\documentclass[12pt]{amsart}
\usepackage{a4wide}

\usepackage{times}
\usepackage{amssymb}
\usepackage{graphicx,xspace}
\usepackage{epsfig}
\usepackage[usenames,dvipsnames]{xcolor}
\usepackage{tikz}
\usepackage[T1]{fontenc}
\usepackage[utf8]{inputenc}
\usepackage{bm}
\usepackage[boxsize=1em]{ytableau}
\usepackage{cancel}
\usepackage{hyperref,pifont}
\usepackage{todonotes}
%

\makeatletter
\def\@tocline#1#2#3#4#5#6#7{\relax
  \ifnum #1>\c@tocdepth 
  \else
    \par \addpenalty\@secpenalty\addvspace{#2}%
    \begingroup \hyphenpenalty\@M
    \@ifempty{#4}{%
      \@tempdima\csname r@tocindent\number#1\endcsname\relax
    }{%
      \@tempdima#4\relax
    }%
    \parindent\z@ \leftskip#3\relax \advance\leftskip\@tempdima\relax
    \rightskip\@pnumwidth plus4em \parfillskip-\@pnumwidth
    #5\leavevmode\hskip-\@tempdima #6\nobreak\relax
    \dotfill\hbox to\@pnumwidth{\@tocpagenum{#7}}\par 
    \nobreak
    \endgroup
  \fi}
\makeatother

\usepackage[capitalize]{cleveref}

\theoremstyle{plain}

\newtheorem{lemma}{Lemma}[section]
\newtheorem{theorem}[lemma]{Theorem}
\newtheorem{corollary}[lemma]{Corollary}
\newtheorem{proposition}[lemma]{Proposition}
\newtheorem{definition}[lemma]{Definition}

\theoremstyle{remark}
\newtheorem{remark}[lemma]{Remark}
\newtheorem{example}[lemma]{Example}
\newtheorem{observation}[lemma]{Observation}

\def\eps{\varepsilon}

\def\Si{\bm{\sigma}}
\def\Mu{\bm{\mu}}

\def\si{\sigma}

\def\Sn{\mathfrak{S}}

\def\xx{\bm{x}}
\def\yy{\bm{y}}
\def\XX{\vec{\mathbf{x}}}
\def\YY{\vec{\mathbf{y}}}

\def\TT{\bm{T}}
\def\ll{\vec{\ell}}
\def\R{\mathbb{R}}

\def\CCC{\mathcal{C}}
\def\TTT{\mathcal{T}}
\def\SSS{\mathcal{S}}
\def\proba{\mathbb{P}}
\def\esper{\mathbb{E}}

\DeclareMathOperator{\db}{db}

\def\patterntree{t_0}
\def\One{\bm{1}}
\def\nonp{{\scriptscriptstyle{\mathrm{not}} {\oplus}}}
\def\nonm{{\scriptscriptstyle{\mathrm{not}} {\ominus}}}
\def\W{W}
\def\O{\mathcal{O}}
\def\cst{\text{cst}}
\def\racine{\sqrt{1-\tfrac z \rho}}
\def\lineaire{(1-\tfrac z \rho)}
\def\Mk{{\vec{\mathbf{m}}_k}}
\def\Mn{{\vec{\mathbf{m}}_n}}
\def\Cbase{\gamma}
\def\Const{B}
\newcommand{\Internal}[1]{\mathrm{Int}(#1)}

\DeclareMathOperator{\id}{id}
\DeclareMathOperator{\occ}{\widetilde{occ}}
\DeclareMathOperator{\perm}{perm}
\DeclareMathOperator{\Perm}{Perm}

\DeclareMathOperator{\pat}{pat}

\DeclareMathOperator{\Cat}{Cat}

\DeclareMathOperator{\Occ}{Occ}

\DeclareMathOperator{\Arg}{Arg}

\newcommand{\set}[1]{\left\{#1\right\}}
\newcommand\restr[2]{{%
		\left.\kern-\nulldelimiterspace %
		#1 %
		\right|_{#2} %
	}}

\def\Av{\mathrm{Av}}
\def\FCA{\mathrm{FCA}(\ll)}

\title{Universal limits of substitution-closed permutation classes}

\author[F. Bassino]{Frédérique Bassino}
       \address[FB]{Université Paris 13, Sorbonne Paris Cité, LIPN, CNRS UMR 7030, F-93430 Villetaneuse, France}
       \email{bassino@lipn.univ-paris13.fr}

 \author[M. Bouvel]{Mathilde Bouvel}
 \author[V. Féray]{Valentin Féray}
       \address[MB,VF]{Institut für Mathematik, Universität Zürich, Winterthurerstr. 190, CH-8057 Zürich, Switzerland}
       \email{mathilde.bouvel@math.uzh.ch}
       \email{valentin.feray@math.uzh.ch}

 \author[L. Gerin]{Lucas Gerin}
       \address[LG]{CMAP, \'Ecole Polytechnique, CNRS, Route de Saclay, F-91128 Palaiseau Cedex, France}
       \email{gerin@cmap.polytechnique.fr}

  \author[M. Maazoun]{Mickaël Maazoun}
   \address[MM]{École Normale Supérieure de Lyon, UMPA UMR 5669 CNRS, 46 allée d’Italie, F-69364 Lyon Cedex 07, France}
   \email{mickael.maazoun@ens-lyon.fr}

 \author[A. Pierrot]{Adeline Pierrot}
 \address[AP]{LRI, Université Paris-Sud, Bat. 650 Ada Lovelace, F-91405 Orsay Cedex, France}
       \email{adeline.pierrot@lri.fr}

\keywords{permutation patterns, Brownian excursion, permutons}

\subjclass[2010]{60C05,05A05}

\begin{document}

\begin{abstract}
We consider uniform random permutations in proper substitution-closed classes
and study their limiting behavior in the sense of permutons.

The limit depends on the generating series of the simple permutations in the class.
  Under a mild sufficient condition, the limit is an elementary one-parameter deformation
    of the limit of uniform separable permutations, previously identified as
    the Brownian separable permuton. This limiting object is therefore in some sense universal.
    We identify two other regimes with different limiting objects.
    The first one is degenerate; the second one is nontrivial and related to stable trees.

These results are obtained thanks to a characterization of the convergence
of random permutons
through the convergence of their expected pattern densities.
The limit of expected pattern densities is then computed by using the substitution tree encoding of permutations and performing singularity analysis on the tree series.
\end{abstract}

\maketitle

\tableofcontents
\section{Introduction}

The aim of this paper is to study the asymptotic behavior of a permutation of large size, 
picked uniformly at random in a \emph{substitution-closed permutation class} generated by a given (finite or infinite) family of \emph{simple permutations}
satisfying additional conditions. 
We first give a few definitions necessary to present the recent literature on related problems, and to state our results. 

\subsection{Permutation classes and their limit}\label{sec:PermutationClass}

For any positive integer $n$, the set of permutations of $[n]:= \{1,2,\ldots, n\}$ is denoted by $\Sn_n$. 
We write permutations of $\Sn_n$ in one-line notation as $\sigma = \sigma(1) \sigma(2) \dots \sigma(n)$. 
For a permutation $\sigma$ in $\Sn_n$, the \emph{size} $n$ of $\sigma$ is denoted by $|\sigma|$. 

For $\sigma\in \Sn_n$, and $I\subset [n]$ of cardinality $k$, let $\pat_I(\sigma)$ be the permutation of $\Sn_k$ induced by $\{\sigma(i) : i\in I\}$. 
For example for $\sigma=65831247$ and $I=\{2,5,7\}$ we have
$$
\pat_{\{2,5,7\}}\left(6\mathbf{5}83\mathbf{1}2\mathbf{4}7\right)=312
$$
since the values in the subsequence $\sigma(2) \sigma(5) \sigma(7)=514$ are in the same relative order as in the permutation $312$. 
A permutation $\pi = \pat_I(\sigma)$ is a \emph{pattern} involved (or contained) in $\sigma$, 
and the subsequence $(\sigma(i))_{i \in I}$ is an \emph{occurrence} of $\pi$ in $\sigma$. 
When a pattern $\pi$ has no occurrence in $\sigma$, we say that $\sigma$ \emph{avoids} $\pi$. 
The pattern containment relation defines a partial order on $\mathfrak{S} = \cup_n \mathfrak{S}_n$: 
we write $\pi \preccurlyeq \sigma$ if $\pi$ is a pattern of $\sigma$.

A \emph{permutation class} is a family $\mathcal{C}$ of permutations that is downward closed for $\preccurlyeq$, \emph{i.e.} 
for any $\sigma \in \mathcal{C}$ and any pattern $\pi \preccurlyeq \sigma$, it holds that $\pi \in \mathcal{C}$. 
For every set $B$ of patterns, we denote by $\Av(B)$ the set of all permutations that avoid every pattern in $B$. 
Clearly, for all $B$, $\Av(B)$ is a permutation class. 
Conversely (see for instance~\cite[Paragraph 5.1.2]{BonaBook}), every class $\mathcal{C}$ of permutations can be defined by a set $B$ of excluded patterns. 
Moreover, for any given $\mathcal{C}$, we can define uniquely a set $B$ such that $\mathcal{C}=\Av(B)$: 
it is enough to impose that $B$ is chosen minimal (for set inclusion) among all $B'$ such that $\mathcal{C}=\Av(B')$. 
This $B$ (which happens to be an antichain) is called the \emph{basis} of the class $\mathcal{C}$. 
The basis of a permutation class may be finite or infinite (see~\cite[Paragraph 7.2.3]{BonaBook}). 

One in many ways permutation classes can be studied is by looking at the features of a typical large permutation $\sigma$ in the class. 
A particularly interesting characteristic is the frequency of occurrence of a pattern $\pi$, especially when it is considered for all $\pi$ simultaneously.
Denote by $\mathrm{occ}(\pi,\sigma)$ the number of occurrences of a pattern $\pi\in\Sn_k$ in $\sigma \in \Sn_n$
and by $\occ(\pi,\sigma)$ the {\em pattern density} of $\pi$ in $\sigma$.
More formally
\begin{align}
\mathrm{occ}(\pi,\sigma) &= \mathrm{card}\{I \subset [n] \text{ of cardinality }k \text{ such that } \pat_I(\sigma)=\pi\} \notag\\ %
\occ(\pi,\sigma) &= \frac{\mathrm{occ}(\pi,\sigma)}{\binom{n}{k}} \, 
=\mathbb{P} \left(\pat_{\bm I}(\sigma)=\pi \right),\label{eq:EsperanceOccurrence}
\end{align}
where $\bm I$ is randomly and uniformly chosen among the $\binom{n}{k}$ subsets of $[n]$ with $k$ elements.
The study of the asymptotics of $\occ(\pi,\bm \sigma_n)$, where $\bm{\sigma}_n$ is a uniform random permutation of size $n$ in a permutation class $\mathcal C$ and $\pi\in \Sn$ is a fixed pattern, has been carried out in several cases. 
\begin{itemize}
	\item  The behavior of $\mathbb{E}[\occ(\pi,\bm\sigma_n)]$ for various classes $\mathcal C$ and  fixed $\pi$ was investigated by B\'ona~\cite{Bona1,Bona2}, Homberger \cite{Homberger},
	Chang, Eu and Fu \cite{ChenEuFu} and Rudolf \cite{Rudolph}.
	\item Janson, Nakamura and Zeilberger \cite{JansonNakamuraZeilberger} considered higher moments and joint moments, 
	rigourously when $\mathcal{C} = \Sn$, and also empirically for various classes $\mathcal{C}$.
	A bit later,
	Janson \cite{JansonPermutations} has given for $\mathcal C = \Av(132)$ the joint limit in distribution of the random variables $\occ(\pi,\bm\sigma_n)$, properly rescaled as to yield a nontrivial limit.
\end{itemize}

A parallel line of work consists in studying the asymptotic shape of the diagram of $\bm \sigma_n$. The \emph{diagram} of a permutation $\sigma \in \Sn_n$ is the set of points $\{(i,\sigma(i)),\, 1\leq i \leq n\}$ in the Cartesian plane.
To gain understanding of typical large permutations in $\mathcal{C}$, 
one can investigate the geometric properties of the diagram of $\bm \sigma_n$ as $n\to\infty$, possibly after rescaling this diagram so that it fits into a unit square.
\begin{itemize}
	\item Madras and Liu
	\cite{MadrasLiu}, Atapour and Madras \cite{AtapourMadras}
	and Madras and Pehlivan \cite{MadrasPehlivan} considered the asymptotic shape of $\bm{\sigma_n}$ when $\mathcal C = \mathrm{Av}(\tau)$
	for small patterns $\tau$.
	\item In parallel, Miner and Pak \cite{MinerPak}
	described very precisely the asymptotic shape of $\bm\sigma_n$, when $\mathcal C =\mathrm{Av}(\tau)$ for the $6$ patterns $\tau$ in $\Sn_3$. 
	These shapes are related to  Brownian excursion, as explained by Hoffman, Rizzolo and Slivken \cite{HoffmanBrownian1,HoffmanBrownian2}.
	\item Bevan describes the limit shape
	of permutations in so-called {\em connected monotone grid classes}
	\cite[Chapter 6]{BevanPhD}.%
\end{itemize}

These two points of view may seem different, but they are in fact tightly bound together. 
Indeed, as we shall see in Section~\ref{Sec:Permutons}, 
it follows from results of~\cite{Permutons} that the convergence of pattern densities 
characterizes the convergence of the diagrams, seen as permutons. 
This important property was actually the main motivation for the introduction of permutons in~\cite{Permutons}.

\subsection{The permuton viewpoint}\label{ssec:permutons_intro}

A {\em permuton} is a probability measure on the unit square $[0,1]^2$ with uniform marginals,
{\em i.e.} its pushforwards by the projections on the axes are both the Lebesgue measure on $[0,1]$. 
Permutons generalize permutation diagrams in the following sense:
to every permutation $\sigma\in \Sn_n$, we associate the permuton $\mu_\sigma$ with density 
\[
\mu_\sigma(dxdy) = n\One_{\sigma(\lceil xn \rceil) = \lceil yn \rceil} dxdy.
\]
Note that it amounts to replacing every point $(i,\sigma(i))$ in the diagram of $\sigma$ (normalized to the unit square) 
by a square of the form $[(i-1)/n,i/n]\times [(\sigma(i)-1)/n,\sigma(i)/n]$, which has mass $1/n$ uniformly distributed. 
\smallskip

Permutons were first considered by Hoppen, Kohayakawa, Moreira, Rath and Sampaio in \cite{Permutons},
with the point of view of characterizing
permutation sequences with convergent pattern densities.
The name {\em permuton} and the measure point of view
were given afterwards by Glebov, Grzesik, Klimo\v{s}ová and Kr{\'a}l \cite{FinitelyForcible}.
This recently introduced concept has already been the subject of many articles, including:
\begin{itemize}
  \item results on the set of possible pattern densities of permutons \cite{FinitelyForcible,ParameterTesting, LargeDeviations};
  \item a large deviation principle in the space of permutons,
    giving access to the analysis of random permutations with fixed pattern densities \cite{LargeDeviations};
  \item the description of the limiting distribution of 
    the number of fixed points (and more generally of cycles of a given length) 
    for ``equi-continuous'' sequences of permutations with a limiting permuton \cite{FixedPointsPermuton};
  \item central limit theorems and refinements for pattern occurrences in random permutation models associated to permutons
    \cite{FMN3};
  \item the permuton convergence of some exponentially tilded models of random permutations \cite{ExpTiltedPermuton};
  \item a study of permuton-valued processes, in the context of random sorting networks \cite{GeometryPermuton}.
\end{itemize}\smallskip

In the context of this article, the theory of permutons is a nice framework
to state scaling limit results for sequences of (random) permutations.
Indeed, the space $\mathcal M$ of permutons is equipped with the topology of weak convergence of measures, 
which makes it a compact metric space. 
This allows one to define convergent sequences of permutations: 
we say that $(\sigma_n)_n$ converges to $\mu$ when $(\mu_{\sigma_n}) \to \mu$ weakly. 
Accordingly, for a sequence $(\bm{\sigma_n})_n$ of {\em random} permutations,
we will consider the convergence in distribution of the associated random measures $(\mu_{\bm{\sigma_n}})$
in the weak topology. The limiting object is then a random permuton.

By definition, convergence to a permuton encodes the first-order asymptotics of the shape of a sequence of permutations. 
As we shall see in Section~\ref{Sec:Permutons}, it also encodes the first-order asymptotics of pattern densities: 
a sequence $(\bm{\sigma}_n)_n$ of random permutations converges in distribution to a random permuton if and only if the sequences of random variables $(\occ(\pi,\bm\sigma_n))_n$ converge in distribution, 
jointly for all $\pi\in\Sn$. 
Moreover, for any pattern $\pi$, the limit distribution of the density of $\pi$ can be expressed as a function of the limit permuton. 

Our previous article \cite{BrownianPermutation} studies the limit of the class $\mathcal C = \Av(2413,3142)$ of separable permutations, 
in terms of pattern densities and permutons. 
\begin{theorem}
\label{thm:thm_separable}
	Let $\bm \sigma_n$ be a uniform random separable permutation of size $n$. 
	There exists a random permuton $\bm \mu$, \textit{called the Brownian separable permuton},
    such that $(\mu_{\bm \sigma_n})_n$ converges in distribution to $\bm \mu$. 
\end{theorem}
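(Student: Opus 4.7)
The plan is to exploit the substitution-tree encoding of separable permutations together with the characterization of permuton convergence via pattern densities. Each separable permutation $\sigma$ is in bijection with a unique \emph{canonical} signed binary tree: internal nodes carry a label $\oplus$ or $\ominus$, the leaves in left-to-right order give $\sigma(1),\dots,\sigma(|\sigma|)$, and no internal node bears the same sign as its parent. Under this bijection the uniform distribution on $\Av(2413,3142)\cap \Sn_n$ transfers to an explicit law on canonical trees with $n$ leaves, which is amenable to analytic combinatorics because the ordinary generating series of separable permutations has a square-root singularity at a well-known point $\rho$.

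First I would invoke the equivalence recalled in Section~\ref{Sec:Permutons}: proving $\mu_{\bm{\sigma}_n}\to \bm\mu$ in distribution reduces to establishing, for every finite family of patterns $\pi_1,\dots,\pi_k$, the joint convergence in distribution of $(\occ(\pi_1,\bm{\sigma}_n),\dots,\occ(\pi_k,\bm{\sigma}_n))$ to the joint pattern density vector of a well-defined random permuton. I would take the candidate limit $\bm\mu$ to be built from a standard Brownian excursion $e$ of length one: $e$ codes a random continuum real tree whose branch points are decorated with i.i.d.\ uniform $\{\oplus,\ominus\}$ signs, and for $0\le u<v\le 1$ one declares $u\prec v$ or $v\prec u$ according to the sign at the branch point separating $u$ and $v$. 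The associated random total order on $[0,1]$ produces a random permuton $\bm\mu$ whose pattern densities admit explicit integral expressions in terms of $e$ and the signs.

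The core and hardest step is to compute the asymptotics of $\mathbb{E}[\occ(\pi_1,\bm{\sigma}_n)\cdots\occ(\pi_k,\bm{\sigma}_n)]$. Rewriting each factor via \eqref{eq:EsperanceOccurrence} turns this moment into the probability that $k$ independently uniform subsets of leaves of the random canonical tree realize the prescribed patterns; combinatorially it becomes a weighted sum over embeddings of a decorated ``pattern forest'' into a canonical tree, with the remaining tree fragments counted by the separable generating series. Singularity analysis \emph{à la} Flajolet--Odlyzko, applied to the resulting multivariate series, yields the precise scaling constant, and one checks that it equals $\mathbb{E}\bigl[\prod_i \occ(\pi_i,\bm\mu)\bigr]$. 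Handling the sign-alternation constraint in the canonical encoding, and making the singularity analysis uniform across the (finitely many) pattern parameters involved in each joint moment, is the chief obstacle; once these are controlled, joint moment convergence combined with the fact that the candidate limit is determined by its moments yields the claimed convergence in distribution.
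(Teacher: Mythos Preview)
Your overall strategy is sound, but you are working harder than necessary, and there is one technical slip in the tree encoding.

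On the encoding: canonical trees of separable permutations are \emph{not} binary. They have internal nodes of arbitrary degree $\ge 2$ with the alternation constraint that a child of $\oplus$ is not $\oplus$ (and likewise for $\ominus$); this is what makes the bijection with separable permutations work and gives the Schr\"oder generating series. Binary signed trees with the alternation constraint would be counted by $2\Cat_{n-1}$, which is the wrong count. What you may have in mind are \emph{separation trees} (binary, signs unconstrained), but those are not unique for a given permutation. This does not wreck the plan, but the combinatorial decomposition and singularity analysis must be set up for the correct family.

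On the reduction: you invoke the equivalence of \cref{thm:randompermutonthm} but then aim for item~(b) via the method of moments, i.e.\ you propose to compute all joint moments $\esper\big[\prod_i \occ(\pi_i,\bm\sigma_n)\big]$ and match them with those of the candidate permuton. The paper's route is shorter: item~(c) of \cref{thm:randompermutonthm} says that convergence of the \emph{first} moments $\esper[\occ(\pi,\bm\sigma_n)]$ alone, for each fixed pattern $\pi$, already forces convergence in distribution of $(\mu_{\bm\sigma_n})_n$ to some random permuton, and the limit is characterized by those numbers. So one only needs to extract the asymptotics of $[z^n]T_{t_0}(z)\big/\binom{n}{k}[z^n]T(z)$ for each substitution tree $t_0$ of $\pi$ (trees with a single set of $k$ marked leaves inducing $t_0$), sum over $t_0$, and identify the limit with $N_\pi/(2^{k-1}\Cat_{k-1})$. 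No products of pattern densities, no multiple marked subsets, no moment-determinacy argument. Your approach would succeed in principle, but the bookkeeping for several simultaneous marked subsets and the matching of joint moments with the signed-excursion construction is exactly the ``chief obstacle'' you flag, and the paper sidesteps it entirely.
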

The result of \cite{BrownianPermutation} is more precise, and describes the asymptotic joint distribution of the random variables $\occ(\pi,\Si_n)$ as a measurable functional of a \emph{signed Brownian excursion}. 
This object is a normalized Brownian excursion whose strict local minima are decorated with an i.i.d. sequence of balanced signs in $\{+,-\}$. 
In another paper \cite{MickaelConstruction}, the fifth author gives a direct construction of $\bm \mu$
from this signed Brownian excursion.
In particular, $\bm \mu$ is not equal almost surely to a given permuton;
in this regard, separable permutations behave differently from all other classes 
analysed so far in the literature (which converge to a deterministic permuton).

The class of separable permutations is the smallest nontrivial substitution-closed class, as defined in the next subsection. 
The present paper aims at showing a convergence result similar to Theorem~\ref{thm:thm_separable} for other substitution-closed classes. 
We will see that in many cases the limit belongs to a one-parameter family of deformations of the Brownian separable permuton: 
the \emph{biased Brownian separable permuton} $\bm \mu^{(p)}$ of parameter $p\in (0,1)$ is obtained from a biased signed Brownian excursion 
(defined similarly to the signed Brownian excursion but with each sign having probability $p$ of being a $+$). 
Simulations of the biased Brownian separable permuton are given in \cref{Fig:SimusPermuton}.
A precise definition will be given in \cref{Sec:Standard} (\cref{Def:PermutonBiaise}).

\begin{figure} [htbp]
  \vspace{-1.3cm}
\begin{center}
  \includegraphics[width=0.32\textwidth]{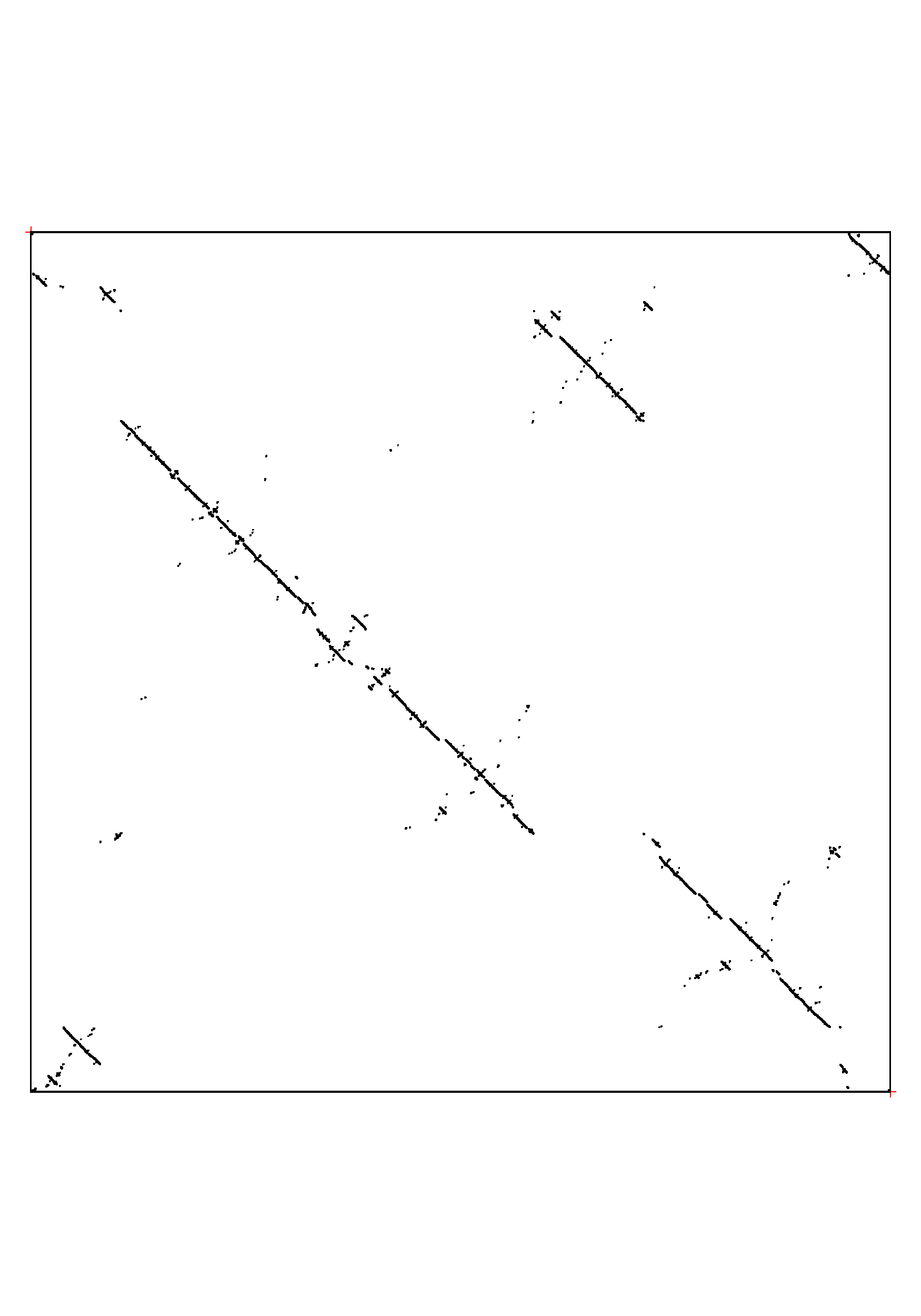}\ \
  \includegraphics[width=0.32\textwidth]{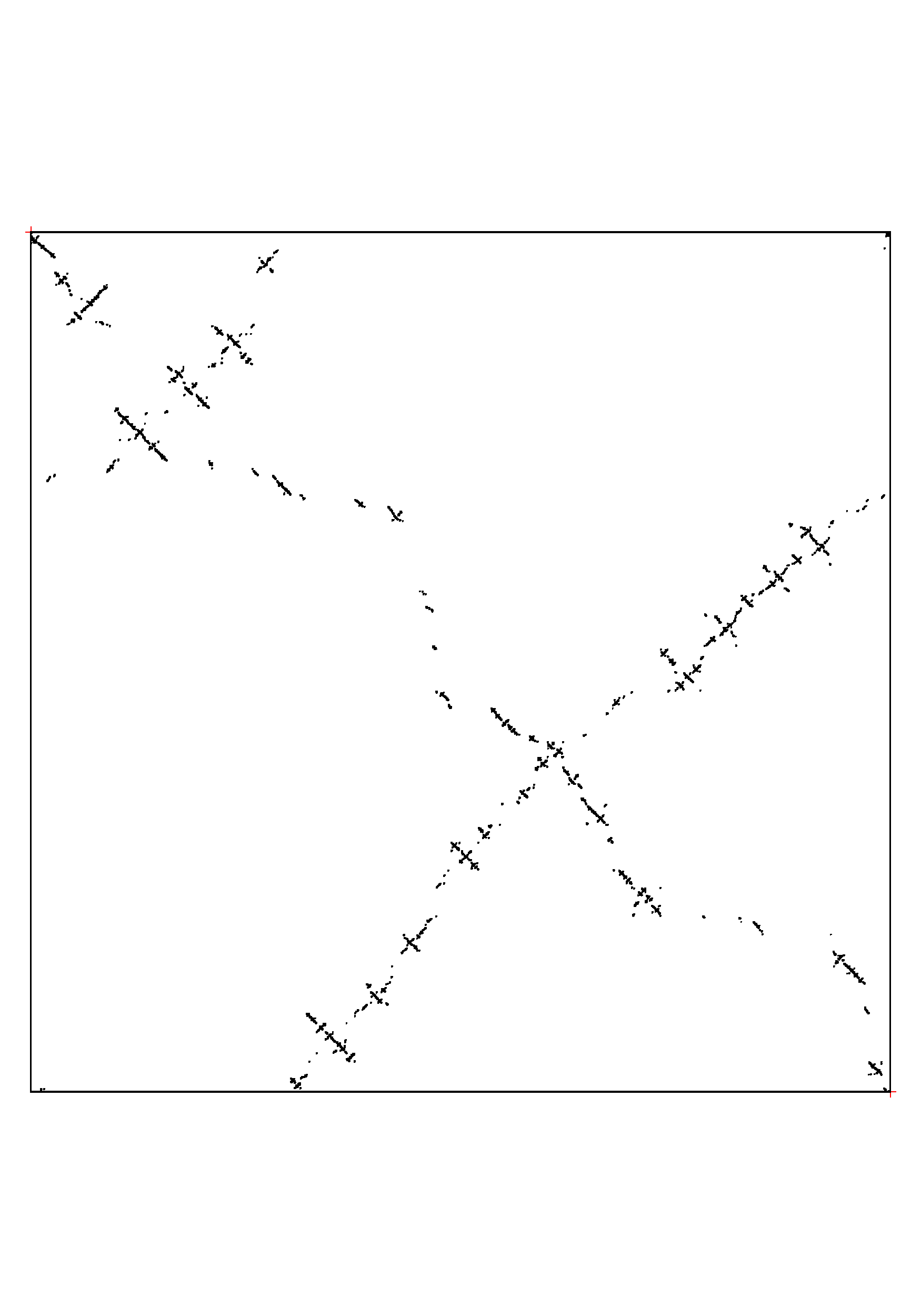}\ \ 
  \includegraphics[width=0.32\textwidth]{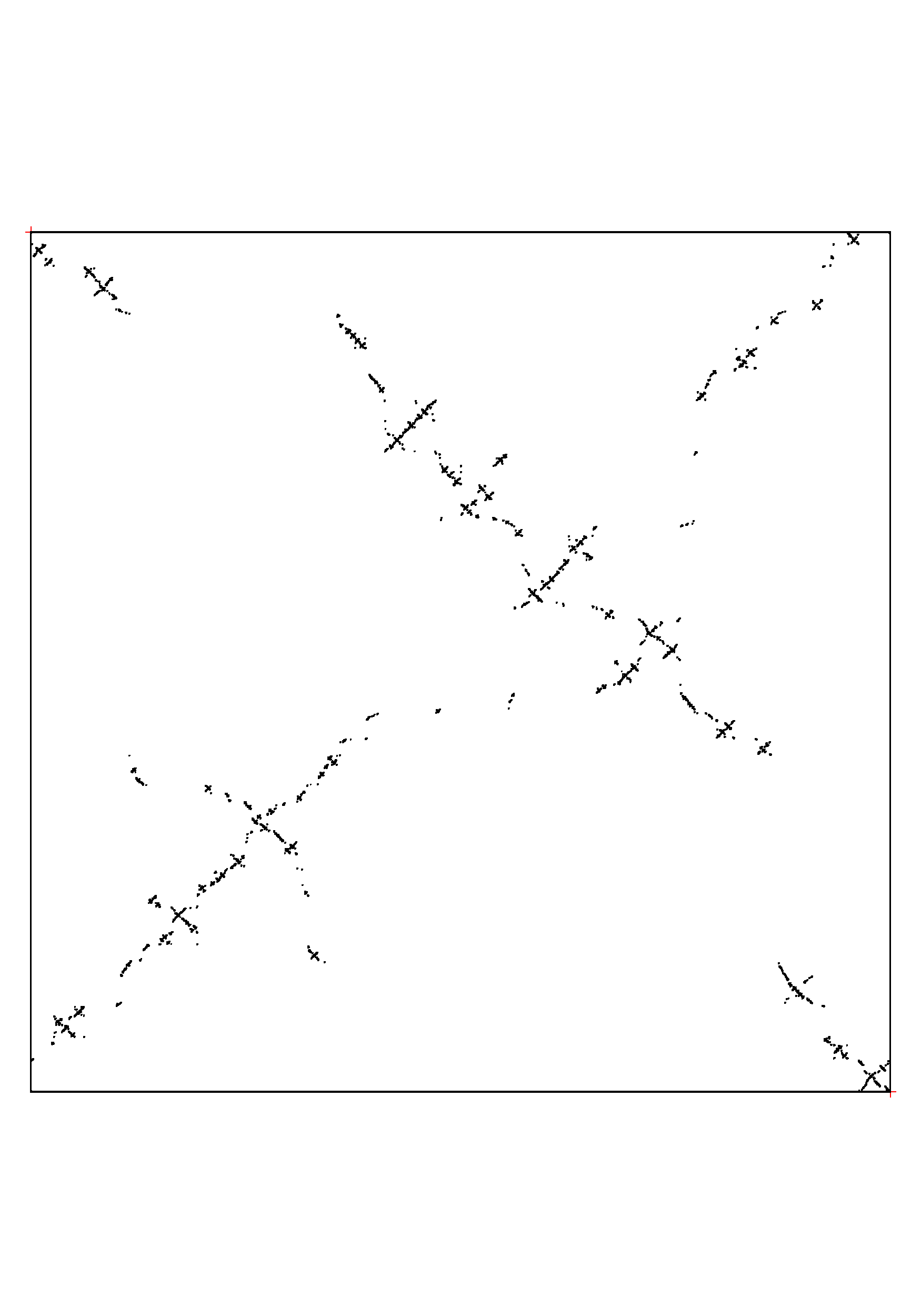}
\end{center}\vspace{-1.3cm}
\caption{Three simulations of the biased Brownian separable permuton $\bm\mu^{(p)}$ (left: $p=0.2$; middle: $p=0.45$; right: $p=0.5$.).
How these simulations were obtained is discussed in \cref{sec:AppendicePermuton}.
}\label{Fig:SimusPermuton}
\end{figure}

Finally, we mention that although permutons are a very nice, natural, and powerful way of studying ``limits of permutation classes'' (in particular because they unify many earlier results, as explained above), 
this approach has its weaknesses. Most importantly, it gives no information beyond the first order. 
For instance, the results of \cite{MinerPak,HoffmanBrownian1} describe the \textit{canoe} shape of large permutations in classes avoiding one pattern of length three. 
As the width of the canoe is a $o(n)$, one only sees the diagonal (or antidiagonal) in the permuton limit, but has no information about the fluctuations around this limit.

\subsection{Substitution-closed classes}\label{sec:OperatorsPermutations} 

\begin{definition}%
Let $\theta=\theta(1)\cdots \theta(d)$ be a permutation of size $d$, and let $\pi^{(1)},\dots,\pi^{(d)}$ be $d$ other permutations. 
The \emph{substitution} of $\pi^{(1)},\dots,\pi^{(d)}$ in $\theta$ is the permutation of size $|\pi^{(1)}|+ \dots +|\pi^{(d)}|$ 
obtained by replacing each $\theta(i)$ by a sequence of integers isomorphic to $\pi^{(i)}$ while keeping the relative order induced by $\theta$ between these subsequences.\\
This permutation is denoted by $\theta[\pi^{(1)},\dots,\pi^{(d)}]$. 
We sometimes refer to $\theta$ as the \emph{skeleton} of the substitution. 
\end{definition}

When $\theta$ is $12 \ldots k$ (resp. $k\ldots 21$), for any value of $k \geq 2$, we rather write $\oplus$ (resp. $\ominus$) instead of $\theta$. 
Note that the specific value of $k$ does not appear in this notation, but can be recovered counting the number of permutations $\pi^{(i)}$ which are substituted in $\oplus$ (resp. $\ominus$). 

Examples of substitution (see \cref{fig:sum_and_skew} below) are conveniently presented representing permutations by their diagrams: 
the diagram of $\theta[\pi^{(1)},\dots,\pi^{(d)}]$ is obtained by 
blowing up each point $\theta_i$ of $\theta$ onto a square containing the diagram of $\pi^{(i)}$. 

\begin{figure}[htbp]
    \begin{center}
      \includegraphics[width=7cm]{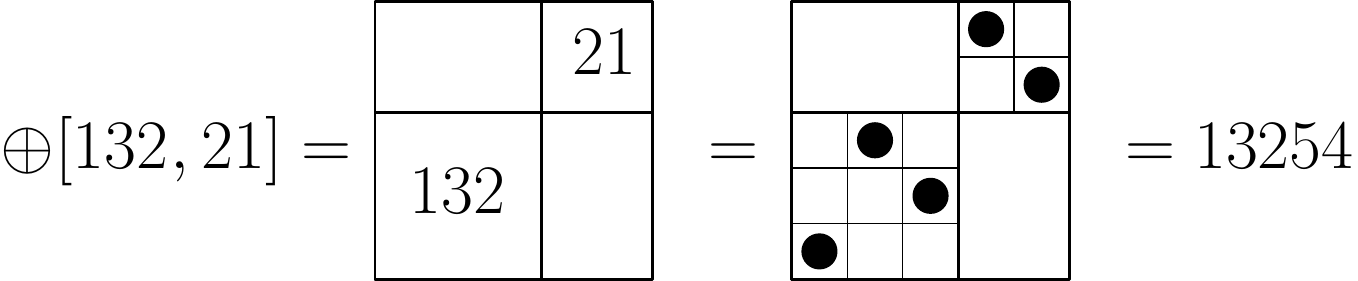} \qquad 
      \includegraphics[width=7cm]{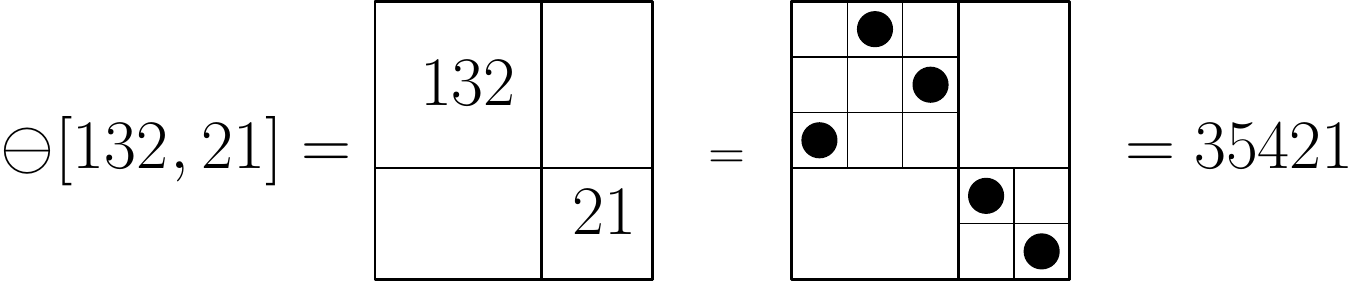}\\

\hspace{1cm}

	\includegraphics[width=95mm]{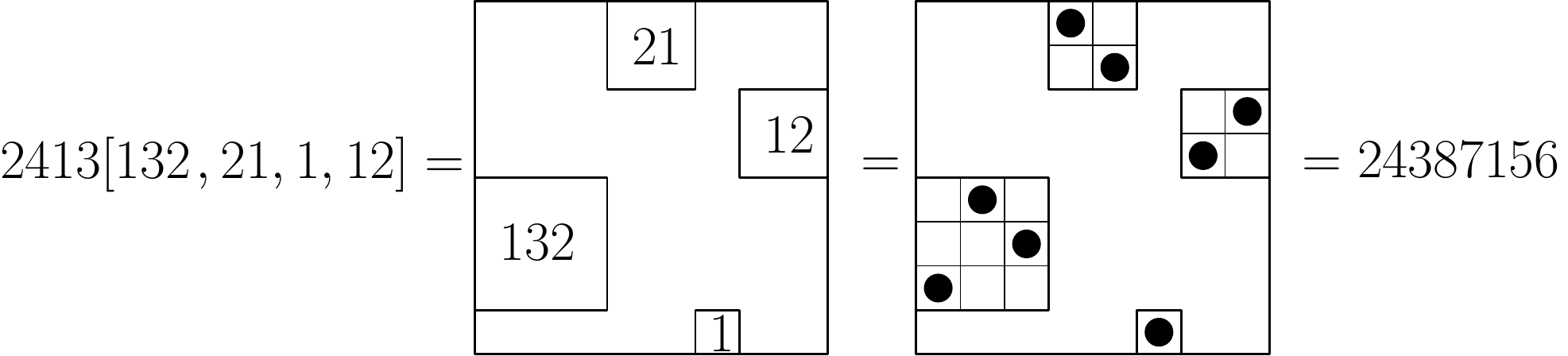}
    \end{center}
\caption{Substitution of permutations.\label{fig:sum_and_skew}}
\end{figure}

By definition of permutation classes, if $\theta[\pi^{(1)},\dots,\pi^{(d)}] \in \mathcal{C}$ for some permutation class $\mathcal{C}$,
then $\theta, \pi^{(1)},\dots,\pi^{(d)} \in \mathcal{C}$. The converse is not always true.
\begin{definition}%
A permutation class $\mathcal{C}$ is \emph{substitution-closed} if, for every $\theta, \pi^{(1)},\dots,\pi^{(d)}$ in $\mathcal{C}$,
$\theta[\pi^{(1)},\dots,\pi^{(d)}] \in \mathcal{C}$.
\end{definition}

The focus of this paper will be substitution-closed classes.
To study such classes it is essential to observe that any permutation has a canonical decomposition using substitutions, which can be encoded in a tree. This decomposition is canonical in the same sense as the decomposition of integers into products of primes. 
In this analogy, \emph{simple permutations} play the role of prime numbers and the substitution plays the role of the product.
We first give a simple definition: 
a permutation $\sigma$ is $\oplus$-indecomposable (resp. $\ominus$-indecomposable) 
if it cannot be written as $\oplus[\pi^{(1)},\pi^{(2)}]$ (resp. $\ominus[\pi^{(1)},\pi^{(2)}]$), 
(or equivalently, if there is no $d$ such that $\sigma$ can be written as $\oplus[\pi^{(1)},\dots,\pi^{(d)}]$ (resp. $\ominus[\pi^{(1)},\dots,\pi^{(d)}]$)).

\begin{definition}%
A \emph{simple permutation} is a permutation of size $n > 2$ that does not map any nontrivial interval 
(\emph{i.e.} a range in $[n]$ containing at least two and at most $n-1$ elements) onto an interval.
\end{definition}
For instance, $451326$ is not simple as it maps $[3;5]$ onto $[1;3]$. 
The smallest simple permutations are $2413$ and $3142$ (there is no simple permutation of size $3$). 

\medskip

\noindent \textbf{Remark:}
Usually in the literature, the definition of a simple permutation requires only $n \ge 2$ and not $n> 2$,
so that $12$ and $21$ are considered to be simple.
However in our work, $12$ and $21$ do not play the same role as the other simple permutations, that is why we do not consider them to be simple.

\begin{theorem}[Decomposition of permutations, Proposition 2 in \cite{AA05}]\label{Th:AlbertAtkinson}
Every permutation $\sigma$ of size $n\geq 2$ can be uniquely decomposed as either:
\begin{itemize}
\item $\alpha[\pi^{(1)},\dots,\pi^{(d)}]$, where $\alpha$ is simple (of size $d\geq 4$),
\item $\oplus[\pi^{(1)},\dots,\pi^{(d)}]$, where $d\geq 2$ and $\pi^{(1)},\dots,\pi^{(d)}$ are $\oplus$-indecomposable,
\item $\ominus[\pi^{(1)},\dots,\pi^{(d)}]$, where $d\geq 2$ and $\pi^{(1)},\dots,\pi^{(d)}$ are $\ominus$-indecomposable.
\end{itemize}
\end{theorem}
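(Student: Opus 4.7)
The plan is to prove existence first, then uniqueness. The central notion to work with is that of an \emph{interval} (or block) of a permutation $\sigma \in \Sn_n$: a set $I = \{a, a+1, \ldots, b\} \subseteq [n]$ of consecutive positions whose image $\sigma(I)$ is a set of consecutive values. Every permutation has the trivial intervals (singletons and $[n]$ itself), and the key structural lemma to establish first is that whenever two intervals $I, J$ \emph{overlap} (that is, $I\cap J \neq \emptyset$ and neither contains the other), both $I\cup J$ and $I\cap J$ are again intervals. An immediate consequence is that the three cases in the statement are mutually exclusive: in a simple permutation $\alpha$ of size $\geq 4$ we cannot have $\alpha(1)=1$ nor $\alpha(|\alpha|)=|\alpha|$ (otherwise $\{2,\ldots,|\alpha|\}$ or $\{1,\ldots,|\alpha|-1\}$ would be a nontrivial interval), which forces any permutation of the form $\alpha[\pi^{(1)},\ldots,\pi^{(d)}]$ with $\alpha$ simple of size $\geq 4$ to be neither $\oplus$- nor $\ominus$-decomposable.

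For \textbf{existence}, I would split on whether $\sigma$ is $\oplus$-decomposable, $\ominus$-decomposable, or neither. In the first case, iteratively split the leftmost $\oplus$-decomposable factor until none remains: this terminates and yields $\sigma = \oplus[\pi^{(1)},\ldots,\pi^{(d)}]$ with each $\pi^{(i)}$ being $\oplus$-indecomposable. The $\ominus$ case is symmetric. If $\sigma$ is neither, let $M_1, \ldots, M_d$ denote the maximal proper intervals of $\sigma$ (maximal among intervals that are proper subsets of $[n]$). The interval-combining lemma implies that no two of the $M_i$ overlap: an overlap would force $M_i\cup M_j = [n]$ by maximality, and a direct analysis of this situation shows $\sigma$ would then be $\oplus$- or $\ominus$-decomposable, a contradiction. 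Hence the $M_i$ are pairwise disjoint; since every singleton is an interval contained in some maximal proper interval, the $M_i$ partition $[n]$. Setting $\pi^{(i)} = \pat_{M_i}(\sigma)$ and taking $\alpha$ to be the induced pattern on block representatives, we obtain $\sigma = \alpha[\pi^{(1)},\ldots,\pi^{(d)}]$. By maximality of the $M_i$, $\alpha$ has no nontrivial proper interval; thus either $\alpha$ is simple, or $d\leq 3$. The cases $d=2$ (with $\alpha\in\{12,21\}$) and $d=3$ (where the six permutations of size $3$ all reduce to $\oplus$ or $\ominus$) both contradict the standing assumption, so $d\geq 4$ and $\alpha$ is simple.

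For \textbf{uniqueness}, I would characterize each decomposition intrinsically. In the simple case, given $\sigma = \alpha[\pi^{(1)},\ldots,\pi^{(d)}]$ with $\alpha$ simple of size $\geq 4$, the blocks correspond to intervals of $\sigma$, and the simplicity of $\alpha$ forces these blocks to be exactly the maximal proper intervals of $\sigma$, which are uniquely determined; this pins down both $\alpha$ and the $\pi^{(i)}$. In the $\oplus$ case, the indecomposability condition forces $\pi^{(1)}$ to be the longest $\oplus$-indecomposable prefix of $\sigma$, and the remaining factors are recovered recursively; the $\ominus$ case is symmetric.

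I expect the main obstacle to be the sub-claim used in the third case: if $M, M'$ are overlapping maximal proper intervals with $M\cup M' = [n]$, then $\sigma$ must be $\oplus$- or $\ominus$-decomposable. The strategy is to partition $[n]$ as $M\setminus M'$, $M\cap M'$, $M'\setminus M$ and analyze the four sets together with their images under $\sigma$; the rigidity imposed by the interval structure forces the three pieces to have images which are either arranged increasingly (yielding an $\oplus$ decomposition) or decreasingly (yielding an $\ominus$ decomposition) along a suitable cut. Once this rigidity step is in hand, the rest of the argument reduces to bookkeeping.
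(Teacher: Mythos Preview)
The paper does not give its own proof of this statement: it is quoted verbatim as Proposition~2 of Albert--Atkinson \cite{AA05} and used as a black box, so there is nothing to compare against.

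Your outline is essentially the standard argument and is sound. One point deserves tightening. Several steps (mutual exclusivity of the three cases; showing in the simple case that the blocks $B_i$ coincide with the maximal proper intervals of $\sigma$) implicitly rely on the \emph{interval-compatibility lemma}: if $\sigma=\alpha[\pi^{(1)},\dots,\pi^{(d)}]$ with blocks $B_1,\dots,B_d$, then every interval $I$ of $\sigma$ is either contained in a single $B_j$, or equals a union $B_a\cup\cdots\cup B_b$ with $\{a,\dots,b\}$ an interval of $\alpha$. You never state this lemma, yet you use it: for instance, $\alpha(1)\neq 1$ and $\alpha(d)\neq d$ alone do not rule out $\sigma$ being $\oplus$-decomposable---you still need to know that a prefix interval $\{1,\dots,k\}$ of $\sigma$ with $\sigma(\{1,\dots,k\})=\{1,\dots,k\}$ must either sit inside $B_1$ (forcing $\alpha(1)=1$) or be a union of initial blocks (forcing a nontrivial interval in $\alpha$). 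The lemma follows readily from your overlapping-intervals lemma together with the observation that each $B_j$ is itself an interval of $\sigma$, so this is a gap in presentation rather than in strategy; just make it explicit.
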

This decomposition theorem can be applied recursively inside the permutations $\pi^{(i)}$ appearing in the items above, 
until we reach permutations of size $1$. 
Doing so, a permutation $\sigma$ can be naturally encoded by a rooted planar tree,
whose internal nodes are labeled by the skeletons of the substitutions that are considered along the recursive decomposition process, 
and whose leaves correspond to the elements of $\sigma$.
This construction provides a one-to-one correspondence between permutations and {\em canonical trees} (defined below) that maps the size to the number of leaves.
\begin{definition}\label{defintro:CanonicalTree}
A \emph{canonical tree}  is a rooted planar tree whose internal nodes carry labels satisfying the following constraints.
\begin{itemize}
\item Internal nodes are labeled by $\oplus,\ominus$, or by a simple permutation.
\item A node labeled by $\alpha$ has degree\footnote{Throughout the paper, by \emph{degree} of a node in a tree, we mean the number of its children (which is sometimes called arity in other works). 
Note that it is different from the graph-degree: for us, the edge to the parent (if it exists) is not counted in the degree.} 
$|\alpha|$, nodes labeled by $\oplus$ and $\ominus$ have degree at least~$2$.  
\item A child of a node labeled by  $\oplus$ (resp. $\ominus$) cannot be labeled by  $\oplus$ (resp. $\ominus$).
\end{itemize}
\end{definition}
Canonical trees are known in the literature under several names: decomposition trees, substitution trees,\ldots
We choose the term canonical because we consider many variants of substitution trees in this paper, but only these canonical ones provide a one-to-one correspondence with permutations.

The representation of permutations by their canonical trees is essential in the study of subs\-ti\-tu\-tion-closed classes. 
The reason is that, for any such class $\mathcal{C}$, the set of canonical trees of permutations in $\mathcal{C}$ can be easily described. 

\begin{proposition}
\label{prop:treesOfSubsClosedClasses}
Let $\mathcal{C}$ be a substitution-closed permutation class, and assume\footnote{Otherwise, 
$\mathcal{C} = \{12\ldots k : k \geq 1 \}$ or $\mathcal{C} = \{k\ldots 21 : k \geq 1 \}$ or  $\mathcal{C} = \{1\}$ and these cases are trivial.} that $12,21 \in \mathcal{C}$. 
Denote by $\mathcal{S}$ the set of simple permutations in $\mathcal{C}$. 
The set of canonical trees encoding permutations of $\mathcal{C}$ is the set of all canonical trees built on the set of nodes $\{\oplus, \ominus\} \cup \{\alpha : \alpha \in \mathcal{S}\}$. 
\end{proposition}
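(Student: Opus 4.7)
The plan is to prove the two inclusions separately, since the statement says the set of canonical trees of permutations in $\mathcal{C}$ equals the set of canonical trees with internal labels in $\{\oplus,\ominus\} \cup \{\alpha : \alpha \in \mathcal{S}\}$.

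For the forward inclusion, suppose $\sigma \in \mathcal{C}$ and let $t$ be its canonical tree. I want to show that every label of an internal node of $t$ lies in $\{\oplus,\ominus\} \cup \mathcal{S}$. The only thing to check is that if an internal node $v$ of $t$ carries a label that is a simple permutation $\alpha$, then $\alpha \in \mathcal{C}$. This follows from the fact that $\alpha$ is a pattern of $\sigma$: indeed, from the recursive construction of the canonical tree, the node $v$ corresponds to a substitution $\alpha[\pi^{(1)},\dots,\pi^{(|\alpha|)}]$ occurring in the decomposition, and picking any one leaf in the subtree rooted at each child of $v$ yields a set $I \subset [n]$ such that $\pat_I(\sigma) = \alpha$. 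Since $\mathcal{C}$ is downward-closed for $\preccurlyeq$, this gives $\alpha \in \mathcal{C}$, hence $\alpha \in \mathcal{S}$.

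For the reverse inclusion, I proceed by induction on the number of leaves of a canonical tree $t$ whose internal labels lie in $\{\oplus,\ominus\} \cup \{\alpha : \alpha \in \mathcal{S}\}$, showing that the permutation $\sigma$ it encodes is in $\mathcal{C}$. The base case is a single leaf, giving the permutation $1$, which lies in $\mathcal{C}$ by hypothesis. Otherwise, $\sigma = \theta[\pi^{(1)},\dots,\pi^{(d)}]$, where $\theta$ is determined by the root label of $t$ and each $\pi^{(i)}$ is encoded by a proper subtree whose labels are again in the allowed set. By induction hypothesis, $\pi^{(i)} \in \mathcal{C}$ for each $i$. To conclude by substitution-closure, I only need $\theta \in \mathcal{C}$. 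If the root label is a simple permutation $\alpha \in \mathcal{S}$, then $\theta = \alpha \in \mathcal{S} \subseteq \mathcal{C}$. If the root label is $\oplus$ (resp.\ $\ominus$), then $\theta = 12\cdots d$ (resp.\ $d\cdots 21$). These monotone permutations are obtained by iterated substitution from $12$ (resp.\ $21$) using the identity $12\cdots d = 12[1,12\cdots(d-1)]$ (and symmetrically for decreasing ones); since $12,21 \in \mathcal{C}$ and $\mathcal{C}$ is substitution-closed, a short induction on $d$ gives $\theta \in \mathcal{C}$. Substitution-closure then yields $\sigma \in \mathcal{C}$.

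There is no serious obstacle here: the whole statement is essentially a reformulation of Theorem~\ref{Th:AlbertAtkinson} combined with the defining properties of a substitution-closed class. The only mild subtlety is ensuring that monotone skeletons $12\cdots d$ and $d\cdots 21$ of arbitrary size are available in $\mathcal{C}$, which is handled by the iterated substitution argument above. Care should also be taken that the trees constructed in the reverse inclusion are genuinely canonical (in particular that no $\oplus$-child of an $\oplus$-node and no $\ominus$-child of an $\ominus$-node appears), but this is precisely the definition we quote in Definition~\ref{defintro:CanonicalTree}, so it is automatic.
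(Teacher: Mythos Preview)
Your proof is correct and follows essentially the same approach as the paper: the forward inclusion uses that a simple label $\alpha$ in the canonical tree of $\sigma$ is a pattern of $\sigma$ (hence lies in $\mathcal{C}$), and the reverse inclusion is an induction using substitution-closure. You have simply spelled out more details than the paper does, in particular the argument that the monotone skeletons $12\cdots d$ and $d\cdots 21$ lie in $\mathcal{C}$, which the paper leaves implicit.
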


\begin{proof}
First, if a canonical tree contains a node labeled by a simple permutation $\alpha \notin \mathcal{S}$, then the corresponding permutation $\sigma$ contains the pattern $\alpha \notin \mathcal{C}$, and hence $\sigma \notin \mathcal{C}$. 
Second, by induction, all canonical trees built on $\{\oplus, \ominus\} \cup \{\alpha : \alpha \in \mathcal{S}\}$ encode permutations of $\mathcal{C}$, because $\mathcal{C}$ is substitution-closed. 
If necessary, details can be found in~\cite[Lemma 11]{AA05}. 
\end{proof}

For instance, the class $\Av(2413,3142)$ of separable permutations studied in~\cite{BrownianPermutation} 
corresponds to the set of all canonical trees built on $\{\oplus, \ominus\}$, \emph{i.e.}, to $\mathcal{S}=\emptyset$. 
It is therefore the smallest nontrivial substitution-closed class.

\begin{observation}
\label{obs:Sdownwardclosed}
Let $\mathcal{C}$ be any substitution-closed permutation class, and let $\mathcal{S}$ be the set of simple permutations in $\mathcal{C}$. 
Because $\mathcal{C}$ is a class, it holds that for all $\alpha \in \mathcal{S}$, if $\alpha'$ is a simple permutation such that $\alpha' \preccurlyeq \alpha$, then $\alpha' \in \mathcal{S}$. 
Whenever a set $\mathcal{S}$ of simple permutations satisfies this property, we say that $\mathcal{S}$ is {\em downward-closed} (implicitly: for $\preccurlyeq$ and among the set of simple permutations). 
\end{observation}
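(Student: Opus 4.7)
The statement is essentially an immediate unpacking of definitions, so my plan is just to trace through the two properties at play — being a permutation class and being simple — and observe that they combine trivially. I would not expect any technical obstacle; the main thing is to make the reasoning explicit.

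First, I would fix $\alpha \in \mathcal{S}$ and a simple permutation $\alpha' \preccurlyeq \alpha$, and note that by the very definition of $\mathcal{S}$ in the statement, $\alpha \in \mathcal{C}$. Then I would invoke the defining property of a permutation class from Section~\ref{sec:PermutationClass}, namely downward-closure for $\preccurlyeq$: since $\alpha \in \mathcal{C}$ and $\alpha' \preccurlyeq \alpha$, it follows that $\alpha' \in \mathcal{C}$.

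Finally, combining this with the hypothesis that $\alpha'$ is simple, I can conclude that $\alpha' \in \mathcal{S}$, because $\mathcal{S}$ was defined as the set of \emph{all} simple permutations of $\mathcal{C}$. This closes the argument; there is no use of the substitution-closure hypothesis at all — the observation holds for every permutation class, and the substitution-closure is only mentioned because $\mathcal{S}$ is being singled out in that context.

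The only ``subtlety'' worth flagging in the write-up is that the property is stated relative to the partial order $\preccurlyeq$ \emph{restricted to simple permutations}, so I would emphasize that the downward-closure of $\mathcal{C}$ under general $\preccurlyeq$ is strictly stronger than what is asserted for $\mathcal{S}$, and that this is precisely why the implication goes through with no extra work. I do not anticipate any difficulty, and the full proof should be no longer than a couple of sentences.
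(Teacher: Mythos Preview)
Your proposal is correct and matches the paper's treatment: the paper gives no separate proof for this observation, regarding the claim as immediate from the definition of a permutation class (downward-closure under $\preccurlyeq$). Your additional remark that substitution-closure plays no role here is accurate and worth keeping.
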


Thanks to their encoding by families of trees, it can be proved that substitution-closed permutation classes (possibly, satisfying additional constraints) share a common behavior. 
For example, the canonical tree representation of their elements
imply that all substitution-closed classes with finitely many simple permutations    
have an algebraic generating function~\cite[Corollary 14]{AA05}.
(This is actually easy, the main contribution of~\cite{AA05} being to generalize this algebraicity result to all classes containing a finite number of simple permutations, 
again using canonical trees as a key tool.)
Our work illustrates this universality paradigm in probability theory:
we prove that the biased Brownian separable permuton is the limiting permuton 
of many substitution-closed classes (see Theorem~\ref{Th:MainIntro} and to a lesser extent Theorem~\ref{Th:Main3}). 

\subsection{Our results: Universality}%

Let $\mathcal{S}$ be a (finite or infinite) set of simple permutations. 
We denote by $\langle \mathcal{S}\rangle_n$ the set of permutations of size $n$ 
whose canonical trees use only nodes $\oplus$, $\ominus$ and $\alpha \in \mathcal{S}$, 
and we define $\langle \mathcal{S}\rangle = \cup_n \langle \mathcal{S}\rangle_n$. 
From Proposition~\ref{prop:treesOfSubsClosedClasses} and Observation~\ref{obs:Sdownwardclosed}, 
every substitution-closed permutation class $\mathcal{C}$ containing $12$ and $21$ can be written as $\mathcal{C} = \langle \mathcal{S}\rangle$ 
for a downward-closed set $\mathcal{S}$ of simple permutations (which is just the set of simple permutations in $\mathcal{C}$). 

\begin{remark}
For a generic (not necessarily downward-closed) set $\mathcal{S}$ of simple permutations, 
$\langle \mathcal{S}\rangle$ is a family of permutations more general than a substitution-closed permutation class. 
The results that we obtain apply not only to permutation classes but also to such sets of permutations. 

Note however that our work does not consider substitution-closed sets of permutations 
not containing either $12$ or $21$ (as mentioned above, a permutation {\em class} not containing one of these two permutations
is necessary trivial, but there might be interesting such substitution-closed {\em sets}).
In principle, such sets of permutations could also be studied by the approach developed in this paper, 
but we prefer to leave such cases outside of our study. 
Indeed, to cover them, it would require to re-do all computations,
modifying the combinatorial equations that we start from (see Proposition~\ref{Prop:systeme1} p.~\pageref{Prop:systeme1}) 
and all equations that follow, so as not to allow the nodes labeled $\oplus$ and/or $\ominus$. 
\end{remark}

We are interested in the asymptotic behavior of a uniform permutation $\Si_n$ in $\langle \mathcal{S}\rangle_n$ which we describe in terms of permutons. 
Let 
$$
S(z)=\sum_{\alpha \in \mathcal{S}} z^{|\alpha|}
$$
be the generating function of $\mathcal{S}$ and let $R_S\in [0,+\infty]$ be the radius of convergence of $S$.
\begin{theorem}[Main Theorem: the standard case]\label{Th:MainIntro}
Let $\mathcal{S}$ be a set of simple permutations such that
\begin{equation}\label{eq:h1intro}
  R_S > 0 \quad \text{and} \quad \lim_{r\rightarrow R_S \atop r < R_S} S'(r) > \frac{2}{(1+R_S)^2} -1.
  \tag{H1}
\end{equation}
For every $n\geq 1$, let $\Si_n$ be a uniform permutation in $\langle \mathcal{S}\rangle_n$, and let $\mu_{\Si_n}$ be the random permuton
associated with $\Si_n$.
The sequence $(\mu_{\Si_n})_n$ tends in distribution in the weak
convergence topology to the biased Brownian separable permuton $\bm \mu^{(p)}$
whose parameter $p$ is given in \eqref{eq:p_plus_p_moins} p.~\pageref{eq:p_plus_p_moins}.
\end{theorem}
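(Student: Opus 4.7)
The plan is to apply a characterization (established earlier in the paper, as announced in the abstract and in \cref{ssec:permutons_intro}) that reduces the convergence in distribution of the random permutons $\mu_{\Si_n}$ to the convergence of expected pattern densities; it then suffices to prove that for every fixed pattern $\pi$ of size $k$,
\[
\mathbb{E}[\occ(\pi,\Si_n)] \xrightarrow[n\to\infty]{} \mathbb{E}[\occ(\pi,\bm\mu^{(p)})],
\]
with $p$ given by \eqref{eq:p_plus_p_moins}. I would encode permutations in $\langle\mathcal{S}\rangle$ by their canonical trees (\cref{Th:AlbertAtkinson}, \cref{prop:treesOfSubsClosedClasses}), so that the enumeration of $\langle\mathcal{S}\rangle$ is governed by a generating series $T(z)$ satisfying a functional equation involving $S(z)$, with the constraint that children of $\oplus$-nodes are not $\oplus$-nodes (and similarly for $\ominus$).

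Next, the expected pattern density $\mathbb{E}[\occ(\pi,\Si_n)]$ is expressed combinatorially by introducing an auxiliary series $T_\pi(z)$ whose coefficient $[z^n]T_\pi(z)$ counts pairs $(\sigma,I)$ with $\sigma\in\langle\mathcal{S}\rangle_n$, $I\subseteq[n]$ of size $k$, and $\pat_I(\sigma)=\pi$. This $T_\pi(z)$ is described by a system of equations obtained by decomposing an occurrence of $\pi$ in $\sigma$ along the canonical tree of $\sigma$, encoding at each internal node how $\pi$ is split among the children. I then perform singularity analysis: under hypothesis \eqref{eq:h1intro}, the functional equation for $T(z)$ falls within the smooth implicit-function schema, yielding a square-root singularity $T(z)=\rho - c\sqrt{1 - z/\rho} + \cdots$ at some $\rho>0$, hence $[z^n]T(z)\sim C n^{-3/2}\rho^{-n}$ by the standard transfer theorems of analytic combinatorics. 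A parallel analysis of $T_\pi(z)$ gives the asymptotics of $[z^n]T_\pi(z)$ and thereby the limit of $\mathbb{E}[\occ(\pi,\Si_n)] = \binom{n}{k}^{-1}\,[z^n]T_\pi(z)\,/\,[z^n]T(z)$.

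Finally, I compare this limit with the expected pattern density of $\bm\mu^{(p)}$ as computed in \cref{Sec:Standard}. The parameter $p$ arises by identifying, at the dominant singularity $\rho$, the relative contributions of the $\oplus$- and $\ominus$-subseries of $T(z)$; their ratio gives \eqref{eq:p_plus_p_moins}, while the contribution of simple-permutation nodes from $\mathcal{S}$ only enters smooth pre-factors and does not alter the macroscopic tree structure (this is the sense in which $\bm\mu^{(p)}$ is universal). The main obstacle will be setting up and solving the system of equations for $T_\pi(z)$: one must decompose each occurrence of $\pi$ through a canonical tree, distinguishing cases according to whether the internal node traversed is $\oplus$, $\ominus$, or a simple permutation $\alpha\in\mathcal{S}$, and then check that the singular expansion of $T_\pi(z)$ at $\rho$ matches the integral formula for $\mathbb{E}[\occ(\pi,\bm\mu^{(p)})]$ against the biased signed Brownian excursion, uniformly in the combinatorial encoding of $\pi$. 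Verifying this matching pattern-by-pattern, while keeping the hypothesis \eqref{eq:h1intro} as the sole analytic input, is the technical heart of the argument.
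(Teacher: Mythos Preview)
Your outline is essentially the paper's approach: reduce to convergence of $\mathbb{E}[\occ(\pi,\Si_n)]$ via \cref{thm:randompermutonthm}, encode by canonical trees, set up generating functions for marked trees, and do singularity analysis under \eqref{eq:h1intro}. Two points are worth sharpening, since they are where your proposal is vaguer than the paper and where the actual work lies.

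First, the paper does \emph{not} work directly with a single series $T_\pi$. Instead it decomposes $T_\pi=\sum_{t_0:\perm(t_0)=\pi} T_{t_0}$, where $t_0$ ranges over all \emph{substitution trees} (not just the canonical one) encoding $\pi$, and $T_{t_0}$ counts canonical trees with $k$ marked leaves whose \emph{induced tree} equals $t_0$. Each $T_{t_0}$ is further split as $\sum_{V_s} T_{t_0,V_s}$, recording which internal nodes of $t_0$ arise from simple nodes of the ambient canonical tree; this is what makes the enumeration factor as a product over internal nodes (\cref{prop:Dec_TtS}). The singularity analysis then shows that, asymptotically, only those $t_0$ that are \emph{expanded trees} (binary linear nodes, simple nonlinear nodes) contribute; this is the mechanism producing the Catalan factor and the sign weights $p^{r_+}(1-p)^{r_-}$. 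Your phrase ``decomposing an occurrence of $\pi$ along the canonical tree of $\sigma$'' is the right instinct, but without organizing by the induced tree $t_0$ the system for $T_\pi$ does not decouple and the asymptotics are opaque.

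Second, the comparison with $\bm\mu^{(p)}$ is purely combinatorial, not via any integral against a signed Brownian excursion: the target is the explicit formula $\mathbb{E}[\occ(\pi,\bm\mu^{(p)})]=N_\pi\,p^{r_+(\pi)}(1-p)^{r_-(\pi)}/\Cat_{k-1}$ of \cref{Def:PermutonBiaise}. Correspondingly, $p$ is not read off from ``$\oplus$- versus $\ominus$-subseries of $T$'' but from the constant in the expansion of $T_{t_0}$ at $\rho$, which involves $\Occ_{12}(T(\rho))$ and $\Occ_{21}(T(\rho))$ (occurrences of $12$ and $21$ in the simple permutations of $\mathcal S$); see \cref{prop:Asymp_Tt} and \cref{prop:proba_patterns_general}. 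So your last paragraph points at the wrong verification; once the induced-tree decomposition is in place, the matching is an algebraic identity, not an analytic one.
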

An important point in \cref{Th:MainIntro} is that the limiting object
depends on $\mathcal{S}$ only through the parameter $p$. 
It turns out that $p$ only depends on the number of occurrences 
of the patterns $12$ and $21$ in the elements of $\mathcal{S}$.
We illustrate this universality of the limiting object on \cref{Fig:PermInClasses},
by showing large uniform random permutations in two different substitution-closed 
classes:
the first one has a finite set of simple permutations
$\SSS=\{2413,3142,24153,42513\}$,
while the second is the substitution closure of $\Av(321)$,
which contains infinitely many simple permutations and satisfies \eqref{eq:h1intro} (as we will explain below).
Although this is hard to see on the picture,
the corresponding values of the biaised parameter
are different, namely .5 and around .6 respectively 
(see \cref{Sec:Standard}, \cref{ex:calculp1,ex:calculp3}).
\begin{figure} [htbp]
	\begin{center}
      \includegraphics[width=.4\textwidth]{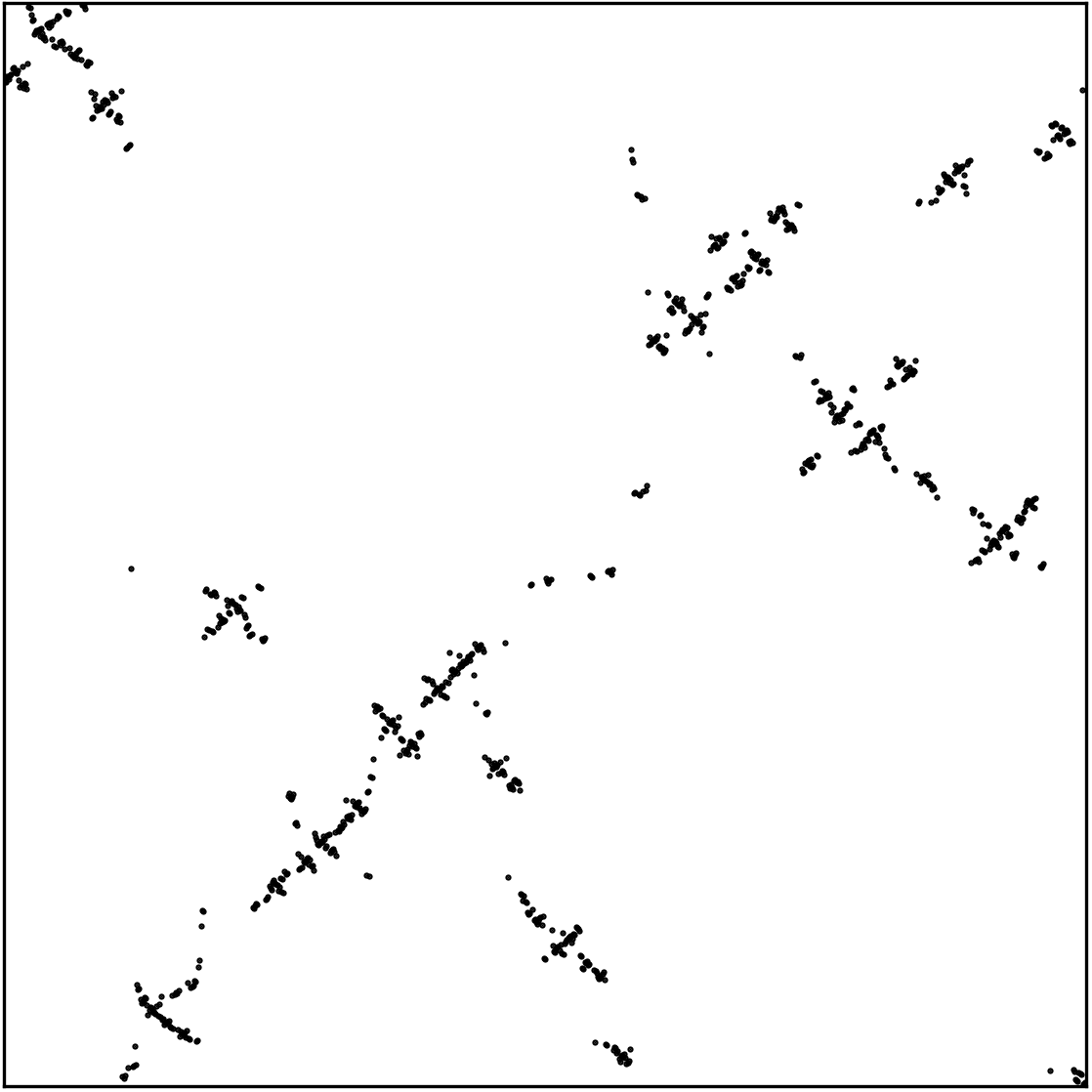} \qquad
      \includegraphics[width=0.4\textwidth]{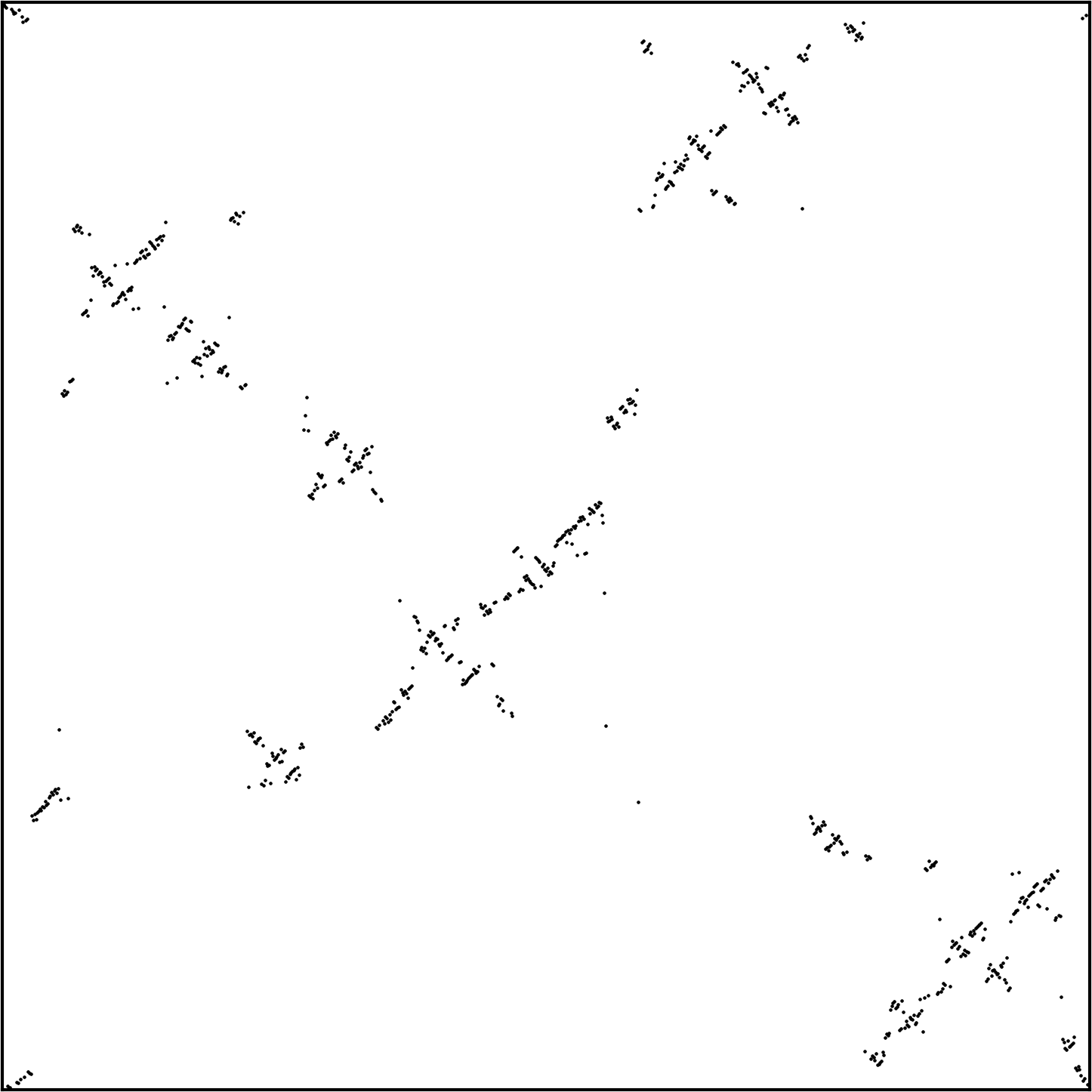}
	\end{center}
	\caption{On the left: A uniform random permutation of size 981
    in the substitution-closed class with $\SSS=\{2413,3142,24153,42513\}$.
    On the right: A uniform random permutation of size 840 in the substitution-closure $\CCC$ of $\Av(321)$, \emph{i.e.}, when $\SSS$ is the set of simple $321$-avoiding permutations.}
    \label{Fig:PermInClasses}
\end{figure}

In the following, to lighten the notation, we write $S'(R_S) := \lim_{r\rightarrow R_S \atop r < R_S} S'(r)$. Note that $S'(R_S)$ may be $\infty$. 

The case when Condition \eqref{eq:h1intro} of \cref{Th:MainIntro} is not satisfied is discussed in the next section.
When Condition \eqref{eq:h1intro} is satisfied the case is called \emph{standard} 
because there are natural and easy sufficient conditions to ensure this case (that are given below).
Moreover, this case includes most sets $\mathcal{S}$ studied so far in the literature on permutation classes, to our knowledge.
This gives a fairly precise (and positive) answer to an important question raised in our previous article \cite{BrownianPermutation}: is the Brownian separable permuton universal 
(in the sense that it describes the limit of a large family of substitution-closed classes)? 

We now give several cases in which Condition \eqref{eq:h1intro} of \cref{Th:MainIntro} is satisfied.
\begin{itemize}
\item If $S$ is a generating function with radius of convergence $R_S > \sqrt{2}-1$,
  \eqref{eq:h1intro} is satisfied. 
  Indeed, the condition $R_S > \sqrt{2}-1$ implies $\frac{2}{(1+R_S)^2} -1 <0$, 
  and $S'(R_S)$ is nonnegative since $S'$ (like $S$) is a series with nonnegative coefficients. 
  In particular, the situation where $R_S > \sqrt{2}-1$ covers the cases where
  there are finitely many simple permutations in the class (then $S$ is a polynomial and $R_S=\infty$),
  and more generally where $R_S=1$ ({\em i.e.}
  the number of simple permutations of size $n$ grows subexponentially).
\item If $S'$ is divergent at $R_S$, \eqref{eq:h1intro} is trivially verified. In particular, this happens
  when $S$ is a rational generating function, or when $S$ has a square root singularity at $R_S$.
\end{itemize}

In the literature, there are quite a few examples of permutations classes whose set $\mathcal S$ of simple permutations
has been enumerated.
We can therefore ask whether Condition~\eqref{eq:h1intro} applies to them.
In most examples we could
find, it is indeed satisfied, and this follows from the discussion above. We record these examples here. 
\begin{itemize}
\item %
  Classes with finitely many simple permutations have
attracted a fair amount of attention,  see~\cite{AA05} and subsequently~\cite{BBPR, MathildeMarniCyril, BRV}.
\item Several families of simple permutations with a bounded number of elements of each size have appeared in the literature:
  the family of exceptional simple permutations (also called simple parallel alternations
in~\cite{BrignallSurvey}),
 the family of wedge simple permutations (see also~\cite{BrignallSurvey}), the families
  of oscillations and quasi-oscillations (see~\cite{PinPerm}), and the families of simple permutations contained
  in the following three classes: $\Av(4213,3142)$, $\Av(4213,1342)$ and $\Av(4213,3124)$ -- see \cite{AlbertAtkinsonVatter3}.
\item The family of simple pin-permutations has a rational generating function -- see \cite{PinPerm}. 
\item The generating function $S$ is also rational when $\mathcal S$ is the set of simple permutations contained in several
permutation classes defined by the avoidance of two patterns of size~$4$, namely $\Av(3124,4312)$ -- see~\cite{Pantone},
$\Av(2143,4312)$ and $\Av(1324,4312)$ -- see~\cite{AlbertAtkinsonBrignall3}, $\Av(2143,4231)$ -- see\cite{AlbertAtkinsonBrignall},
$\Av(1324,4231)$ -- see~\cite{AlbertAtkinsonVatter}, $\Av(4312,3142)$ and $\Av(4231,3124)$ -- see~\cite{AlbertAtkinsonVatter3}.
\item The set $\mathcal S$ of simple permutations of  the class 
$\Av(4231, 35142, 42513, 351624)$ enumerated in~\cite{AlbertBrignall}  is also rational.
\item We come back to the above example, where $\CCC$ is the substitution
  of $Av(321)$. This class has been studied in \cite{Basis_Subst_Closure},
  where an explicit basis of avoided patterns is given.
  In this case, $\mathcal S$ is the set of simple permutations avoiding $321$,
  whose generating function $S$ is computed in~\cite{AlbertVatter}: 
it has a square-root singularity at $R_S = \tfrac{1}{3}$,
which proves that \eqref{eq:h1intro} is fulfilled.
\end{itemize}

In addition to verifying Condition~\eqref{eq:h1intro},
we have computed the numerical value of the parameter $p$
for some of the above-mentioned sets $\mathcal S$ of simple permutations;
see \cref{ex:calculp1,ex:calculp2,ex:calculp3} (p.~\pageref{ex:calculp1}).
%

Notably absent from the above list is the class $\Av(2413)$, enumerated in \cite{Stankova3142,Bona0}.
Since the avoided pattern, $2413$, is simple, this class is substitution-closed.
Its generating series
behaves as $C (\rho-z)^{\mathbf{3/2}}$ around its dominant singularity $\rho=1/8$.
This prevents the set of simple permutations in this class to satisfy Condition \eqref{eq:h1intro};
compare with \cref{Prop:asymp_T}.
\subsection{Our results: Beyond universality}
When $R_S > 0$, for the two remaining cases $S'(R_S)<2/(1+R_S)^2-1$ and $S'(R_S)=2/(1+R_S)^2-1$, the asymptotic behavior of $\mu_{\Si_n}$ is qualitatively different,
and the results require slight additional hypotheses and notation.
As a consequence, for the moment we only briefly describe these behaviors, the results being stated with full rigor later.

\begin{itemize}
\item {\bf Case }$S'(R_S)<2/(1+R_S)^2-1$. This is a \emph{degenerate} case.\\
We first show in \cref{Th:MainH2}  that, with a small additional assumption which will be called $(CS)$,
the sequence $(\mu_{\Si_n})$  of random permutons converges.
If uniform simple permutations in $\mathcal{S}\cap \Sn_n$ have a limit (in the sense of permutons),
we show that the limit of permutations in $\langle \mathcal{S} \rangle$ is the same 
(see \cref{Prop:H2degenerate} and the subsequent comment).
This explains the terminology ``degenerate'': all permutations in the class (or set) $\langle \mathcal{S} \rangle$ are close to the simple ones,
and the ``composite'' structure of permutations does not appear in the limit.

\item {\bf Case }$S'(R_S)=2/(1+R_S)^2-1$. This \emph{critical} case is more subtle.\\
  We again need to assume the above mentioned hypothesis $(CS)$. According to the behavior of $S$ near $R_S$, the limiting permuton of $(\mu_{\Si_n})$ can  either be the (biased) Brownian separable permuton (\cref{Th:Main3}) or belong to a new family of \emph{stable permutons} (\cref{Th:Main4}). 
Finite substructures of stable permutons are 
connected to those of the random \emph{stable tree} (see \cite{DuquesneLeGall}),
which explains the terminology. Two simulations are presented in \cref{Fig:SimusPermutonStable}.
\end{itemize}
\begin{figure} [htbp]
	\begin{center}
		\includegraphics[width=0.40\textwidth]{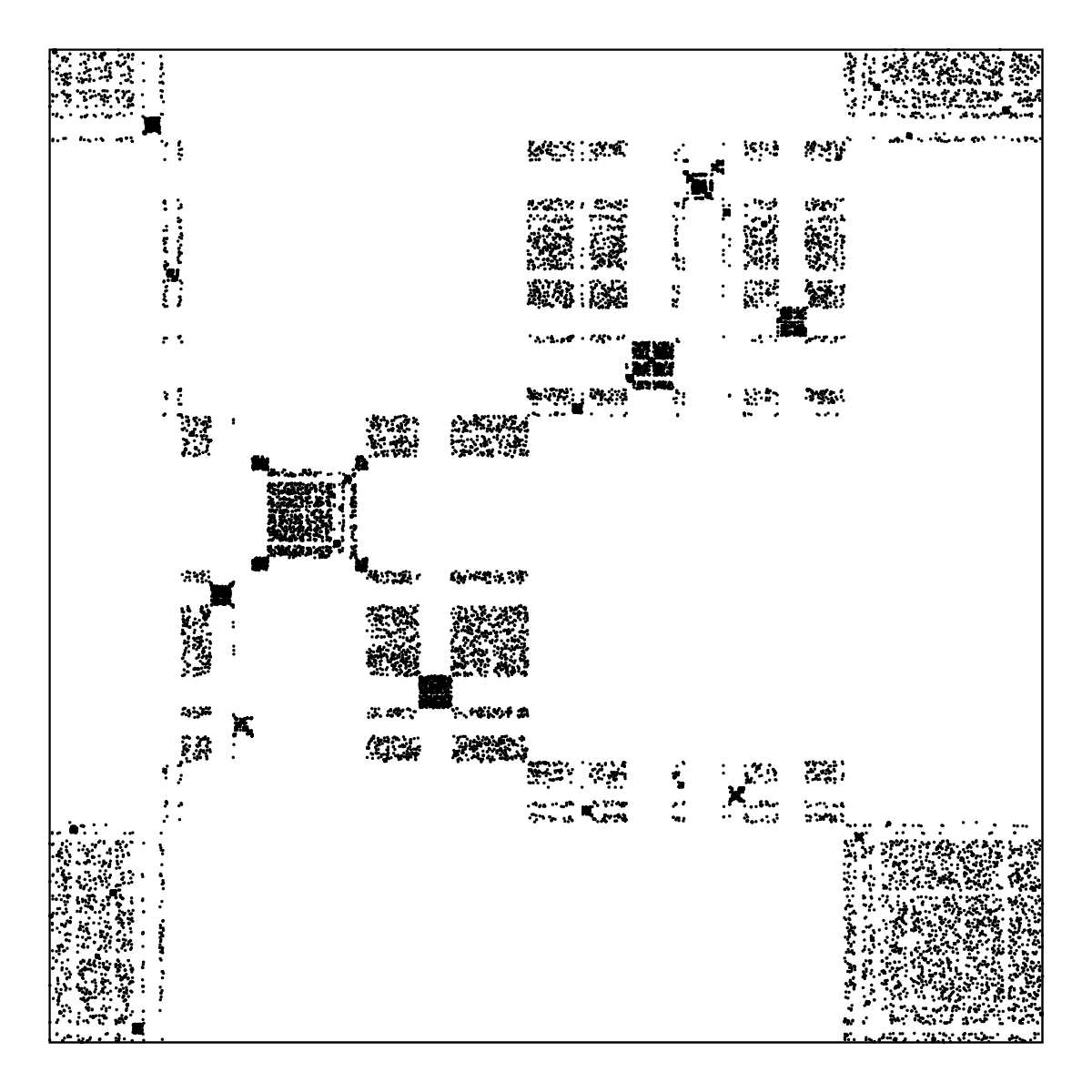}\hspace{2cm}
		\includegraphics[width=0.40\textwidth]{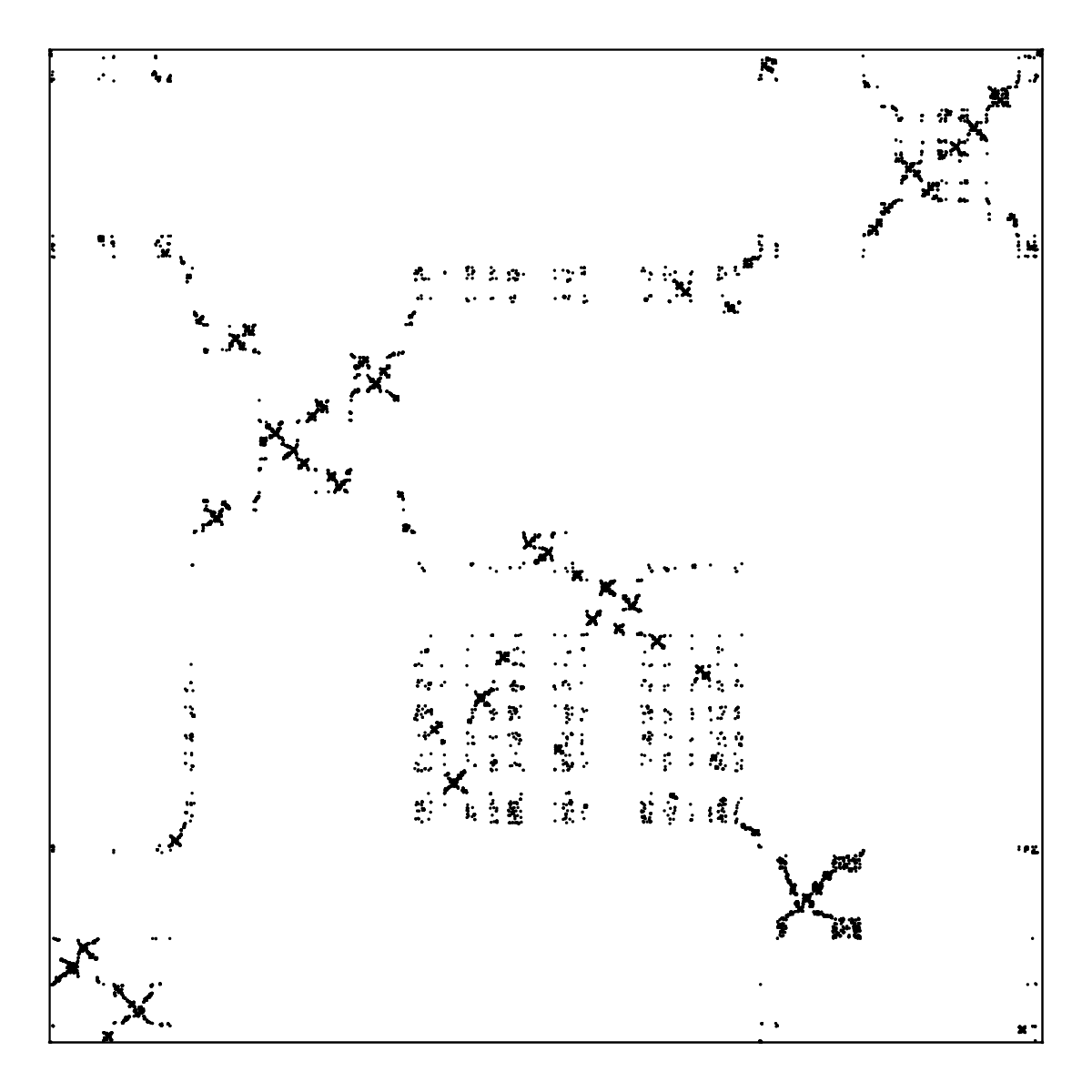}
	\end{center}
	\caption{Simulations of a $1.1$-stable and $1.5$-stable permuton, driven by the uniform measure.
	}\label{Fig:SimusPermutonStable}
\end{figure}
We believe that the above-mentioned class $\Av(2413)$ belongs to the degenerate regime.
Indeed, in the critical regime, the singularity exponent of the class should be smaller than $1$,
and cannot be $3/2$, as for $\Av(2413)$.
Since there is no direct description of the simple permutations in $\Av(2413)$,
it seems however out of reach to prove our hypothesis $(CS)$ for this specific class.
We are therefore unable to describe its limiting permuton and let this open for further research.
We refer to \cite[Fig. 7]{Madras2413} for simulations of uniform random permutations in this class.

\begin{remark}\label{Rem:GW}
  The variety of behaviors that we observe can be informally understood in terms of trees.
  We have seen in \cref{sec:OperatorsPermutations} that permutations in $\langle \mathcal{S}\rangle$
  can be encoded by trees.
  Taking a uniform element in $\langle \mathcal{S}\rangle_n$,
  we can prove that the corresponding tree is a multi-type Galton-Watson tree
  conditioned on having $n$ leaves.
  This link with conditioned Galton-Watson trees is not used in this paper, but may give intuition on our results.

  In the standard and critical case, these Galton-Watson trees are critical.
  It is therefore not surprising to see two different limiting behaviors.
  When the law of reproduction has finite variance, we get one behavior related to the Brownian excursion and the Brownian continuum random tree (that we describe as the universal one).
  When the law of reproduction has infinite variance, we get the behavior related to stable trees. 
  In the standard case, the law of reproduction %
  always has finite variance.

  On the contrary, in the degenerate case, the underlying Galton-Watson tree model is subcritical.
  At the limit, such trees conditioned to being large have one internal node of very high degree
  (\cite[Theorem 7.1, case (ii)]{JansonSurveysGWTrees}).
  This node corresponds to a large simple permutation in the tree encoding a uniform random permutation $\Si_n$ in $\langle \mathcal{S}\rangle$.
  It is therefore not surprising that $\Si_n$ is asymptotically close to a uniform \emph{simple} permutation in $\langle \mathcal{S}\rangle$.
\end{remark}

\begin{remark}
The reader may have noticed that all cases where we describe the asymptotic behavior of $\mu_{\Si_n}$ are such that $R_S >0$. 

Observe that it is always the case for proper permutation classes (\emph{i.e.}, permutation classes different from $\Sn$). 
Indeed, from the Marcus-Tardos Theorem~\cite{MaTa04}, the number of permutations of size $n$ in a proper class is at most $c^n$, for some constant $c$. 
For the class $\Sn$, we however do have $R_S = 0$, since 
there are asymptotically $e^{-2}n!(1+\mathcal{O}(1/n))$ simple permutations of size $n$ \cite[Theorem 5]{AlbertAtkinsonKlazar}. 
In this case, the sequence $(\mu_{\Si_n})_n$ of permutons  associated with a uniform permutation $\Si_n$ in $\Sn$
converges in distribution to the uniform measure on $[0,1]^2$.
The situation where $R_S = 0$ may happen as well for sets $\langle \mathcal{S} \rangle$ where $\mathcal{S}$ is not downward-closed, 
but we leave these cases open. 
\end{remark}

\subsection{Limits of proportions of pattern occurrences}
Let us change our approach and discuss in this section
the asymptotic behavior (as $n\to \infty$) of the proportion
$\occ(\pi,\Si_n)$ of occurrences of a fixed pattern $\pi$ in $\Si_n$
(as done in \cite{Bona1,Bona2,ChenEuFu,Homberger,JansonPermutations,JansonNakamuraZeilberger,Rudolph}
for uniform random permutations in various classes).
Since most examples fit in that regime, we focus here
on the standard case (when \eqref{eq:h1intro} is satisfied).

As mentioned in \cref{ssec:permutons_intro} and explained in more details
in \cref{Sec:Permutons}, the convergence of $\mu_{\Si_n}$ 
towards $\bm \mu^{(p)}$ implies the (joint) convergence in distribution
\begin{equation}
  \big(\occ(\pi,\Si_n) \big)_\pi \to \occ(\pi,\bm \mu^{(p)}).
  \label{eq:LimitOcc}
\end{equation}
The limiting random variables $\occ(\pi,\bm \mu^{(p)})$ have been
studied in \cite[Section~9]{BrownianPermutation} (for $p=.5$):
in particular, $\occ(\pi,\Si_n)$ is non-deterministic if and only if $\pi$ is separable of size at least 2
and it is possible to compute their moment algorithmically.
These results are easily extended to the general case $p \in (0,1)$.
Therefore, for separable patterns $\pi$, \eqref{eq:LimitOcc} establishes
the convergence of $\occ(\pi,\Si_n)$ to a non-deterministic limit.
Since these are bounded variables, their (joint) moments also converge
to the (joint) moments of the limiting vector, which can be computed algorithmically
(even if in practice only low order moments can be effectively computed; see the
discussion in \cite[Section 9]{BrownianPermutation}).
Note that all these limiting moments are trivially nonzero,
since these are moments of nondeterministic nonnegative random variables.

For nonseparable patterns however, the situation is different:
\eqref{eq:LimitOcc} only entails the convergence of $\occ(\pi,\Si_n)$ to $0$.
Indeed, if $\pi$ is nonseparable, the limiting quantity $\occ(\pi,\bm \mu^{(p)})$ is identically $0$
(this is a consequence of \cite[Proposition 9.1]{BrownianPermutation} when $p=.5$, the result being easily extended to $p \in (0,1)$).

We can go further and ask whether $\big(\occ(\pi,\Si_n)\big)_n$ has a limit in distribution
with some appropriate normalization. 
We therefore investigate the moments of $\occ(\pi,\Si_n)$.
In \cref{Sec:OccNonSeparables}, 
we define some permutation statistics $\db(\pi)$ (see \cref{eq:defi_db}) and 
show that under the hypothesis (H1) we have the following asymptotic behavior\footnote{We say that the sequence $(a_n)$ behaves as $\Theta(b_n)$ if there are $c,C>0$ such that $c |b_n|\leq |a_n|\leq C |b_n|$ for every $n\geq 1$.}.

\smallskip

\noindent{\bf Proposition \ref{prop:momentsnonseparables}.}
\emph{
For each $\pi \in \CCC$ and $m\geq 1$, we have $\esper[(\occ(\pi,\Si_n))^m] = \Theta (n^{-\db(\pi)/2})$. 
}

\smallskip

Proposition \ref{prop:momentsnonseparables} also holds for separable patterns $\pi$:
in that case $\db(\pi)$=0 and we have \hbox{$\esper[(\occ(\pi,\Si_n))^m]= \Theta (1)$}.
No news here, since the moments of $\occ(\pi,\Si_n)$ have nonzero limits,
as previously explained.
For nonseparable patterns, $\db(\pi)$ is positive and  measures in some sense how nonseparable $\pi$ is.
Note that the order of magnitude of $\esper[(\occ(\pi,\Si_n))^m]$ is independent of $m$,
which implies that there is a set of probability $\Theta(n^{-\db(\pi)/2})$
on which the variables $\occ(\pi,\Si_n)$ stays bounded away from $0$ (see \cref{corol:momentsnonseparables}).
This event of small probability contributes to the asymptotic behavior of moments,
and thus the method of moments is inappropriate to find a limiting distribution 
for some appropriate normalization of $\occ(\pi,\Si_n)$.
Finding such a limiting distribution is therefore left as an open question.
\subsection{Outline of the proof}
\label{Subsec:outline_proof}
In our previous paper \cite{BrownianPermutation} (\emph{i.e.} when the family of simple permutations is $\mathcal{S}=\emptyset$), 
the proof of the convergence to the Brownian separable permuton strongly relied on 
a connection to Galton-Watson trees conditioned on having a given number of leaves.
This allowed us to use fine results by Kortchemski \cite{Igor} or Pitman and Rizzolo \cite{PitmanRizzolo}
on such conditioned random tree models. 

For a general family $\mathcal{S}$, generalizing this approach would require delicate
results on the asymptotic behavior of conditioned \emph{multitype} Galton-Watson trees.
Moreover, there are several other steps in the main proofs of \cite{BrownianPermutation},
in particular the subtree exchangeability argument, that are not easily adapted.

The strategy developed in the present paper is different. We strongly use the framework of permutons. Indeed, we first show that to establish the convergence  in distribution of $(\mu_{\bm \sigma_n})_n$ %
to some random permuton $\bm \mu$,
it is enough to prove the convergence of $\Big(\esper\left[\occ(\pi,\bm \sigma_n)\right]\Big)_n$ for every pattern $\pi$ (see \cref{thm:randompermutonthm}). By definition, if $\pi \in \Sn_k$ and $n\geq k$,
\begin{equation}
	\esper[\occ(\pi,\bm \sigma_n)] = \frac {\# \{\sigma \in \langle \SSS \rangle_n, I \subset [n] : \pat_I(\sigma) = \pi\}} {\binom n k\ \#\langle \SSS \rangle_n}
    \label{eq:MainQuotient}
  \end{equation}

The asymptotic behavior of the numerator and denominator is then obtained with analytic combinatorics, which allows us to transfer from the behavior of a generating series near its singularity to the asymptotic behavior of its coefficients.
This goes in three steps. 
\medskip

\noindent \textbf{Step 1: Enumeration.} We compute (or characterize by an implicit equation) some generating series.
For instance to estimate the denominator of \eqref{eq:MainQuotient} we consider $\sum_{n\geq 1} \#\langle \SSS \rangle_n\ z^n$.
We readily use the size-preserving bijection between $\langle \SSS \rangle$ and the class $\TTT$ of $\SSS$-canonical trees, counted by the number of leaves.
Hence the generating function we want to compute is the same as that of $\TTT$, denoted $T$.

Using again the encoding of permutations by trees, the numerator
can be described as a number of trees with marked leaves and some conditions
on the tree induced by these marked leaves.
Obtaining generating functions for such combinatorial classes is possible,
and needs the introduction of several intermediary functions which count trees with various contraints, and possibly one marked leaf. This is detailed in \cref{Sec:Enumeration}.
\medskip

\noindent \textbf{Step 2: Singularity analysis.} Then we want to know the singular behavior of the generating functions we computed so far.
As it turns out, the singular behavior of some intermediate function $T_{\nonp}$ drives the singular behavior of all the other series.
The function $T_{\nonp}$ is characterized by the implicit equation
\begin{equation}
\label{Eq:Tnonp_intro}
	T_\nonp(z) = z + \Lambda(T_\nonp(z)),
\end{equation}
where $\Lambda$ is a known analytic function with radius of convergence $R_\Lambda$ that involves $S$ and some rational functions (see \cref{eq:DefLambda} p.~\pageref{eq:DefLambda}).
Hence the behavior of $T_\nonp$ depends on whether there is a point inside the disk of convergence $D(0,R_\Lambda)$ where $\Lambda'=1$, because around such a \textit{critical} point, the equation \eqref{Eq:Tnonp_intro} is not invertible.
Since $\Lambda$ is a series with positive integer coefficients, it suffices to check the sign of $\Lambda'(R_\Lambda) -1$, which can easily be translated in terms of the function $S$.
This is where the sign of $S'(R_S)- 2/(1+R_S)^2 + 1$ appears, leading to the three different cases.
More precisely\footnote{In this informal description, we left out some conditions on the singularity of $S$ that appear in the critical and degenerate cases.}
 
\begin{itemize}
\item The standard case $S'(R_S)>2/(1+R_S)^2-1$ is equivalent to $\Lambda'(R_\Lambda) >1$.
In this case there is a unique critical point $\tau \in (0,R_\Lambda)$.
As a result, the radius of convergence of $T_\nonp$ is $\rho = \tau - \Lambda(\tau)$,
and the analyticity of $\Lambda$ around $\tau = T_\nonp(\rho)$ implies that $T_\nonp$ has a singularity of exponent $1/2$ 
(\cref{lem:MMCversionSerie}).
Such a behavior is sometimes called \textit{branch point} in the literature:
$\Lambda$ is analytic at $\tau$ but the equation \eqref{Eq:Tnonp_intro} has two solutions (called branches)
near $\rho$ and one cannot find an analytic solution in a neighbourhood of $\rho$.
The solution $T_\nonp$ is therefore singular at $\rho$.

\item The degenerate case $S'(R_S)<2/(1+R_S)^2-1$ is equivalent to $\Lambda'(R_\Lambda) <1$.
In this case there is no critical point in the disk $D(0,R_\Lambda)$ nor at its boundary.
As a result, the unique dominant singularity of $T_\nonp$ is the point $\rho = R_\Lambda - \Lambda(R_\Lambda)$
where $T_\nonp(\rho)$ reaches the singularity $R_\Lambda$ of $\Lambda$. Moreover, $T_\nonp$ has a bounded derivative at its singularity, and so has exponent $\delta > 1$, which is the same as the exponent of $S$
(\cref{lem:Analyse_Tnonp2}).

\item The critical case $S'(R_S)=2/(1+R_S)^2-1$ is equivalent to $\Lambda'(R_\Lambda) =1$.
In this case there is no critical point inside the disk $D(0,R_\Lambda)$, but the singularity $R_\Lambda$ of $\Lambda$ is a critical point.
Once again the radius of convergence of $T_\nonp$ is $\rho = R_\Lambda - \Lambda(R_\Lambda)$, but $T_\nonp$ has no first derivative at its singularity.
Here the exponent of the singularity of $T_\nonp$ depends on that of the singularity of $S$, and belongs to $[1/2,1)$
(see \cref{lem:Inversion3}).

\end{itemize}
Once we have found the asymptotic behavior of $T_{\nonp}$, we should analyze
the tree series found in Step 1.
It is purely routine from an analytic point of view,
but involves some combinatorial arguments, regarding the encoding of permutations
by substitution trees.\medskip

\noindent \textbf{Step 3: Transfer.} Finally we use a transfer theorem of analytic combinatorics (\cref{thm:transfert}) to translate the singularity exponents we found in Step 2 into a limiting behavior for \eqref{eq:MainQuotient}.
Informally, a square-root singularity, which is the same as in "usual" families of trees, will lead to the Brownian separable permuton. A singularity of exponent in $(1/2,1)$ will lead to the $\delta$-stable tree, where $\delta \in (1,2)$ is the inverse of the exponent. A singularity of exponent $\delta > 1$ will invariably lead to the degenerate case.

\subsection{Organization of the paper}

\begin{figure}[htbp]
\begin{center}
\includegraphics[width=10cm]{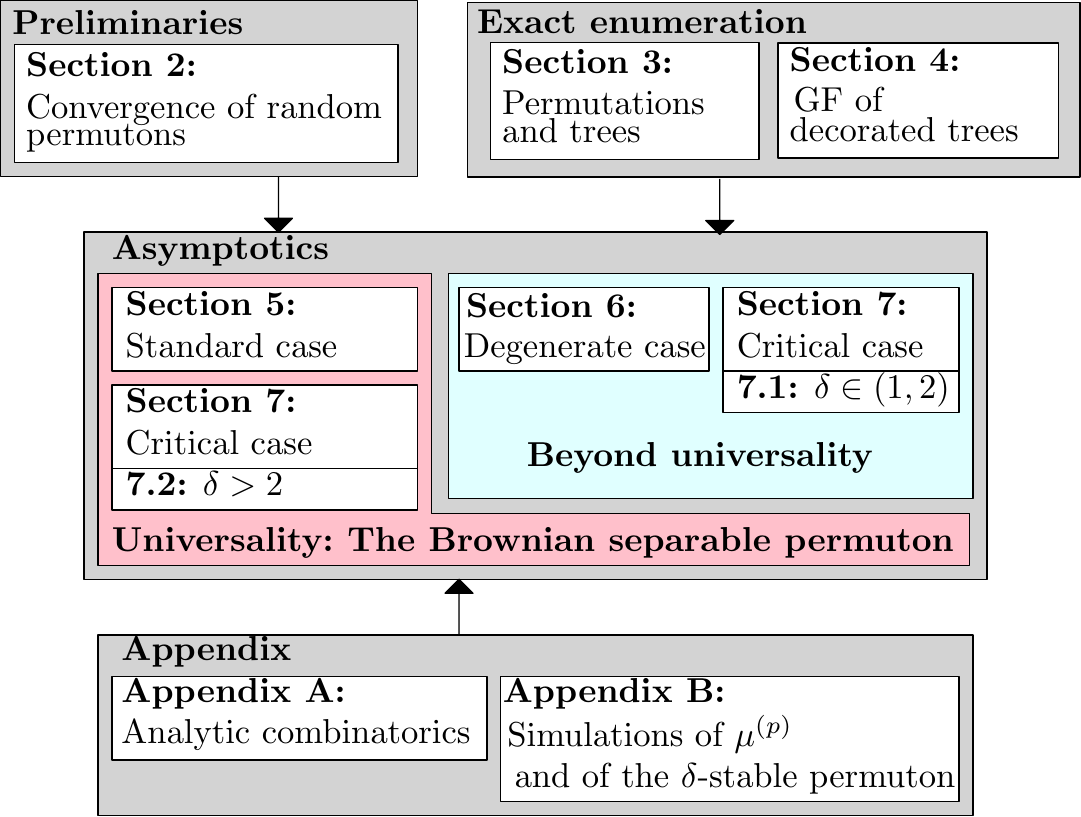}
\caption{Reader's guide of the paper.}\label{fig:orga-papier}
\end{center}
\end{figure}
The paper is organized as follows (see also \cref{fig:orga-papier}).
\begin{itemize}
\item Section \ref{Sec:Permutons} is devoted to proving useful results on the convergence of random permutons.
The proofs heavily rely on previous estimates for deterministic permutons \cite{Permutons}.
We believe that these general results regarding random permutons are interesting on their own, therefore these are presented in a self-contained way.
\item In Sections \ref{Sec:CodingByTrees} and \ref{Sec:Enumeration}, we prove nonasymptotic enumeration results for the number of permutations encoded by some given families of (decorated) trees. The main result is \cref{prop:Dec_TtS}, which is the first step towards the estimation of $\mathbb{E}\left[\occ(\pi,\bm \sigma_n)\right]$.
\item In Sections \ref{Sec:Standard}, \ref{sec:Asymp2}, \ref{sec:critical} we prove our main results: the convergence of the sequence $(\mu_{\Si_n})_n$ of permutons. As already mentioned, the quantitative behavior depends on the family $\mathcal{S}$, more precisely on the sign of $S'(R_S)-2/(1+R_S)^2+1$:
\begin{itemize}
\item Section \ref{Sec:Standard} is devoted to the standard case $S'(R_S)>2/(1+R_S)^2-1$. We show in \cref{Th:Main} the convergence to the biased Brownian separable permuton.
\item In Section \ref{sec:Asymp2}, we consider the degenerate case $S'(R_S)<2/(1+R_S)^2-1$.
\item In Section \ref{sec:critical}, we consider the critical case $S'(R_S)=2/(1+R_S)^2-1$. This case itself is divided into two subcases, according to whether the exponent $\delta$ (defined in \cref{Def:HypoCS}) is smaller (\cref{ssec:deltaPlusPetit}) or greater (\cref{ssec:deltaPlusGrand}) than $2$.
\end{itemize} 
\item We postpone to \cref{sec:complex_analysis} many useful results of complex analysis.
\item Finally, \cref{sec:AppendicePermuton} 
  discusses how  \cref{Fig:SimusPermuton,Fig:SimusPermutonStable,Fig:PermInClasses}
  have been obtained.
\end{itemize}
\section{Convergence of random permutons}\label{Sec:Permutons}

In this section, we first recall the terminology of (deterministic) permutons, introduced in \cite{Permutons}.
We also adapt their results to obtain criteria for the convergence in distribution of random permutons.

\noindent{\bf Notation.}
\emph{
Since this section involves many different probability spaces,
we use a superscript on $\proba$ (and similarly on expectation symbols $\esper$) to record the source of randomness.
In the case where the event $A(\bm u, \bm v)$ (or the function $H(\bm u, \bm v)$)
depends on two random variables $\bm u$ and $\bm v$, we interpret $\proba^{\bm u}(A(\bm u, \bm v))$ (or $\esper^{\bm u}[H(\bm u, \bm v)]$)
as the conditional probability (expectation) with respect to $\bm v$.
}
\medskip

\subsection{Deterministic permutons and extracted permutations}\label{Sec:ExtractedPermutations} ~
Recall from \cref{ssec:permutons_intro} that a permuton is 
a probability measure on the unit square with uniform marginals. 
To a permutation $\sigma$ of size $n$, we can associate the permuton 
$\mu_\sigma$ which is essentially the (normalized) diagram of $\sigma$,
where each dot has been replaced with a small square of dimension $1/n \times 1/n$ carrying a mass $1/n$.

Let $\mathcal M$ be the set of permutons.
We need to equip $\mathcal M$ with a topology.
We say that a sequence of (deterministic) permutons $(\mu_n)_n$ converges \emph{weakly} to $\mu$ (simply denoted $\mu_n \to \mu$) if 
$$
\int_{[0,1]^2} f d\mu_n \stackrel{n\to +\infty}{\to} \int_{[0,1]^2} f d\mu,
$$
for every bounded and continuous function $f: [0,1]^2 \to \mathbb{R}$.
With this topology, $\mathcal M$ is compact and metrizable by a metric $d_\square$ which has been introduced in \cite{Permutons} (see Lemmas 2.5 and 5.3 in \cite{Permutons}):
$$
\mu_{n}\stackrel{n\to +\infty}{\to} \mu \qquad  \Leftrightarrow \qquad d_\square(\mu_n,\mu)\stackrel{n\to +\infty}{\to} 0.
$$
Since $\mathcal{M}$ is compact, Prokhorov's theorem ensures that the space of probability distributions on $\mathcal M$ is compact (for convergences of measure, we refer to \cite{Billingsley}). 
\medskip

Recall from Section \ref{sec:PermutationClass} that for $\sigma \in \Sn_n$ and $\pi \in \Sn_k$, we have
$$
\occ(\pi,\sigma)=\mathbb{P}^{{\bm I}_{n,k}} \left(\pat_{{\bm I}_{n,k}}(\sigma)=\pi \right),
$$
where ${\bm I}_{n,k}$ is randomly and uniformly chosen among the $\binom{n}{k}$ subsets of $[n]$ with $k$ elements.
The random permutation $\pat_{{\bm I}_{n,k}}(\sigma)$ is called the {\em induced subpermutation} (of size $k$) in $\sigma$.
We will define the {\em pattern density} $\occ(\pi,\mu)$ of a pattern $\pi \in \Sn_k$ in a {\em permuton} $\mu$
by analogy with this formula.

Take a sequence of $k$ random points $(\XX,\YY) = ((\xx_1,\yy_1),\dots, (\xx_k,\yy_k))$ in $[0,1]^2$, 
independently with common distribution $\mu$. 
Because $\mu$ has uniform marginals and the $\xx_i$'s (resp. $\yy_i$'s) are independent, 
it holds that the $\xx_i$'s (resp. $\yy_i$'s) are almost surely distinct.
We denote by $(\xx_{(1)},\yy_{(1)}),\dots, (\xx_{(k)},\yy_{(k)})$ the $x$-ordered sample of $(\XX,\YY)$,
 \emph{i.e.} the unique reordering of the sequence $((\xx_1,\yy_1),\dots, (\xx_k,\yy_k))$ such that
$\xx_{(1)}<\cdots<\xx_{(k)}$.
Then the values $(\yy_{(1)},\cdots,\yy_{(k)})$ are in the same
relative order as the values of a unique permutation, that we denote $\Perm(\XX,\YY)$.
Since the points are taken at random, $\Perm(\XX,\YY)$ is a random permutation of size $k$.
We call it the {\em induced subpermutation} (of size $k$) in $\mu$.
Then we set
\[\occ(\pi,\mu) = \proba^{\XX,\YY} \, \big(\, \Perm(\XX,\YY)=\pi \,\big).
\]
Rewriting this probability in an integral form, we get immediately:
\begin{equation}
\label{eq:defIntegraleOcc}
\occ(\pi,\mu) = \int_{([0,1]^2)^k} \One_{\Perm(\vec x, \vec y) = \pi}\;\mu(dx_1dy_1)\cdots\mu(dx_kdy_k)
\end{equation}
which identifies $\occ(\pi,\cdot)$ as a measurable function on the space of permutons.

In the following, as we consider a random permuton $\bm \mu$, we need to construct a finite sequence of points $(\xx_1,\yy_1), \dots, (\xx_k,\yy_k)$, which are independent with common distribution $\bm \mu$ \textit{conditionally on $\bm \mu$}. This is possible up to considering a new probability space where the joint distribution of $(\bm \mu, (\xx_1, \yy_1),\ldots,(\xx_k, \yy_k))$ is characterized as follows: for every positive measurable functional $H : \mathcal M \times ([0,1]^2)^k \to \R$, 
\begin{multline}\label{eq:propUniverselle}
\esper^{\bm \mu, \XX,\YY }[H(\bm \mu,(\xx_1,\yy_1), \ldots, (\xx_k,\yy_k))]\\
= \esper^{\bm \mu} \left[ \int_{([0,1]^2)^k} \bm \mu(dx_1\,dy_1) \cdots \bm \mu(dx_k\,dy_k) H(\bm \mu, (x_1,y_1), \ldots, (x_k,y_k))
\right].
\end{multline}
In this new probability space, we call $\Mk$ the vector $(\XX,\YY) = (\xx_i, \yy_i)_{1\leq i \leq k}$, and we use the notation \hbox{$\Perm(\Mk, \bm\mu)=\Perm(\XX,\YY)$},
to highlight the two levels of randomness.
\medskip

We end this section by the following two estimates, proved in \cite{Permutons}.
\begin{lemma}[Occurrences in a permutation and its associated permuton {\cite[Lemma 3.5]{Permutons}}]\label{lem:densityofassociatedpermuton}\ 
If $\pi \in \Sn_k$ and $\sigma \in \Sn_n$, then 
$$
|\occ(\pi,\sigma)-\occ(\pi,\mu_\sigma)| \leq \frac 1 n \binom k 2.
$$
\end{lemma}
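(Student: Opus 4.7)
The plan is to couple the two sampling procedures and control the probability of a discrepancy by a simple birthday-type bound. Write $k$ i.i.d. points $(\xx_1,\yy_1),\ldots,(\xx_k,\yy_k)$ with common distribution $\mu_\sigma$. By the definition of $\mu_\sigma$, each such point can be sampled in two stages: first pick an index $\bm i\in[n]$ uniformly, then pick $\xx$ uniformly in $[(\bm i-1)/n,\bm i/n]$ and $\yy$ uniformly in $[(\sigma(\bm i)-1)/n,\sigma(\bm i)/n]$ independently. Doing this for each of the $k$ points produces a tuple $(\bm i_1,\ldots,\bm i_k)$ of indices drawn uniformly \emph{with replacement} from $[n]$, augmented by independent jitter within the cells.

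The central observation is that on the event $E$ that the $\bm i_j$'s are pairwise distinct, the $x$-reordering of the points coincides with the ordering induced by the indices, and so $\Perm(\XX,\YY) = \pat_{\{\bm i_1,\ldots,\bm i_k\}}(\sigma)$ almost surely: the jitter cannot change the relative order because distinct indices correspond to disjoint cells. Moreover, conditionally on $E$, the set $\{\bm i_1,\ldots,\bm i_k\}$ is uniformly distributed among all $k$-subsets of $[n]$, so it has the same law as $\bm I_{n,k}$. Hence
\[
\proba^{\XX,\YY}\!\big(\Perm(\XX,\YY)=\pi \,\big|\, E\big) \;=\; \occ(\pi,\sigma).
\]

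Next, I would estimate $\proba(E^c)$ by a union bound: for any $j<j'$, $\proba(\bm i_j = \bm i_{j'}) = 1/n$, so $\proba(E^c)\le \binom{k}{2}/n$. Combining this with the previous identity gives
\[
\occ(\pi,\mu_\sigma) \;=\; \proba(E)\,\occ(\pi,\sigma) \;+\; \proba(E^c)\,Q,
\]
for some $Q \in [0,1]$ (the conditional probability that $\Perm(\XX,\YY)=\pi$ given $E^c$). Subtracting $\occ(\pi,\sigma)$ and using $|Q-\occ(\pi,\sigma)|\le 1$ yields
\[
\big|\occ(\pi,\mu_\sigma)-\occ(\pi,\sigma)\big| \;\le\; \proba(E^c) \;\le\; \frac{1}{n}\binom{k}{2},
\]
which is the desired bound.

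The only mildly delicate point is verifying that, on the event $E$, the induced pattern really does coincide with $\pat_{\{\bm i_1,\ldots,\bm i_k\}}(\sigma)$. This is essentially a picture-check: since distinct indices $\bm i,\bm i'$ give cells whose $x$-ranges $[(\bm i-1)/n,\bm i/n]$ are disjoint, and similarly for the $y$-ranges using $\sigma$, the relative order of the sampled points matches exactly the relative order of the pairs $(\bm i_j,\sigma(\bm i_j))$. Once this is established the rest is a straightforward application of the union bound, so I expect no substantive obstacle.
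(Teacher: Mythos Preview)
Your proof is correct; this is the standard coupling argument for this bound. Note that the paper does not actually prove this lemma but merely cites it from \cite[Lemma 3.5]{Permutons}, so there is no in-paper proof to compare against; your argument is essentially the one found in that reference.
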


\begin{lemma}[Approximation of a permuton by a permutation {\cite[Lemma 4.2]{Permutons}}]
		\label{lem:subpermapproxpermuton}
		There is a $k_0$ such that if $k>k_0$, for any permuton $\nu$,
		\[\proba^\Mk\left[
		d_{\square}(\mu_{\Perm(\Mk,\nu)},\nu)
		\geq 16k^{-1/4}\right]
		\leq \frac12 e^{-\sqrt{k}}.
		\]
	\end{lemma}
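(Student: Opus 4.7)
The plan is to prove this concentration estimate by interposing the empirical measure $\tilde\mu_k := \tfrac1k \sum_{i=1}^k \delta_{(\xx_i,\yy_i)}$ between $\nu$ and $\mu_{\Perm(\Mk,\nu)}$, and applying the triangle inequality:
\[
d_\square\bigl(\mu_{\Perm(\Mk,\nu)},\nu\bigr) \;\leq\; d_\square\bigl(\tilde\mu_k,\nu\bigr) \;+\; d_\square\bigl(\mu_{\Perm(\Mk,\nu)},\tilde\mu_k\bigr).
\]
It then suffices to bound each summand by $8k^{-1/4}$ off an event of probability at most $\tfrac14 e^{-\sqrt k}$. Because $d_\square$ (as introduced in \cite{Permutons}) is a supremum of $|\mu(R)-\nu(R)|$ over axis-aligned rectangles $R\subset[0,1]^2$, both tasks reduce to controlling the discrepancy of rectangle measures.

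For the first summand, I would discretize $[0,1]^2$ into a grid of $\lceil k^{1/4}\rceil^2 = O(\sqrt k)$ cells of side at most $k^{-1/4}$. Any axis-aligned rectangle $R$ is sandwiched between a union $R^-$ of full cells contained in $R$ and a union $R^+$ of full cells meeting $R$, with $\nu(R^+\setminus R^-) \leq 4k^{-1/4}$ thanks to the uniform marginals of $\nu$. For each fixed cell $C$, the quantity $k\cdot\tilde\mu_k(C)$ is a sum of $k$ i.i.d.\ Bernoulli$(\nu(C))$ variables, so Hoeffding's inequality gives $\proba^{\Mk}\bigl(|\tilde\mu_k(C)-\nu(C)| \geq k^{-1/4}\bigr) \leq 2 e^{-2\sqrt k}$. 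A union bound over the $O(\sqrt k)$ cells combined with the sandwich gives the required control of $d_\square(\tilde\mu_k,\nu)$ outside an exponentially small event.

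For the second summand, the crucial observation is that $\mu_{\Perm(\Mk,\nu)}$ is obtained from $\tilde\mu_k$ by transporting the Dirac mass at $(\xx_i,\yy_i)$ onto the square of side $1/k$ centered at $(F_k(\xx_i),G_k(\yy_i))$, where $F_k, G_k$ are the empirical CDFs of the marginal samples. Since $(\xx_i)$ and $(\yy_i)$ are each i.i.d.\ uniform on $[0,1]$ by the uniform-marginals property of $\nu$, the Dvoretzky--Kiefer--Wolfowitz inequality gives $\sup_t |F_k(t)-t| \leq k^{-1/4}$, and similarly for $G_k$, with probability at least $1 - 4 e^{-2\sqrt k}$. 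On that event each Dirac mass is displaced by at most $k^{-1/4}+1/k$ in each coordinate, so for any rectangle $R$ the measures $\tilde\mu_k(R)$ and $\mu_{\Perm(\Mk,\nu)}(R)$ differ by at most the mass of a boundary strip of width $O(k^{-1/4})$. The principal subtlety lies in this second summand: one must convert the transport bound into a cut-norm bound cleanly, and the exponent $1/4$ arises naturally by balancing the discretization scale $k^{-1/4}$ against the Hoeffding/DKW concentration scale $k^{-1/2}$ once the $O(\sqrt k)$-cell union bound has been paid for. Tracking constants so as to land precisely on $16k^{-1/4}$ and $\tfrac12 e^{-\sqrt k}$ is then pure bookkeeping.
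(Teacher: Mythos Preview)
The paper does not actually prove this lemma; it is quoted as a black box from \cite[Lemma~4.2]{Permutons}. Your outline follows the same overall strategy as that reference (empirical measure as intermediary, grid discretization, DKW for the marginals), but there is a genuine gap in your treatment of the first summand $d_\square(\tilde\mu_k,\nu)$.

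You apply Hoeffding to each of the $O(\sqrt{k})$ grid \emph{cells} separately, obtaining $|\tilde\mu_k(C)-\nu(C)|\le k^{-1/4}$ on the good event. But a grid-aligned rectangle $R^-$ is a union of up to $\sqrt{k}$ cells, so summing the per-cell errors only yields $|\tilde\mu_k(R^-)-\nu(R^-)|\le \sqrt{k}\cdot k^{-1/4}=k^{1/4}$, which diverges. Tightening the per-cell tolerance to $k^{-3/4}$ so that the sum stays $O(k^{-1/4})$ destroys the concentration: Hoeffding then gives $\exp(-2k\cdot k^{-3/2})=\exp(-2k^{-1/2})\to 1$. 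The fix is to apply Hoeffding not to individual cells but directly to each of the $O(m^4)=O(k)$ grid-aligned \emph{rectangles}; the union bound then costs only a polynomial factor in $k$, which is absorbed by $e^{-2\sqrt k}$ versus the target $e^{-\sqrt k}$, and your sandwich argument goes through unchanged.

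Your second summand via DKW is correct in spirit; just note that converting the transport bound into a rectangle-discrepancy bound requires controlling the $\tilde\mu_k$-mass (not the $\nu$-mass) of the boundary strip, which is available on the same DKW event since $|F_k(t)-t|\le k^{-1/4}$ bounds the empirical mass of any vertical or horizontal strip.
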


\subsection{Random permutons and convergence in distribution}
We now consider a sequence of random permutations $(\bm \sigma_n)$ (with $\bm \sigma_n$ of size $n$).
An example of interest for the present paper is when, for each $n \ge 1$,
$\bm \sigma_n$ is a uniform random permutation of size $n$ in a given class $\CCC$.
Another example are the random permutations $(\bm \sigma_n)_{n\geq 1} = (\Perm(\Mn,\bm \mu))_{n\geq 1}$
constructed above from a given random permuton $\bm \mu$.
In the case where $\bm \mu$ is deterministic, these correspond to the $Z$-random permutations from \cite{Permutons},
used to prove that each permuton is the limit of some permutation sequence.
\medskip

Taking ${\bm I}_{n,k}$ independently from $(\bm \sigma_n)$, we have for every $\pi$ of size $k$:
\begin{equation}\label{eq:E(occ)=P(pat)}
\esper^{\bm \sigma_n}[\occ(\pi,\bm \sigma_n)] 
= \esper^{\bm \sigma_n}\left[\mathbb{P}^{{\bm I}_{n,k}} \left(\pat_{{\bm I}_{n,k}}(\bm \sigma_n)=\pi \right)\right]
=\proba^{\bm\sigma_n,{\bm I}_{n,k}} (\pat_{{\bm I}_{n,k}}(\bm \sigma_n) = \pi).
\end{equation}
Similar, for a random permuton $\Mu$, we have 
\begin{equation}
\esper^{\bm \mu}[\occ(\pi,\bm \mu)]
=\proba^{\bm\mu,\Mk} (\Perm(\Mk,\bm \mu)=\pi).
\label{eq:E(occ)=P(perm)}
\end{equation}
This is a consequence of \eqref{eq:propUniverselle} above, 
applied to $H(\mu,(x_1,y_1),\ldots,(x_k,y_k)) = \One_{\Perm(\vec x,\vec y) = \pi}$
and combined with \eqref{eq:defIntegraleOcc}.
The same argument may be applied to 
$$H(\nu,(x_1,y_1),\ldots,(x_k,y_k)) = \One_{d_{\square}(\mu_{\Perm(\vec x,\vec y)},\nu)
	\geq 16k^{-1/4}},$$ yielding a randomized version of  \cref{lem:subpermapproxpermuton}.

\begin{lemma}[Approximation of a random permuton by a random permutation]
	\label{lem:subpermapproxrandompermuton}
	There is a $k_0$ such that if $k>k_0$, for any random permuton $\bm \nu$,
	\[\proba^{\bm \nu,\Mk}
	\left[
	d_{\square}(\mu_{\Perm(\Mk,\bm \nu)},\bm \nu)
	\geq 16k^{-1/4}\right]
	\leq \frac12 e^{-\sqrt{k}}.
	\]
\end{lemma}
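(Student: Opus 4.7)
The plan is to deduce this directly from the deterministic \cref{lem:subpermapproxpermuton} by conditioning on $\bm{\nu}$ via the universal property \eqref{eq:propUniverselle}, exactly as the paragraph preceding the statement suggests.

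First, I would define the measurable functional $H : \mathcal{M} \times ([0,1]^2)^k \to \R$ by
\[
H(\nu,(x_1,y_1),\ldots,(x_k,y_k)) = \One_{d_{\square}(\mu_{\Perm(\vec x,\vec y)},\nu)\,\geq\, 16 k^{-1/4}}.
\]
(The fact that $\Perm(\vec x, \vec y)$ and hence $\mu_{\Perm(\vec x, \vec y)}$ are measurable functions of $(\vec x, \vec y)$ on the full-measure set where all coordinates are distinct, together with measurability of $d_\square$, ensures $H$ is measurable.) Applying \eqref{eq:propUniverselle} to this $H$ yields
\[
\esper^{\bm{\nu},\Mk}[H(\bm{\nu},\Mk)] = \esper^{\bm{\nu}}\!\left[\int_{([0,1]^2)^k} H(\bm{\nu},(x_i,y_i)_i)\, \bm{\nu}(dx_1 dy_1)\cdots \bm{\nu}(dx_k dy_k)\right].
\]
The left-hand side is exactly $\proba^{\bm{\nu},\Mk}\bigl[d_\square(\mu_{\Perm(\Mk,\bm\nu)},\bm\nu) \geq 16 k^{-1/4}\bigr]$.

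Next, I would observe that for each deterministic realization $\nu \in \mathcal{M}$ of $\bm\nu$, the inner integral on the right-hand side equals
\[
\proba^{\Mk}\!\left[d_\square(\mu_{\Perm(\Mk,\nu)},\nu) \geq 16 k^{-1/4}\right],
\]
because under the integrating measure $\nu^{\otimes k}$ the points $(x_i, y_i)$ are i.i.d.\ samples from $\nu$, which is precisely how $\Mk$ is defined in the deterministic setting. By \cref{lem:subpermapproxpermuton}, this quantity is bounded by $\tfrac12 e^{-\sqrt k}$ as soon as $k > k_0$, and crucially this bound is \emph{uniform in $\nu$}.

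Finally, I would take expectation over $\bm\nu$: since the integrand is pointwise bounded by the constant $\tfrac12 e^{-\sqrt k}$, the expectation is bounded by the same constant, giving the desired inequality. There is no real obstacle here; the only subtlety is ensuring that the universal property \eqref{eq:propUniverselle} applies to the particular $H$ chosen (which is why $H$ was stated as a general measurable functional rather than, say, the specific indicator $\One_{\Perm(\vec x, \vec y) = \pi}$ of the preceding display), and this is immediate from the measurability discussion above.
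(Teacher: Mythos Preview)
Your proposal is correct and follows exactly the argument the paper intends: apply the universal property \eqref{eq:propUniverselle} to $H(\nu,(x_i,y_i)_i)=\One_{d_\square(\mu_{\Perm(\vec x,\vec y)},\nu)\geq 16k^{-1/4}}$ and then use the uniform bound from \cref{lem:subpermapproxpermuton} inside the expectation over $\bm\nu$. The paper's own proof is just the one-line remark preceding the statement, and you have spelled out precisely that computation.
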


\medskip

This result has an important consequence for the distribution of random permutons. 
\begin{proposition}[Subpermutations characterize the distribution of $\bm \mu$]\label{Prop:CaracterisationLoiPermuton}
Let $\bm \mu$, $\bm {\mu}'$ be two random permutons. If there exists $k_1$ such that for $k\geq k_1$ and every $\pi$ of size $k$ we have 
$$
\proba^{\bm \mu,\Mk}(\Perm(\Mk,\bm \mu)=\pi)=\proba^{\bm \mu',\Mk}(\Perm(\Mk,\bm \mu')=\pi),
$$
then $\bm \mu \stackrel{d}{=}\bm \mu'$.
\end{proposition}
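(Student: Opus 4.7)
The plan is to exploit the fact that $\mathcal M$ is a compact metric space, so the distribution of a random permuton is determined by the expectations $\esper[F(\bm\mu)]$ for all continuous $F : \mathcal M \to \R$. Fixing such an $F$, I would show that $\esper^{\bm\mu}[F(\bm\mu)]$ can be approximated, uniformly in the underlying random permuton, by quantities depending only on the distribution of the induced subpermutations $\Perm(\Mk,\bm\mu)$, which by hypothesis coincide for $\bm\mu$ and $\bm\mu'$ once $k \geq k_1$.

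More precisely, I would first note that since $F$ is continuous on the compact metric space $(\mathcal M, d_\square)$, it is bounded by some $M$ and uniformly continuous: for every $\eps>0$, there exists $\delta>0$ such that $d_\square(\mu,\nu)<\delta$ implies $|F(\mu)-F(\nu)|<\eps$. Choose $k \geq \max(k_1,k_0)$ large enough that $16 k^{-1/4} < \delta$. Then \cref{lem:subpermapproxrandompermuton}, applied to $\bm\nu = \bm\mu$, gives
\[
\esper^{\bm\mu,\Mk}\big|F(\mu_{\Perm(\Mk,\bm\mu)}) - F(\bm\mu)\big| \;\leq\; \eps + 2M \cdot \tfrac{1}{2} e^{-\sqrt k},
\]
by splitting according to whether $d_\square(\mu_{\Perm(\Mk,\bm\mu)},\bm\mu)$ is smaller or larger than $16 k^{-1/4}$. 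The same bound holds with $\bm\mu'$ in place of $\bm\mu$.

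Next, I would rewrite the expectation of $F$ on the sampled permuton $\mu_{\Perm(\Mk,\bm\mu)}$ by conditioning on the value of $\Perm(\Mk,\bm\mu)$:
\[
\esper^{\bm\mu,\Mk}\big[F(\mu_{\Perm(\Mk,\bm\mu)})\big] = \sum_{\pi \in \Sn_k} F(\mu_\pi)\, \proba^{\bm\mu,\Mk}\!\big(\Perm(\Mk,\bm\mu)=\pi\big).
\]
The hypothesis of the proposition ensures that, for $k \geq k_1$, the right-hand side is unchanged if $\bm\mu$ is replaced by $\bm\mu'$. Combining this equality with the two previous approximation estimates yields
\[
\big|\esper^{\bm\mu}[F(\bm\mu)] - \esper^{\bm\mu'}[F(\bm\mu')]\big| \;\leq\; 2\eps + 2M e^{-\sqrt k}.
\]
Letting first $k \to \infty$ and then $\eps \to 0$ gives $\esper^{\bm\mu}[F(\bm\mu)] = \esper^{\bm\mu'}[F(\bm\mu')]$ for every continuous $F$, hence $\bm\mu \stackrel{d}{=} \bm\mu'$.

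There is no real obstacle, since the necessary technology — the compactness/metrizability of $\mathcal M$ from \cite{Permutons} and the randomized sampling lemma \cref{lem:subpermapproxrandompermuton} — has been put in place just above. The only mildly delicate point is to make sure that the two sources of randomness (the permuton and the sample) are handled simultaneously, which is precisely what the randomized version of the sampling lemma is designed for; that is why we bothered to state it in the slightly stronger form \cref{lem:subpermapproxrandompermuton} rather than relying directly on the deterministic \cref{lem:subpermapproxpermuton}.
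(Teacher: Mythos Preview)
Your proof is correct and follows essentially the same approach as the paper: both approximate $\esper[F(\bm\mu)]$ by $\esper[F(\mu_{\Perm(\Mk,\bm\mu)})]$ via the randomized sampling lemma (\cref{lem:subpermapproxrandompermuton}), observe that the latter depends only on the distribution of $\Perm(\Mk,\bm\mu)$ (which by hypothesis coincides for $\bm\mu$ and $\bm\mu'$), and let $k\to\infty$. Your version is slightly more explicit in spelling out the uniform continuity argument and the sum over $\pi\in\Sn_k$, but the structure is identical to the paper's three-term decomposition.
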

\begin{proof}
We need to prove that $\mathbb{E}^{\bm \mu}[\phi(\bm \mu)]=\mathbb{E}^{\bm \mu'}[\phi(\bm \mu')]$ for every bounded and continuous function $\phi:\mathcal{M}\to \mathbb{R}$. Fix $k\geq k_1$. It holds that 
\begin{align*}
\mathbb{E}^{\bm\mu}[\phi(\bm\mu)]-\mathbb{E}^{\bm\mu'}[\phi(\bm\mu')]
& = \mathbb{E}^{\bm\mu,\Mk}[\phi(\bm\mu)-\phi(\mu_{\Perm(\Mk,\bm\mu)})] \\
& + \left (\mathbb{E}^{\bm\mu,\Mk}[\phi(\mu_{\Perm(\Mk,\bm\mu)})]
-\mathbb{E}^{\bm\mu',\Mk'}[\phi(\mu_{\Perm(\Mk',\bm\mu')})]\right) \\
& + \mathbb{E}^{\bm\mu',\Mk'}[\phi(\mu_{\Perm(\Mk',\bm\mu')})-\phi(\bm\mu')],
\end{align*}
where $\Mk'$ denotes a sequence of $k$ independent points
with common distribution $\bm \mu'$,
conditionally on $\bm \mu'$.
The second term in the above display is zero by assumption. Moreover, from \cref{lem:subpermapproxrandompermuton} the first and third terms go to zero when $k\to +\infty$.
\end{proof}

Our main theorem in this section deals with the convergence of sequences of random permutations to a random permuton. 
It generalizes the result of \cite{Permutons} which states that \emph{deterministic} permuton convergence is characterized by convergence of pattern densities. 
We extend their proof to the case of \emph{random} sequences, where permuton convergence \emph{in distribution} is characterized by convergence of \emph{average} pattern densities,
or equivalently of the induced subpermutations of any (fixed) size.

\begin{theorem}\label{thm:randompermutonthm}
	For any $n$, let $\bm\sigma_n$ be a random permutation of size $n$. 
	Moreover, for any fixed $k$, let ${\bm I}_{n,k}$ be a uniform random subset of $[n]$ with $k$ elements, independent of $\bm \sigma_n$.
	The following assertions are equivalent.
	\begin{enumerate}%
		\item [(a)] $(\mu_{\bm \sigma_n})_n$ converges in distribution for the weak topology to some random permuton $\bm \mu$. 
		\item [(b)] The random infinite vector $\big(\occ(\pi,\bm\sigma_n)\big)_{\pi \in \mathfrak S}$ converges in distribution in the product topology to some random infinite vector $(\bm \Lambda_\pi)_{\pi \in \mathfrak S}$. 
		\item [(c)]For every $\pi$ in $\mathfrak S$, there is a $\Delta_\pi \geq 0$ such that \[\esper[\occ(\pi,\bm \sigma_n)] \xrightarrow{n\to\infty} \Delta_\pi.\]
		\item [(d)]For every $k$, the sequence  $\big(\pat_{{\bm I}_{n,k}}(\bm\sigma_n)\big)_n$ of random permutations converges in distribution to some random permutation $\bm \rho_k$.
	\end{enumerate}
	Whenever these assertions are verified, we have $(\bm \Lambda_\pi)_\pi \stackrel d = (\occ(\pi,\bm \mu))_\pi$ and for every $\pi\in\Sn_k$,
	\[ \proba(\bm \rho_{k} = \pi) = \Delta_\pi = \esper[\bm \Lambda_\pi] = \esper[\occ(\pi,\bm\mu)] = \proba(\Perm(\Mk,\bm\mu) = \pi). \] 

\end{theorem}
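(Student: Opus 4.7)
The plan is to establish the cycle of implications (a) $\Rightarrow$ (b) $\Rightarrow$ (c) $\Rightarrow$ (d) $\Rightarrow$ (a), and identify the joint limits along the way. Three of these four implications are essentially routine once the right tools are in place; the nontrivial step is (d) $\Rightarrow$ (a), which requires the compactness of $\mathcal M$ together with the uniqueness statement of \cref{Prop:CaracterisationLoiPermuton}.

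For (a) $\Rightarrow$ (b), I would first verify that each map $\occ(\pi,\cdot):\mathcal M \to [0,1]$ is continuous in the weak topology. Writing $\occ(\pi,\mu)$ via the integral \eqref{eq:defIntegraleOcc}, the integrand is bounded and its discontinuity set has $\mu^{\otimes k}$-measure zero for any permuton $\mu$ (because the marginals of $\mu$ are uniform, so $\xx_i\neq \xx_j$ and $\yy_i\neq \yy_j$ almost surely). Hence if $\mu_n\to \mu$ weakly, then $\mu_n^{\otimes k}\to \mu^{\otimes k}$ weakly and by Portmanteau the pattern density passes to the limit. The resulting product map into $[0,1]^{\mathfrak S}$ is continuous, and the continuous mapping theorem yields the convergence in distribution of $(\occ(\pi,\mu_{\Si_n}))_\pi$ to $(\occ(\pi,\bm\mu))_\pi$. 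Using \cref{lem:densityofassociatedpermuton} to replace $\occ(\pi,\mu_{\Si_n})$ by $\occ(\pi,\Si_n)$ up to a deterministic error $\binom{|\pi|}{2}/n$, we obtain (b) with $\bm \Lambda_\pi \stackrel d = \occ(\pi,\bm \mu)$. The implication (b) $\Rightarrow$ (c) is then immediate by bounded convergence since $\occ(\pi,\Si_n)\in[0,1]$, and we set $\Delta_\pi := \esper[\bm \Lambda_\pi]$. The implication (c) $\Rightarrow$ (d) is a direct reformulation of \eqref{eq:E(occ)=P(pat)}: for $\pi\in\Sn_k$ and $n\geq k$, $\esper[\occ(\pi,\Si_n)] = \proba(\pat_{\bm I_{n,k}}(\Si_n)=\pi)$, so the numbers $(\Delta_\pi)_{\pi\in\Sn_k}$ sum to $1$ and define the law of the random permutation $\bm \rho_k$.

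The main step is (d) $\Rightarrow$ (a). Since $\mathcal M$ is compact, the space of Borel probability measures on $\mathcal M$ is compact for weak convergence, hence the sequence of laws of $\mu_{\Si_n}$ is tight. Let $\bm \mu$ be any subsequential limit, along some subsequence $(n_j)$. Applying the already proven implications (a) $\Rightarrow$ (b) $\Rightarrow$ (c) $\Rightarrow$ (d) to this subsequence gives $\pat_{\bm I_{n_j,k}}(\Si_{n_j})\to \Perm(\Mk,\bm \mu)$ in distribution, while by hypothesis (d) the same sequence converges to $\bm \rho_k$. Hence $\proba(\Perm(\Mk,\bm \mu)=\pi) = \proba(\bm \rho_k = \pi)$ for every $k$ large enough and every $\pi\in\Sn_k$, and by \cref{Prop:CaracterisationLoiPermuton} the law of $\bm \mu$ is uniquely determined. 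Every subsequential limit therefore has the same distribution, so the full sequence $(\mu_{\Si_n})_n$ converges in distribution to $\bm \mu$. The chain of identifications of the limits follows by tracking the equalities through the cycle, together with equation \eqref{eq:E(occ)=P(perm)}.

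The hard part is thus (d) $\Rightarrow$ (a), which hinges on \cref{Prop:CaracterisationLoiPermuton}; the latter is the genuinely probabilistic input and is itself a consequence of the sampling approximation \cref{lem:subpermapproxrandompermuton} imported from \cite{Permutons}. Once that proposition is available, the subsequential compactness argument closes the circle cleanly.
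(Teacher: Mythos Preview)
Your proof is correct and follows the same overall architecture as the paper: the chain (a)$\Rightarrow$(b)$\Rightarrow$(c)$\Rightarrow$(d) is handled identically, and (d)$\Rightarrow$(a) proceeds via tightness and uniqueness of subsequential limits.

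The only noteworthy difference is in how you pin down the subsequential limit in (d)$\Rightarrow$(a). The paper argues directly with the approximation \cref{lem:subpermapproxrandompermuton}: it shows that for any subsequential limit $\bm\mu'$ one has $\mu_{\bm\rho_k}\xrightarrow{d}\bm\mu'$ as $k\to\infty$, which forces all subsequential limits to coincide. You instead recycle the already-established forward implications along the subsequence to get $\Perm(\Mk,\bm\mu')\stackrel d=\bm\rho_k$, and then invoke \cref{Prop:CaracterisationLoiPermuton} to conclude uniqueness. Your route is a bit cleaner in that it packages the approximation argument into the black box of \cref{Prop:CaracterisationLoiPermuton} rather than reproving part of it inline; the paper's route has the advantage of also exhibiting $\bm\mu$ explicitly as the limit of $(\mu_{\bm\rho_k})_k$. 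Both ultimately rest on the same sampling estimate from \cite{Permutons}.
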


\begin{observation}
\label{obs:StartAt2}
In item (c) above, it is enough to consider all $\pi$ of size at least $2$. 
Indeed, for $\pi=1$, the statement is trivial, since $\occ(\pi, \cdot)$ is identically $1$. 
\end{observation}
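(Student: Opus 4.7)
The plan is to observe that for the unique pattern $\pi = 1$ of size one, the density function $\occ(1,\cdot)$ takes the constant value $1$ on every permutation, so the convergence condition in item (c) carries no information at all in that case. Concretely, I would unwind the definition of $\occ$: for any $\sigma \in \Sn_n$ with $n \geq 1$ and any singleton $I = \{i\} \subset [n]$, the extracted pattern $\pat_{\{i\}}(\sigma)$ is the unique permutation of $\Sn_1$, namely $1$. Hence $\mathrm{occ}(1,\sigma) = \binom{n}{1} = n$, and the normalized density is
\[
\occ(1,\sigma) \;=\; \frac{\mathrm{occ}(1,\sigma)}{\binom{n}{1}} \;=\; 1.
\]

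Applying this pointwise to $\bm\sigma_n$, I would conclude that $\occ(1,\bm\sigma_n) = 1$ almost surely for every $n$, and therefore $\esper[\occ(1,\bm\sigma_n)] = 1$ for every $n$; this trivially converges to $\Delta_1 := 1$. Consequently the hypothesis of item (c) is automatically satisfied for the single pattern of size one, and it suffices to verify it for all patterns $\pi$ with $|\pi| \geq 2$, as claimed. There is no real obstacle in this argument; the only small sanity check I would include is that the resulting value $\Delta_1 = 1$ is compatible with the identification $\Delta_\pi = \proba(\Perm(\Mk,\bm\mu) = \pi)$ from the end of the theorem, which holds trivially since for $k=1$ the induced subpermutation $\Perm(\Mk,\bm\mu)$ must equal $1$ almost surely, as there is only one permutation of size one.
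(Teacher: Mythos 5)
Your proof is correct and follows the same route as the paper, which simply notes that $\occ(1,\cdot)$ is identically $1$; you have merely unwound the definition to verify that fact explicitly. The concluding sanity check about $\Delta_1 = 1 = \proba(\Perm(\vec{\mathbf{m}}_1,\bm\mu) = 1)$ is harmless and consistent, though not needed.
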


\noindent{\bf Proof of (a)$\Rightarrow$(b). } Let $\pi_1,\ldots,\pi_r$ be a finite sequence of patterns. By \cite[Lemma 5.3]{Permutons}, the map  $\mu \mapsto (\occ(\pi_i,\mu))_{1\leq i \leq r}$ is continuous. 
Therefore, $\mu_{\bm \sigma_n}\stackrel{d}{\to} \bm \mu$ implies
$$
\big(\occ(\pi_i,\mu_{\bm \sigma_n})\big)_{1\leq i \leq r} \stackrel{d}{\to} \big(\occ(\pi_i ,\bm\mu)\big)_{1\leq i \leq r}.
$$
Using \cref{lem:densityofassociatedpermuton},
one can replace each $\occ(\pi_i,\mu_{\bm\sigma_n})$ by $\occ(\pi_i,\bm\sigma_n)$ in the above convergence. 
This proves the convergence in distribution 
of all induced permutations $\big(\occ(\pi_i,{\bm \sigma_n})\big)_{1\leq i \leq k}$, 
and hence of $\big(\occ(\pi,\bm\sigma_n)\big)_{\pi \in \mathfrak S}$ in the product topology (see for instance \cite[ex. 2.4 p.~19]{Billingsley}).
\noindent{\bf Proof of (b)$\Rightarrow$(c). } If $\occ(\pi,\bm\sigma_n) \stackrel{d}{\to} \bm \Lambda_\pi$, 
as $\occ$ takes values in $[0,1]$, we have
$$
\esper[\occ(\pi,\bm\sigma_n)] \stackrel{n\to\infty}{\to}  \esper[\bm \Lambda_\pi].
$$

\noindent{\bf Proof of (c)$\Rightarrow$(d). } Fix $\pi\in \Sn_k$ and consider the sequence 
$$
\proba^{\bm\sigma_n,{\bm I}_{n,k}}(\pat_{{\bm I}_{n,k}}(\bm \sigma_n) = \pi)= \esper^{\bm\sigma_n}[\occ(\pi,\bm \sigma_n)],
$$
which converges if (c) holds (the equality comes from \cref{eq:E(occ)=P(pat)}). Since $\pat_{{\bm I}_{n,k}}(\bm \sigma_n)$ is a random variable taking its values in the finite set $\Sn_k$, this says exactly that the sequence $\big(\pat_{{\bm I}_{n,k}}(\bm \sigma_n)\big)_n$ converges in distribution.
\medskip

\noindent{\bf Proof of (d)$\Rightarrow$(a).}
Consider a sequence of random permutations $(\bm \sigma_n)$ satisfying (d),
{\em i.e.} for every $k$, there is a random permutation $\bm \rho_k$ such that $\pat_{{\bm I}_{n,k}}(\bm \sigma_n) \stackrel{d}{\to} \bm \rho_k$.
Put differently, for every pattern $\pi$ of size $k$, we have
$$\mathbb{P}^{\bm\sigma_n, {\bm I}_{n,k}} \left(\pat_{{\bm I}_{n,k}}(\bm \sigma_n)=\pi \right)\to  \proba(\bm \rho_k = \pi).$$

From \cref{lem:densityofassociatedpermuton} and \cref{eq:E(occ)=P(pat)}, we get
\begin{equation*}
\esper^{\bm\sigma_n} \left[ \occ(\pi,\mu_{\bm\sigma_n})\right] = \esper^{\bm\sigma_n} \left[ \occ(\pi,\bm\sigma_n)\right] + \mathcal{O}(1/n)
= \proba^{{\bm I}_{n,k},\bm\sigma_n}(\pat_{{\bm I}_{n,k}}(\bm\sigma_n) = \pi) + \mathcal{O}(1/n).
\end{equation*}
Set $\bm\theta_{k,n} = \Perm(\Mk,\mu_{\bm\sigma_n})$. Then, using \cref{eq:E(occ)=P(perm)}, for every $\pi \in \Sn_k$, we have 
\begin{align*}
\proba^{\bm{\theta_{k,n}}}(\bm \theta_{k,n} = \pi) & = 
\proba^{\Mk,\bm\sigma_n}(\Perm(\Mk, \mu_{\bm\sigma_n}) = \pi)
=  \esper^{\bm\sigma_n} \left[ \occ(\pi,\mu_{\bm\sigma_n})\right] \\
& = \proba^{{\bm I}_{n,k},\bm\sigma_n}(\pat_{{\bm I}_{n,k}}(\bm\sigma_n) = \pi) + \mathcal{O}(1/n) \rightarrow \proba(\bm \rho_k = \pi).
\end{align*}

In other words, $\bm \theta_{k,n} \stackrel{d}{\to} \bm\rho_k$. Since $\mu_{\bm\rho_k}$ takes its values in a finite set of permutons, this implies
\begin{equation}\label{Eq:mumuSubPermConverge}
\mu_{\bm\theta_{k,n}} \stackrel{d}{\to} \mu_{\bm\rho_k}.
\end{equation}

Let $H : (\mathcal M,d_{\square}) \to \R$ be a bounded continuous functional. 
It holds that 
\begin{align*}
\left| \esper\left[H(\mu_{\bm\sigma_n})\right] - \esper\left[H(\mu_{\bm\theta_{k,n}})\right]\right| \leq & \ \esper\left[\,\left|H(\mu_{\bm\sigma_n})- H(\mu_{\bm\theta_{k,n}})\right|\,\right] \\
\leq & \ \esper\left[\,\left|H(\mu_{\bm\sigma_n})- H(\mu_{\bm\theta_{k,n}})\right|\,\mathbf{1}_{d_{\square}\left(\mu_{\bm\sigma_n}, \mu_{\bm\theta_{k,n}}\right)\leq 16k^{-1/4} }\right] \\
& + \esper\left[\,\left|H(\mu_{\bm\sigma_n})- H(\mu_{\bm\theta_{k,n}})\right|\,\mathbf{1}_{d_{\square}\left(\mu_{\bm\sigma_n}, \mu_{\bm\theta_{k,n}}\right)> 16k^{-1/4} }\right]. 
\end{align*}
The first term can be bounded by introducing the modulus of continuity of $H$, which is defined as \hbox{$\omega(\eps) = \sup_{d_{\square}(\xi,\zeta) \leq \eps} |H(\xi)-H(\zeta)|$}. 
Since $\mathcal M$ is compact, it goes to $0$ when $\eps$ goes to $0$. 
Hence,
\begin{align*}
& \esper\left[\,\left|H(\mu_{\bm\sigma_n})- H(\mu_{\bm\theta_{k,n}})\right|\,\mathbf{1}_{d_{\square}\left(\mu_{\bm\sigma_n}, \mu_{\bm\theta_{k,n}}\right)\leq 16k^{-1/4} }\right] \\
& \qquad \leq \esper\left[ \omega\left( d_{\square}\left(\mu_{\bm\sigma_n}, \mu_{\bm\theta_{k,n}}\right)\right)\mathbf{1}_{d_{\square}\left(\mu_{\bm\sigma_n}, \mu_{\bm\theta_{k,n}}\right)\leq 16k^{-1/4} }\right] 
\leq \omega\left(16k^{-1/4}\right).
\end{align*}
As for the second term, for $k$ large enough, \cref{lem:subpermapproxrandompermuton} yields 
\begin{align*}
&\esper\left[\,\left|H(\mu_{\bm\sigma_n})- H(\mu_{\bm\theta_{k,n}})\right|\,\mathbf{1}_{d_{\square}\left(\mu_{\bm\sigma_n}, \mu_{\bm\theta_{k,n}}\right)> 16k^{-1/4} }\right] \\ 
& \qquad \leq \esper\left[ 2\,\sup |H|\ \mathbf{1}_{d_{\square}\left(\mu_{\bm\sigma_n}, \mu_{\bm\theta_{k,n}}\right)> 16k^{-1/4} }\right] 
\leq \frac12 e^{-\sqrt{k}}\, 2\sup |H|.
\end{align*}
Putting things together, we obtain 
\begin{equation}
\left| \esper\left[H(\mu_{\bm\sigma_n})\right] - \esper\left[H(\mu_{\bm\theta_{k,n}})\right]\right| \leq \omega\left(16k^{-1/4}\right) +  \frac12 e^{-\sqrt{k}}\, 2\sup |H|.
\label{eq:Tech9}
\end{equation}

Assume that $(\mu_{\bm\sigma_n})_n$ has a subsequence converging in distribution to a random permuton $\bm \mu'$.
Taking the limit when $n \to \infty$ of \eqref{eq:Tech9} along this subsequence, we get
\[\big| \esper[H(\bm \mu')] - \esper[H(\mu_{\bm\rho_k})]\big| \leq \omega\left(16k^{-1/4}\right) +  e^{-\sqrt{k}} \sup |H|.\]
(Recall indeed that $({\bm\theta_{k,n}})_n$ converges to ${\bm\rho_k}$ in distribution.)
The right-hand side tends to $0$ when $k$ tends to infinity,
which proves that $(\mu_{\bm\rho_k})_k$ converges to $\bm \mu'$ in distribution as well.

Therefore, all converging subsequences of $(\mu_{\bm\sigma_n})_n$ converge to the same limit $\bm \mu'$,
which is the limit of $(\mu_{\bm\rho_k})_{k \ge 1}$. 
Thanks to the compactness of the space of probability distributions on $\mathcal{M}$,
this is enough to conclude that $(\mu_{\bm\sigma_n})$ has indeed a limit.
Item (a) is proved.

\noindent{\bf Proof of additional statements. }
Assume that (a)--(d) hold. 
That $(\bm \Lambda_\pi)_\pi \stackrel d = (\occ(\pi,\bm \mu))_\pi$ follows from the proof of (a)$\Rightarrow$(b). 
Fix any integer $k$, and any permutation $\pi$ of size $k$. 
The above equality in distribution implies $\esper[\bm \Lambda_\pi] = \esper[\occ(\pi,\bm\mu)]$.
That $\Delta_\pi = \esper[\bm \Lambda_\pi]$ is clear from the proof of (b)$\Rightarrow$(c).
The equality $\proba(\bm \rho_{k} = \pi) = \Delta_{\pi}$ follows from the proof of (c)$\Rightarrow$(d).
Finally, $\esper[\occ(\pi,\bm\mu)] = \proba(\Perm(\Mk,\bm\mu) = \pi)$ comes from \cref{eq:E(occ)=P(perm)}. \qed
\begin{remark}
  In some sense, \cref{thm:randompermutonthm} can be seen as an analogue of a theorem of Aldous for random trees
  \cite[Theorem 18]{AldousCRT3}.
  Both in permutations and trees, there is a natural way to construct a smaller structure
  from $k$ elements of a big structure (induced subpermutations or subtrees).
  The goal is then to reduce the convergence of the big structure to the convergence, for each $k$,
  of the induced substructures. 
  For trees, we need an extra tightness assumption (that the family of trees is ``leaf-tight'' in Aldous' terminology).
  In our case, since the space of permutons is compact,
  we do not need such an assumption.
\end{remark}

We finish this section by a comment on the existence of random permutons
with prescribed induced subpermutations. 
\begin{definition}
	A family of random permutations $(\bm \rho_n)_n$ is \emph{consistent} if
	\begin{enumerate}
		\item for every $n\geq 1$, $\bm \rho_n \in \Sn_n$,
	    \item for every $n\geq k \geq 1$, if $\bm I_{n,k}$ is a uniform subset of $[n]$ of size $k$, independent of $\bm \rho_n$, then $\pat_{{\bm I}_{n,k}}(\bm \rho_n) \stackrel d = \bm \rho_k$.
    \end{enumerate} 
	\label{Def:consistency}
\end{definition}
It turns out that consistent family of random permutations and random permutons are essentially equivalent:
\begin{proposition}
	If $\Mu$ is a random permuton, then the family defined by $\bm \rho_k \stackrel d = \Perm(\Mk,\Mu)$ is consistent. Conversely, for every consistent family of random permutations $(\bm \rho_k)_{k\geq 1}$, there exists a random permuton $\bm \mu$ whose distribution is uniquely determined, such that $\Perm(\Mk,\Mu) \stackrel d = \bm\rho_k$. In that case, $\mu_{\bm \rho_n} \xrightarrow[n\to\infty]{d} \bm \mu$.
	\label{Prop:existence_permuton}
\end{proposition}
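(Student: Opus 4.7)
The plan is to reduce both implications to the characterization \cref{thm:randompermutonthm} together with the identification \cref{Prop:CaracterisationLoiPermuton}, the only nontrivial input beyond that being an exchangeability argument for the direct implication.

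For the direct implication, I would fix a random permuton $\bm\mu$ and use the joint law of $(\bm\mu,\Mn)$ defined by \eqref{eq:propUniverselle}. Conditionally on $\bm\mu$, the points $(\xx_1,\yy_1),\dots,(\xx_n,\yy_n)$ are i.i.d.\ of law $\bm\mu$, hence exchangeable. The permutation $\Perm(\Mn,\bm\mu)$ depends only on the unordered set of sampled points (via their $x$-sort), and $\pat_{\bm I_{n,k}}(\Perm(\Mn,\bm\mu))$ is, by unwinding definitions, precisely $\Perm$ applied to the $k$ sampled points whose indices (after $x$-sorting) lie in $\bm I_{n,k}$. Since $\bm I_{n,k}$ is independent of $\Mn$, exchangeability shows that this sub-sample of $k$ points is again i.i.d.\ of law $\bm\mu$ conditionally on $\bm\mu$, hence distributed as $\Mk$. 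Therefore $\pat_{\bm I_{n,k}}(\bm\rho_n) \stackrel{d}{=} \Perm(\Mk,\bm\mu) \stackrel{d}{=} \bm\rho_k$, which is exactly consistency.

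For the converse, let $(\bm\rho_n)_n$ be a consistent family and apply \cref{thm:randompermutonthm} to $\bm\sigma_n := \bm\rho_n$. For every fixed $k$, consistency gives $\pat_{\bm I_{n,k}}(\bm\rho_n) \stackrel{d}{=} \bm\rho_k$ as soon as $n\geq k$, so this sequence is stationary in distribution and trivially converges in distribution to $\bm\rho_k$. This is condition~(d) of the theorem, so its conclusion~(a) yields a random permuton $\bm\mu$ with $\mu_{\bm\rho_n}\xrightarrow[n\to\infty]{d}\bm\mu$. The additional statement of \cref{thm:randompermutonthm} then gives, for every $k$ and every $\pi\in\Sn_k$, $\proba(\Perm(\Mk,\bm\mu)=\pi) = \proba(\bm\rho_k=\pi)$, which is the desired identity $\Perm(\Mk,\bm\mu)\stackrel{d}{=}\bm\rho_k$.

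Uniqueness of the law of $\bm\mu$ is then immediate: any other random permuton $\bm\mu'$ with $\Perm(\Mk,\bm\mu')\stackrel{d}{=}\bm\rho_k$ for all $k$ would agree with $\bm\mu$ on the distribution of all induced subpermutations, and \cref{Prop:CaracterisationLoiPermuton} forces $\bm\mu\stackrel{d}{=}\bm\mu'$. I do not expect any real obstacle in this proof; the only mildly subtle step is the exchangeability argument in the first paragraph, where one has to be careful that restriction to a uniform $k$-subset of i.i.d.\ samples commutes with the pattern map, which is essentially the content of the definition of $\Perm$ ignoring the labelling of sampled points.
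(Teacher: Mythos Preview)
Your proof is correct and follows essentially the same approach as the paper: the direct implication via exchangeability of the i.i.d.\ sample (the paper phrases this as coupling $\Mk$ with a uniform random $k$-subset of $\Mn$), and the converse via the implication (d)$\Rightarrow$(a) of \cref{thm:randompermutonthm} together with its additional statement, with uniqueness from \cref{Prop:CaracterisationLoiPermuton}.
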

\begin{proof}
	Set $n\geq k \geq 1$. The first assertion follows from the following coupled construction of ${\vec{\mathbf{m}}_n}$ and $\Mk$ : $\Mk$ is a uniform random subset of ${\vec{\mathbf{m}}_n}$, chosen independently of it. It follows that $\Perm(\Mk, \Mu) = \pat_{{\bm I}_{n,k}}(\Perm({\vec{\mathbf{m}}_n},\Mu))$, for some random subset ${\bm I}_{n,k}$ of $[n]$.
By construction, the distribution of ${\bm I}_{n,k}$ is uniform and independent of $\Perm({\vec{\mathbf{m}}_n},\Mu)$. Hence the consistency follows.
	
	The converse is immediate, by applying the implication (d)$\rightarrow$(a) 
    and the last assertion of \cref{thm:randompermutonthm} to the sequence $(\bm \rho_k)_{k\geq 1}$.
    Consistency ensures that we get the prescribed induced subpermutations,
    and uniqueness in distribution follows by \cref{Prop:CaracterisationLoiPermuton}.
\end{proof}
\section{Coding permutations by trees}\label{Sec:CodingByTrees}

\subsection{Substitution trees}
\label{sec:SubsTrees}
As seen in \cref{sec:OperatorsPermutations} (\cref{Th:AlbertAtkinson}), any permutation $\sigma$ can be recursively decomposed using substitutions in a canonical way
and this decomposition can be encoded in a canonical tree.
However, if we do not impose conditions on $\theta$ and the $\pi^{(i)}$'s (as done in \cref{Th:AlbertAtkinson}),
a permutation $\sigma$ may be represented in many ways as a substitution $\sigma=\theta[\pi^{(1)},\dots,\pi^{(d)}]$,
where the $\pi^{(i)}$'s themselves may be further decomposed using substitutions.
Such decompositions can be recorded in {\em substitution trees}.

\begin{definition}\label{dfn:trees}
A \emph{rooted planar tree} is either a leaf, or consists of a root node $\varnothing$ 
with an ordered $k$-tuple of subtrees attached to the root,
which are themselves rooted planar trees.\\
In our context, the \emph{size} of a tree $t$ is its number of leaves. 
It is denoted $|t|$, 
whereas $\#t$ denotes the number of nodes of $t$ (including both leaves and internal nodes). 
\end{definition}

Internal vertices of all trees considered in this paper have degree at least $2$.
It is natural (and also convenient for counting purposes in \cref{Sec:Enumeration}) to consider that 
the single leaf of the tree of size $1$ is also its root (and is therefore also denoted $\varnothing$). 

Since we work with \emph{planar} trees, we can label their leaves canonically with the integers from $1$ to $|t|$: the leaf labeled by $i$ is the $i$th leaf met in the depth-first traversal of $t$ which choses left before right.
A subset of the set of leaves of a tree $t$ is therefore canonically represented by a subset $I$ of $[|t|]$.

\begin{definition}\label{def:SubstitutionTree}
A \emph{substitution tree} of size $n$ is a labeled rooted planar tree with $n$ leaves,
where any internal node with $k \ge 2$ children is labeled by a permutation of size $k$.
Internal nodes with only one child are forbidden.
\end{definition}

Internal nodes labeled by the ascending permutation $1 2 \cdots r$ or the descending permutation $r \cdots 2 1$ (for some $r \ge 2$)
will play a particular role.
Therefore we replace {\em every} such label with a $\oplus$ (for ascending permutations) or a $\ominus$ (for descending permutations).
Since the size of a label corresponds to the number of children and since there is exactly one ascending (resp. descending) permutation
of each size, there is no loss of information in this replacement.
Internal nodes labeled $\oplus$ or  $\ominus$ are called {\em linear nodes}, the other nodes being called {\em nonlinear}.
Among nonlinear nodes, the ones labeled by simple permutations are called {\em simple nodes}.

An example of substitution tree is shown in \cref{fig:ExemplePermutationAssocieeAUnArbre}, left.

\begin{definition}\label{Def:PermTree}
Let $t$ be a substitution tree. We define inductively the permutation $\perm(t)$ associated with $t$:
\begin{itemize}
	\item if $t$ is just a leaf, then $\perm(t)=1$;
	\item if the root of $t$ has $r\geq 2$ children with corresponding subtrees $t_1,\ldots,t_r$ (from left to right), and is labeled with the permutation $\theta$, then $\perm(t)$ is the permutation obtained as the substitution of $\perm(t_1),\dots,\perm(t_r)$ in $\theta$:
	\[\perm(t) = \theta[\perm(t_1),\ldots,\perm(t_r)].\]
\end{itemize}
\end{definition}

\begin{figure}[htbp]
    \begin{center}
      \includegraphics[width=10cm]{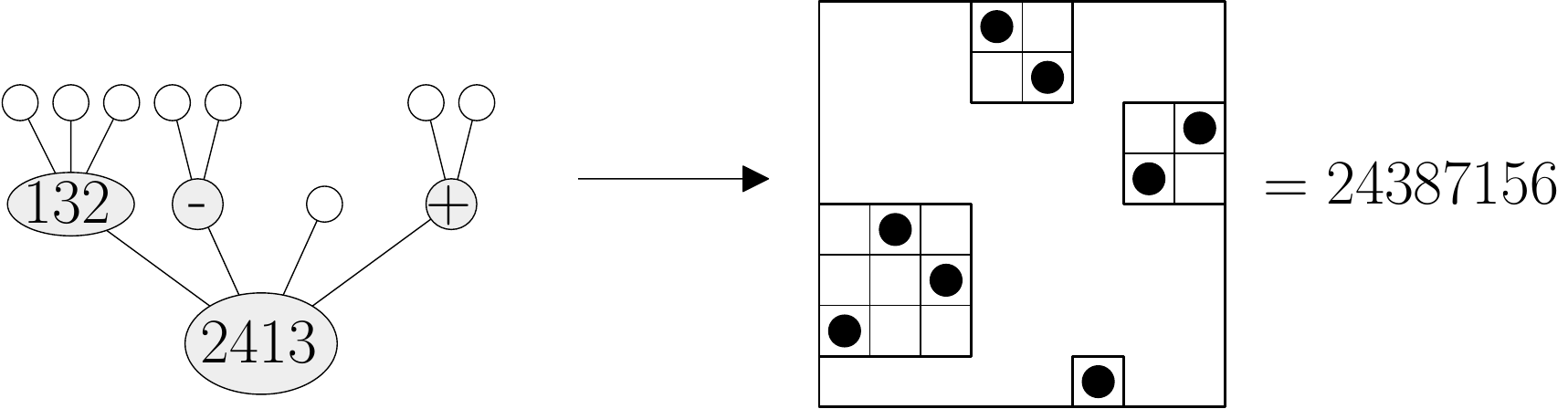}
    \end{center}
    \caption{A substitution tree encoding a permutation.}
    \label{fig:ExemplePermutationAssocieeAUnArbre}
\end{figure}

\cref{fig:ExemplePermutationAssocieeAUnArbre} illustrates this construction. 
When $\perm(t)=\sigma$, when say that $t$ is a tree that \emph{encodes} $\sigma$, or a tree \emph{associated with} $\sigma$. 
Nonsimple permutations $\sigma$ are encoded by several trees $t$.
However, if we restrict ourselves to canonical trees (which are particular cases of substitution trees; see \cref{defintro:CanonicalTree}),
we have uniqueness.
Indeed, from \cref{Th:AlbertAtkinson}, to any permutation $\sigma$ we can associate uniquely a canonical tree $t$ such that $\perm(t)=\sigma$.
\bigskip

The remaining of \cref{sec:SubsTrees} is devoted to the proof of simple combinatorial lemmas
on the structure of the set of substitution trees associated with a given permutation $\si$.
These lemmas are useful in \cref{Sec:Standard}.
\medskip

We first make the following observation.
Take a substitution tree $\tau$ of some permutation $\pi$ with a marked node $v$ labeled by $\theta$.
Consider also a substitution tree $\tau'$ of $\theta$.
Then replacing $v$ by the tree $\tau'$ yields a new substitution tree $\tau''$ of {\em the same permutation} $\pi$.
(When doing this replacement the $|\theta|$ subtrees attached to $v$ are glued on the leaves of $\tau'$,
respecting their order, see \cref{fig:inflation}.)
This operation will be referred to as {\em the inflation of $v$ with $\tau'$}.
\begin{figure}[hthb]
  \begin{center}
    \includegraphics[scale=.8]{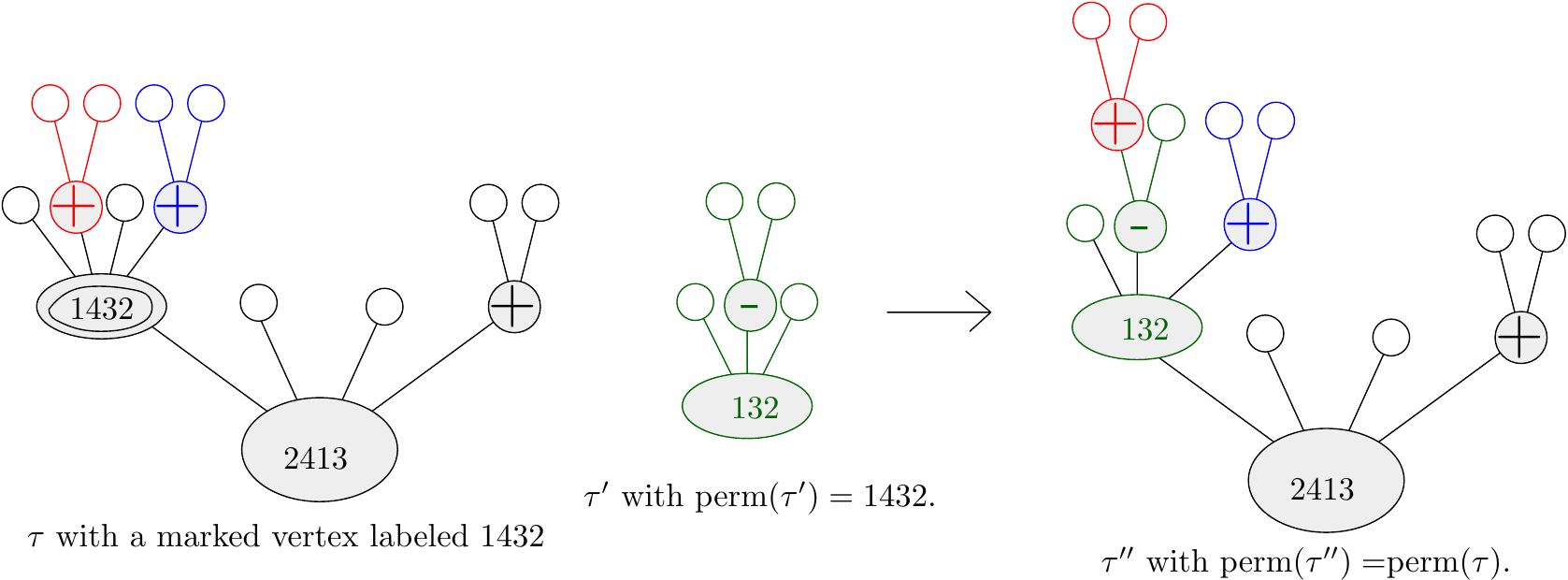}
  \end{center}
  \caption{Illustration of the inflation procedure (best seen with colors).}
  \label{fig:inflation}
\end{figure}

Conversely, consider a connected set $A$ of internal nodes in a substitution tree $\tau''$ of $\pi$.
From this set we build a substitution tree $\tau'$ whose set of internal nodes is $A$,
the ancestor-descendant relation in $\tau'$ is inherited from the one in $\tau''$,
and we add leaves so that the degree of each node of $A$ is the same in $\tau'$ than in $\tau''$.
We denote $\theta=\perm(\tau')$.
Then merging all nodes in $A$ into a single node labeled by $\theta$
turns $\tau''$ into a new substitution tree $\tau$ of {\em the same permutation} $\pi$.
We call this a {\em merge operation}.
For example, the tree $\tau$ of \cref{fig:inflation} can be obtained from the tree $\tau''$ of the same figure
by merging the nodes labeled $132$ and $\ominus$.
\medskip

We now consider a last family of substitution trees.
An \textit{expanded tree} is a substitution tree where nonlinear nodes are labeled by simple permutations,
while linear nodes are required to be binary.

\begin{lemma}
  \label{lem:fromCanonicalToExpanded}
Any expanded tree of $\pi$ is obtained from its canonical tree by inflating all nodes 
labeled by $\oplus$ (resp. $\ominus$)
with binary trees whose internal nodes are all labeled by $\oplus$ (resp. $\ominus$)
\end{lemma}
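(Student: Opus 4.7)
My plan is to prove this by going in the reverse direction: starting from any expanded tree $\tau$ of $\pi$, I will construct the canonical tree via a sequence of merge operations (which were just introduced as the inverse of inflations), and then show that the merges involved are exactly those produced by inflating linear nodes of the canonical tree with binary linear trees.

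The key step is the following merging procedure. In $\tau$, consider the equivalence relation on internal nodes that identifies two nodes if they are adjacent (parent/child) and both carry the same linear label $\oplus$ or both carry the same linear label $\ominus$. Its equivalence classes are maximal connected sets of internal nodes sharing a common linear label; call these the \emph{maximal linear blocks}. Each maximal linear block of type $\oplus$ is a binary subtree all of whose internal nodes are $\oplus$, and because of the recursive identity $\oplus[\oplus[\alpha,\beta],\gamma] = \oplus[\alpha,\beta,\gamma]$ (and its analogue at any position), the permutation it encodes is the ascending permutation $12\cdots k$, where $k$ is the number of its leaves. Hence by the merge operation described just before the lemma, we may merge such a block into a single node labeled $\oplus$ of degree $k$, without changing the permutation encoded by the tree. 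The analogous statement holds for blocks of type $\ominus$. Performing all such merges (one per maximal linear block) simultaneously produces a new substitution tree $\tau'$ still encoding $\pi$.

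I now claim that $\tau'$ is the canonical tree of $\pi$. Indeed, nonlinear nodes of $\tau'$ are inherited from $\tau$, so they are labeled by simple permutations; linear nodes of $\tau'$ have degree at least $2$ (since a maximal linear block has at least one internal node, hence at least two leaves); and by maximality of the blocks, no $\oplus$-labeled node of $\tau'$ has a $\oplus$-labeled child, and similarly for $\ominus$. These are precisely the defining properties of a canonical tree (\cref{defintro:CanonicalTree}), so by the uniqueness part of \cref{Th:AlbertAtkinson}, $\tau'$ must coincide with the canonical tree of $\pi$.

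Inverting the merges now gives the result: each linear node $v$ of the canonical tree is the image of exactly one maximal linear block of $\tau$, which is a binary tree $\tau_v'$ whose internal nodes all carry the same label as $v$; and $\tau$ is precisely the substitution tree obtained from the canonical tree by inflating each such $v$ with $\tau_v'$. The main obstacle is in fact only bookkeeping: one must check that the merge and inflation operations are mutually inverse at the level of planar structures, so that after inflating the linear nodes of the canonical tree by the $\tau_v'$ one recovers $\tau$ exactly (and not merely a tree encoding the same permutation). This follows from the fact that in the merge operation, the children of the resulting node are, in planar order, precisely the leaves of the block in the depth-first order inherited from $\tau$, so that inverting the merge replants the original subtrees of $\tau$ at the correct positions.
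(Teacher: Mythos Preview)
Your proof is correct and follows essentially the same approach as the paper: both start from an arbitrary expanded tree, merge adjacent linear nodes with the same label until none remain (you do this in one step via maximal linear blocks, the paper iteratively), verify that the result satisfies the defining conditions of a canonical tree and hence equals it by uniqueness, and then invert the merges to obtain the inflation description. Your version is somewhat more careful about the planar bookkeeping in the inversion step, but the core argument is identical.
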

\begin{proof}
  Let $\tau$ be an expanded tree of $\pi$. 
  Consider, if any, two adjacent linear nodes of $\tau$ with the same label (either both $\oplus$ or both $\ominus$)
  and merge them. Note that the resulting node will still have label $\oplus$ or $\ominus$.
  We repeat this operation until there is no adjacent linear nodes with the same label.
  Nonlinear nodes in the resulting tree $\tau'$ are all labeled by simple permutations:
  it is the case in $\tau$ (by definition of expanded trees) and we did not create any new nonlinear nodes.
  Therefore $\tau'$ satisfy all conditions of canonical trees (see \cref{defintro:CanonicalTree}).
  By uniqueness, $\tau'$ is {\em the} canonical tree of $\pi$.
  Reversing the merge operations, $\tau$ can be obtained from $\tau'$ by inflating its linear nodes,
  which proves the proposition.
\end{proof}

We recall a fact well-known to combinatorialists:
 the number of complete binary trees ({\em i.e.} plane rooted trees, whose internal vertices have all degree $2$)
 with $d$ leaves is $\Cat_{d-1}$. Therefore each linear node of degree $d$ of the canonical tree,
 can be inflated with a binary tree in $\Cat_{d-1}$ ways.
We therefore get the following interesting corollary, regarding the number and properties of expanded trees.
\begin{corollary}
  \label{cor:OnExpandedTrees}
 Let $\pi$ be a permutation and $d_1,\cdots,d_r$ (resp. $e_1,\cdots,e_s$) be the degrees of the nodes
 labeled $\oplus$ (resp. $\ominus$)
 in the canonical tree of $\pi$. Then
 \begin{itemize}
   \item the number $\widetilde{N_\pi}$ of expanded trees of $\pi$ is $\prod_{i=1}^r \Cat_{d_i-1}\, \prod_{j=1}^s \Cat_{e_j-1}$,
where we denote by $\Cat_{k} := \frac{1}{k+1}\binom{2k}{k}$ the $k$-th Catalan number, which counts complete binary trees with $k$ leaves.
   \item each expanded tree of $\pi$ has $\sum_{i=1}^r (d_i-1)$ nodes labeled $\oplus$
     and $\sum_{j=1}^s (e_j-1)$ nodes labeled $\ominus$.
   \item the labels of the nonlinear nodes in any expanded tree of $\tau$
     are the same as in its canonical tree.
 \end{itemize}
\end{corollary}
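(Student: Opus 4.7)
The plan is to leverage Lemma~\ref{lem:fromCanonicalToExpanded} and reduce everything to a counting of complete binary trees. Let $\tau_0$ denote the canonical tree of $\pi$, and let $v_1,\dots,v_r$ (resp.\ $w_1,\dots,w_s$) be its linear nodes labeled $\oplus$ (resp.\ $\ominus$), of degrees $d_1,\dots,d_r$ (resp.\ $e_1,\dots,e_s$). By Lemma~\ref{lem:fromCanonicalToExpanded}, every expanded tree $\tau$ of $\pi$ arises by choosing, for each $v_i$, a complete binary tree $\beta_i$ with $d_i$ leaves whose internal nodes are all labeled $\oplus$, for each $w_j$, a complete binary tree $\gamma_j$ with $e_j$ leaves whose internal nodes are all labeled $\ominus$, and inflating each $v_i$ with $\beta_i$ and each $w_j$ with $\gamma_j$.

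First I would establish that this assignment $(\beta_1,\dots,\beta_r,\gamma_1,\dots,\gamma_s)\mapsto \tau$ is a bijection onto the set of expanded trees of $\pi$. Surjectivity is precisely Lemma~\ref{lem:fromCanonicalToExpanded}. For injectivity, the canonical tree $\tau_0$ can be recovered from $\tau$ by iteratively merging adjacent linear nodes sharing the same sign (as in the proof of Lemma~\ref{lem:fromCanonicalToExpanded}); hence in $\tau$ the maximal connected subtree of nodes labeled $\oplus$ that gets merged back into $v_i$ is uniquely determined, and this subtree is exactly $\beta_i$ (and similarly for $\gamma_j$). So the data $(\beta_i)_i,(\gamma_j)_j$ can be read off $\tau$.

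Since the labels in each $\beta_i$ (resp.\ $\gamma_j$) are all forced to be $\oplus$ (resp.\ $\ominus$), the only freedom is the planar shape. The number of complete binary trees with $k$ leaves is $\Cat_{k-1}$, so there are $\Cat_{d_i-1}$ choices for $\beta_i$ and $\Cat_{e_j-1}$ choices for $\gamma_j$. Multiplying yields
\[
\widetilde{N_\pi}=\prod_{i=1}^r \Cat_{d_i-1}\,\prod_{j=1}^s \Cat_{e_j-1},
\]
proving the first item. For the second item, a complete binary tree with $k$ leaves has exactly $k-1$ internal nodes; inflating $v_i$ with $\beta_i$ therefore contributes $d_i-1$ nodes labeled $\oplus$ to $\tau$, and the nodes $v_1,\dots,v_r$ themselves are absorbed into these inflations. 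The $\oplus$-count of $\tau$ is thus $\sum_{i=1}^r(d_i-1)$, and symmetrically for $\ominus$.

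Finally, the third item follows immediately from the construction: inflation only inserts new linear internal nodes (with forced signs) and replaces each linear node $v_i$ or $w_j$ of $\tau_0$ by a tree whose internal labels are all linear. The nonlinear nodes of $\tau_0$ are untouched and appear unchanged in $\tau$, so they coincide with the nonlinear nodes of $\tau$, with the same labels. The only delicate point in the whole argument is the injectivity of the inflation map, which is where one has to be careful that the merging procedure identifies precisely the $\beta_i$ and $\gamma_j$; everything else is routine.
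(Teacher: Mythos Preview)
Your proof is correct and follows exactly the same approach as the paper: derive the corollary from Lemma~\ref{lem:fromCanonicalToExpanded} by counting the $\Cat_{d-1}$ complete binary trees available for each linear node of degree $d$. Your write-up is actually more thorough than the paper's, which only gives a one-sentence justification and does not explicitly discuss the injectivity of the inflation map that you take care to address.
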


\begin{lemma}
  \label{lem:fromExpandedToAll}
  Any substitution tree of $\pi$ can be obtained from some expanded tree of $\pi$
  by merge operations.
\end{lemma}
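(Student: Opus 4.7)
The plan is to construct, locally at each internal node $v$ of $\tau$, an expanded tree $E_v$ of the label of $v$, then to glue these together by inflating each $v$ with $E_v$; the merge operation will naturally invert this gluing.

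First, I would fix a substitution tree $\tau$ of $\pi$, and for each internal node $v$ of $\tau$ denote by $\theta_v$ its label (a permutation of size equal to the degree of $v$). By \cref{Th:AlbertAtkinson}, $\theta_v$ admits a canonical tree $c_v$. I would then form an expanded tree $E_v$ of $\theta_v$ from $c_v$ by inflating each linear node of degree $d$ with an arbitrarily chosen complete binary tree having $d$ leaves whose internal nodes all carry the same linear label as the node being inflated, and by keeping every simple (nonlinear) node of $c_v$ unchanged. Since inflation preserves the associated permutation, $E_v$ still encodes $\theta_v$; by construction its linear nodes are binary and its nonlinear nodes are simple, so it is expanded (and in fact consistent with \cref{lem:fromCanonicalToExpanded}).

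Second, I would inflate every internal node $v$ of $\tau$ with its corresponding $E_v$, producing a tree $\tau'$. As inflation preserves the associated permutation and introduces no internal node of degree one, $\tau'$ is a substitution tree of $\pi$. Every internal node of $\tau'$ lies inside some $E_v$ and is therefore either a binary linear node or a simple nonlinear node, so $\tau'$ is an expanded tree of $\pi$.

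Third, I would check that the merge operation is the exact inverse of inflation. For each $v \in \Internal{\tau}$, the internal nodes coming from $E_v$ form a connected subset $A_v$ of the internal nodes of $\tau'$. The substitution tree built from $A_v$ in the definition of the merge operation is canonically isomorphic to $E_v$, so its associated permutation equals $\perm(E_v) = \theta_v$. Merging $A_v$ therefore replaces it with a single node labeled $\theta_v$ whose subtrees are exactly those hanging at $v$ in $\tau$. The sets $A_v$, for $v$ running over $\Internal{\tau}$, are pairwise disjoint, so the merges commute and can be performed in any order; performing all of them transforms $\tau'$ into $\tau$, proving the claim. The argument is essentially combinatorial bookkeeping, and I do not expect any serious obstacle: the only input beyond the definitions is the existence of $c_v$ at each internal node, guaranteed by \cref{Th:AlbertAtkinson}.
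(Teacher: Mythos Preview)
Your proof is correct and follows essentially the same approach as the paper: inflate each node of $\tau$ by an expanded tree of its label to produce an expanded tree $\tau'$, then recover $\tau$ by merging. The paper's version is marginally terser in that it only inflates nodes that are neither simple nor binary (inflating the others being a no-op), but the argument is the same.
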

\begin{proof}
  The proof is similar to that of \cref{lem:fromCanonicalToExpanded}.
  Starting from any substitution tree of $\pi$ and %
  inflating every node that is neither simple nor binary by an expanded tree encoding its label,
  we get an expanded tree. Reversing these inflation operations,
  we can obtain any substitution tree from some expanded tree  of $\pi$,
  using only merge operations.
\end{proof}

\subsection{Induced trees}

Since permutations are encoded by trees and since we are interested in patterns in permutations,
we consider an analogue of patterns in trees: this leads to the notion of {\em induced trees}.

\begin{definition}[First common ancestor]\label{dfn:common_ancestor}
Let $t$ be a tree, and $u$ and $v$ be two nodes (internal nodes or leaves) of $t$. 
The \emph{first common ancestor} of $u$ and $v$ is the node furthest away from the root $\varnothing$ that appears 
on both paths from $\varnothing$ to $u$ and from $\varnothing$ to $v$ in $t$. 
\end{definition}

The following simple observation  allows to read the relative order of $\sigma_i$ and $\sigma_j$ in any substitution tree encoding $\sigma$.
\begin{observation}\label{obs:caract_perm_tau}
Let $i \neq j$ be two leaves of a substitution tree $t$ and $\sigma=\perm(t)$. Let $v$ be the first common ancestor of $i,j$ in $t$ and $\theta$ be the permutation labeling $v$.
We define $k$ (resp. $\ell$) such that the $k$-th (resp. $\ell$-th) child of $v$ is an ancestor of $i$ (resp. $j$).

Then $\sigma_i>\sigma_j$ if and only if $\theta_k>\theta_\ell$.
\end{observation}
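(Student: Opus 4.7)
The plan is to prove the statement by induction on the number of internal nodes of the substitution tree $t$. The base case where $t$ has no internal node is vacuous, since $t$ then consists of a single leaf and no pair $i \neq j$ exists.

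For the inductive step, let $\theta_0$ be the permutation labeling the root of $t$, of size $r$, and let $t_1, \ldots, t_r$ be the subtrees hanging from the root. By \cref{Def:PermTree}, $\sigma = \perm(t) = \theta_0[\perm(t_1), \ldots, \perm(t_r)]$. I would then distinguish two cases according to whether $v$ is the root of $t$ or not.

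In the first case, $v$ equals the root and $\theta = \theta_0$; the leaves $i$ and $j$ belong to two distinct subtrees $t_k$ and $t_\ell$ respectively, with $k \neq \ell$. By the very definition of the substitution $\theta_0[\perm(t_1), \ldots, \perm(t_r)]$, the values $\sigma_m$ for $m$ running over the leaves of $t_k$ form an interval of integers in $\sigma$, and likewise for $t_\ell$; moreover the relative position of these two intervals is dictated by the relative order of $(\theta_0)_k$ and $(\theta_0)_\ell$. Consequently $\sigma_i > \sigma_j$ if and only if $\theta_k > \theta_\ell$, which is exactly the claim.

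In the second case, both $i$ and $j$ lie in the same subtree $t_m$ for some $m$; then $v$ is an internal node of $t_m$, and the first common ancestor of $i,j$ within $t_m$ is also $v$, with the same label $\theta$ and the same indices $k, \ell$. Since, in the substitution, the values of the leaves of $t_m$ form a contiguous block inside $\sigma$ preserving the relative order given by $\perm(t_m)$, the inequality $\sigma_i > \sigma_j$ is equivalent to $\perm(t_m)_{i'} > \perm(t_m)_{j'}$, where $i'$ and $j'$ are the positions of the two leaves within $t_m$. The induction hypothesis, applied to $t_m$ (which has strictly fewer internal nodes), then gives the equivalence with $\theta_k > \theta_\ell$, concluding the proof. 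There is no real obstacle here: the statement is essentially a direct unwinding of \cref{Def:PermTree} together with the definition of $\theta[\pi^{(1)},\dots,\pi^{(d)}]$; the only point worth emphasizing is the simple but crucial observation that the substitution operation preserves the inner order of each block while the outer order between blocks is entirely encoded by $\theta_0$.
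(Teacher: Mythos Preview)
Your proof is correct. The paper actually states this as an \emph{Observation} without proof, treating it as immediate from the definitions of substitution (\cref{sec:OperatorsPermutations}) and of $\perm(t)$ (\cref{Def:PermTree}). Your induction on the number of internal nodes is exactly the natural formalization of this ``unwinding of definitions'', and both cases (first common ancestor at the root versus deeper in the tree) are handled correctly.
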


\begin{definition}[Induced tree]\label{dfn:induced_subtree}
Let $t$ be a substitution tree, and let $I$ be a subset of the leaves of $t$.
The tree $t_I$ induced by $I$ is the substitution tree of size $|I|$ defined as follows.
The tree structure of $t_I$ is given by:
\begin{itemize}
 \item the leaves of $t_I$ are the leaves of $t$ labeled by elements of $I$; 
 \item the internal nodes of $t_I$ are the nodes of $t$ that are first common ancestors of two (or more) leaves in $I$; 
 \item the ancestor-descendant relation in $t_I$ is inherited from the one in $t$; 
 \item the order between the children of an internal node of $t_I$ is inherited from $t$. 
\end{itemize}
The label of an internal node $v$ of $t_I$ is defined as follows:
\begin{itemize}
 \item if $v$ is labeled by a permutation $\theta$ in $t$, the label of $v$ in $t_I$ is given by the pattern of $\theta$ 
 induced by the children of $v$ having a descendant that belongs to $t_I$ (or equivalently, to $I$).
\end{itemize}
\end{definition}
A detailed example of the induced tree construction is given in \cref{fig:ExampleCanonicalTree}.
\begin{figure}[htbp]
    \begin{center}
      \includegraphics[width=12cm]{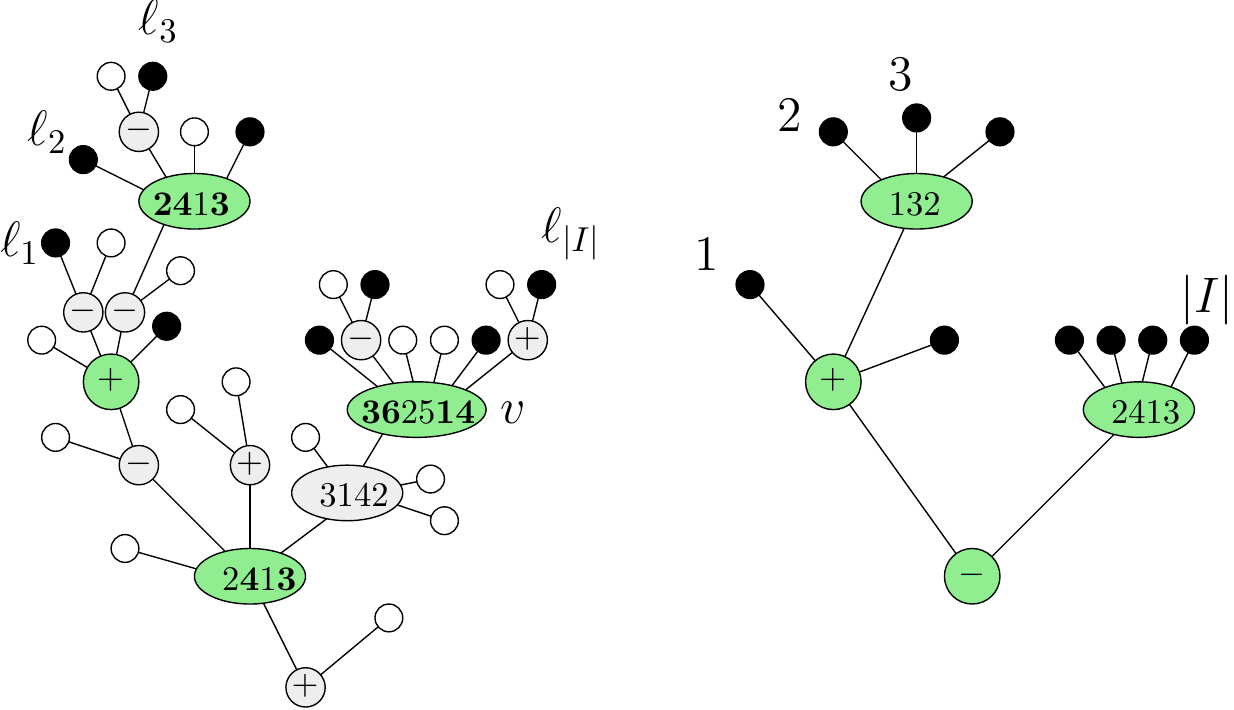}
    \end{center}
\caption{On the left: A substitution tree $t$ of size $n=26$ (which happens to be a canonical tree), where leaves are indicated both by $\circ$ and $\bullet$. 
Among these $26$ leaves, $|I|=9$ leaves are marked and indicated by $\bullet$. 
In green are shown the internal nodes of $t$ which are first common ancestors of these $9$ marked leaves.
On the right: The substitution tree induced by the $9$ marked leaves.
Observe that the node $v$ labeled by $362514$ in $t$ is labeled by $2413$ in $t_I$. 
This is because only the first, second, fifth and sixth children of $v$ have descendants that belong to $I$, and $\pat_{\{1,2,5,6\}}(362514)=2413$.
The induced tree is not canonical since $132$ is not simple.}
    \label{fig:ExampleCanonicalTree}
\end{figure}
Note that if $v$ has label $\oplus$ in $t$, it has also label $\oplus$ in $t_I$.
Indeed, $\oplus$ nodes correspond to increasing permutations and all patterns of increasing permutations
are increasing permutations. The same holds with $\ominus$. The converse is however not true:
a node can be linear in $t_I$ but nonlinear in $t$ ( \emph{e.g.} the bottommost green node in \cref{fig:ExampleCanonicalTree}).

\begin{observation}
By definition, for any substitution tree $t$ with $k$ leaves and subset $I$ of $[k]$, $t_I$ is a substitution tree.
However, if $t$ is a canonical tree, $t_I$ is a substitution tree which is not necessarily canonical
(see for example \cref{fig:ExampleCanonicalTree}).
\end{observation}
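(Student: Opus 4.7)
The plan is to verify that the construction in \cref{dfn:induced_subtree} indeed produces a substitution tree in the sense of \cref{def:SubstitutionTree}, and then to invoke the example in \cref{fig:ExampleCanonicalTree} for the assertion about canonicality. The rooted planar structure and the ancestor-descendant order are inherited directly from $t$, so the only nontrivial conditions to check are: (a) every internal node $v$ of $t_I$ has at least two children in $t_I$, and (b) the label assigned to $v$ in $t_I$ is a permutation whose size equals the degree of $v$ in $t_I$.

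The central step that drives both (a) and (b) is to establish a bijection between the children of $v$ in $t_I$ and the children $c$ of $v$ in $t$ whose subtree (in $t$) contains at least one leaf of $I$. In one direction, given such a $c$, one associates either the unique leaf of $I$ in its subtree (if there is only one), or else the first common ancestor, within $c$'s subtree, of the leaves of $I$ contained in it (if there are several). The main point to verify---which I expect to be the only real piece of work---is that no internal node of $t_I$ lies strictly between $v$ and this associated descendant: any such strictly intermediate node would sit inside $c$'s subtree and, by construction, would have all leaves of $I$ among its descendants gathered inside a single one of its own subtrees, so it could not be a first common ancestor of two leaves of $I$. Once the bijection is in hand, (a) follows from the very definition of internal nodes of $t_I$ as first common ancestors of at least two leaves of $I$ (so at least two children of $v$ in $t$ have descendants in $I$), and (b) follows immediately from the definition of the label of $v$ in $t_I$ as the pattern of $\theta$ induced by exactly those children.

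For the second assertion, I would simply invoke the example of \cref{fig:ExampleCanonicalTree}: there $t$ is canonical but the induced tree $t_I$ contains a node labeled by $132$, and since $132$ is neither simple nor of the form $\oplus$ or $\ominus$, this tree violates the first condition of \cref{defintro:CanonicalTree}. The underlying conceptual reason is that a pattern of a simple permutation need not itself be simple (nor is it constrained to respect the no-same-label condition between parent and child), so the canonical-tree conditions are not preserved when passing from $t$ to $t_I$.
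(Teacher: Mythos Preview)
Your verification is correct and in fact more thorough than the paper, which treats this statement as a bare observation with no accompanying proof: the paper simply relies on the reader to see that the construction of \cref{dfn:induced_subtree} yields a substitution tree, and points to \cref{fig:ExampleCanonicalTree} for the failure of canonicality. Your explicit bijection between the children of $v$ in $t_I$ and the children of $v$ in $t$ having a descendant in $I$, together with the argument ruling out intermediate first common ancestors, is exactly the right way to make the observation rigorous, and your reading of the figure (the node labeled $132$, which is neither simple nor $\oplus/\ominus$) matches the paper's intended example.
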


An important feature of induced trees is the following, which follows from \cref{obs:caract_perm_tau} and is illustrated in \cref{fig:DiagrammeCommutatif}.
\begin{lemma}\label{lem:DiagrammeCommutatif} 
Let $t$ be a substitution tree with $k$ leaves, and $I$ be a subset of $[k]$.
We have 
$$
\pat_I(\perm(t)) = \perm(t_I).
$$
\end{lemma}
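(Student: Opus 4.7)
The plan is to establish the equality of the permutations $\pat_I(\perm(t))$ and $\perm(t_I)$ by a pairwise comparison argument, using \cref{obs:caract_perm_tau} as the key tool. Both permutations have size $|I|$, and their positions are canonically indexed by the elements of $I$ taken in increasing order (for $\pat_I(\perm(t))$) and by the leaves of $t_I$ read in the depth-first order inherited from $t$ (for $\perm(t_I)$); these two indexings coincide by construction of $t_I$. Since a permutation is determined by the relative order of each pair of its entries, it suffices to show that for any two distinct leaves $i, j \in I$ the corresponding entries are in the same relative order in both permutations.

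For $\pat_I(\perm(t))$, setting $\sigma = \perm(t)$, the definition of the pattern shows that the relative order of the two entries is the same as that of $\sigma_i$ and $\sigma_j$. Applying \cref{obs:caract_perm_tau} to $t$, one has $\sigma_i > \sigma_j$ if and only if $\theta_k > \theta_\ell$, where $v$ is the first common ancestor of $i$ and $j$ in $t$, $\theta$ is its label, and the $k$-th (respectively $\ell$-th) child of $v$ is the ancestor of $i$ (respectively $j$).

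For $\perm(t_I)$, I would first note that $v$ is still an internal node of $t_I$ (being a first common ancestor in $t$ of two leaves of $I$) and, moreover, is still the first common ancestor of $i$ and $j$ in $t_I$: any strictly deeper common ancestor in $t_I$ would, by the inheritance of the ancestor-descendant relation from $t$, contradict the choice of $v$. Let $K \subseteq [|\theta|]$ denote the set of indices of children of $v$ in $t$ that have a descendant in $I$, so that by \cref{dfn:induced_subtree} the label of $v$ in $t_I$ is $\theta' := \pat_K(\theta)$. Denoting by $k'$ and $\ell'$ the ranks of $k$ and $\ell$ within $K$, a second application of \cref{obs:caract_perm_tau}, this time to $t_I$, shows that the relative order of the two entries of $\perm(t_I)$ is governed by $\theta'_{k'}$ versus $\theta'_{\ell'}$. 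But by the very definition of $\pat_K$, the inequality $\theta'_{k'} > \theta'_{\ell'}$ holds if and only if $\theta_k > \theta_\ell$. Combined with the previous paragraph, this shows that the entries at positions $i$ and $j$ are in the same relative order in both permutations, which concludes the proof.

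The only real difficulty is bookkeeping: carefully identifying the positions of $\pat_I(\perm(t))$ with the leaves of $t_I$ (both canonically in bijection with $I$), and tracking how the indices $k, \ell$ of children of $v$ are relabeled as $k', \ell'$ when passing from $t$ to $t_I$. Everything else reduces to two applications of \cref{obs:caract_perm_tau}, which does all the conceptual work.
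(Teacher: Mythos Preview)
Your proof is correct and follows exactly the approach the paper indicates: the paper merely states that the lemma ``follows from \cref{obs:caract_perm_tau}'' without spelling out the details, and you have supplied precisely those details via the pairwise-comparison argument and two applications of that observation.
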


As a consequence of this formula, counting the total number of occurrences of a given pattern
in some family of permutations can be reduced to counting the total number of induced trees
equal to a given $t_0$ in the corresponding family of canonical trees.
This is precisely the goal of the next section.

\begin{figure}[htbp]
    \begin{center}
      \includegraphics[width=10cm]{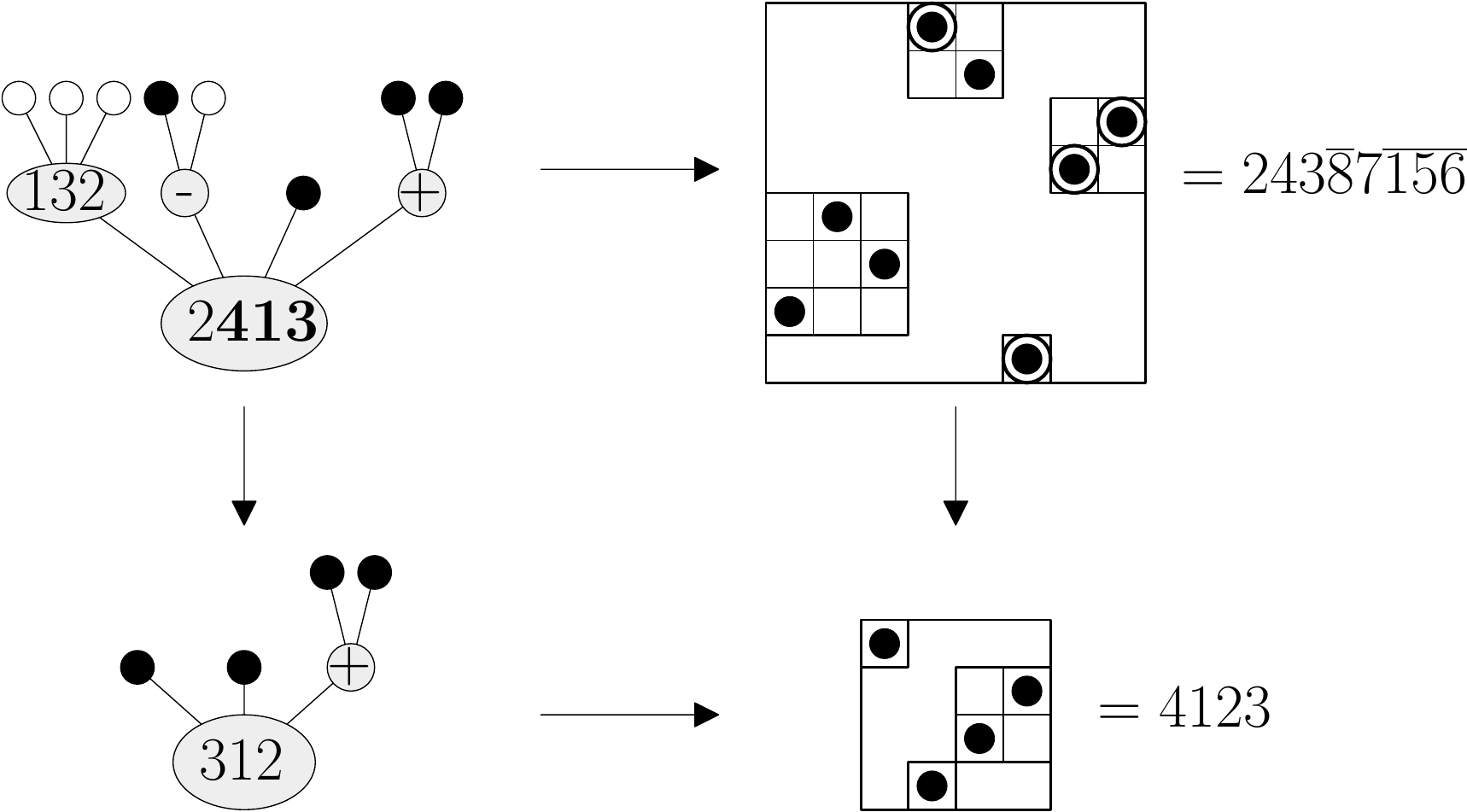}
    \end{center}
    \caption{Illustration of \cref{lem:DiagrammeCommutatif}. 
    On the top: A substitution tree $t$ with $|I|$ marked leaves (in this example $k=8$ and $I=\{4,6,7,8\}$), and the permutation $\perm(t)$ it encodes, with the corresponding $|I|$ marked elements (at positions in $I$). 
    On the bottom: The induced tree $t_I$ and the induced pattern $\pat_I(\perm(t)) = \perm(t_I)$.}
    \label{fig:DiagrammeCommutatif}
\end{figure}

\newpage
\section{Exact enumeration of various families of trees}
\label{Sec:Enumeration}
Let $\mathcal{S}$ be a fixed family of simple permutations. Recall that its generating function is
$$
S(z)=\sum_{\alpha \in \mathcal{S}} z^{|\alpha|}= \sum_{n\geq 4} s_nz^{n},
$$
where $s_n$ is the number of permutations of size $n$ in $\mathcal{S}$. %
An {\em $\mathcal{S}$-canonical tree} is any canonical tree whose simple nodes carry labels in $\mathcal S$. 
We denote by $\mathcal{T}$ the combinatorial class of $\mathcal{S}$-canonical trees,
the size of $|t|$ a tree $t$ being its number of leaves.
Recall that $\langle \mathcal S\rangle$ is by definition the set of permutations whose canonical tree is in $\mathcal{T}$.
Since canonical trees encode permutations in a unique way,
$\perm$ defines a size-preserving bijection between  $\mathcal{T}$ and $\langle \mathcal S\rangle$.
Both have therefore the same generating function which we denote by
$$
T(z)=\sum_{t\in \mathcal{T}}z^{|t|}= \sum_{\sigma\in\langle \mathcal{S}\rangle}z^{|\sigma|}.
$$

In \cref{Subsec:CombSyst} below, we explain how to compute $T(z)$ starting from the datum $S(z)$.
We then study families of $\mathcal{S}$-canonical trees with one marked leaf,
with constraints on the root and/or on the marked leaf.
These are building blocks for \cref{Subsec:DecompTrees},
where we consider the family of $\mathcal{S}$-canonical trees with $k$ marked leaves,
inducing a given tree $t_0$.

\subsection{Generating functions of $\mathcal{S}$-canonical trees (possibly with marked leaves)}
\label{Subsec:CombSyst}
In order to compute $T(z)$ in terms of $S(z)$,
we need to introduce the auxiliary family $\mathcal{T}_{\nonp}$ (resp. $\mathcal{T}_{\nonm}$) of $\mathcal{S}$-canonical trees with a root (always denoted $\varnothing$) that is \textbf{not} labeled $\oplus$ (resp. $\ominus$), 
and its generating function $T_{\nonp}$ (resp. $T_{\nonm}$):
$$
T_\nonp(z)=\sum_{\substack{t\in \mathcal{T};\\ \varnothing\text{ is not labeled }\oplus}}z^{|t|}\,.
$$
Note that replacing all labels $\ominus$ by $\oplus$ and $\oplus$ by $\ominus$ 
defines an involution on $\mathcal S$-canonical trees.
This implies in particular $T_{\nonp}=T_{\nonm}$ and will be used to get other similar identities below. 

\begin{proposition}
  Together with the condition $T_{\nonp}(0)=0$,
  the generating function  $T_{\nonp}$ is determined by the following implicit equation
  \begin{equation}
    T_{\nonp}= z + \frac{T_{\nonp}^2}{1-T_{\nonp}} + S \left(\frac{T_{\nonp}}{1-T_{\nonp}}\right).
    \label{eq:Tnonp}
  \end{equation}
  The main series $T$ is then simply given in terms of $\, T_{\nonp}$ by
  \begin{equation}
    T= \frac{T_{\nonp}}{1-T_{\nonp}}.
    \label{eq:T_Tnp}
  \end{equation}
\label{Prop:systeme1}
\end{proposition}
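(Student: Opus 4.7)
The plan is to translate the combinatorial definition of $\mathcal{S}$-canonical trees into a system of generating function equations by decomposing trees according to the label of their root, and then to eliminate one of the unknowns.

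First, I would observe the symmetry obtained by swapping every $\oplus$ label with $\ominus$ and vice-versa: this is a size-preserving involution on $\mathcal{T}$ that exchanges $\mathcal{T}_{\nonp}$ with $\mathcal{T}_{\nonm}$, so that $T_{\nonp} = T_{\nonm}$. Next, I would decompose a tree $t \in \mathcal{T}_{\nonp}$ according to the nature of its root $\varnothing$. Three mutually exclusive cases arise from \cref{defintro:CanonicalTree}:
\begin{itemize}
\item $t$ is reduced to a leaf, contributing $z$;
\item the root is labeled $\ominus$ with $d \geq 2$ children, each of which (by the canonicity constraint that a child of an $\ominus$ node cannot be labeled $\ominus$) must be a tree in $\mathcal{T}_{\nonm}$; this contributes $\sum_{d\geq 2} T_{\nonm}^{\,d} = \tfrac{T_{\nonm}^{\,2}}{1-T_{\nonm}}$, which equals $\tfrac{T_{\nonp}^{\,2}}{1-T_{\nonp}}$ by the symmetry above;
\item the root is labeled by some $\alpha \in \mathcal{S}$ of size $|\alpha| \geq 4$, with $|\alpha|$ children that are arbitrary $\mathcal{S}$-canonical trees; summing over $\alpha$ gives $\sum_{\alpha\in\mathcal{S}} T^{|\alpha|} = S(T)$.
\end{itemize}
Adding the three contributions yields $T_{\nonp} = z + \tfrac{T_{\nonp}^{\,2}}{1-T_{\nonp}} + S(T)$.

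A parallel decomposition of an arbitrary tree in $\mathcal{T}$, splitting on whether the root is labeled $\oplus$ or not, gives
\begin{equation*}
T \;=\; T_{\nonp} \;+\; \frac{T_{\nonp}^{\,2}}{1-T_{\nonp}} \;=\; \frac{T_{\nonp}}{1-T_{\nonp}},
\end{equation*}
which is \eqref{eq:T_Tnp}. Substituting this expression for $T$ in the equation obtained for $T_{\nonp}$ produces exactly \eqref{eq:Tnonp}.

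Finally, for the uniqueness statement, I would argue at the level of formal power series: the right-hand side of \eqref{eq:Tnonp} can be written as $z + \Phi(T_{\nonp})$ where $\Phi(u) = \tfrac{u^2}{1-u} + S\!\bigl(\tfrac{u}{1-u}\bigr)$ is an analytic function with $\Phi(0)=\Phi'(0)=0$ (the latter because $S$ has valuation at least $4$, so $S(u/(1-u)) = O(u^4)$). Together with the initial condition $T_{\nonp}(0)=0$, this allows the coefficients of $T_{\nonp}$ to be computed one by one by extracting $[z^n]$ on both sides, showing that the equation admits a unique formal power series solution. I don't expect any real obstacle: the whole argument is a direct application of the symbolic method to the recursive structure encoded by \cref{defintro:CanonicalTree}, the only subtle point being the careful use of the symmetry $T_{\nonp}=T_{\nonm}$ when the root is labeled $\ominus$.
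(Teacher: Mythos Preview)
Your proof is correct and follows essentially the same approach as the paper: decompose $\mathcal{T}_{\nonp}$ and $\mathcal{T}$ according to the root label, use the $\oplus/\ominus$ symmetry to identify $T_{\nonp}=T_{\nonm}$, combine the two equations to get \eqref{eq:T_Tnp}, and substitute back to obtain \eqref{eq:Tnonp}. Your uniqueness argument via $\Phi(0)=\Phi'(0)=0$ is slightly more explicit than the paper's, which simply observes that the coefficients can be computed inductively.
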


\begin{proof}
A tree of $\mathcal{T}$ is either a leaf, or a root labeled $\oplus$ and a sequence of at least two trees in $\mathcal{T}_{\nonp}$, or a root labeled  $\ominus$ and a sequence of at least two trees in $\mathcal{T}_{\nonm}$, or a root labeled by $\alpha \in \mathcal{S}$ and a sequence of $|\alpha|$ unconstrained trees. Therefore
$$
T=z+ \frac{T_{\nonp}^2}{1-T_{\nonp}}+\frac{T_{\nonm}^2}{1-T_{\nonm}} + S(T)=z+ 2\,\frac{T_{\nonp}^2}{1-T_{\nonp}} + S(T)
$$
Similarly,
\begin{equation}
T_{\nonp}=T_{\nonm}=z+ \frac{T_{\nonp}^2}{1-T_{\nonp}} + S(T).
\label{eq:Tech10}
\end{equation}

By combining these two equations we get
$T=T_{\nonp} + \frac{T_{\nonp}^2}{1-T_{\nonp}}= \frac{T_{\nonp}}{1-T_{\nonp}}$, that is \cref{eq:T_Tnp}.
Substituting it back in \cref{eq:Tech10} gives \cref{eq:Tnonp}.

Observe, that under the assumption $T_{\nonp}(0)=0$, \cref{eq:Tnonp} allows one to compute
  inductively the coefficients of $T_{\nonp}$.
  Hence $T_{\nonp}$ is uniquely determined by $T_{\nonp}(0)=0$ and \cref{eq:Tnonp},
  as claimed.
\end{proof}

We now consider trees with %
a marked leaf. As before, subscripts indicate a constraint on the root.
The generating function of trees with a marked leaf counted by their number of {\bf unmarked} leaves
is obtained by differentiating the generating function of trees without marked leaf: $T'$, $T'_{\nonp}$,  $T'_{\nonm}$. Indeed,
$$
T'_\nonp(z)=\sum_{\substack{t\in \mathcal{T};\\ \varnothing\text{ is not labeled }\oplus}}|t|\ z^{|t|-1}
=\sum_{\substack{\hat{t} \text{ obtained by marking a leaf }\\ \text{ from a tree of }\mathcal{T} \text{ of root not labeled }\oplus \\ }}z^{\#\text{unmarked leaves of }\\ \hat{t}}\ .
$$
Accordingly, we denote by $\mathcal{T}'$, $\mathcal{T}'_{\nonp}$ and $\mathcal{T}'_{\nonm}$ the families of trees counted by $T'$, $T'_{\nonp}$ and $T'_{\nonm}$. 

Consistently, we use superscripts when we consider families of trees with a marked leaf that satisfies an additional constraint, and similarly for their generating function.
We say that a leaf is $\oplus$-replaceable (resp. $\ominus$-replaceable)
if it may  be replaced by a tree whose root is labeled $\oplus$ (resp. $\ominus$)
without violating the definition of canonical trees (see the third item in \cref{defintro:CanonicalTree}).
In other words, its parent (if it exists) should be labeled by $\ominus$ or by a simple permutation
(resp. by $\oplus$ or by a simple permutation).
We then denote $\mathcal{T}_{\nonp}^+$ (resp. $\mathcal{T}_{\nonp}^-$)
the families of trees in $\mathcal{T}_{\nonp}$ with a $\oplus$-replaceable marked leaf
(resp. a  $\ominus$-replaceable marked leaf).
Similar definitions hold for $\mathcal{T}^+$, $\mathcal{T}^-$, $\mathcal{T}_{\nonm}^+$ and $\mathcal{T}_{\nonm}^+$.

As for $T'$, we take the convention that $T_{\nonp}^+$ and all generating functions with superscript
count trees  according to the number of {\bf unmarked} leaves.
By definition, $T_{\nonp}^-$ has constant coefficient $1$ (corresponding to the tree consisting of a single leaf). 
We however take the convention that $T_{\nonp}^+$ has constant coefficient $0$: 
in other words, the single leaf is excluded from the family $\mathcal{T}_{\nonp}^+$
(intuitively, a single leaf cannot be replaced by a tree with root labeled $\oplus$,
since the trees in $\mathcal{T}_{\nonp}$ should not have a root labeled $\oplus$).

\begin{proposition}
  The generating functions $T^+$, $T_{\nonm}^{+}$ and $T_{\nonp}^+$ are given by the following formulas: 
  \begin{align}
    \label{eq:tp} T^+&=\frac{1}{1- \W S'(T) - \W - S'(T)}; \\
\label{eq:tpm} T_{\nonm}^{+} &=\frac{1}{1+\W} \, T^+ ;  \\
\label{eq:tpp} T_{\nonp}^{+} & = (\W S'(T) + \W + S'(T)) T_{\nonm}^+
  \end{align}
  where $T$ and $T_{\nonp}$ are given by \cref{Prop:systeme1} 
  and $\W=(\tfrac{1}{1-T_{\nonp}})^2-1$.
 
Quantities with a minus superscript are obtained by symmetry: $T^-=T^+$, $T_{\nonm}^{-}=T_{\nonp}^{+}$ and  $T_{\nonp}^{-}=T_{\nonm}^{+}$.
\label{Prop:Sol_TPlus}
\end{proposition}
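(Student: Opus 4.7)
The strategy is to establish a linear system of three equations for $T^+$, $T_\nonp^+$ and $T_\nonm^+$ by decomposing a tree with a marked leaf according to its root label, and then solve the system algebraically.

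I would first decompose a tree $t$ counted by $T^+$ according to the label $\theta$ of its root $\varnothing$, in four cases. The single-leaf case contributes $1$ to $T^+$, since the mark is trivially $\oplus$-replaceable. If $\theta = \oplus$, then $\varnothing$ has $d\geq 2$ children in $\mathcal{T}_\nonp$ and the mark lies in exactly one of them, say $t_j$; the parent of the marked leaf coincides with $\varnothing$ (and so has label $\oplus$) precisely when $|t_j|=1$, so $\oplus$-replaceability forces $|t_j|\geq 2$, which is exactly the family counted by $T_\nonp^+$ (whose constant term is $0$ by convention). The choice of position $j$ of the marked subtree among $d\geq 2$ children, together with the other $d-1$ subtrees in $\mathcal{T}_\nonp$, contributes the factor $\sum_{d\geq 2}d\,T_\nonp^{d-1}=(1-T_\nonp)^{-2}-1=\W$, so this case totals $\W\,T_\nonp^+$. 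If $\theta = \ominus$ the situation is dual: a single-leaf $t_j$ is now allowed (its parent has the admissible label $\ominus$), so $t_j \in \mathcal{T}_\nonm^+$ with the symmetric convention that the single leaf is included, yielding $\W\,T_\nonm^+$. If $\theta = \alpha\in\mathcal{S}$, then $t_j$ ranges over all of $\mathcal{T}^+$ (a simple parent never obstructs $\oplus$-replaceability), the other children are arbitrary in $\mathcal{T}$, and summing over $\alpha$ together with the $|\alpha|$ choices of $j$ produces $S'(T)\,T^+$. Adding the four contributions yields
\begin{equation*}
T^+ = 1 + \W\,T_\nonp^+ + \W\,T_\nonm^+ + S'(T)\,T^+.
\end{equation*}
The same decomposition applied to $T_\nonp^+$ (suppressing the $\theta=\oplus$ case and the single-leaf term) and to $T_\nonm^+$ (suppressing the $\theta=\ominus$ case while keeping the single-leaf term) gives
\begin{equation*}
T_\nonp^+ = \W\,T_\nonm^+ + S'(T)\,T^+,\qquad T_\nonm^+ = 1 + \W\,T_\nonp^+ + S'(T)\,T^+.
\end{equation*}

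Next I would solve this $3\times 3$ linear system by elimination. Subtracting the third equation from the first gives $T^+ - T_\nonm^+ = \W\,T_\nonm^+$, i.e.\ $T^+=(1+\W)\,T_\nonm^+$, which is \eqref{eq:tpm}. Substituting this into the equation for $T_\nonp^+$ gives $T_\nonp^+=(\W+S'(T)+\W\,S'(T))\,T_\nonm^+$, which is \eqref{eq:tpp}. Plugging both relations back into the equation for $T_\nonm^+$, the coefficient of $T_\nonm^+$ simplifies via the factorisation $1-\W^2=(1-\W)(1+\W)$, and after dividing by $1+\W$ one obtains $T^+\,[1-\W-S'(T)-\W\,S'(T)]=1$, which is \eqref{eq:tp}. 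The symmetric formulas $T^-=T^+$, $T_\nonp^-=T_\nonm^+$ and $T_\nonm^-=T_\nonp^+$ follow from the involution on $\mathcal{T}$ that swaps the labels $\oplus$ and $\ominus$, which exchanges $\mathcal{T}_\nonp \leftrightarrow \mathcal{T}_\nonm$ and maps $\oplus$-replaceable marked leaves to $\ominus$-replaceable ones.

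The main difficulty is purely one of bookkeeping: one must consistently track whether the marked leaf's parent coincides with the root of $t$ (which happens exactly when the subtree containing the mark is a single leaf), and match this with the convention fixing the constant coefficient of each of $T^+$, $T_\nonp^+$ and $T_\nonm^+$. Once the three equations have been set up correctly, the algebra of the final elimination is routine.
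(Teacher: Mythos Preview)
Your proof is correct and follows essentially the same approach as the paper: you set up the same three linear equations in $T^+$, $T_\nonp^+$, $T_\nonm^+$ by decomposing according to the root label, and solve them algebraically. The only cosmetic difference is that the paper begins the case analysis with $T_\nonp^+$ rather than $T^+$, but the resulting system and its solution are identical.
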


\begin{proof}
Consider a tree $t$ in $\mathcal{T}_{\nonp}^{+}$. As explained above, $|t| \neq 1$ and we distinguish cases according to the label of the root of $t$, 
which may be either $\ominus$ or a simple permutation. 

\begin{figure}[htbp]
 \includegraphics[width=12cm]{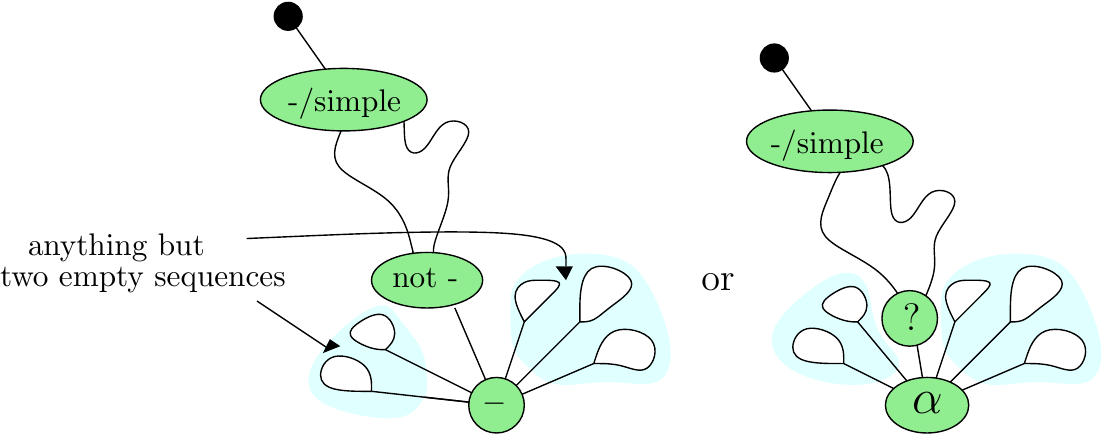}
 \caption{Proof of \cref{Prop:Sol_TPlus}.}
 \label{fig:proof_first_system}
\end{figure}

\begin{enumerate}
\item The root of $t$ is labeled $\ominus$ (see left of \cref{fig:proof_first_system}). Then $t$ can be decomposed as a tree in $\mathcal{T}_{\nonm}^{+}$ (which may be a single leaf) and a nonempty pair of sequences of unmarked trees in $\mathcal{T}_{\nonm}$.
\item The root of $t$ is labeled by a simple permutation $\alpha\in\mathcal{S}$ of size $d$ (see right of \cref{fig:proof_first_system}). Then $t$ can be decomposed as a $d$-uple of unconstrained trees, with one of them having a $\oplus$-replaceable marked leaf.
\end{enumerate} 
Therefore we have
$$
T_{\nonp}^{+} = \W T_{\nonm}^{+} + S'(T) \, T^+,
$$
where $\W=(\tfrac{1}{1-T_{\nonp}})^2-1$ counts nonempty pairs of sequences of unmarked trees in $\mathcal{T}_{\nonm}$ (since $T_{\nonm}=T_{\nonp}$).

Similarly, we have
\begin{align*}
T_{\nonm}^{+} &= 1 + \W T_{\nonp}^{+} + S'(T) \, T^+ ; \\
T^+ &= 1 + \W T_{\nonp}^{+} + \W T_{\nonm}^{+} + S'(T) \, T^+.
\end{align*}

The above three equations form a system with three indeterminates: $T_{\nonp}^{+}$, $T_{\nonm}^{+}$ and $T^+$
($W$ and $T$ are known thanks to \cref{Prop:systeme1}).
Solving this system gives \cref{eq:tp,eq:tpm,eq:tpp}.

The symmetry argument giving $T_{\nonp}^{-}$, $T_{\nonm}^{-}$ and $T^-$
consist as before in exchanging $\ominus$ and $\oplus$ labels in $\mathcal S$-canonical trees.
\end{proof}

\subsection{Generating function counting trees with marked leaves inducing a given tree} \label{Subsec:DecompTrees}
To enumerate trees with marked leaves inducing a given tree, we introduce another kind of generating functions. 
Recall that for any permutations $\alpha$ and $\theta$, $\mathrm{occ}(\theta,\alpha)$ is the number of occurrences of $\theta$ in $\alpha$. 
For a permutation $\theta$, we set
\begin{equation}\label{eq:defOcc}
\Occ_\theta(z) = \sum_{\alpha\in \mathcal{S}} \mathrm{occ}(\theta,\alpha) z^{|\alpha|-|\theta|}.
\end{equation}

\begin{observation}\label{obs:RcvOcc>RS}
For $d\geq 1$ and any fixed $\alpha$, 
$\sum_{\theta \in \Sn_d}\mathrm{occ}(\theta,\alpha) = \binom {|\alpha|}{d}$.
Therefore $\sum_{\theta \in \Sn_d} \Occ_{\theta}$ is related to the $d$-th derivative of $S$ by %
$\sum_{\theta \in \Sn_d} \Occ_{\theta} = \tfrac {S^{(d)}} {d!}$.
This implies that the radius of convergence of each $\Occ_{\theta}$ is at least $R_S$, the radius of convergence of $S$.
\end{observation}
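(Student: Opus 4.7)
The plan is to prove the three assertions in sequence, each of which is a short combinatorial or analytic manipulation.

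First, I would establish the identity $\sum_{\theta \in \Sn_d}\mathrm{occ}(\theta,\alpha) = \binom{|\alpha|}{d}$ by a direct double-counting argument. By definition, $\mathrm{occ}(\theta,\alpha)$ counts subsets $I \subset [|\alpha|]$ of size $d$ with $\pat_I(\alpha) = \theta$. Since every $d$-element subset $I$ induces exactly one pattern in $\Sn_d$, summing $\mathrm{occ}(\theta,\alpha)$ over all $\theta \in \Sn_d$ counts each $d$-element subset of $[|\alpha|]$ exactly once, yielding $\binom{|\alpha|}{d}$.

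Next I would combine this with \cref{eq:defOcc}. Summing over $\theta \in \Sn_d$ (and recalling $|\theta|=d$ for all such $\theta$) gives
\[
\sum_{\theta \in \Sn_d} \Occ_\theta(z) \;=\; \sum_{\alpha \in \mathcal S} \Big(\sum_{\theta \in \Sn_d}\mathrm{occ}(\theta,\alpha)\Big)\, z^{|\alpha|-d} \;=\; \sum_{\alpha \in \mathcal S} \binom{|\alpha|}{d}\, z^{|\alpha|-d}.
\]
On the other hand, differentiating $S(z) = \sum_{\alpha \in \mathcal S} z^{|\alpha|}$ termwise $d$ times and dividing by $d!$ produces exactly $\sum_{\alpha \in \mathcal S} \binom{|\alpha|}{d}\, z^{|\alpha|-d}$. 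Matching these two expressions yields $\sum_{\theta \in \Sn_d}\Occ_\theta = S^{(d)}/d!$.

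Finally, for the statement on radii of convergence, I would observe that each coefficient of $\Occ_\theta$ is nonnegative and is bounded above by the corresponding coefficient of $\sum_{\theta' \in \Sn_d}\Occ_{\theta'} = S^{(d)}/d!$. Since termwise differentiation preserves the radius of convergence, $S^{(d)}/d!$ has radius of convergence $R_S$; by dominance of nonnegative coefficients, the radius of convergence of $\Occ_\theta$ is therefore at least $R_S$. There is no serious obstacle here; the only thing to be careful about is to invoke nonnegativity of coefficients in the comparison, which is automatic since $\mathrm{occ}(\theta,\alpha) \geq 0$.
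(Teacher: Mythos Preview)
Your proposal is correct and supplies exactly the natural justification; the paper states this as an observation without a separate proof, and your three steps (double-counting subsets, termwise differentiation of $S$, and coefficient-wise comparison of nonnegative series) are precisely the intended elementary verifications.
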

\bigskip

Fix a substitution tree $\patterntree$ with $k$ leaves. 
Let us call $\TTT_{\patterntree}$ the family of $\mathcal{S}$-canonical trees $t$ with $k$ marked leaves $\ll=(\ell_1,\dots,\ell_k)$ such that these leaves induce $\patterntree$:
$$
\TTT_{\patterntree} = \set{(t,\ll) \text{ such that } t \in \mathcal{T} \text{ and } t_{\ll}=\patterntree}.
$$
We define the size of an object $(t,\ll)$ as the number of leaves in $t$ ({\bf both marked and unmarked}).
The corresponding generating series is denoted $T_{\patterntree}(z)$
\medskip

Let $(t,\ll) \in \TTT_{\patterntree}$. 
As noted after the definition of induced trees, a nonlinear node of $t_{\ll}$ has to come from a nonlinear node of $t$,
whereas a linear node of $t_{\ll}$ may come from a linear or a nonlinear node of $t$.
In order to ease the enumeration, we partition $\TTT_{\patterntree}$ according to the set of nodes of $t_{\ll} = t_0$
coming from nonlinear nodes of $t$ (that is, simple nodes of $t$ since $t$ is canonical).

More formally, let $\Internal{t}$ 
be the set of internal nodes
of a tree $t$.
With each $(t,\ll)\in \TTT_{\patterntree}$, we associate the set $\FCA\subseteq\Internal{t}$ of the first common ancestors of $\ll$ in $t$. 
From the definition of induced tree, a node $v$ in $\patterntree$
corresponds to a unique node in $\FCA$, that we denote $\varphi(v)$.
For $V_s \subseteq \Internal{\patterntree}$, let
\[\TTT_{\patterntree,V_s} = \left\{ (t,\ll) \in \TTT_{\patterntree} : \{v\in \Internal{\patterntree} : \varphi(v) \text{ is simple}\} = V_s\right\}.\] 
Clearly $\TTT_{\patterntree,V_s}$ is nonempty if and only if $V_s$ contains every nonlinear node of $\patterntree$.
An example of a marked tree $(t,\ll)$ with the corresponding pair $(\patterntree, V_s)$ is shown on \cref{Fig:ExampleofT_tS}.
In pictures, we will always circle nodes $v$ in $V_s$ and the corresponding nodes $\varphi(v)$ in $t$.
\begin{figure}[thbp]
	\begin{center}
		\includegraphics[width=12cm]{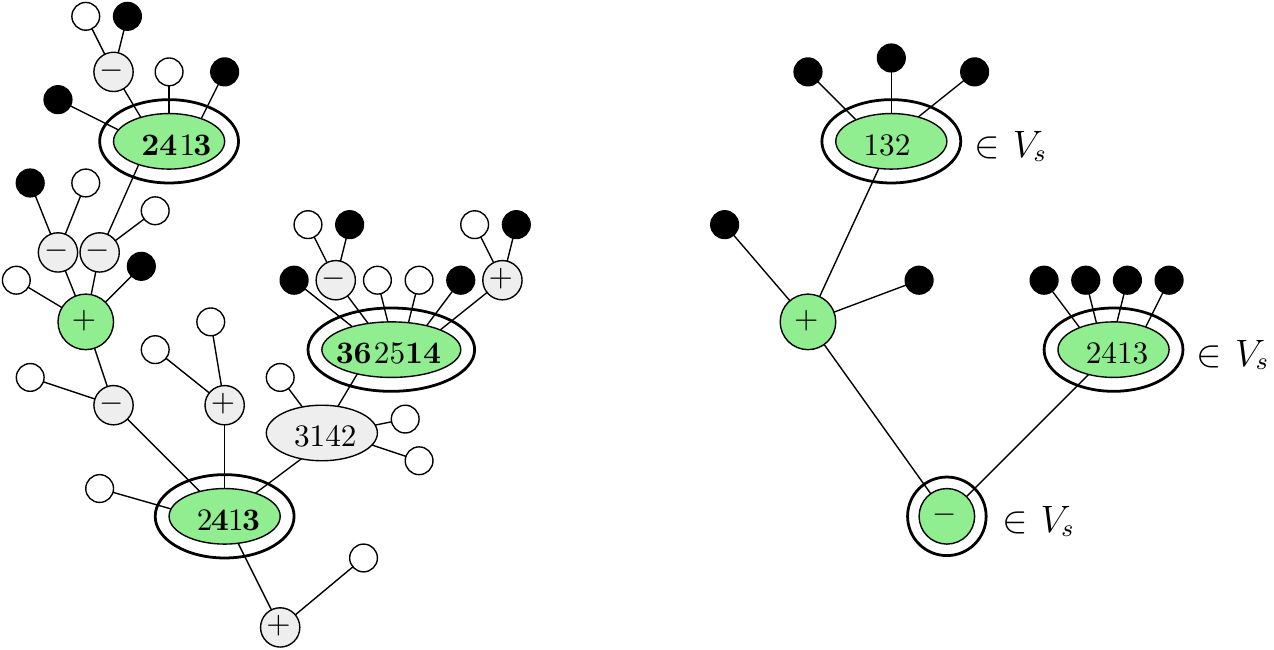}
		\caption{On the left: A canonical tree $t$ with $k=9$ marked leaves (in black).
        The set $\FCA$ of first common ancestors of the marked leaves is the set of green nodes.
        On the right: The corresponding decorated tree $(\patterntree,V_s)$.
        The tree $\patterntree$ is the tree induced by the marked leaves in $t$.
		\label{Fig:ExampleofT_tS}}
	\end{center}
\end{figure}
\begin{definition}
A \emph{decorated tree} is a pair $(\patterntree,V_s)$ where 
\begin{itemize}
\item $\patterntree$ is a substitution tree;
\item $V_s$ is a subset of $\Internal\patterntree$ that contains all nonlinear nodes.
\end{itemize}
\end{definition}

Therefore we have the following decomposition:
$$
\TTT_{\patterntree}=\bigcup_{V_s\text{ s.t. }(\patterntree,V_s)\text{ is a decorated tree} } \TTT_{\patterntree,V_s}.
$$

Let $(\patterntree,V_s)$ be a decorated tree.
We consider the generating function $T_{\patterntree,V_s}$ of $\TTT_{\patterntree,V_s}$,
the size of $(t,\ll)$ being its number of leaves (both marked and unmarked):
$$
T_{\patterntree,V_s}(z)=\sum_{(t,\ll) \in \TTT_{\patterntree,V_s}} z^{|t|}.
$$
To compute $T_{\patterntree,V_s}$, we introduce some notation. 
For every internal node $v$ of $\patterntree$, let
\begin{itemize}
	\item $\theta_v$ be the permutation labeling $v$,
	\item $d'_v$ be its number of children which are leaves or in $V_s$,
	\item $d^+_v$ be its number of children which are not in $V_s$ and are labeled by $\oplus$,
	\item $d^-_v$ be its number of children which are not in $V_s$ and are labeled by $\ominus$,
	\item $d_v=d'_v+d^+_v+d^-_v$ be its total number of children.
\end{itemize}
We also set the type of root to be $\prime$ if the root of $t_0$ is in $V_s$, and $+$ (resp. $-$) if the root is not in $V_s$ and labeled $\oplus$ (resp. $\ominus$).
\begin{proposition}[Enumeration of trees with marked leaves inducing a given decorated tree]
	\label{prop:Dec_TtS}
Let $(\patterntree,V_s)$ be a decorated tree and $k$ be its number of leaves. Then
\begin{equation}\label{eq:T_tS}
		T_{\patterntree,V_s}
		= z^k \ T^{\text{type of root}}
		\prod_{v\in \Internal{\patterntree}} A_v,
	\end{equation}
	where
	\begin{equation}A_v =  \begin{cases} \label{eq:A_v}
		\Occ_{\theta_v}(T) \ (T')^{d'_v} (T^+)^{d^+_v} (T^-)^{d^-_v}  & \text{ if } v\in V_s \, , \\		 
		\left(\tfrac 1 {1-T_{\nonp}}\right)^{d_v+1 }
		(T_{\nonp}')^{d'_v} (T_{\nonp}^+)^{d^+_v} (T_{\nonp}^-)^{d^-_v}
		  & \text{ if } v\notin  V_s \text{ and } \theta_v = \oplus \, , \\
		\left( 
		\tfrac 1 {1-T_{\nonm}}\right)^{d_v+1 }
		(T_{\nonm}')^{d'_v} (T_{\nonm}^+)^{d^+_v} (T_{\nonm}^-)^{d^-_v}
		  & \text{ if } v\notin  V_s \text{ and } \theta_v = \ominus \, .\\
	\end{cases}  \end{equation}
\end{proposition}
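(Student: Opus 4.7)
The plan is to prove \eqref{eq:T_tS} by constructing a bijective decomposition of every pair $(t,\ll) \in \TTT_{\patterntree,V_s}$ into independent building blocks, each counted by one of the generating functions on the right-hand side. First, I extend the map $\varphi$ from the statement so that it also sends the $i$-th leaf of $\patterntree$ to the marked leaf $\ell_i$, and verify via \cref{obs:caract_perm_tau}, together with the fact that any pattern extracted from an increasing (resp. decreasing) permutation remains increasing (resp. decreasing), that $\varphi(v)$ is a simple node in $t$ exactly when $v \in V_s$, and that otherwise $\varphi(v)$ is a linear node carrying the same label as $v$ in $\patterntree$.

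Next, I slice $t$ along $\varphi(\Internal{\patterntree})$ into pieces: a ``stem'' containing everything strictly above $\varphi(\text{root of }\patterntree)$ plus a slot-leaf standing for $\varphi(\text{root of }\patterntree)$, and for each $v \in \Internal{\patterntree}$ a ``local piece'' containing $\varphi(v)$, the subtrees of its unmarked-descendant children, and the $d_v$ strips going from $\varphi(v)$ down to either an $\ell_i$ (if the corresponding child of $v$ in $\patterntree$ is a leaf) or to the next image $\varphi(c)$, the latter being represented in the piece by a slot-leaf. For the local piece at $v \in V_s$, the node $\varphi(v)$ is simple, labeled by some $\alpha \in \mathcal{S}$ in which the $d_v$ marked-descendant children form an occurrence of $\theta_v$, while the $|\alpha|-d_v$ remaining children of $\varphi(v)$ are roots of arbitrary $\mathcal{S}$-canonical trees; summing over $\alpha$ and over the choice of occurrence then yields exactly $\Occ_{\theta_v}(T)$. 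For the local piece at $v \notin V_s$ with $\theta_v = \oplus$, $\varphi(v)$ is labeled $\oplus$, its $d_v$ marked-descendant children appear in the left-to-right order prescribed by $\patterntree$, and any number of unmarked subtrees (each in $\mathcal{T}_\nonp$ by the $\oplus$-$\oplus$ exclusion rule of \cref{defintro:CanonicalTree}) may be inserted in each of the $d_v+1$ gaps, giving the factor $(1/(1-T_\nonp))^{d_v+1}$; the case $\theta_v = \ominus$ is entirely symmetric. For each of the $d_v$ strips, the strip is itself a tree with one marked leaf whose root-constraint (unconstrained, or ``not $\oplus$'', or ``not $\ominus$'') is dictated by the label of $\varphi(v)$, and whose slot-constraint (none, or $\oplus$-replaceable, or $\ominus$-replaceable) is dictated by the type of the corresponding child of $v$ in $\patterntree$ (respectively a leaf or a node in $V_s$, or labeled $\oplus$ outside $V_s$, or labeled $\ominus$ outside $V_s$); this produces the factors $T'$, $T^+$, $T^-$ when $v\in V_s$, the factors $T_\nonp'$, $T_\nonp^+$, $T_\nonp^-$ when $v\notin V_s$ with $\theta_v = \oplus$, and the $\nonm$-variants when $\theta_v = \ominus$, in the respective exponents $d'_v$, $d^+_v$, $d^-_v$. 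The stem is enumerated by $T^{\text{type of root}}$ by the same logic applied to $\varphi(\text{root of }\patterntree)$.

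The factor $z^k$ accounts for the $k$ actual marked leaves of $\patterntree$, each of which contributes weight $1$ rather than $z$ inside the relevant marked-leaf generating function; multiplying all the pieces gives \eqref{eq:T_tS}. The main obstacle is to rigorously justify the bijectivity of the decomposition: one must verify that the pieces are genuinely independent, so that their generating functions multiply, and that any tuple of pieces satisfying the prescribed constraints can be uniquely glued back into an element of $\TTT_{\patterntree,V_s}$. Both points rest on the observation that the $+$/$-$-replaceable and not-$\oplus$/not-$\ominus$ conditions appearing in the definitions of $T_\nonp$, $T_\nonp^+$, $T_\nonp^-$ (and their symmetric analogues) encode precisely the canonical-tree adjacency constraints of \cref{defintro:CanonicalTree} at every gluing point, thereby ensuring that the reassembled tree is itself $\mathcal{S}$-canonical and has $(\patterntree,V_s)$ as its induced decorated tree.
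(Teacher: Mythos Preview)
Your proposal is correct and follows essentially the same approach as the paper: the paper also decomposes $(t,\ll)$ into a ``bottom'' tree $t_B$ (your stem) and, for each $v\in\Internal{\patterntree}$, a subtree $t_v$ rooted at $\varphi(v)$ containing all descendants whose first proper $\FCA$-ancestor is $\varphi(v)$ (your local piece with its strips); it then computes the generating function $A_v$ of $t_v$ by exactly the case analysis you describe (simple root with an occurrence of $\theta_v$ and $|\alpha|-d_v$ free $\mathcal T$-subtrees when $v\in V_s$; linear root with $d_v+1$ gap-sequences in $\mathcal T_{\nonp}$ when $v\notin V_s$), and checks bijectivity by verifying that the replaceability and root constraints exactly enforce \cref{defintro:CanonicalTree} at each gluing point.
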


\begin{proof}%
The proof is based on a decomposition of marked $\mathcal{S}$-canonical trees $(t,\ll)$ of $\TTT_{\patterntree,V_s}$ followed by a study of the series $A_v$ depending on the type of the node $\varphi(v)$ in $t$. 
\medskip 

\noindent{\bf First step: Decomposing a tree in $\TTT_{\patterntree,V_s}$.}\\
We fix a decorated tree $(\patterntree,V_s)$ with $k$ leaves and a marked $\mathcal{S}$-canonical tree $(t,\ll) \in \TTT_{\patterntree,V_s}$. 
We want to decompose $t$ into subtrees, one for each internal node of $\patterntree$ plus one attached to the root of $t$.
Recall that $\varphi:\Internal{\patterntree}\to \FCA\subseteq\Internal{t}$ is the correspondence between the internal nodes of $\patterntree$ and the set of first common ancestors of leaves $\ll$ in $t$. 

For every internal node $v$ of $\patterntree$, let $t_v$ be the subtree of $t$ defined as follows.

\begin{itemize}
\item The root of $t_v$ is $\varphi(v)$.
\item The nodes of $t_v$ are descendants of $\varphi(v)$.
\item A descendant of $\varphi(v)$ in $t$ belongs to $t_v$ if and only if its first proper ancestor in $\FCA$ is $\varphi(v)$ (proper meaning different from the node itself).
\end{itemize}
Moreover we define $t_B$ as the subtree of $t$ rooted at the root of $t$
and containing the nodes of $t$ having no proper ancestor in $\FCA$;
$B$ stands for ``bottom'' and is used here as a symbol, not as a variable.
(If $\varphi$ maps the root of $t_0$ to the root of $t$, then $t_B$ is reduced to a leaf.) 

A schematic representation of the trees $t_v$ and $t_B$
is given in \cref{Fig:decomp_of_T_tS}.

\begin{figure}[thbp]
	\begin{center}
		\includegraphics[width=12cm]{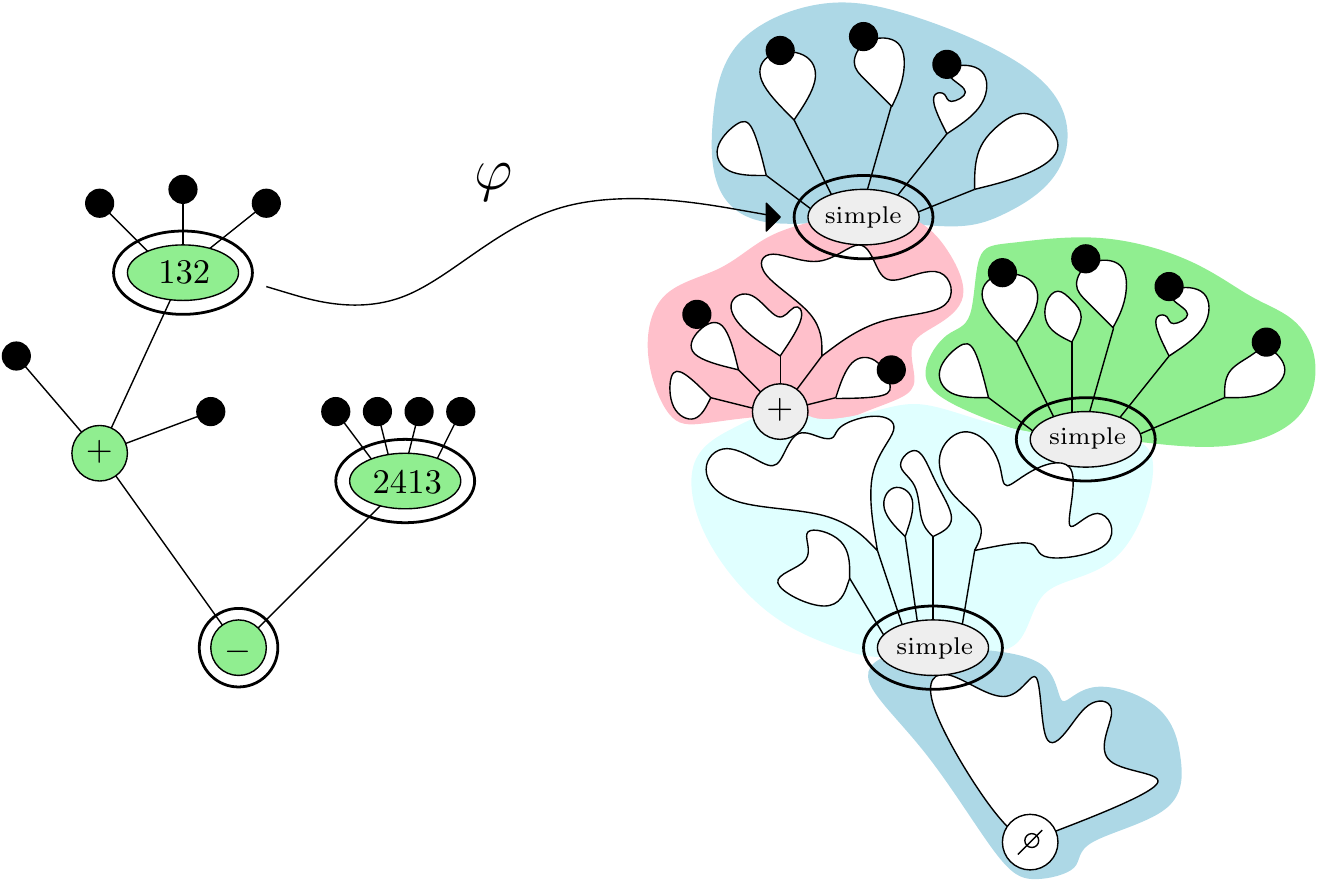}
		\caption{On the left: A decorated tree $(\patterntree,V_s)$ with $9$ leaves and $4$ internal nodes.
		On the right: A schematic representation of a tree $t \in \TTT_{\patterntree,V_s}$.
		In this case, $t$ is decomposed into $4$ subtrees $t_v$ and a subtree $t_B$.
		Note that a linear node in $\patterntree$ corresponds 
        to either a simple or a linear node in $t$, depending on whether or not it belongs to $V_s$.
		\label{Fig:decomp_of_T_tS}}
	\end{center}
\end{figure}

By definition, a node $u$ of $t$ that is not in $\FCA$ belongs to exactly one $t_v$.
On the contrary, if $u$ is in $\FCA$, then $u$ is the root of $t_{\varphi^{-1}(u)}$ and is a leaf of another $t_v$,
where $v$ is the parent of $\varphi^{-1}(u)$.
(If $\varphi^{-1}(u)$ is the root of $\patterntree$, then there is no such $v$, and $u$ is a leaf of $t_B$.)

By construction of $t_v$ and $t_B$, their leaves are either leaves of $t$ or belong to $\FCA$.
We mark the leaves that belong to $\FCA$ or that are marked leaves of $t$.
In this way, the trees $t_v$ and $t_B$ that we have constructed are {\em marked trees}.

The following properties are straightforward to check.
\begin{enumerate}
	\item The tree $t_v$ is an $\mathcal{S}$-canonical tree with $d_v$ marked leaves.
	\item The root of $t_v$ is nonlinear  if and only if $v\in V_s$. 
	\item The root of $t_v$ is $\oplus$ if and only if $v\notin V_s$ and is labeled $\oplus$.
	\item The root of $t_v$ is $\ominus$ if and only if $v\notin V_s$ and is labeled $\ominus$. 
	\item The $d_v$ marked leaves of $t_v$ belong to $d_v$ subtrees coming from $d_v$ \emph{distinct} children of the root of $t_v$.
The pattern induced by the position of those $d_v$ children on the permutation labeling the root of $t_v$ is $\theta_v$. 
(For example, in \cref{Fig:ExampleofT_tS}, four marked leaves are branched on the node labeled $362514$ at positions $1,2,5,6$. This implies that the corresponding node in $t_0$ is labeled with $\theta_v=2413$.)
\item Let $w$ be the $i$-th child of $v$ in $\patterntree$. If $w\in \Internal{\patterntree} \setminus V_s$, and its label is a $\oplus$ (resp. a $\ominus$), then the $i$-th marked leaf of $t_v$ must be $\oplus$-replaceable (resp. $\ominus$-replaceable).
\end{enumerate}
The combinatorial class of trees satisfying properties i) to vi) will be denoted $\mathcal A_v$.

In addition, we observe that $t_B$ is an $\mathcal{S}$-canonical tree with one marked leaf;
moreover, if the root of $\patterntree$ is not in $V_s$ and is labeled by $\oplus$ (resp. $\ominus$),
then the marked leaf of $t_B$ must be $\oplus$-replaceable (resp. $\ominus$-replaceable).

This yields a map $$
\begin{array}{r r c l}
\TT : & \TTT_{\patterntree,V_s}        & \to & \TTT^{\,\text{type of root}} \times \prod_{v\in \Internal{\patterntree}} \mathcal A_v \\
& (t,\ll) & \mapsto & (t_B,(t_v)_{v\in \Internal{\patterntree}}).
\end{array}
$$

We claim that this map is a bijection, and that the inverse map is obtained as follows. 
Let us be given $t_B$ and a collection of trees $t_v$, one for each internal node of $\patterntree$.
We first take $t_B$ and glue $t_{\text{root of }\patterntree}$ on it,
the root of $t_{\text{root of }\patterntree}$ replacing the marked leaf of $t_B$.
We then proceed inductively: if $t_v$ has already been glued and $w$ is the $i$-th child of $v$,
we glue $t_w$ on $t_v$, by replacing the $i$-th marked leaf of $t_v$ with the root of $t_w$. 
This yields a tree $t$ with $k$ marked leaves, denoted $\ll$.
This tree is $\mathcal{S}$-canonical because of items i), iii), iv) and vi) of the definition of $\mathcal A_v$:
we only glue trees with root $\oplus$ (resp. $\ominus$) on $\oplus$-replaceable leaves (resp. $\ominus$-replaceable leaves).
By construction, the $k$ marked leaves $\ll$ induce a tree having the same structure as $t_0$
and item v) of the definition of $\mathcal A_v$ ensures that the labels in the induced tree and in $t_0$ do match.
Because of item ii), the tree $(t,\ll)$ is indeed in $\TTT_{t_0,V_s}$.
We have therefore constructed a map from $ \TTT^{\,\text{type of root}} \times \prod_{v\in \Internal{\patterntree}} \mathcal A_v$
to $\TTT_{t_0,V_s}$. By construction, this map indeed inverts $\TT$, and $\TT$ is a bijection.

Let $A_v$ be the generating function of the combinatorial class $\mathcal A_v$, counted by the number of \textbf{unmarked leaves}.
If $A_v$ verifies \eqref{eq:A_v}, then \eqref{eq:T_tS} follows from the fact that $\TT$ is a bijection.
Note indeed that the factor $z^k$ in \eqref{eq:T_tS} comes from the fact that we count marked leaves in the series in the left-hand side
and but not in the series in the right hand side (the bijection $\TT$ leaves the number of unmarked leaves invariant).

We are left to show that the generating function $A_v$ verifies \eqref{eq:A_v}.
\medskip

\noindent{\bf Second step (i): Computing $A_v$ when $v\in V_s$.}\\
Recall that the $d_v$ marked leaves of any tree $t \in \mathcal A_v$ belong to $d_v$
subtrees coming from $d_v$ \emph{distinct} children of the root of $t$.
Since $v \in V_s$, the elements $t$ of $\mathcal A_v$ can be uniquely decomposed as follows 
(this decomposition is illustrated on \cref{fig:DecompoSommet_Racine231dansS}).
\begin{figure}[thbp]
\begin{center}
\includegraphics[width=12cm]{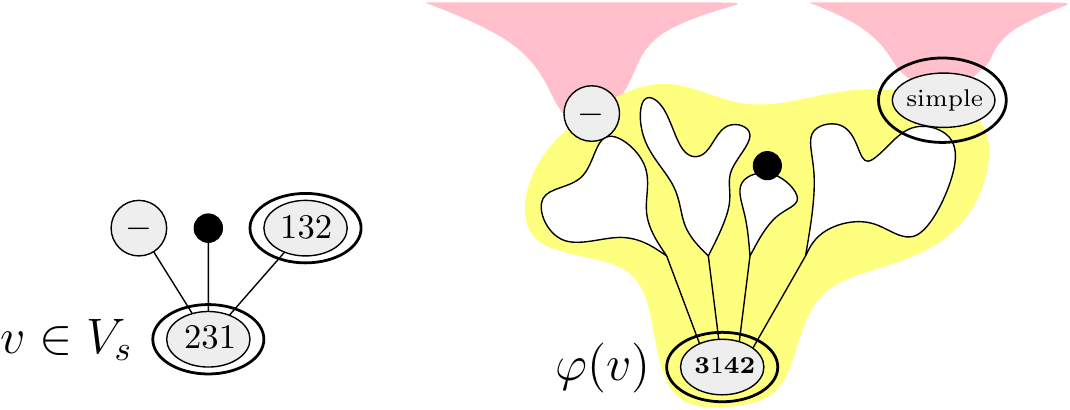}
\caption{Computing $A_v$ when $v\in V_s$. On the left: We only represent the node $v$ in $t_0$ and its children;
the other parts of $t_0$ being irrelevant to compute $A_v$.
On the right: We zoom in on the part corresponding to $t_v$ in the schematic representation of an element
in $\mathcal T_{(\patterntree,V_s)}$.
Here the series $A_v$ is \hbox{$A_v=\Occ_{231}(T)\, (T')^{2}\, (T^-)$.}}
\label{fig:DecompoSommet_Racine231dansS}
\end{center}
\end{figure}
\begin{enumerate}
	\item The root of $t$ should be labeled by a simple permutation $\alpha$ in $\mathcal S$;
      among the $|\alpha|$ children of the root, $d_v$ are marked (corresponding to the subtrees containing a marked leaf)
      and the pattern of $\alpha$ corresponding to the positions of these marked children should be $\theta_v$.
      (In \cref{fig:DecompoSommet_Racine231dansS}, $\alpha=3142$, the marked leaves are the first, third and fourth subtrees,
      and the pattern of $3142$ corresponding to positions $\{1,3,4\}$ is indeed $\theta_v=231$)
	\item We glue $|\alpha|-d_v$
      unmarked $\mathcal{S}$-canonical trees with arbitrary roots
      on the unmarked children of $\alpha$.
      (In \cref{fig:DecompoSommet_Racine231dansS}, we have only one such tree,
      which is glued on the second child of the root);
	\item We glue $d_v$ $\mathcal{S}$-canonical trees with one leaf marked and an arbitrary root on the marked children of $\alpha$.
      In addition, 
	\begin{itemize}
		\item for $d'_v$ of these trees, there is no constraint on the marked leaf.
          (In \cref{fig:DecompoSommet_Racine231dansS}, the trees glued on the third and fourth children of the root
          are unconstrained trees with a marked leaf.)
		\item For $d^+_v$ (resp. $d^-_v$) of these trees, the marked leaf must be $\oplus$-replaceable (resp. $\ominus$-replaceable).
          (In \cref{fig:DecompoSommet_Racine231dansS}, we must glue a tree with a $\ominus$-replaceable marked leaf
          on the first child of the root.)
	\end{itemize}	
\end{enumerate}
The generating functions of the first two steps can be computed as
\[\sum_{\alpha \in \mathcal S} \mathrm{occ}(\theta_v,\alpha) T(z)^{|\alpha|-d_v}=\Occ_{\theta_v}(T(z)),\]
where $\Occ_{\theta_v}$ is defined in \cref{eq:defOcc} p.~\pageref{eq:defOcc}.
Indeed, once the label $\alpha$ of the root is chosen, $\mathrm{occ}(\theta_v,\alpha)$ counts the number
of ways to mark children of the root in step i), and $T(z)^{|\alpha|-d_v}$ comes from step ii).
Step iii) yields an additional factor $(T')^{d'_v} (T^+)^{d^+_v} (T^-)^{d^-_v}$.
This proves the formula \eqref{eq:A_v} in the case where $v$ is in $V_s$.

\medskip

\noindent{\bf Second step (ii): Computing $A_v$ when $v\notin V_s$.}\\
When $v$ is not in $V_s$ and labeled by $\oplus$,
the elements of the class $\mathcal A_v$ can be uniquely decomposed as follows
(this decomposition is illustrated on \cref{fig:DecompoSommet_Racine+PasdansS}).

\begin{figure}[thbp]
\begin{center}
\includegraphics[width=12cm]{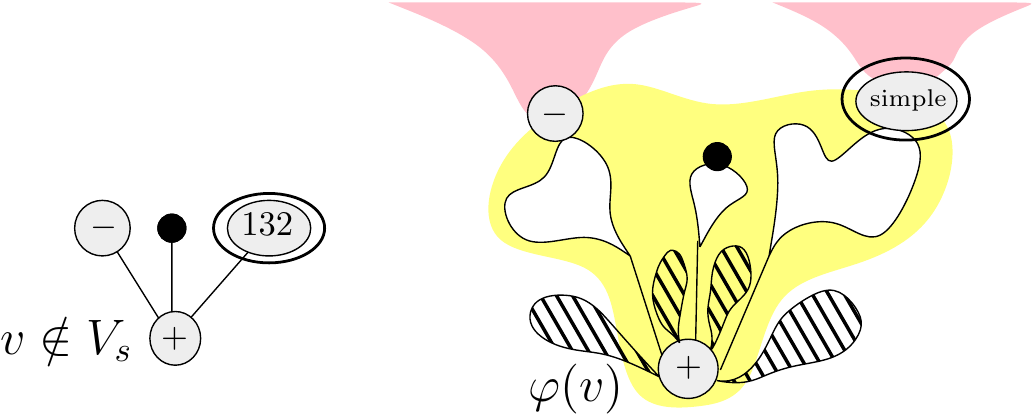}
\caption{Computing $A_v$ when $v$ is not in $V_s$.
On the left: We again represent only the node $v$ in $t_0$ and its children;
the other parts of $t_0$ being irrelevant to compute $A_v$.
On the right: We zoom in on the part corresponding to $t_v$ in the schematic representation of an element
in $\mathcal T_{(\patterntree,V_s)}$.
Here the series $A_v$ is \hbox{$A_v=\left(\tfrac{1}{1-T_{\nonp}}\right)^{4}\,
 (T_{\nonp}')^{2} \, (T_{\nonp}^-)$}.
}
\label{fig:DecompoSommet_Racine+PasdansS}
\end{center}
\end{figure}

\begin{enumerate}
  \item The root is labeled by $\oplus$.
	\item We attach to the root $d_v$ $\mathcal{S}$-canonical trees whose root is not labeled by $\oplus$,
      each with one marked leaf. In addition, 
	\begin{itemize}
		\item for $d'_v$ of these trees, there is no constraint on the marked leaf.
          (In \cref{fig:DecompoSommet_Racine+PasdansS}, the two right-most nonhatched trees attached to the root
          are trees with an unconstraint marked leaf.)
		\item for $d^+_v$ (resp. $d^-_v$) of these trees, the marked leaf must be $\oplus$-replaceable (resp. $\ominus$-replaceable).
          (In \cref{fig:DecompoSommet_Racine+PasdansS}, the left-most nonhatched tree attached to the root
          should have a $\ominus$ replaceable marked leaf.)
	\end{itemize}	
	\item Between and around these $d_v$ trees,
      we attach $d_v + 1$ possibly empty sequences of unmarked $\mathcal{S}$-canonical trees whose root is not labeled by $\oplus$.
       (In \cref{fig:DecompoSommet_Racine+PasdansS}, each of these sequences is represented by a hatched blob.)
\end{enumerate}

Item i) does not involve any choice.
Choices in item ii) are counted by $(T_{\nonp}')^{d'_v} (T^+_{\nonp})^{d^+_v} (T^-_{\nonp})^{d^-_v}$,
while item iii) yields a factor $\left(\tfrac 1 {1-T_{\nonp}}\right)^{d_v+1 }$.
This proves the formula \eqref{eq:A_v} in the case where $v$ is not in $V_s$ and labeled by $\oplus$.
\smallskip

The case when $v$ is not in $V_s$ and is labeled by $\ominus$ follows by symmetry.
This ends the proof of the combinatorial identity \eqref{eq:A_v} and therefore of \cref{prop:Dec_TtS}.
\end{proof}
\section{Asymptotic analysis: The standard case $S'(R_S)>2/(1+R_S)^2-1$}

\label{Sec:Standard}

Let $\mathcal{S}$ be a set of simple permutations. 
The goal of this section is to precisely state, and then prove, \cref{Th:MainIntro} (p.\pageref{Th:MainIntro}): 
the convergence to the biased Brownian separable permuton of uniform random permutations in $\langle \mathcal{S}\rangle$
when $\mathcal{S}$ satisfies Condition \eqref{eq:h1intro}. 

\subsection{Definition of the biased Brownian separable permuton and statement of the theorem}
\label{ssec:BrownianPermuton}
The (unbiased) Brownian separable permuton was defined in \cite{BrownianPermutation}. 
Because the biased Brownian separable permuton is a one-parameter deformation of it, 
it is useful to first recall some facts about the substitution trees encoding separable permutations and the (unbiased) Brownian separable permuton. 

As noted in \cref{sec:OperatorsPermutations}, the canonical trees (called decomposition trees in \cite{BrownianPermutation}) 
of separable permutations are those whose internal nodes are all labeled $\oplus$ or $\ominus$. 
If we consider more generally substitution trees, the following implication still holds:
if $\tau$ is a substitution tree whose nodes are labeled $\oplus$ or $\ominus$,
then $\perm(\tau)$ is a separable permutation.

 Recall from \cref{sec:SubsTrees} that 
 an \textit{expanded tree} is a substitution tree where nonlinear nodes are labeled by simple permutations,
while linear nodes are required to be binary.
In the case of separable permutations, we do not have simple nodes,
so that {\em expanded tree} are binary trees labeled with $\oplus$ and $\ominus$.
These are also referred to as {\em separation trees} in the literature.
\cref{fig:ExempleSeparationTree} shows a separable permutations together with two separation trees associated with it.

\begin{figure}[htbp]
    \begin{center}
      \includegraphics[width=12cm]{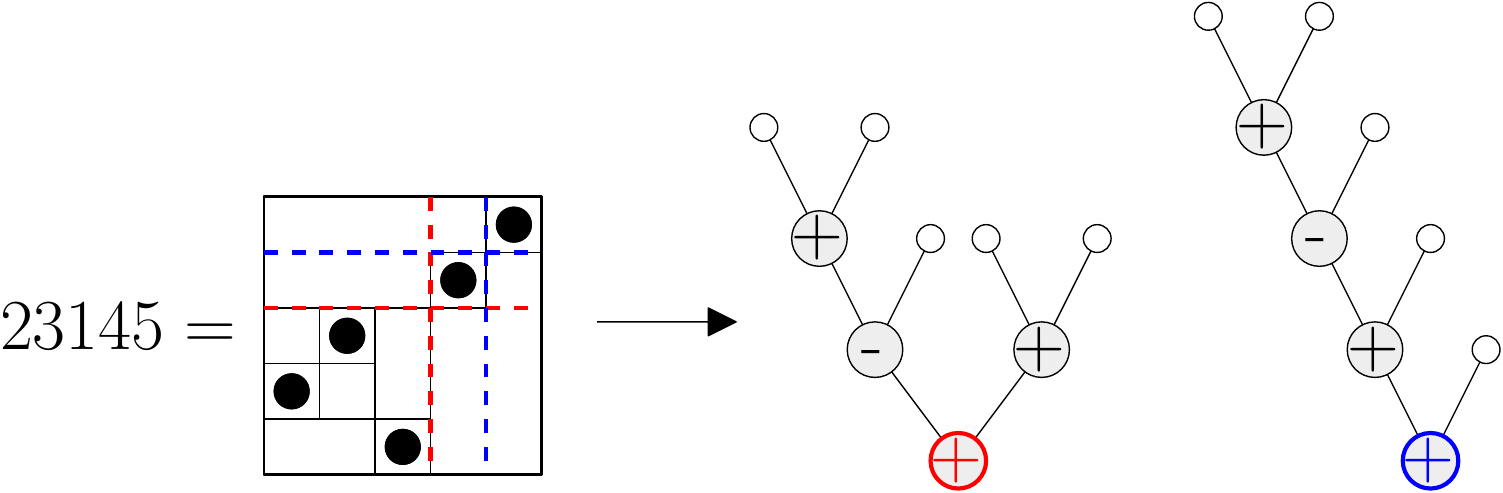}
    \end{center}
    \caption{Two examples of separation trees associated with the separable permutation $\pi=23145$. In this case $r_+(\pi)=3$, $r_-(\pi)=1$. }
    \label{fig:ExempleSeparationTree}
\end{figure}

For any separable permutation $\pi$, we denote by $N_\pi$ its number of separation trees. 
If $\pi$ is not separable, we set $N_\pi = 0$. 
It is shown in \cite[Prop. 9.1]{BrownianPermutation} that the Brownian separable permuton $\bm\mu$ satisfies the following property:
for any $k\geq 2$ and any $\pi \in \Sn_k$,
	\[\esper[\occ(\pi,\bm\mu)] = \frac{N_\pi}{2^{k-1}\Cat_{k-1}}\,,\]
where, as before, we denote by $\Cat_{k} := \frac{1}{k+1}\binom{2k}{k}$ the $k$-th Catalan number,
which counts complete binary trees with $k$ leaves.
In other words, the random permutation of size $k$ extracted from $\bm\mu$
is distributed like the permutation encoded by a uniform complete binary tree with $k$ leaves, 
and hence $k-1$ internal nodes, 
whose signs are chosen uniformly and independently in $\{\oplus,\ominus\}$.
In light of \cref{Prop:CaracterisationLoiPermuton} and \cref{eq:E(occ)=P(perm)} (p.~\pageref{eq:E(occ)=P(perm)}), 
this characterizes the law of the Brownian separable permuton $\bm\mu$ among random permutons. 

The biased Brownian separable permuton of parameter $p\in(0,1)$ has a similar characterization, except that the signs are now chosen with a bias. 
For a separable $\pi$, let $r_+(\pi)$ (resp. $r_-(\pi)$) be the number of internal nodes labeled $\oplus$ (resp. $\ominus$) in a separation tree of $\pi$. 
Even if this is not relevant for the present paper,
let us observe that $r_+(\pi)$ (resp. $r_-(\pi)$) is simply the number of ascents (resp. descents) of $\pi$\footnote{
To see this, observe that each internal node $v$ of a separation tree is the first common ancestor of exactly one pair of consecutive leaves
(the right-most leaf of its left subtree and the left-most leaf of its right subtree).
This two consecutive leaves, corresponding to consecutive elements of the permutation, form an ascent (resp. a descent)
if and only if $v$ is labeled by $\oplus$ (resp. $\ominus$).}.
In particular, $r_+(\pi)$ and $r_-(\pi)$ do not depend on the choice of a separation tree
(this is also a particular case of \cref{cor:OnExpandedTrees}).

\begin{definition}
	The biased Brownian separable permuton of parameter $p\in (0,1)$ is the random permuton $\bm\mu^{(p)}$ characterized by the following relations: for all $k\geq 2$ and all $\pi \in \Sn_k$,
    \begin{equation}
      \esper[\occ(\pi,\bm\mu^{(p)})] = \frac{N_\pi}{\Cat_{k-1}} \ p^{r_+(\pi)} \, (1-p)^{r_-(\pi)} \,. 
      \label{eq:def_PermutonBiaise}
    \end{equation}
	\label{Def:PermutonBiaise}
(Note that the right-hand side is zero if $\pi$ is not separable.)
\end{definition}

Several remarks are in order. 
\begin{itemize}
  \item For $p=1/2$, we get the unbiased Brownian separable permuton.
  \item This characterization of $\bm \mu^{(p)}$ is equivalent to the following: for every $k\geq 1$, 
  \[\Perm(\Mk,\bm \mu^{(p)}) \stackrel d= \perm(\bm b_k^{(p)}),\]
  where $\bm b_k^{(p)}$ is a uniform binary planar tree with $k$ leaves, where each internal node is labeled $\oplus$ (resp. $\ominus$) with probability $p$ (resp. $1-p$), independently from each other.
  \item The existence of $\bm\mu^{(p)}$ is not immediate from this definition, but according to \cref{Prop:existence_permuton}, it suffices to show that $\perm(\bm b_k^{(p)})$ forms a consistent family of random permutations. This is indeed the case, and follows from the fact that a uniform induced subtree of $\bm b_n^{(p)}$ of size $k$ is distributed like $\bm b_k^{(p)}$ (this is,  \emph{e.g.}, a consequence of Rémy's algorithm to generate uniform random binary trees \cite{Remy}).
  \item The definition of $\bm\mu^{(p)}$, and the above argument justifying its existence,
    are not constructive.
    For an explicit construction of $\bm\mu^{(p)}$ starting from a Brownian excursion, see \cite{MickaelConstruction}.
  \item Knowing a priori that such a permuton exists is not necessary for the proof
    of our main theorem. Indeed, we will prove that
    the quantity $\esper[\occ(\pi,\bm \si_n)]$ converges to the right-hand side of \cref{eq:def_PermutonBiaise} (for all patterns $\pi$),
    where $\bm \si_n$ is a uniform permutation in $\langle \mathcal{S}\rangle_n$, and the parameter $p$ depends on $S$.
    From \cref{thm:randompermutonthm}, this implies the existence of a random permuton $\bm\mu^{(p)}$
    satisfying \eqref{eq:def_PermutonBiaise} (only for the relevant value of $p$; not for all $p$) 
    and the convergence in distribution of $(\mu_{\bm \si_n})_n$ to $\bm\mu^{(p)}$.
\end{itemize}

Now, we have all the necessary definitions to make explicit the parameter $p$ of the statement of \cref{Th:MainIntro}, that we restate in a full version. 
(Recall also the definition of $\Occ_\theta(z)$ from \cref{eq:defOcc}, p.~\pageref{eq:defOcc}.)
\begin{theorem}\label{Th:Main}
Let $\mathcal{S}$ be a set of simple permutations such that
\begin{equation}\label{eq:h1} 
  R_S > 0 \quad \text{and} \quad \lim_{r\rightarrow R_S \atop r < R_S} S'(r) > \frac{2}{(1+R_S)^2} -1.
  \tag{H1}
\end{equation}
For every $n\geq 1$, let $\Si_n$ be a uniform permutation in $\langle \mathcal{S}\rangle_n$, and let $\mu_{\Si_n}$ be the random permuton
associated with $\Si_n$.
The sequence $(\mu_{\Si_n})_n$ tends in distribution in the weak
convergence topology to the biased Brownian separable permuton $\bm\mu^{(p)}$ of parameter $p$, where
\begin{align}
	p &= \frac{ (1+\kappa)^3\Occ_{12} (\kappa)  +1}
	{(1+\kappa)^3(\Occ_{12} (\kappa)+\Occ_{21} (\kappa)) +2}%
\label{eq:p_plus_p_moins}
\end{align}
and $\kappa$ is the unique point in $(0,R_S)$ such that $S'(\kappa) = \tfrac 2 {(1+\kappa)^2} - 1$.
\end{theorem}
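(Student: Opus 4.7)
The plan is to apply (c)$\Rightarrow$(a) of \cref{thm:randompermutonthm}, combined with \cref{Prop:CaracterisationLoiPermuton} and \cref{Def:PermutonBiaise}, so that the theorem reduces to proving
\[
\esper[\occ(\pi,\Si_n)]\ \xrightarrow[n\to\infty]{}\ \frac{N_\pi}{\Cat_{k-1}}\,p^{r_+(\pi)}(1-p)^{r_-(\pi)}
\qquad\text{for every }\pi\in\Sn_k,\ k\geq 2.
\]
Via the size-preserving bijection $\perm:\TTT\to\langle\SSS\rangle$ and \cref{lem:DiagrammeCommutatif}, the left-hand side equals
\[
\frac{1}{\binom{n}{k}\,[z^n]T(z)}\,\sum_{\substack{(t_0,V_s)\\ \perm(t_0)=\pi}} [z^n]\,T_{t_0,V_s}(z),
\]
with each summand explicit by \cref{prop:Dec_TtS}.

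Next I would carry out the singularity analysis of the basic series. Writing \cref{eq:Tnonp} as $T_\nonp=z+\Lambda(T_\nonp)$ with $\Lambda(u):=u^2/(1-u)+S(u/(1-u))$, the substitution $v=u/(1-u)$ shows that $\Lambda'(u)=1$ is equivalent to the critical identity
\[
(1+\kappa)^2\bigl(S'(\kappa)+1\bigr)\,=\,2,\qquad \kappa:=u/(1-u),
\]
i.e.\ $S'(\kappa)=2/(1+\kappa)^2-1$. Under (H1), $S'(\kappa)$ is (weakly) increasing in $\kappa$ while $2/(1+\kappa)^2-1$ is strictly decreasing, and (H1) itself guarantees that the two curves cross, so that there is a unique critical value $\kappa\in(0,R_S)$, corresponding to a unique $\tau:=\kappa/(1+\kappa)\in(0,R_\Lambda)$. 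The smooth branch-point machinery of \cref{sec:complex_analysis} then yields $T_\nonp(z)=\tau-c\sqrt{1-z/\rho}+O(1-z/\rho)$ at $\rho:=\tau-\Lambda(\tau)$, and via \cref{eq:T_Tnp,Prop:Sol_TPlus} this transfers to $(1-z/\rho)^{-1/2}$ singularities for $T'$, $T^\pm$, $T_\nonp'$ and $T_\nonp^\pm$ (the denominator $2-(1+T)^2(S'(T)+1)$ in \cref{eq:tp} vanishing at $\rho$ precisely by the critical identity), while $\Occ_\theta(T)$ and $1/(1-T_\nonp)$ stay analytic at $\rho$ since $\kappa<R_S$.

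With this in hand, \cref{thm:transfert} will supply coefficient asymptotics. For a decorated tree $(t_0,V_s)$ with $k$ leaves and $i$ internal nodes, $\sum_v d_v=k+i-1$, so \cref{eq:T_tS,eq:A_v} give a singularity of order $(1-z/\rho)^{-(k+i)/2}$ for $T_{t_0,V_s}$. Since $[z^n]T=\Theta(\rho^{-n}n^{-3/2})$, the ratio of coefficients scales as $n^{(i-k+1)/2}$ and thus contributes a nonzero limit only when $i=k-1$, i.e.\ when $t_0$ is binary. Binary substitution trees carry only $\oplus/\ominus$ labels, so $\pi$ must be separable in the limit (consistent with $N_\pi=0$ otherwise); for separable $\pi$, such binary $t_0$ are exactly the $N_\pi$ separation trees of $\pi$, and $V_s$ then ranges freely over $\Internal{t_0}$.

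The main obstacle is the final identification of the limiting constant. The leading singular coefficient of each $A_v$ factorizes over the children of $v$ into local factors whose values depend on whether $v$ and each child lie in $V_s$ (and, if not, on the child's label), so summing over $V_s$ amounts to a transfer-matrix computation along the edges of $t_0$. I would show that this summation collapses into a product over the internal nodes of $t_0$, each node contributing $p$ if labeled $\oplus$ and $1-p$ if labeled $\ominus$, together with a universal normalization that, combined with the $[z^n]T$ denominator, produces the prefactor $N_\pi/\Cat_{k-1}$. The closed form \cref{eq:p_plus_p_moins} for $p$ then follows from a direct but intricate computation using the explicit leading singular coefficients of $T^\pm$, $T_\nonp'$ and $T_\nonp^\pm$ supplied by \cref{Prop:Sol_TPlus}, the values $\Occ_{12}(\kappa)$ and $\Occ_{21}(\kappa)$, and the critical identity $(1+\kappa)^2(S'(\kappa)+1)=2$. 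The combinatorial-analytic bookkeeping needed to collapse the sums into this factorised form is the technical heart of the argument, and the announced universality of $\bm\mu^{(p)}$ across substitution-closed classes is an immediate corollary, since $p$ depends on $\SSS$ only through $\kappa$, $\Occ_{12}(\kappa)$ and $\Occ_{21}(\kappa)$.
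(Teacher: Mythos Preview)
Your proposal is correct and follows essentially the same route as the paper: reduce via \cref{thm:randompermutonthm} to expected pattern densities, express these through \cref{prop:Dec_TtS}, perform the square-root singularity analysis of $T_\nonp$ and the derived series, transfer to coefficients, observe that only binary $t_0$ survive, and sum over $V_s$. The one place where the paper is sharper than your sketch is the ``collapse'' of the sum over $V_s$: rather than a transfer-matrix argument along edges, the paper observes (\cref{prop:DevMarkedleaves}) that the nine leading constants of $T',T^\pm,T'_\nonp,\ldots$ form a rank-$1$ table $K=\gamma\lambda^{a+b}$, so the edge-dependent factors in $A_v$ regroup via the identity $\sum_v d'_v+\One_{\text{root}\in V_s}=|V_s|+k$ into a product where each node's contribution depends only on its own membership in $V_s$; the sum over $V_s$ is then literally a product of two-term sums, and the identification of $p$ uses $\Occ_{12}+\Occ_{21}=S''/2$ together with $\Lambda''(\tau)=2\rho/\beta^2$.
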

Since $S$ is a power series with nonnegative coefficients,
 $t\mapsto S'(t) - \tfrac 2 {(1+t)^2} + 1$ is increasing and continuous on $[0,R_S)$ (as the sum of two increasing and continuous functions).
 It therefore takes all values from $-1$ to some positive number (possibly $+\infty$) exactly once.
This entails the existence and uniqueness of $\kappa$.

\begin{example}\label{ex:calculp1}
In many cases $\Occ_{12} = \Occ_{21}$, and then $p = 1/2$ and $\bm\mu^{(p)}$ is the unbiased Brownian separable permuton.
This is the case with separable permutations ($\SSS = \emptyset$), with $\SSS = \{2413\}$ or $\SSS = \{3142\}$,
and with any set of simple permutations stable by taking reverse or complement,
like the one considered in the introduction
$\SSS=\{2413,3142,24153,42513\}$.
\end{example}
\begin{example}\label{ex:calculp2}
When $\SSS$ is the family of increasing oscillations (see for instance~\cite{PinPerm}), we can compute
\[S(z) = \frac{2z^4}{1-z};\quad \Occ_{12}(z) = \frac{2z^2(3 - 3z + z^2)}{(1 - z)^3}; \quad \Occ_{21}(z)=\frac{2z^2(3-2z)}{(1 - z)^2}.
\] 
We get through numerical approximation $\kappa \approx 0.2709$ and deduce $p\approx0.5353$.
\end{example}
\begin{example}\label{ex:calculp3}
  Taking $\SSS$ to be the family of simple permutations in $\Av(321)$, we are interested in the class $\CCC=\langle \SSS \rangle$ which is the substitution-closure of $\Av(321)$. In this case, \cite{AlbertVatter} gives 
\[S(z) = \frac{1-z-2z^2 -2z^3 - \sqrt{1-2z-3z^2}}{2+2z}.
\] 
We get through numerical approximation $\kappa \approx 0.2486$. It seems hard to compute the generating series $\Occ_{12}$, but we can locate its value at $\kappa$ by exhaustively computing the number of inversions of each permutation in $\SSS$ up to a certain order $N$, and controlling the rest of the series using the fact that 
a permutation of size $n$ in $\Av(321)$ cannot have more than $n^2/4$ inversions\footnote{Permutations avoiding $321$ consist of two increasing subsequences. 
The number of inversions of $\sigma\in \Av(321)$ of size $n$ is therefore at most $\max_{0\leq k \leq n} k(n-k) \leq \tfrac{n^2}{4}$. 
The claim follows.}.
Performing this with $N=12$ yields $p\in[0.577, 0.622]$.
\end{example}

The remainder of Section \ref{Sec:Standard} is devoted to the proof of \cref{Th:Main},
using generating functions from \cref{Sec:Enumeration} and methods of analytic combinatorics.
More precisely, using \cref{thm:randompermutonthm} we are interested in the limit of $\esper[\occ(\pi,\Si_n)]$,
which we will express in terms of probability that a tree with marked leaves induces a given subtree.
This probability itself will be expressed as the ratio of the coefficients of
the generating function of trees with marked leaves inducing a given subtree
and of the generating function of trees without marked leaves.
We begin with the study of the asymptotics of these generating functions.
\smallskip

Notation: throughout the article, the class $\CCC$, or equivalently its set of simple permutations $\SSS$,
are considered as fixed, and so is the pattern $\pi$ or the tree $t_0$ of which we are studying the proportion
of occurrences (and therefore their size $k$).
Constants in asymptotic expansions, including the ones in $o$, $\mathcal O$ and $\Theta$ symbols,
may therefore depend on these objects.

\subsection{Asymptotics of the generating function of trees with no or one marked leaf}
\label{ssec:Asymp_TreesGF_Std}

From \cref{eq:Tnonp} p.\pageref{eq:Tnonp}, we have
\begin{equation}\label{eq:Tnonp_Lambda}
T_{\nonp}=z+ \Lambda(T_{\nonp})
\end{equation}
where
\begin{equation}\label{eq:DefLambda}
\Lambda(u)=\frac{u^2}{1-u}+ S\left(\frac{u}{1-u}\right).
\end{equation}

We denote by $R_\Lambda$ the radius of convergence of $\Lambda$. Note that $R_\Lambda  = \tfrac {R_S} {1+R_S} \leq 1$.
We will also use repeatedly the inverse equation $R_S=\tfrac {R_\Lambda}{1-R_\Lambda}$.
In the following, to lighten the notation, 
we write $\Lambda'(R_\Lambda) := \lim_{r\rightarrow R_\Lambda \atop r < R_\Lambda} \Lambda'(r)$. Note that $\Lambda'(R_\Lambda)$ may be $\infty$. 

\begin{observation}
\label{obs:conditionH1}
Differentiating \cref{eq:DefLambda}, we get
\begin{equation}
  \Lambda'(u)= \frac{1}{(1-u)^2} \left(1 + S'\Big( \frac{u}{1-u}\Big) \right)-1.
  \label{eq:LambdaPrime}
\end{equation}
In particular, it follows that the condition \eqref{eq:h1} is equivalent to $R_\Lambda >0$ and $\Lambda'(R_\Lambda)>1$.
\end{observation}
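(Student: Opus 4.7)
The plan is to carry out two separate verifications: first the differentiation formula, then the equivalence with hypothesis \eqref{eq:h1}.

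For the derivative computation, I would simply apply the quotient rule to $u^2/(1-u)$ and the chain rule to $S(u/(1-u))$. The useful identity to bring out is
\[
\frac{d}{du}\Big[\tfrac{u^2}{1-u}\Big] = \frac{2u-u^2}{(1-u)^2} = \frac{1}{(1-u)^2}-1,
\]
obtained by writing $2u-u^2 = 1-(1-u)^2$. Combining this with $\frac{d}{du}\bigl[S(u/(1-u))\bigr] = S'(u/(1-u))\cdot (1-u)^{-2}$ immediately yields the claimed expression for $\Lambda'(u)$.

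For the equivalence, the main observation is that $u\mapsto v = u/(1-u)$ is an increasing bijection from $[0,1)$ onto $[0,\infty)$, mapping $R_\Lambda$ to $R_S$; since $\Lambda(u) = u^2/(1-u) + S(v)$ and the first term is analytic on $|u|<1$, we have $R_\Lambda = R_S/(1+R_S)$, and in particular $R_S>0 \iff R_\Lambda>0$. I would then take the limit $u\to R_\Lambda^-$ in the derivative formula, noting that $1-R_\Lambda = 1/(1+R_S)$, to obtain
\[
\Lambda'(R_\Lambda) = (1+R_S)^2\bigl(1+S'(R_S)\bigr) - 1.
\]
The inequality $\Lambda'(R_\Lambda)>1$ is then equivalent to $(1+S'(R_S))(1+R_S)^2 > 2$, i.e.\ $S'(R_S) > 2/(1+R_S)^2 - 1$, which is exactly the second part of \eqref{eq:h1}.

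There is no real obstacle here: the computation is elementary, and the only subtle point is to make sure that the limits $S'(R_S)$ and $\Lambda'(R_\Lambda)$ are interpreted consistently as one-sided limits from below (possibly $+\infty$), as flagged right before the observation. Since both $S'$ and $\Lambda'$ have nonnegative coefficients and are thus monotone increasing on $[0,R_S)$ and $[0,R_\Lambda)$ respectively, these limits are well-defined in $[0,+\infty]$, and the equivalence passes through equally well in the infinite case (both sides of the inequality remain meaningful).
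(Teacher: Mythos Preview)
Your proof is correct and is exactly the natural fill-in of the details the paper leaves implicit (the paper states this as an observation without proof). The derivative computation and the equivalence via $R_\Lambda = R_S/(1+R_S)$, $1-R_\Lambda = 1/(1+R_S)$ are precisely what is intended, and your remark on interpreting $S'(R_S)$ and $\Lambda'(R_\Lambda)$ as possibly infinite left limits matches the paper's convention stated just before the observation.
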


\begin{observation}
\label{obs:Lambda_at_0}
Since $S$ is analytic at $0$ with nonnegative coefficients, the same holds for $\Lambda$. 
Moreover, the series expansion of $\Lambda$ is 
\[
\Lambda(u) = u^2 + \sum_{i\geq 3} \lambda_i u^i, \text{ with } \lambda_i \geq 1 \text{ for all }i \geq 3.
\]
In particular it is aperiodic, in the sense given in \cref{ssec:aperiodicity}.
\end{observation}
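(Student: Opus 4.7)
The observation is essentially a direct series computation, so the plan is short and the only obstacle is making sure each ingredient is correctly justified.

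My plan is to decompose $\Lambda$ into its two summands and analyze them separately. First, I would note that $u \mapsto u/(1-u)$ is analytic at $0$ (on the disk $|u|<1$) and vanishes at $0$, so composing it with $S$, which is assumed analytic at $0$, gives a series $S(u/(1-u))$ analytic on some neighborhood of $0$. The summand $u^2/(1-u)$ is obviously analytic at $0$. Hence $\Lambda$ is analytic at $0$. Since $S$ has nonnegative coefficients and $u/(1-u) = \sum_{i\geq 1} u^i$ also does, the composition $S(u/(1-u))$ has nonnegative coefficients (products and sums of nonnegative numbers); the summand $u^2/(1-u)$ has nonnegative coefficients as well, so $\Lambda$ has nonnegative coefficients.

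Next I would pin down the low-order coefficients. From $\frac{u^2}{1-u} = \sum_{i\geq 2} u^i$, this summand contributes $1$ to the coefficient of $u^i$ for every $i\geq 2$ and nothing for $i=0,1$. For the second summand, recall that $S(z) = \sum_{n\geq 4} s_n z^n$ (by our convention that simple permutations have size $\geq 4$). Substituting $z = u/(1-u)$, whose lowest-order term is $u$, the term $s_n (u/(1-u))^n$ has lowest-order contribution of degree $n \geq 4$, so $S(u/(1-u))$ has no term of degree $\leq 3$. Combining the two: $\Lambda$ has no constant or linear term, its coefficient of $u^2$ is exactly $1$, and for every $i\geq 3$ its coefficient $\lambda_i$ equals $1$ plus a nonnegative contribution from $S(u/(1-u))$, hence $\lambda_i \geq 1$. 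This establishes the stated expansion.

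Finally, aperiodicity (in the sense of \cref{ssec:aperiodicity}, namely that the $\gcd$ of the indices of nonzero coefficients is $1$) follows immediately: both $\lambda_2 = 1$ and $\lambda_3 \geq 1$ are nonzero, and $\gcd(2,3)=1$. The only subtle point worth double-checking is the claim that $S(u/(1-u))$ has no term of degree below $4$; this relies on our convention that simple permutations are of size $\geq 4$ (so that $S$ has no terms in degrees $0, 1, 2, 3$), and this is really the only place where the hypothesis on $S$ is used beyond nonnegativity and analyticity.
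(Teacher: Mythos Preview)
Your proof is correct and is precisely the direct series computation that the paper leaves implicit (the statement is recorded as an ``Observation'' without a separate proof). One small remark: your phrasing of aperiodicity as ``the $\gcd$ of the indices of nonzero coefficients is $1$'' is not verbatim the paper's definition (which forbids a representation $\Lambda(u)=u^r B(u^d)$ with $d\ge 2$), but your actual argument---that $\lambda_2$ and $\lambda_3$ are both nonzero, hence two consecutive indices carry nonzero coefficients---immediately rules out any such representation, so the conclusion stands.
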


\begin{proposition}[Asymptotics of the generating function of $\mathcal S$-canonical trees with no marked leaf]\label{lem:MMCversionSerie}    	\label{Prop:asymp_T}
	Assume that \eqref{eq:h1} holds, and recall that $\kappa$ is defined by $S'(\kappa) = \tfrac 2 {(1+\kappa)^2} - 1$. 
	There is a unique $\tau\in (0,R_\Lambda)$ such that $\Lambda'(\tau)=1$, and we have $\tau = \frac {\kappa}{1+\kappa}$.
	The generating functions $T$ and $T_{\nonp}$ have the same radius of convergence $\rho=\tau-\Lambda(\tau)\in (0,\tau)$ and have a unique dominant singularity\footnote{For the reader who is not familiar with complex analysis,
		    all useful definitions and results are given in \cref{sec:complex_analysis}. In particular, "near $\rho$" means "in a $\Delta$-neighborhood of $\rho$",
		    where "$\Delta$-neighborhood" is defined in \cref{Def:DeltaDomaine}.
		    The formal definition of \emph{(unique) dominant singularity} is given in \cref{eq:def_Exp} p.\pageref{eq:def_Exp}.
	} in $\rho$.
	Their asymptotic expansions near $\rho$ are:
	\begin{align}
	T(z) &= \frac{\tau}{1 - \tau} - \beta\, \lambda\,  \sqrt{1-\tfrac{z}{\rho}} + \mathcal{O}(1-\tfrac{z}{\rho}),\label{eq:Exp_T}\\
	T_{\nonp}(z)&= \tau - \beta \sqrt{1-\tfrac{z}{\rho}} + \mathcal{O}(1-\tfrac{z}{\rho}).\label{eq:DevT}
	\end{align}
	where $\displaystyle{\beta = \sqrt{\frac{2 \rho}{\Lambda''(\tau)}}}$ and $\lambda= \displaystyle{\frac{1}{(1-\tau)^2}}$. 
	In particular, $T$ and $T_{\nonp}$ are convergent at $z=\rho$ and
	$$
	T(\rho)= \frac{\tau}{1 - \tau},\qquad T_{\nonp}(\rho)=\tau.
	$$
\end{proposition}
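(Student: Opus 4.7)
The plan is to treat $T_{\nonp}$ as the compositional inverse of $\Psi(u):=u-\Lambda(u)$ at $0$, and to apply the standard smooth inverse-function schema of analytic combinatorics. The proof splits into four steps.

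\emph{Step 1 (existence and identification of $\tau$).} By \cref{obs:Lambda_at_0}, the series $\Lambda'$ has nonnegative coefficients and $\Lambda'(0)=0$; being a power series with nonnegative coefficients, it is continuous and strictly increasing on $[0,R_\Lambda)$ (strict monotonicity comes from the positivity of $\lambda_i$ for $i\geq 3$). Under \eqref{eq:h1}, \cref{obs:conditionH1} gives $\Lambda'(R_\Lambda)>1$, so by the intermediate value theorem there is a unique $\tau\in(0,R_\Lambda)$ with $\Lambda'(\tau)=1$. To check $\tau=\kappa/(1+\kappa)$, plug $u=\kappa/(1+\kappa)$ into \eqref{eq:LambdaPrime}: then $u/(1-u)=\kappa$ and $1/(1-u)^2=(1+\kappa)^2$, so $\Lambda'(u)=(1+\kappa)^2(1+S'(\kappa))-1$, which equals $1$ exactly when $S'(\kappa)=2/(1+\kappa)^2-1$, i.e., at $\kappa$ as defined.

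\emph{Step 2 (radius of convergence and boundary values).} Since $\Psi'(0)=1-\Lambda'(0)=1\neq 0$, the analytic inverse function theorem ensures $T_{\nonp}$ is analytic near $0$ and satisfies \eqref{eq:Tnonp_Lambda}. On $[0,\tau)$ we have $\Psi'(u)=1-\Lambda'(u)>0$, hence $\Psi:[0,\tau]\to[0,\rho]$ is an analytic bijection with $\rho=\tau-\Lambda(\tau)$; integrating $\Lambda'<1$ on $(0,\tau)$ yields $\Lambda(\tau)<\tau$, so $\rho\in(0,\tau)$. The inverse $T_{\nonp}$ is analytic on $[0,\rho)$, with $T_{\nonp}(\rho)=\tau$ and $T_{\nonp}'(z)=1/(1-\Lambda'(T_{\nonp}(z)))\to\infty$ as $z\to\rho^-$; since the Taylor coefficients of $T_{\nonp}$ are nonnegative, Pringsheim's theorem identifies $\rho$ as its radius of convergence.

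\emph{Step 3 (singular expansion of $T_{\nonp}$).} This is the smooth inverse-function schema: expanding $\Psi$ at $\tau$ gives
\[
\Psi(u)=\rho-\tfrac{\Lambda''(\tau)}{2}(u-\tau)^2+O\bigl((u-\tau)^3\bigr),
\]
with $\Lambda''(\tau)>0$ (again by \cref{obs:Lambda_at_0}). Locally solving for $u$ in terms of $z=\Psi(u)$ produces two analytic branches $u=\tau\pm\sqrt{2(\rho-z)/\Lambda''(\tau)}\,(1+O(\sqrt{\rho-z}))$; the one continuing the real branch $T_{\nonp}(z)<\tau$ from the left of $\rho$ is the minus branch, giving \eqref{eq:DevT} with $\beta=\sqrt{2\rho/\Lambda''(\tau)}$. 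To upgrade this to an expansion in a $\Delta$-neighborhood of $\rho$ and to obtain uniqueness of the dominant singularity, we invoke the aperiodicity of $\Lambda$ noted in \cref{obs:Lambda_at_0} together with the standard analytic-continuation argument recalled in \cref{sec:complex_analysis}: on the circle $|z|=\rho$, a hypothetical other singularity $z_0$ would force $\Lambda'(T_{\nonp}(z_0))=1$ with $|T_{\nonp}(z_0)|=\tau$, contradicting strict inequality $|\Lambda'(T_{\nonp}(z_0))|<\Lambda'(|T_{\nonp}(z_0)|)$ guaranteed by aperiodicity.

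\emph{Step 4 (transfer to $T$).} Since $\tau<R_\Lambda\leq 1$, the map $u\mapsto u/(1-u)$ is analytic at $\tau$; composing with \eqref{eq:T_Tnp} and with the expansion of $T_{\nonp}$ yields
\[
T(z)=\frac{\tau}{1-\tau}+\frac{1}{(1-\tau)^2}\bigl(T_{\nonp}(z)-\tau\bigr)+O\bigl((T_{\nonp}(z)-\tau)^2\bigr),
\]
which combined with \eqref{eq:DevT} gives \eqref{eq:Exp_T} with $\lambda=1/(1-\tau)^2$. The main obstacle is Step 3, namely checking rigorously that the expansion is valid in a $\Delta$-neighborhood of $\rho$ and that $\rho$ is the unique dominant singularity; this relies on aperiodicity and on the general complex-analytic machinery deferred to \cref{sec:complex_analysis}.
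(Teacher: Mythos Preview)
Your proof is correct and follows the same strategy as the paper's: the paper invokes \cite[Theorem~1]{MathildeMarniCyril} as a black box for the square-root expansion and the uniqueness of the dominant singularity of $T_{\nonp}$, while you unpack that inverse-function argument by hand; Step~4 is exactly the paper's subcritical composition with $u\mapsto u/(1-u)$.

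The only wrinkle is in your uniqueness argument in Step~3. The assertion ``with $|T_{\nonp}(z_0)|=\tau$'' is not right (you only have $|T_{\nonp}(z_0)|\le\tau$), and applying the Daffodil Lemma to $\Lambda'$ at the point $T_{\nonp}(z_0)$ requires first knowing that $T_{\nonp}(z_0)$ is not a nonnegative real, which you have not justified. The cleaner route (compare the argument the paper spells out later in \cref{sec:proof_Inv2}) is to apply \cref{lem:daffodil} to the aperiodic series $T_{\nonp}$ itself to obtain the strict inequality $|T_{\nonp}(z_0)|<T_{\nonp}(\rho)=\tau$ for $|z_0|\le\rho$, $z_0\ne\rho$; then $|\Lambda'(T_{\nonp}(z_0))|\le\Lambda'(|T_{\nonp}(z_0)|)<\Lambda'(\tau)=1$, and \cref{lem:implicit} extends $T_{\nonp}$ analytically past $z_0$.
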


This type of behavior with a square-root dominant singularity is classical for series defined by an implicit equation
(such as $T_{\nonp}$, which is characterized by $T_\nonp(z) = z + \Lambda(T_\nonp(z))$), that belong to the \textit{smooth implicit function schema} \cite[Def. VII.4]{Violet}. This schema is defined by the existence of a solution to some \textit{characteristic equation}, which in our case reduces to hypothesis \eqref{eq:h1}, as explained in \cref{Subsec:outline_proof}.
Our result is a special case of \cite[Th. 1]{MathildeMarniCyril}, a general result for equations of the form $U(z) = z +\Lambda(U(z))$. Implicit equations of this form characterize generating functions of weighted trees counted by their number of leaves, and are also considered in \cite[Prop. 8]{PitmanRizzolo}.

\begin{proof}%
	From \cref{obs:Lambda_at_0}, $\Lambda'$ is strictly increasing in the real interval $(0,R_\Lambda)$.
	Together with the fact that $\Lambda'(0)=0$
    and the assumption $\Lambda'(R_\Lambda)>1$ (see \cref{obs:conditionH1}),
    this proves the existence and uniqueness of $\tau >0$ such that $\Lambda'(\tau)=1$.
	
    Setting $v=\tfrac{u}{1-u}$ in $\Lambda'$, which is given by \eqref{eq:LambdaPrime},
    we have $
	\Lambda'\left(\tfrac v {1+v}\right)
	= (1+v)^2(1+S'(v)) - 1
	$. It follows that $\Lambda'(\frac {\kappa} {1+\kappa}) = 1$.
    By uniqueness of $\tau$, we conclude $\tau = \frac {\kappa} {1+\kappa}$.
	
	We now consider the expansion of $T_{\nonp}$ and deduce afterwards the one of $T$.
	From \cref{eq:Tnonp_Lambda}, we have $T_{\nonp}(z)=z+\Lambda(T_{\nonp}(z))$.
	Then Theorem 1 in \cite{MathildeMarniCyril} gives that $T_\nonp$ is analytic at $0$ and
	has a unique dominant singularity of exponent $\tfrac 1 2$ in $\rho=\tau-\Lambda(\tau)$, with the expansion given in \cref{eq:DevT}: 
	$T_{\nonp}(z)=\tau - \beta \sqrt{1-\tfrac{z}{\rho}} + \mathcal{O}(1-\tfrac{z}{\rho})$.
	The next step is to justify that $\rho \in (0,\tau)$.
    That $\rho <\tau$ follows from $\Lambda(\tau)>0$ (since $\tau >0$).
    Moreover, since $\Lambda$ has nonnegative coefficients and no constant term, we have
    $\Lambda(\tau)<\tau \Lambda'(\tau)=\tau$, so that $\rho>0$.

    Finally, we look at the series $T=\tfrac{T_{\nonp}}{1-T_{\nonp}}$ (see \cref{eq:T_Tnp}).
    Observe that
    $T_{\nonp}(\rho)=\tau < R_\Lambda \leq 1$.
	Consequently, the dominant singularity of $T$ is the same as $T_{\nonp}$, \emph{i.e.} $\rho$, 
    and is still unique -- indeed this singularity is reached before that the denominator vanishes; more formally,
    this is a particular case of subcritical composition (\cref{lem:Comp}).
	The asymptotic expansion of $T$ near $\rho$ is obtained through the following computation:
    \[
	T(z)=\frac{\tau - \beta \sqrt{1-\tfrac{z}{\rho}} + \mathcal{O}(1-\tfrac{z}{\rho})}{1-\tau + \beta \sqrt{1-\tfrac{z}{\rho}} + \mathcal{O}(1-\tfrac{z}{\rho})}
        = \frac{\tau}{1 - \tau} - \frac{\beta}{(1 - \tau)^2} \sqrt{1-\tfrac{z}{\rho}} + \mathcal{O}(1-\tfrac{z}{\rho}). \qedhere
        \]
\end{proof}

\begin{proposition}[Asymptotics of the  generating function of $\mathcal S$-canonical trees with marked leaves]\label{prop:DevMarkedleaves}
	All generating functions $T'$, $T'_{\nonp}$, $T'_{\nonm}$, $T^+$, $T^+_{\nonp}$, $T^+_{\nonm}$, $T^-$, $T^-_{\nonp}$ and $T^-_{\nonm}$
	have a unique dominant singularity in $\rho$. They diverge at the singularity $z=\rho$
	and behave as $K (1-\tfrac{z}{\rho})^{-1/2}$, 
	where the constant $K$ is given in the table below:
	\[\begin{array}{| c | l | l | l |}\hline 
	\text{superscript} \backslash \text{subscript}  &\ \  \emptyset & \text{not} \oplus & \text{not} \ominus \\\hline
	' & \Cbase \, \lambda^2 & \Cbase \, \lambda  &  \Cbase \, \lambda  \\ \hline
	+ & \Cbase \, \lambda & \Cbase & \Cbase \\ \hline
	- & \Cbase \, \lambda & \Cbase & \Cbase \\ \hline
	\end{array}\]
	with
	$\Cbase= \frac{\beta(1-\tau)^2}{2\rho}$
    (recall that $\lambda
	= \frac{1}{(1-\tau)^2}$ was defined in \cref{Prop:asymp_T}).
\end{proposition}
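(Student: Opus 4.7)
My plan splits the nine series according to the superscript. The three primed series are handled by straightforwardly differentiating the expansions of $T$ and $T_\nonp$ from Proposition \ref{Prop:asymp_T}, together with the symmetry $T'_\nonp = T'_\nonm$ inherited from $T_\nonp = T_\nonm$. Since the derivative of $\sqrt{1-z/\rho}$ contributes $(2\rho)^{-1}(1-z/\rho)^{-1/2}$ while the $\mathcal{O}(1-z/\rho)$ remainder differentiates to $\mathcal{O}(1)$, the leading constants $\gamma\lambda^2$ and $\gamma\lambda$ come out directly once one checks $\gamma\lambda = \beta/(2\rho)$; uniqueness of the dominant singularity at $\rho$ is inherited from $T$ and $T_\nonp$.

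For the six $+$/$-$-superscripted series, the heart of the argument is an algebraic simplification of the formulas in Proposition \ref{Prop:Sol_TPlus}. Setting $u = T_\nonp$ so that $T = u/(1-u)$ and $1+W = (1-u)^{-2}$, and reading off the identity $(1-u)^{-2}(1+S'(u/(1-u))) = 1+\Lambda'(u)$ from \eqref{eq:LambdaPrime}, a short computation gives
\begin{equation*}
  1 - W S'(T) - W - S'(T) = 1 - \Lambda'(T_\nonp).
\end{equation*}
This collapses the formulas of Proposition \ref{Prop:Sol_TPlus} into
\begin{equation*}
  T^+ = \frac{1}{1-\Lambda'(T_\nonp)}, \qquad T^+_\nonm = \frac{(1-T_\nonp)^2}{1-\Lambda'(T_\nonp)}, \qquad T^+_\nonp = \frac{\Lambda'(T_\nonp)(1-T_\nonp)^2}{1-\Lambda'(T_\nonp)},
\end{equation*}
with the $-$-superscripted series recovered via the symmetry stated at the end of Proposition \ref{Prop:Sol_TPlus}.

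From here, extracting the asymptotics near $\rho$ becomes routine. Since $\tau < R_\Lambda$, the function $\Lambda'$ is analytic at $\tau = T_\nonp(\rho)$; combining its Taylor expansion with Proposition \ref{Prop:asymp_T} and $\Lambda'(\tau)=1$ yields
\begin{equation*}
  1 - \Lambda'(T_\nonp(z)) = \Lambda''(\tau)\,\beta\, \sqrt{1 - \tfrac{z}{\rho}} + \mathcal{O}\bigl(1-\tfrac{z}{\rho}\bigr).
\end{equation*}
Hence $T^+(z) \sim \bigl(\Lambda''(\tau)\beta\bigr)^{-1}(1-z/\rho)^{-1/2}$, and the identity $\beta^2 = 2\rho/\Lambda''(\tau)$ converts the leading constant into $\beta/(2\rho) = \gamma\lambda$, matching the table. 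The constants for $T^\pm_\nonm$ and $T^\pm_\nonp$ then follow by multiplying by $(1-\tau)^2 = 1/\lambda$ and $\Lambda'(\tau)(1-\tau)^2 = 1/\lambda$ respectively, producing $\gamma$ in both cases.

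The main obstacle is justifying the uniqueness of the dominant singularity, i.e.\ that $1-\Lambda'(T_\nonp(z))$ does not vanish for $z$ on the circle $|z|=\rho$ with $z\ne\rho$. I would combine the aperiodicity of $\Lambda$ (\cref{obs:Lambda_at_0}) — which through the smooth implicit-function schema underlying Proposition \ref{Prop:asymp_T} yields the strict bound $|T_\nonp(z)| < \tau$ for such $z$ — with the nonnegativity of the Taylor coefficients of $\Lambda'$, to conclude $|\Lambda'(T_\nonp(z))| \le \Lambda'(|T_\nonp(z)|) < \Lambda'(\tau) = 1$. This rules out any additional dominant singularities for the six $+$/$-$ series; without the clean rewriting via $\Lambda'(T_\nonp)$, this step would be considerably more delicate.
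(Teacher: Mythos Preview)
Your proof is correct and follows essentially the same route as the paper. Both proofs handle the primed series by singular differentiation of \cref{eq:Exp_T,eq:DevT}, and both reduce the $\pm$ series to the behavior of $\Lambda'(T_\nonp)$ via the identity $(1+W)(1+S'(T))=1+\Lambda'(T_\nonp)$ coming from \eqref{eq:LambdaPrime} (this is the paper's \eqref{eq:DevS'T}). The only difference is organizational: the paper expands $1+W$ and $1+S'(T)$ separately and multiplies the expansions, while you perform the algebraic simplification first to obtain the closed forms $T^+=1/(1-\Lambda'(T_\nonp))$, $T^+_\nonm=(1-T_\nonp)^2/(1-\Lambda'(T_\nonp))$, $T^+_\nonp=\Lambda'(T_\nonp)(1-T_\nonp)^2/(1-\Lambda'(T_\nonp))$ and then expand. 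Your version is a bit more transparent, and your explicit argument that $1-\Lambda'(T_\nonp(z))$ cannot vanish for $|z|=\rho$, $z\ne\rho$ (via $|T_\nonp(z)|<\tau$ and monotonicity of $\Lambda'$ on $[0,\tau]$) is a point the paper leaves implicit. One small remark: your claim that the $\mathcal{O}(1-z/\rho)$ remainder differentiates to $\mathcal{O}(1)$ is exactly the content of the singular differentiation theorem (\cref{thm:singular_diff}), not termwise differentiation, so it is worth citing it as the paper does.
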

Note that the table is in fact a rank $1$ matrix.
Namely, 
passing from a root different from $\oplus$ (resp. $\ominus$)
to a nonconditioned root always adds a factor $\lambda$,
independently of the condition on the marked leaf.
Similarly, removing the leaf condition always yields the same factor $\lambda$
independently of the conditions on roots.

\begin{proof}%
By singular differentiation (see \cref{thm:singular_diff})
of \cref{eq:Exp_T,eq:DevT}, 
we have (near $\rho$)
\begin{align*}
&T'(z)= \frac{\beta}{2\rho(1-\tau)^2}  (1-\tfrac{z}{\rho})^{-1/2} +\O(1) = \Cbase\lambda^2 (1-\tfrac{z}{\rho})^{-1/2} +\O(1) ,\\
&T_{\nonp}'(z)=\frac{\beta}{2\rho}  (1-\tfrac{z}{\rho})^{-1/2} +\O(1)= \Cbase\lambda (1-\tfrac{z}{\rho})^{-1/2} +\O(1).
\end{align*} 

Since $T_{\nonm}=T_{\nonp}$, we have obtained the constants in the first line of the table. We now turn to the two last lines.
In order to use \cref{eq:tp,eq:tpm,eq:tpp} p.\pageref{eq:tp},
we first compute the expansions of all intermediate quantities appearing in these formulas. %
From \cref{eq:DevT}, we obtain the following expansion near $\rho$:
\begin{equation}\label{eq:DevW}
1+ W(z)= \left(\frac{1}{1-T_\nonp(z)}\right)^2 = \frac{1}{(1-\tau)^2} -\frac{2\beta}{(1-\tau)^3}\sqrt{1-\tfrac{z}{\rho}} + \mathcal{O}(1-\tfrac{z}{\rho}).
\end{equation}

We turn to the expansion of $S'(T)$.
Putting $u=T_{\nonp}$ in $\Lambda'(u)$ (which is given by \eqref{eq:LambdaPrime}) and using \cref{eq:T_Tnp}, we have
\begin{equation}\label{eq:DevS'T}
1+S'(T) = 1+S'(\tfrac {T_{\nonp}}{1-T_\nonp}) = (1-T_\nonp)^2\, (\Lambda'(T_\nonp) +1).
\end{equation}
Recall that $T_{\nonp}(\rho) = \tau < R_\Lambda$. 
Therefore, the composition $\Lambda'\circ T_\nonp$ is subcritical (see \cref{lem:Comp}). 
This implies that $\Lambda'\circ T_\nonp$ has a unique dominant singularity at $\rho$, 
and plugging in the asymptotic expansion \eqref{eq:DevT} of $T_\nonp$ at $\rho$, we obtain
\begin{align}\label{eq:DevLambda'T}
	\Lambda'(T_\nonp(z)) &= \Lambda'(\tau) + \Lambda''(\tau)(T_\nonp(z) - \tau) + O((T_\nonp(z) - \tau) ^2)\nonumber\\
	&= 1 - \tfrac {2\rho} {\beta}\racine +O\lineaire\,,
\end{align}
where we used the equalities $\Lambda'(\tau) = 1$ (by definition of $\tau$) and $\Lambda''(\tau) = \tfrac {2\rho}{\beta^2}$ (by definition of $\beta$). 
Combining \cref{eq:DevT} and \cref{eq:DevLambda'T} into \cref{eq:DevS'T}, we obtain, after simplification:
\begin{equation}\label{eq:DevF'T}
1 + S'(T) = 2(1-\tau)^2 - \left[ \tfrac{2\rho}{\beta} (1-\tau)^2 - 4\beta(1-\tau)\right]\racine + \O\lineaire
\end{equation}
The expansion of  $WS'(T) + W + S'(T)$ then follows from \cref{eq:DevW} and \cref{eq:DevF'T}:
\begin{align}
&WS'(T) + W + S'(T) = (1+W)(1+S'(T)) - 1 \nonumber \\
&= 2\tfrac{(1- \tau)^2}{(1- \tau)^2} -1 - \left[\tfrac{4\beta{(1- \tau)^2}}{(1-\tau)^3} + \tfrac{2\rho{(1- \tau)^2}}{\beta(1- \tau)^2} - \tfrac{4\beta{(1- \tau)}}{(1- \tau)^2} \right]\racine + \O\lineaire \nonumber\\
&= 1 - \tfrac {1}{\Cbase\lambda}\racine + \O\lineaire \ \text{ by definition of $\Cbase$ and } \lambda.
\end{align}

We can now derive the expansions of our generating functions, using \cref{eq:tp,eq:tpm,eq:tpp}. First,
$$
T^+=\frac{1}{1- \W S'(T) - \W - S'(T)}=\Cbase\lambda\left(1-\tfrac{z}{\rho}\right)^{-1/2}+\O(1).
$$
Then,
$$
T^+_{\nonm}=\frac{1}{1+W} \ T^+ = \frac{\Cbase\lambda}{\lambda}\left(1-\tfrac{z}{\rho}\right)^{-1/2}+\O(1) =\Cbase\left(1-\tfrac{z}{\rho}\right)^{-1/2}+\O(1).
$$
Since $\W S'(T) + \W + S'(T)$ takes value $1$ at $\rho$, the series $T^+_{\nonp}$ has the same first-order expansion:
$$
T^+_{\nonp}=(\W S'(T) + \W + S'(T)) T_{\nonm}^+= \Cbase \left(1-\tfrac{z}{\rho}\right)^{-1/2}+\O(1).
$$

By symmetry, we have $T^-=T^+$, $T_{\nonm}^{-}=T_{\nonp}^{+}$ and $T_{\nonp}^{-}=T_{\nonm}^{+}$,
and this completes the proof of the proposition. 
\end{proof}

\subsection{Asymptotics of the generating function of marked trees with a given induced tree}

We recall some notation introduced in \cref{Subsec:DecompTrees}.
Let $t_0$ be a substitution tree with $k\geq 2$ leaves and $e$ edges. 

\label{def:Vstar}
Let $V_*$ (resp. $V_+$, $V_-$) be the set of nonlinear nodes (resp. nodes labeled $\oplus$, $\ominus$) in $\Internal{\patterntree}$. Recall that, for $v \in \Internal{t_0}$, $d_v$ is the degree of $v$ and $\theta_v$ the permutation labeling  $v$,
and that $\TTT_{\patterntree}$ is the set of $\mathcal{S}$-canonical trees $t$ with $k$ marked leaves such that these leaves induce $\patterntree$. 
Denote by $T_{\patterntree}$ the generating function of $\TTT_{\patterntree}$ (where the size is the number of leaves, both marked and unmarked).
\begin{proposition}\label{prop:Asymp_Tt}
	The series $T_{\patterntree}$ has a unique dominant singularity in $\rho$, with the asymptotic expansion $T_{\patterntree} = \Const_{\patterntree} (1-\tfrac z \rho)^{-(e+1)/2} (1+o(1))$, where the constant $\Const_{\patterntree}$ is 
\begin{equation}\label{Eq:DefinitionCt}
\Const_{\patterntree} =\rho^k (\Cbase\lambda^2)^{e+1}\prod_{v\in V_*} \Occ_{\theta_v}(\tfrac{\tau}{1-\tau})\times \prod_{v\in V_+ \cup V_-} (\Occ_{\theta_v}(\tfrac{\tau}{1-\tau}) +  (1-\tau)^{d_v+1}).
\end{equation}
\end{proposition}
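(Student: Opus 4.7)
The plan is to use \cref{prop:Dec_TtS} to split $T_{\patterntree}$ as a sum over decorations of $\patterntree$, then extract the leading singular behavior of each summand using the asymptotic expansions proved in \cref{Prop:asymp_T,prop:DevMarkedleaves}. Since $\TTT_{\patterntree} = \bigsqcup_{V_* \subseteq V_s \subseteq \Internal{\patterntree}} \TTT_{\patterntree, V_s}$, we have
\[
T_{\patterntree}(z) \;=\; \sum_{V_* \subseteq V_s \subseteq \Internal{\patterntree}} T_{\patterntree, V_s}(z),
\]
and each summand factorizes by \cref{prop:Dec_TtS} as $z^k\, T^{\text{type of root}} \prod_{v\in\Internal{\patterntree}} A_v$.

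I would then analyze a generic summand. The series $T$ and $T_{\nonp}$ are analytic at $\rho$, with respective values $\tfrac{\tau}{1-\tau}$ and $\tau$; note that $\tfrac{\tau}{1-\tau} = \kappa < R_S$. Since by \cref{obs:RcvOcc>RS} the radius of convergence of $\Occ_{\theta_v}$ is at least $R_S$, the subcritical composition lemma \cref{lem:Comp} ensures that $\Occ_{\theta_v}(T)$ is analytic at $\rho$ with value $\Occ_{\theta_v}\!\bigl(\tfrac{\tau}{1-\tau}\bigr)$; similarly $(1-T_{\nonp})^{-1}$ is analytic at $\rho$ with value $(1-\tau)^{-1} = \lambda^{1/2}$. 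The ``marked-leaf'' series $T', T^+, T^-, T'_{\nonp}, T^+_{\nonp}, T^-_{\nonp}$ have, by \cref{prop:DevMarkedleaves}, a unique dominant singularity at $\rho$ of exponent $-\tfrac 1 2$ with explicit leading constants. Multiplying these expansions yields a unique dominant singularity of $T_{\patterntree, V_s}$ at $\rho$ of exponent
\[
-\tfrac{1}{2}\Bigl(1 + \sum_{v\in\Internal{\patterntree}} d_v\Bigr) = -\tfrac{e+1}{2},
\]
since the root factor contributes one $-\tfrac 1 2$ and each $A_v$ contributes $d_v$ such factors (one per child of $v$). The $T_{\patterntree, V_s}$'s have therefore the same unique dominant singularity at $\rho$, of the same exponent; hence so does $T_{\patterntree}$, and its leading constant is the sum of those of the $T_{\patterntree, V_s}$.

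Finally, I would compute this sum and check it equals $\Const_{\patterntree}$. The sum factorizes over $\Internal{\patterntree}$: for each $v\in V_*$, necessarily $v\in V_s$, giving a single contribution; for each $v\in V_+ \cup V_-$, the two cases $v \in V_s$ and $v \notin V_s$ yield two terms contributing respectively a factor proportional to $\Occ_{\theta_v}\!\bigl(\tfrac{\tau}{1-\tau}\bigr)$ and to $(1-\tau)^{d_v+1} = \lambda^{-(d_v+1)/2}$. The root factor $T^{\text{type of root}}$, whose leading constant is either $\Cbase \lambda^2$ (case $\prime$) or $\Cbase \lambda$ (cases $\pm$), is absorbed into the corresponding contribution of the root node. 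The main obstacle is the bookkeeping of the $\Cbase$ and $\lambda$ exponents: one has to check that, for each linear node $v$, the contributions of the two cases can be written as $(\Cbase \lambda^2)^{d_v} \bigl(\Occ_{\theta_v}(\tfrac{\tau}{1-\tau}) + (1-\tau)^{d_v+1}\bigr)$, so that after collecting the common prefactor $\rho^k (\Cbase \lambda^2)^{e+1}$ (using $\sum_v d_v = e$) one recovers exactly \cref{Eq:DefinitionCt}.
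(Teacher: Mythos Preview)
Your proposal is correct and follows essentially the same route as the paper: decompose $T_{\patterntree}$ as $\sum_{V_s} T_{\patterntree,V_s}$, use \cref{prop:Dec_TtS} together with \cref{Prop:asymp_T,prop:DevMarkedleaves} and subcritical composition to see that every summand has a unique dominant singularity at $\rho$ of exponent $-(e+1)/2$, and then sum the leading constants and factor the sum over $V_+\cup V_-$ as a product.

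One small remark on the bookkeeping you flag at the end: the way the paper packages the $\Cbase$ and $\lambda$ powers is not quite ``$(\Cbase\lambda^2)^{d_v}$ per node with the root factor absorbed into the root node''. Rather, it first writes the leading constant of each $T_{\patterntree,V_s}$ in the uniform shape
\[
\rho^k(\Cbase\lambda^2)^{e+1}\prod_{v\in V_s}\Occ_{\theta_v}\!\left(\tfrac{\tau}{1-\tau}\right)\prod_{v\notin V_s}(1-\tau)^{d_v+1},
\]
using the two combinatorial identities $\sum_v d_v=e$ and $\sum_v d'_v+\One_{\text{root}\in V_s}=|V_s|+k$ (the latter is precisely what handles the dependence of the root factor $T^{\text{type of root}}$ on $V_s$). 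Once in this form, the sum over $V_s=V_*\cup\widetilde{V_s}$ factorizes immediately into \cref{Eq:DefinitionCt}. Your ``absorption'' heuristic is morally right, but when you carry it out you will find it cleaner to track the global exponent of $\Cbase\lambda^2$ via these identities rather than node by node.
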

\begin{proof}
  By definition, $T_{\patterntree}=\sum_{V_s} T_{\patterntree,V_s}$,
  where the sum runs over sets $V_s$ such that $(\patterntree,V_s)$ is a decorated tree.
	We start from the formula for $T_{\patterntree,V_s}$, which is given by \cref{prop:Dec_TtS}.
	From \cref{prop:DevMarkedleaves}, the nine series $T',\ldots,T^-_{\nonm}$ of trees with one marked leaf all have unique dominant singularities in $\rho$. This is also the case for the functions $\Occ_{\theta_v}(T)$ and $\left(\tfrac 1 {1-T_\nonp}\right)^{d_v}$ by subcritical composition (see \cref{lem:Comp}).
	Indeed, from \cref{obs:RcvOcc>RS} the radius of convergence of $\Occ_{\theta}$ is at least $R_S$ and from \cref{lem:MMCversionSerie}, $T$ and $T_{\nonp}$ are convergent at $\rho$ with $T(\rho) = \tfrac \tau{1-\tau}< \tfrac {R_\Lambda}{1-R_\Lambda} = R_S$ and $T_\nonp (\rho) = \tau < 1$. 
	As a consequence, $T_{\patterntree}$ has a unique dominant singularity in $\rho$ (see \cref{lem:MultExp}).
    
    For exact asymptotics near $\rho$, note that $\Occ_{\theta_v}(T)$ and $\tfrac 1 {1-T_\nonp}$
    converge respectively to $\Occ_{\theta_v}(\tfrac{\tau}{1-\tau})$ and $\tfrac 1 {1-\tau}$ at $\rho$
    (see \cref{lem:MMCversionSerie}),
    while the nine series $T',\ldots,T^-_{\nonm}$ behave as $\cst (1-\tfrac{z}{\rho})^{-1/2}$,
    where the constants are given in \cref{prop:DevMarkedleaves}.
    We thus get using the notation of \cref{prop:Dec_TtS}:
    \begin{align*}
      z^k \ T^{\text{type of root}} &= \rho^k \Cbase\lambda^{1+\One_{\text{\tiny root} \in V_s}} (1-\tfrac z \rho)^{-1/2} (1+o(1));\\
      A_v&= (1-\tfrac z \rho)^{-d_v/2} \, (1+o(1)) \cdot \begin{cases}
        \Occ_{\theta_v}(\tfrac{\tau}{1-\tau}) (\Cbase\lambda^2)^{d'_v} (\Cbase\lambda)^{d_v^+ + d_v^-}  & \text{if }v \in V_s;\vspace{1mm}\\
        \left( \tfrac 1 {1-\tau}\right)^{d_v+1}                                                                              
        (\Cbase \lambda)^{d'_v} \Cbase^{d_v^+ + d_v^-}  & \text{if }v \notin V_s.
      \end{cases}
    \end{align*}
    The asymptotic behavior of $T_{\patterntree,V_s}$ near $\rho$ is then obtained by multiplying the above expressions.
    The formula can be simplified by observing that
    $\sum_{v\in \Internal{\patterntree}} (d'_v + d_v^+ + d_v^-) =\sum _{v\in \Internal{\patterntree}} d_v= e$ 
    and  $\sum_{v\in \Internal{\patterntree}}d'_v +\One_{\text{\scriptsize root} \in V_s} = |V_s|+k$,
    and we obtain:
\begin{align*}
	T_{\patterntree,V_s}&=(1+o(1)) (1-\tfrac z \rho)^{-(e+1)/2} \rho^k \Cbase^{e+1} \lambda^{1+k+|V_s|}
	\prod_{v\in V_s} \lambda^{d_v}\Occ_{\theta_v}(\tfrac{\tau}{1-\tau})
	\prod_{v\notin V_s} \left( \tfrac 1 {1-\tau}\right)^{d_v+1}\\
	&=(1+o(1)) (1-\tfrac z \rho)^{-(e+1)/2}  \rho^k (\Cbase\lambda^2)^{e+1}
	\prod_{v\in V_s} \Occ_{\theta_v}(\tfrac{\tau}{1-\tau})
	\prod_{v\notin V_s} \big[\left( \tfrac 1 {1-\tau}\right)^{d_v+1} \lambda^{-d_v-1}\big].
\end{align*}
To write the second line, we have used that 
\[
 \sum_{v\in V_s} d_v + \sum_{v\notin V_s} (d_v+1) +1+k+|V_s| =  \sum_{v\in \Internal{\patterntree}} d_v + |\Internal{\patterntree}| +k +1 = 2e+2.
\]
Now we have that $T_{\patterntree}$ is the sum of $T_{\patterntree,V_s}$
over sets $V_s$ such that $(\patterntree,V_s)$ is a decorated tree.
By definition, such $V_s$ can be written as $V_* \cup \widetilde{V_s}$ for some $\widetilde{V_s} \subset V_+ \cup V_-$
(the notation $V_*$, introduced right before the proposition, is the set of nonlinear nodes of $\patterntree$).
This change of variables leads to
\begin{multline*}
	T_{\patterntree} =(1+o(1)) (1-\tfrac z \rho)^{-(e+1)/2}  \rho^k (\Cbase\lambda^2)^{e+1} 
	\prod_{v\in V_*} \Occ_{\theta_v}(\tfrac{\tau}{1-\tau})\\
	\times \sum_{\widetilde{V_s} \subset V_+ \cup V_-}\left( \prod_{v\in \widetilde{V_s}} \Occ_{\theta_v}(\tfrac{\tau}{1-\tau})\right) 
	\left( \prod_{{v\in V_+ \cup V_-}\atop {v\notin \widetilde{V_s}}}
    \big[\left( \tfrac 1 {1-\tau}\right)^{d_v+1} \lambda^{-d_v-1} \big]\right).
\end{multline*} 
We first observe that since $\lambda=(1-\tau)^{-2}$, the last factor simplifies as $(1-\tau)^{d_v+1}$.
The proposition then follows by writing the sum of products on the second line as a product of sums.
\end{proof}

\subsection{Probability of tree patterns} \label{section:TreePatterns}
Recall that $\TTT$ is the set of $\mathcal{S}$-canonical trees (ie canonical trees of permutations in $\langle \mathcal{S}\rangle$). 
We take a uniform random tree with $n$ leaves in $\TTT$  and mark $k$ of its leaves, also chosen uniformly at random.
We denote by $\mathbf{t}^{(n)}_k$ the tree induced by the $k$ marked leaves.

\begin{proposition}\label{prop:proba_arbre}
Let $k\geq 2$, and let $\patterntree$ be any substitution tree with $k$ leaves. Then
	\[\proba (\mathbf{t}^{(n)}_k = \patterntree) = k! \frac{2\sqrt{\pi}}{\Gamma(\tfrac{e(t_0)+1} 2)}\frac{(1-\tau)^2}{\beta} \Const_{\patterntree} \, n^{e(\patterntree)/2+1-k} \, (1+o(1)),  \]
where $\Const_{\patterntree}$ is given by \cref{Eq:DefinitionCt}, and $e(\patterntree)$ is the number of edges of $\patterntree$.
\end{proposition}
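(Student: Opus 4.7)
The plan is to express $\proba(\mathbf{t}^{(n)}_k = \patterntree)$ as a ratio of coefficients of generating functions and then use singularity analysis to extract the asymptotics.

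First, I would rewrite the probability combinatorially. By construction, the numerator counts pairs $(t,\ell_1,\dots,\ell_k)$ with $t\in\mathcal{T}$ of size $n$ and marked leaves inducing $\patterntree$, which is precisely $[z^n]T_{\patterntree}(z)$. The denominator is the number of uniform choices, namely $\binom{n}{k}\cdot [z^n]T(z)$. Hence
\begin{equation*}
\proba(\mathbf{t}^{(n)}_k = \patterntree) = \frac{[z^n]T_{\patterntree}(z)}{\binom{n}{k}\,[z^n]T(z)}.
\end{equation*}

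Next I would apply the transfer theorem of analytic combinatorics (\cref{thm:transfert}) to both series. Proposition~\ref{prop:Asymp_Tt} asserts $T_{\patterntree}$ has a unique dominant singularity at $\rho$ with $T_{\patterntree}(z) \sim \Const_{\patterntree}(1-z/\rho)^{-(e+1)/2}$, so
\begin{equation*}
[z^n]T_{\patterntree}(z) = \Const_{\patterntree}\,\frac{n^{(e-1)/2}}{\Gamma((e+1)/2)}\,\rho^{-n}\,(1+o(1)).
\end{equation*}
For $T$, Proposition~\ref{Prop:asymp_T} gives the singular expansion $T(z)=\tfrac{\tau}{1-\tau}-\beta\lambda\sqrt{1-z/\rho}+\mathcal{O}(1-z/\rho)$, again with a unique dominant singularity at $\rho$. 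The constant term contributes nothing to coefficient asymptotics, and the transfer theorem applied to $-\beta\lambda(1-z/\rho)^{1/2}$, using $\Gamma(-1/2)=-2\sqrt{\pi}$, yields
\begin{equation*}
[z^n]T(z) = \frac{\beta\lambda}{2\sqrt{\pi}}\,n^{-3/2}\,\rho^{-n}\,(1+o(1)) = \frac{\beta}{2\sqrt{\pi}(1-\tau)^2}\,n^{-3/2}\,\rho^{-n}\,(1+o(1)),
\end{equation*}
where I used $\lambda=(1-\tau)^{-2}$.

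Finally, combining these with $\binom{n}{k} = n^k/k! + \mathcal{O}(n^{k-1})$ (as $k$ is fixed), the $\rho^{-n}$ factors cancel and the powers of $n$ combine as $(e-1)/2 - k + 3/2 = e/2 + 1 - k$. A direct simplification gives
\begin{equation*}
\proba(\mathbf{t}^{(n)}_k = \patterntree) = k!\,\frac{2\sqrt{\pi}}{\Gamma((e+1)/2)}\,\frac{(1-\tau)^2}{\beta}\,\Const_{\patterntree}\,n^{e/2+1-k}\,(1+o(1)),
\end{equation*}
as claimed. No real obstacle is expected: the only nontrivial ingredient is the validity of singularity analysis, which is guaranteed by the unique dominant singularity statements in Propositions~\ref{Prop:asymp_T} and~\ref{prop:Asymp_Tt} (these in turn rely on aperiodicity of $\Lambda$, noted in Observation~\ref{obs:Lambda_at_0}, ensuring a $\Delta$-domain of analyticity around $\rho$).
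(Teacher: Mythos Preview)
Your proof is correct and follows exactly the same approach as the paper: express the probability as the ratio $[z^n]T_{\patterntree}/(\binom{n}{k}[z^n]T)$ and apply the Transfer Theorem to each series using Propositions~\ref{Prop:asymp_T} and~\ref{prop:Asymp_Tt}. The paper's own proof is in fact terser than yours---it stops after citing the two propositions and calls the remaining computation ``a routine exercise''---so you have simply spelled out that exercise.
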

\begin{proof}
  Directly from the definition, we have:
  \[ \proba(\mathbf{t}^{(n)}_k=\patterntree) = \frac{[z^n] T_{\patterntree}(z)}{\binom{n}{k} [z^n] T(z)}.\]
  The Transfer Theorem (\cref{thm:transfert}) gives us the asymptotic
  behavior of $[z^n] T(z)$ and $[z^n] T_{\patterntree}(z)$ from the asymptotic expansions in \cref{lem:MMCversionSerie} (\cref{eq:Exp_T})
  and \cref{prop:Asymp_Tt}.
  Deriving the result from there is a routine exercise.
\end{proof}

\subsection{Back to permutations}\mbox{ }

Let $\pi$ be a permutation of size $k$.
 Recall from \cref{sec:SubsTrees} that 
 an \textit{expanded tree} is a substitution tree where nonlinear nodes are labeled by simple permutations,
while linear nodes are required to be binary.
As in \cref{cor:OnExpandedTrees}, we denote
$\widetilde{N_\pi}$ the number of expanded trees of $\pi$.
We know (see \cref{cor:OnExpandedTrees}) that they have all the same number of linear nodes labeled $\oplus$ (resp. $\ominus$),
this number being denoted by $r_+$ (resp. $r_-$) and they all contain the same $r_*$ simple nodes, whose labels will be denoted as $\theta_1, \ldots,\theta_{r_*}$.

%
%
%
%
%
%
%
%
%
%
%
%
%
%

We introduce the \emph{default of binarity} of the permutation $\pi$:
\begin{equation}\label{eq:defi_db}
\db(\pi) %
= \sum_{i=1}^{r_*} (|\theta_i|-2).%
\end{equation}
Observe that $\db(\pi)=0$ if and only if $\pi$ is separable.

Finally, to state the next proposition, we also need to introduce the quantities 
\[\nu_+ = \Occ_{12}\left(\frac{\tau}{1-\tau}\right), \quad
\nu_- = \Occ_{21}\left(\frac{\tau}{1-\tau}\right)\, \text{ and} \quad
p = \frac {\nu_+ +(1-\tau)^3} {\nu_+ + \nu_- +2(1-\tau)^3},\quad
\]
Note that this is the same  $p$ as in \cref{Th:Main}.
\begin{proposition} 
	\label{prop:proba_patterns_general}
	Let $\pi \in \Sn_k$ with $k\geq 2$ and let $\Si_n$ be a  uniform random permutation in $\langle \SSS \rangle_n$. 
	With notation as above, we have 
	\begin{align*}
	\esper [ \occ (\pi,\Si_n)] =& \Const_\pi \, n^{-\db(\pi)/2} \, (1+o(1)), \\
	\text{where } \, \Const_\pi = & \widetilde{N_\pi}
	\frac{ k! \sqrt \pi}{2^{2k-2-\db(\pi)}\Gamma(k - \tfrac{\db(\pi)+1} 2)}
	p^{r_+}(1-p)^{r_-}
	\prod_{i=1}^{r_*}\left[\rho^{-1}\left(\tfrac{\beta}{(1-\tau)^2}\right)^{|\theta_i|}\Occ_{\theta_i}(\tfrac \tau {1-\tau})\right] .
	\end{align*}
\end{proposition}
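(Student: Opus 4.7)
The plan is to reduce $\esper[\occ(\pi,\Si_n)]$ to a sum over substitution trees encoding $\pi$, via the commutative identity $\pat_I(\perm(t)) = \perm(t_I)$ of \cref{lem:DiagrammeCommutatif}. Writing $\bm t_n$ for the canonical tree of $\Si_n$ and $\bm{t}^{(n)}_k$ for the subtree induced by a uniform random $k$-subset of its leaves, this identity gives
\[
\esper[\occ(\pi,\Si_n)] \;=\; \proba(\perm(\bm{t}^{(n)}_k) = \pi) \;=\; \sum_{t_0 \,:\, \perm(t_0) = \pi} \proba(\bm{t}^{(n)}_k = t_0),
\]
where the sum is finite. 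By \cref{prop:proba_arbre}, each term is of order $n^{e(t_0)/2 + 1 - k}$, so the dominant contribution comes from those $t_0$ that maximize $e(t_0)$.

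To identify these dominant trees, I would appeal to \cref{lem:fromExpandedToAll}: every substitution tree of $\pi$ arises by iterated merge operations from some expanded tree of $\pi$, and each nontrivial merge strictly decreases the number of internal nodes (hence of edges). So the maximum of $e(t_0)$ is attained exactly on expanded trees, while all other summands contribute $o(n^{-\db(\pi)/2})$. A short edge count in an expanded tree---each simple node has degree $|\theta_i|$ and each linear node is binary---gives $r_+ + r_- + r_* = k - 1 - \db(\pi)$, hence $e(t_0) = 2k - 2 - \db(\pi)$, yielding precisely the exponent $-\db(\pi)/2$. By \cref{cor:OnExpandedTrees}, all $\widetilde{N_\pi}$ expanded trees of $\pi$ share the same parameters $r_\pm, r_*$ and the same multiset of simple labels $(\theta_i)$, so they produce equal constants $\Const_{t_0}$ in \cref{prop:proba_arbre}.

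It then remains to plug the formula for $\Const_{t_0}$ from \cref{prop:Asymp_Tt} into \cref{prop:proba_arbre} and simplify. For an expanded tree, $V_*$ consists of the simple nodes and contributes $\prod_{i=1}^{r_*} \Occ_{\theta_i}(\tfrac{\tau}{1-\tau})$, while each $v \in V_+$ (resp.\ $V_-$) is binary with $\theta_v = 12$ (resp.\ $21$), contributing $\nu_+ + (1-\tau)^3$ (resp.\ $\nu_- + (1-\tau)^3$). Factoring these as $p\,B$ and $(1-p)\,B$ with $B := \nu_+ + \nu_- + 2(1-\tau)^3$ produces the expected factor $p^{r_+}(1-p)^{r_-}\,B^{r_+ + r_-}$.

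The main obstacle is the algebraic collapse at the end. The key auxiliary identity is $B = \rho(1-\tau)^4/\beta^2$, which I would derive by differentiating \cref{eq:LambdaPrime} at $u = \tau$ to get $\Lambda''(\tau) = 2B/(1-\tau)^4$ and invoking the relation $\Lambda''(\tau) = 2\rho/\beta^2$ built into the definition of $\beta$ in \cref{lem:MMCversionSerie}. Combined with $r_+ + r_- + r_* = k - 1 - \db(\pi)$ and $\sum_i |\theta_i| = \db(\pi) + 2 r_*$, this makes the powers of $\beta$, $\rho$, $(1-\tau)$ and $2$ cancel entirely, collapsing the per-tree prefactor to $2^{-(2k-2-\db(\pi))}\prod_i \rho^{-1}(\beta/(1-\tau)^2)^{|\theta_i|}$ and yielding the announced formula for $\Const_\pi$.
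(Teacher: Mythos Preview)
Your proposal is correct and follows essentially the same route as the paper: the same reduction via \cref{lem:DiagrammeCommutatif} and \cref{prop:proba_arbre}, the same identification of expanded trees as the dominant summands through \cref{lem:fromExpandedToAll} and \cref{cor:OnExpandedTrees}, and the same algebraic collapse using the key identity $\nu_+ + \nu_- + 2(1-\tau)^3 = \rho(1-\tau)^4/\beta^2$ obtained by differentiating \cref{eq:LambdaPrime} and invoking $\Lambda''(\tau)=2\rho/\beta^2$. The paper phrases the dominance argument via $\db(t_0)=\sum_v(d_v-2)$ rather than via edge counts, but this is the same content.
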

\begin{proof}
  We denote by $\bm I$ a uniform random $k$-element subset of $[n]$
  and by $\bm t^{(n)}$ a uniform random $\mathcal{S}$-canonical tree with $n$ leaves. It holds that $\Si_n \stackrel{d}{=} \perm(\bm t^{(n)})$.
	As a consequence of \cref{eq:E(occ)=P(pat),lem:DiagrammeCommutatif}, we have
	\[\esper[\occ(\pi,\Si_n)] = \proba(\pat_{\bm I}(\Si_n) = \pi)
	=\proba(\perm({\mathbf{t}}^{(n)}_{\bm I}) = \pi)
	=  \sum_{\patterntree : \perm(\patterntree) = \pi} \proba ({\mathbf{t}}^{(n)}_{\bm I} = \patterntree). 
	\]
	After plugging in the estimate of \cref{prop:proba_arbre}, we get 
	\begin{align}
	\label{eq:Esper_is_sum_on_trees}
		\esper[\occ(\pi,\Si_n)] =  (1+o(1))\sum_{\patterntree:\perm(\patterntree) = \pi} k! \frac{2\sqrt{\pi}}{\Gamma(\tfrac{e(t_0)+1} 2)}\frac{(1-\tau)^2}{\beta}  \Const_{\patterntree} n^{-\db(\patterntree)/2}
	\end{align}
	where $\db(\patterntree) = 2k-2 -e(\patterntree)$ is the default of binarity of the tree $\patterntree$. 

We claim that if $\patterntree$ is a substitution tree of $\pi$, then $\db(\patterntree)\geq \db(\pi)$
with equality if and only if $\patterntree$ is an expanded tree.
Indeed,
\[\db(\patterntree) = e(\patterntree) + 2( k-1-e(\patterntree))                                    
     = \sum_{v \in \Internal{\patterntree}} (d_v -2).\]
Moreover from  \cref{lem:fromExpandedToAll} any substitution tree can be obtained from an expanded tree of $\pi$
by merging some internal nodes along edges connecting them and such merges always increase (strictly) the considered sum,
which proves the claim.

It follows that, in the sum of \cref{eq:Esper_is_sum_on_trees}, only expanded trees appear asymptotically. 
Moreover, $e(\patterntree)$ and the constant $\Const_{\patterntree}$ does not depend on the choice of an expanded tree $\patterntree$ of $\pi$. 
As a result, we get $\esper [ \occ (\pi,\Si_n)] =(1+o(1)) \Const_\pi \, n^{-\db(\pi)/2}$ where 
	\begin{multline}\label{eq:calcul_C_pi}
		\Const_\pi = \widetilde{N_\pi} \frac{k!2\sqrt{\pi}}{\Gamma(\tfrac {e+1} 2)}
		\frac{(1-\tau)^2}{\beta} \rho^k (\Cbase\lambda^2)^{e+1}
			\left(\prod_{i=1}^{r_*} \Occ_{\theta_i}(\tfrac{\tau}{1-\tau})\right)
			 \Big(\nu_+ + (1-\tau)^3 \Big)^{r_+}
			 \Big(\nu_- + (1-\tau)^3 \Big)^{r_-}\\
		=\widetilde{N_\pi} \frac{ k! 2 \sqrt \pi}{\Gamma(\tfrac{e+1} 2)}
			p^{r_+}(1-p)^{r_-}
			\left(\prod_{i=1}^{r_*} {\Occ_{\theta_i}(\tfrac{\tau}{1-\tau})} \right)
			\times
			\rho^k(\Cbase\lambda^2)^{e+1}
			\frac{(1-\tau)^2}{\beta}\Big(\nu_+ + \nu_- + 2(1-\tau)^3\Big)^{r_+ +r_-}
	\end{multline}
Differentiating \cref{eq:LambdaPrime} p.\pageref{eq:LambdaPrime}
and using $\Lambda'(\tau) = 1$
yield the identity $\Lambda''(\tau) = \tfrac 4 {1-\tau} + \tfrac 1 {(1-\tau)^4} S''(\tfrac {\tau}{1-\tau})$.
Moreover, since $\Occ_{12} +\Occ_{21} = \tfrac {S''}2$ (see \cref{obs:RcvOcc>RS}), this gives us 
	\begin{equation*}
		\nu_+ + \nu_- + 2(1-\tau)^3 = \frac{(1-\tau)^4}{2} \Lambda''(\tau) = \frac {(1-\tau)^4 \rho} {\beta^2}
	\end{equation*}
	Finally, after collecting everything together, we get
	\begin{align*}
		& \rho^k(\Cbase\lambda^2)^{e+1}
		\frac{(1-\tau)^2}{\beta}\Big(\nu_+ + \nu_- + 2(1-\tau)^3\Big)^{r_+ +r_-} \\
		& =\rho^k\left(\frac \beta {2\rho(1-\tau)^2}\right)^{e+1}
			\frac{(1-\tau)^2}{\beta}
			\left(\frac {\rho (1-\tau)^4} {\beta^2}\right)^{r_+ +r_-}
		= \frac{1}{\rho^{r_*} 2^{e+1}}
			\left(\frac{\beta}{(1-\tau)^2}\right)^{2r_* +\db(\pi)}, 
	\end{align*}
	where the last equality above has been obtained using that, for any expanded tree
    $\patterntree$ of $\pi$, we have
	$r_+ + r_- + r_* = |\Internal{\patterntree}| = e-k+1$
	and $\db(\pi) = 2k-2-e$.
	This allows us to simplify \cref{eq:calcul_C_pi} and yields the desired value of $\Const_\pi$.
\end{proof}
We can now conclude the proof of \cref{Th:Main}.
Let $\Si_n$ be a  uniform random permutation in $\langle \SSS \rangle_n$.
Our goal is to show that $\mu_{\Si_n}$ converges to the biased Brownian separable permuton of parameter $p$.
Let $\pi$ be any permutation of size $k\geq 2$. 
As a consequence of \cref{thm:randompermutonthm} (with \cref{obs:StartAt2}) and \cref{Def:PermutonBiaise},
we just have to show that 
  \[ \esper [\occ(\pi,\Si_n)] \stackrel{n\to \infty}{\to} 
  \frac{N_\pi}{\Cat_{k-1}} \, p^{r_+(\pi)} (1-p)^{r_-(\pi)}.\]
Assume first that $\pi$ is not separable. 
In this case, we have $N_\pi = 0$. It also holds that $\db(\pi)>0$, and \cref{prop:proba_patterns_general} implies that 
$\esper [\occ(\pi,\Si_n)] \to 0$. 

Assume on the contrary that $\pi$ is separable. In this case, $\db(\pi) = 0$, $\widetilde{N_\pi} = N_\pi$ and $r_*=0$. 
Therefore, from \cref{prop:proba_patterns_general} we get that 
  \[ \esper [\occ(\pi,\Si_n)] \stackrel{n\to \infty}{\to} 
  N_\pi \, p^{r_+(\pi)} (1-p)^{r_-(\pi)} \, \frac{k! \sqrt{\pi}}{2^{2k-2}\Gamma(k-\tfrac12)}
  = \frac{N_\pi}{\Cat_{k-1}} \, p^{r_+(\pi)} (1-p)^{r_-(\pi)},\]
where we have used the identity $ \Gamma(k-\tfrac12) = \frac{2^{3-2k}\sqrt{\pi}\, \Gamma(2k-2)}{\Gamma(k-1)} = \frac{2^{3-2k}\sqrt{\pi}(2k-3)!}{(k-2)!}$ 
coming from the duplication formula of the Gamma function.
This concludes the proof.\qed

\subsection{Occurrences of nonseparable patterns}\label{Sec:OccNonSeparables}
Since $\esper[\occ(\pi,\Si_n)]$ tends to $0$
whenever $\pi$ is a nonseparable pattern and the random variable  takes only nonnegative values,
$\occ(\pi,\Si_n)$ tends to $0$ in probability.

Here, we discuss more precisely the asymptotic behavior of $\occ(\pi,\Si_n)$ in this case.
The first result gives the order of magnitude of its moments;
we then present a consequence for the random variable itself.

\begin{proposition} \label{prop:momentsnonseparables}
	For $\pi \in \langle \SSS \rangle$ and $m\geq 1$, $\esper[(\occ(\pi,\Si_n))^m] = \Theta (n^{-\db(\pi)/2})$. 
\end{proposition}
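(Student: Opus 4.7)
The upper bound is immediate: since $0\le \occ(\pi,\Si_n)\le 1$, one has $\occ^m\le \occ$ pointwise, hence $\esper[\occ^m]\le \esper[\occ]=O(n^{-\db(\pi)/2})$ by \cref{prop:proba_patterns_general}. The content of the statement is the matching lower bound $\esper[\occ^m]=\Omega(n^{-\db(\pi)/2})$, which conceptually reflects that the asymptotics $\esper[\occ]\sim \Const_\pi\, n^{-\db(\pi)/2}$ is carried not by $\occ$ being typically of order $n^{-\db/2}$, but by a small event of probability $\Theta(n^{-\db/2})$ on which $\occ$ stays bounded away from $0$; on such an event $\occ^m$ is also bounded below by a positive constant, so the lower bound reduces to identifying such a contribution in the tree-enumeration machinery.

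To make this quantitative, I extend the enumeration argument of \cref{prop:proba_patterns_general} to the $m$-th moment. Writing
\[
\esper[\occ(\pi,\Si_n)^m]=\proba\big(\perm(\bm t^{(n)}_{\bm I_j})=\pi\ \text{for all }j=1,\ldots,m\big)
\]
for independent uniform $k$-subsets $\bm I_1,\ldots,\bm I_m$ of $[n]$ independent of $\bm t^{(n)}$, and restricting to the event that they are pairwise disjoint (which occurs with probability $1-O(1/n)$), the problem reduces to enumerating pairs $(\widetilde t,c)$, where $\widetilde t$ is an $\mathcal S$-canonical substitution tree with $km$ leaves and $c$ is a coloring of its leaves into $m$ blocks of size $k$ such that each color class induces a substitution tree of $\pi$ in $\widetilde t$.

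I then exhibit a single such configuration whose contribution is already $\Theta(n^{-\db/2})$. Starting from an expanded tree $\patterntree^*$ of $\pi$, I replace each of its $k$ leaves by a binary $\oplus/\ominus$-tree with $m$ leaves, with signs chosen so that $\widetilde t$ is $\mathcal S$-canonical---the root of each inserted subtree is given the label opposite to that of its new parent in $\patterntree^*$ whenever that parent is linear, and $\patterntree^*$ is locally adjusted if needed to remove internal adjacencies of same-sign linear nodes. Assigning one leaf of each inserted subtree to each color yields a valid coloring $c$ where every color class induces $\patterntree^*$, and a direct edge count gives $e(\widetilde t)=e(\patterntree^*)+k(2m-2)=2km-2-\db(\pi)$. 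By the natural analog of \cref{prop:Asymp_Tt} applied to $\widetilde t$, the probability that $\bm t^{(n)}_{\bm J}=\widetilde t$ for a uniform $km$-subset $\bm J$ of leaves of $\bm t^{(n)}$ is $\Theta(n^{e(\widetilde t)/2+1-km})=\Theta(n^{-\db(\pi)/2})$, while conditionally on this event the random ordered partition $(\bm I_j)_j$ of $\bm J$ realizes the prescribed coloring with positive constant probability $(k!)^m/(km)!$. The main technical obstacle is extending the generating-function machinery of \cref{Sec:Enumeration,Sec:Standard} to this multiply-marked setting and verifying that the associated decorated series has a unique dominant singularity at $\rho$ with the expected exponent $(e(\widetilde t)+1)/2$; once this is in place, the transfer theorem yields the claimed $\Omega(n^{-\db(\pi)/2})$ lower bound.
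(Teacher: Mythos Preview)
Your proof is correct, but it takes a somewhat more hands-on route than the paper's, and you create two artificial difficulties for yourself.

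First, the ``main technical obstacle'' you flag is not an obstacle at all: \cref{prop:proba_arbre} already applies to any substitution tree $\patterntree$ with any number of leaves, so taking $\patterntree=\widetilde t$ (with $km$ leaves) gives directly $\proba(\bm t^{(n)}_{\bm J}=\widetilde t)=\Theta(n^{e(\widetilde t)/2+1-km})$. No extension of the machinery is needed. Likewise, your care to make $\widetilde t$ an $\mathcal S$-canonical tree is unnecessary: induced trees are arbitrary substitution trees, not canonical ones, so any labeling of the inserted binary subtrees works.

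Second, and more interestingly, the paper bypasses the tree-level argument entirely by reducing the $m$-th moment to first moments of \emph{larger patterns}. Expanding $\occ(\pi,\Si_n)^m$ as a sum over $m$-tuples $(I_1,\ldots,I_m)$ and grouping by $K=\bigcup_i I_i$ and $\rho=\pat_K(\Si_n)$, one gets a finite linear combination of quantities $\binom{n}{k}^{-m}\binom{n}{j}\,\esper[\occ(\rho,\Si_n)]$ with $j=|\rho|\le km$. Each such term is of order $n^{j-km-\db(\rho)/2}$ by \cref{prop:proba_patterns_general}, and the observation that $\pi\preccurlyeq\rho$ forces $\db(\rho)\ge\db(\pi)$ gives the upper bound term by term. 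For the lower bound one exhibits the single pattern $\rho=\pi[1\cdots m,\ldots,1\cdots m]$ with $j=km$ and $\db(\rho)=\db(\pi)$; this is exactly the permutation encoded by your tree $\widetilde t$, so the two lower-bound constructions coincide. The paper's reduction is cleaner because it stays at the pattern level and reuses \cref{prop:proba_patterns_general} verbatim, avoiding the coloring/conditioning layer and the need to revisit \cref{prop:Asymp_Tt}. Your upper bound via $\occ^m\le\occ$ is of course simpler than the paper's, but the paper's term-by-term analysis is what naturally produces the matching lower bound.
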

\begin{remark}
	This result does consider separable patterns $\pi$, but in this case it is a direct consequence of our main theorem. Indeed, according to \cref{thm:randompermutonthm}, \cref{Th:Main} entails convergence in distribution of $(\occ(\pi,\Si_n))_n$ to $\occ(\pi,\Mu^{(p)})$, jointly for all $\pi\in\Sn$, and hence of all moments and mixed moments (since those random variables are bounded by $1$). Namely, we have $\esper[\occ(\pi,\Si_n)^m]\xrightarrow[n\to\infty]{} \esper[\occ(\pi,\Mu^{(p)})^m]$. 
    This limiting value is positive if and only if $\pi$ is separable,
    and can be computed exactly by adapting the method exposed in \cite[Section 9.1]{BrownianPermutation}.
\end{remark}
\begin{proof}%
By definition, $\occ(\pi,\Si_n) = \tbinom n k ^{-1} \sum_{I \subset [n], |I| = k} \One_{\pat_{I}(\Si_n) = \pi}$,
where we use $k$ for the size of the pattern $\pi$, as usual. Consequently,
	\begin{align*}
		 \esper [\occ(\pi,\Si_n)^m] = \tbinom n k ^{-m} \ \esper \bigg[ 
			\sum_{I_1,\ldots, I_m \subset [n] \atop \forall i,|I_i| = k} 
			\One_{\forall i,\pat_{I_i}(\Si_n) = \pi}
			\bigg]. 
	\end{align*}	
We split the sum according to the different possible values of $K = \bigcup_i I_i$ and $j = |K|$. 
Denoting $B^K_{k,m}$ the set of possible ordered covers of $K$ by $m$ sets of size $k$, this gives 
	\begin{align*}
		 \esper [\occ(\pi,\Si_n)^m] = \tbinom n k ^{-m} \ \esper \bigg[ 
		\sum_{j=k}^{mk} \sum_{K\subset [n] \atop |K|=j}
		\sum_{(I_1,\ldots, I_m) \in B^K_{k,m}}
		\One_{\forall i,\pat_{I_i}(\Si_n) = \pi}
		\bigg].
	\end{align*}
	Let us now remark that the unique increasing bijection between $K$ and $[j]$ induces a bijection between $B^K_{k,m}$ and $B^{[j]}_{k,m}$.
	Let $(J_i)_{1\leq i\leq m}$ denote the image of $(I_i)_{1\leq i \leq m}$ by this bijection. Clearly, 
	\begin{align} \label{eq:pi_pattern_of_rho}
	\pat_{I_i}(\Si_n) = \pi \iff \pat_{J_i}(\pat_K(\Si_n)) = \pi.
	\end{align}
	The sum can now be decomposed according to the different values of $\rho =\pat_K(\Si_n)$ yielding 
	\begin{align*}
		 \esper [\occ(\pi,\Si_n)^m] &= \tbinom n k ^{-m} \ \esper \bigg[ 
			\sum_{j=k}^{mk} \sum_{K\subset [n] \atop |K|=j}
			\sum_{(J_1,\ldots, J_m) \in B^{[j]}_{k,m}} \sum_{\rho \in \Sn_j}
			\One_{\pat_{K}(\Si_n) = \rho} \One_{\forall i,\pat_{J_i}(\rho)=\pi}
		\bigg]\\
		&= \sum_{j=k}^{mk} 
		\sum_{(J_1,\ldots, J_m) \in B^{[j]}_{k,m}}
		\sum_{\rho \in \Sn_j \atop \forall i,\pat_{J_i}(\rho)=\pi}
		\tbinom n k ^{-m} \tbinom n j \, \esper[\occ(\rho,\Si_n)].
	\end{align*}
    Since the summation index sets do not depend on $n$,
    it is enough to consider each summand separately to get the asymptotics.
    From \cref{prop:proba_patterns_general},
    the summand $\tbinom n k ^{-m} \tbinom n j \, \esper[\occ(\rho,\Si_n)]$ is of order $n^{j-km -\db(\rho)/2}$.
	
	Whenever \cref{eq:pi_pattern_of_rho} holds, $\pi$ is a pattern of $\rho =\pat_K(\Si_n)$. 
	As a consequence, an expanded tree of $\rho$ must have a substitution tree of $\pi$ as an induced tree. 
	Since the default of binarity may only decrease when taking induced trees, 
	this implies that $\db(\rho) \geq \db(\pi)$.
    Since additionally $j \le km$,
    we deduce that $j-km -\db(\rho)/2 \leq -\db(\pi)/2$ which gives $\esper[\occ(\pi,\Si_n)^m] = \O(n^{-\db(\pi)/2})$.
	
	To prove that $\esper[\occ(\pi,\Si_n)^m] = \Theta(n^{-\db(\pi)/2})$,
    it is then enough to find one summand, which grows as $n^{-\db(\pi)/2}$ for large $n$.
	This is achieved considering the summand indexed by
    \[j = km;\  J_i = \{ m\, q + i:\, 0 \le q \le k-1\}; \ \rho = \pi[1\cdots m,\ldots,1\cdots m]. \]
    Indeed in this case, $\db(\rho)=\db(\pi)$, so that $j-km -\db(\rho)/2 = -\db(\pi)/2$, 
	which concludes the proof of the proposition.
\end{proof}
\begin{corollary}
  \label{corol:momentsnonseparables}
	For $\pi \in \langle \SSS \rangle$ and  $\eps > 0$ small enough, $\proba(\occ (\pi,\Si_n)> \eps) = \Theta (n^{-\db(\pi)/2})$,
    where the constant in the $\Theta$ symbol depends on $\eps$.
\end{corollary}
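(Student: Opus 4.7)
The plan is to deduce this corollary from \cref{prop:momentsnonseparables} by a standard second-moment argument. Write $X = \occ(\pi,\Si_n)$ for brevity, and note that $0 \le X \le 1$. By \cref{prop:momentsnonseparables} applied with $m=1$ and $m=2$, there exist positive constants $c_1, C_1, c_2$ (depending on $\pi$) such that
\[ c_1\, n^{-\db(\pi)/2} \le \esper[X] \le C_1\, n^{-\db(\pi)/2} \quad \text{and} \quad \esper[X^2] \ge c_2\, n^{-\db(\pi)/2}. \]

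The upper bound follows directly from Markov's inequality applied to the first moment:
\[ \proba(X > \eps) \le \eps^{-1}\, \esper[X] \le \eps^{-1} C_1\, n^{-\db(\pi)/2}. \]

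For the matching lower bound, I would rely on the pointwise inequality $X^2 \le \One_{X>\eps} + \eps\, X$, which holds because $X \le 1$: on the event $\{X > \eps\}$ we have $X^2 \le X \le 1 = \One_{X>\eps}$, while on $\{X \le \eps\}$ we have $X^2 \le \eps\, X$. Taking expectations yields
\[ \esper[X^2] \le \proba(X > \eps) + \eps\, \esper[X], \]
so that
\[ \proba(X > \eps) \ge \esper[X^2] - \eps\, \esper[X] \ge (c_2 - \eps\, C_1)\, n^{-\db(\pi)/2}. \]
Choosing $\eps < c_2/C_1$ makes the right-hand side a positive constant times $n^{-\db(\pi)/2}$, giving the desired lower bound.

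The only subtle point, which prevents a naive use of the Paley--Zygmund inequality, is that Paley--Zygmund would only yield a lower bound on $\proba(X > \theta\, \esper[X])$, i.e., above a threshold tending to $0$ with $n$, rather than above a fixed $\eps$. What unlocks the argument is precisely the main content of \cref{prop:momentsnonseparables}: the moments $\esper[X]$ and $\esper[X^2]$ are of the \emph{same} order $n^{-\db(\pi)/2}$, reflecting the fact that the mass of $\esper[X]$ comes from a rare event, of probability $\Theta(n^{-\db(\pi)/2})$, on which $X$ is of order $1$, rather than from a typical event on which $X$ would be of order $n^{-\db(\pi)/2}$.
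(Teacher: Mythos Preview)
Your proof is correct and follows essentially the same approach as the paper: Markov's inequality for the upper bound, and for the lower bound the pointwise inequality $X^2 \le \One_{X>\eps} + \eps X$ (valid since $0\le X\le 1$) combined with the fact that $\esper[X]$ and $\esper[X^2]$ have the same order. The paper's write-up is slightly terser but uses the identical key inequality.
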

\begin{proof}
	The upper bound is an immediate consequence of Markov's inequality. 
    For the lower bound, let $X$ be a random variable in $[0,1]$, we have
    \[\esper [X^2] \le \esper \big [\eps \One_{(X<\eps)}X+\One_{(X\ge \eps)}\big] \le \eps \esper[X] + \proba(X \ge \eps).\]
	The corollary follows by taking $X=\occ(\pi,\Si_n)$ and $\eps$ small enough.
\end{proof}

\section{Asymptotic analysis: The degenerate case $S'(R_S)<2/(1+R_S)^2-1$}
\label{sec:Asymp2}

In this section, we are interested in the case
where the generating function $S$ of simple permutations in $\mathcal S$
satisfies the following condition.
\begin{definition}
  [Hypothesis $(H2)$]
  The generating function $S$ of a family $\mathcal S$ of simple permutations is said to satisfy hypothesis $(H2)$ if $S$ meets the following conditions at its radius of convergence \hbox{$R_S>0$}:   
  \begin{enumerate}
  	\item $S'$ is convergent at $R_S$ and 
      \begin{equation}
        S'(R_S) < \frac{2}{(1+R_S)^2} -1;
        \label{eq:Hyp_Sp}
      \end{equation}
    \item\label{item:exp} $S$ has a      
      dominant singularity of exponent $\bm{\delta>1}$ in $R_S$.
  \end{enumerate}
\end{definition}

\cref{item:exp} means that, around the singularity $R_S$, one has
\[S(z)=g_S(z) + (C_S+o(1)) (R_S-z)^\delta,\]
for some analytic function $g_S$ and constant $C_S \neq 0$.
We refer to \cref{sec:singularity} for a precise definition.
Clearly, under $(H2)$, it holds that $R_S < \infty$. 
Note also that the assumption $\delta >1$ is redundant with the convergence of $S'$ at $R_S$.

\subsection{Asymptotic behavior of the main series}
As in \cref{ssec:Asymp_TreesGF_Std},
the first step is to derive the asymptotic behavior of all generating functions
for marked trees around their common dominant singularity.
In this section, we will not compute constants explicitly, but only
focus on the singularity exponent.
Indeed, keeping track only of singularity exponents is here sufficient 
to determine the limiting permuton.

The function $\Lambda$ is defined in \eqref{eq:DefLambda} by:
$$
    \Lambda(u)= \frac{u^2}{1-u} +S\left( \frac{u}{1-u} \right).
$$
\begin{lemma}
  Assume that $S$ satisfies hypothesis $(H2)$.
  Then $\Lambda$ has a unique dominant singularity of exponent $\delta$
  in $R_\Lambda:=\tfrac{R_S}{1+R_S}<1$.
  Moreover, $\Lambda'$ is convergent at $R_\Lambda$ and
      $\Lambda'(R_\Lambda)<1$.
  \label{lem:H2_De_S_A_Lambda}
\end{lemma}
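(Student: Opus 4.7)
The plan is to leverage the Möbius transformation $\phi(u) = u/(1-u)$, which is the building block of $\Lambda$ via $\Lambda(u) = u^2/(1-u) + S(\phi(u))$. Since $\phi$ is biholomorphic on $\{|u|<1\}$ with $\phi(R_\Lambda) = R_S$ for $R_\Lambda := R_S/(1+R_S) \in (0,1)$, everything about $\Lambda$ near $R_\Lambda$ should be obtainable by transporting the corresponding statement about $S$ near $R_S$ through $\phi$.

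First I would establish that $R_\Lambda$ is the radius of convergence of $\Lambda$ and that the dominant singularity at $R_\Lambda$ is unique. The rational term $u^2/(1-u)$ is analytic on the open disk $\{|u|<1\}$, hence at $R_\Lambda$. For the composition $S \circ \phi$, a direct computation gives, for $u = R_\Lambda e^{i\theta}$,
\[|\phi(u)|^2 = \frac{R_\Lambda^2}{1 - 2 R_\Lambda \cos\theta + R_\Lambda^2},\]
which is strictly less than $R_S^2$ whenever $\theta \ne 0$ and equals $R_S^2$ at $\theta = 0$. Thus $\phi$ maps $\overline{D(0,R_\Lambda)} \setminus \{R_\Lambda\}$ into the open disk $D(0,R_S)$ where $S$ is analytic, so $S \circ \phi$ has no singularity there; combined with the fact that $\phi(R_\Lambda) = R_S$ is a singularity of $S$, this yields both that $R_\Lambda$ is the radius of convergence of $\Lambda$ and that $R_\Lambda$ is the unique dominant singularity.

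Next, for the singularity exponent, since $S$ extends analytically to a $\Delta$-domain at $R_S$ with expansion $S(z) = g_S(z) + (C_S + o(1))(R_S - z)^\delta$, and since $\phi$ is biholomorphic near $R_\Lambda$ with $\phi'(R_\Lambda) = (1-R_\Lambda)^{-2} \ne 0$, the inverse $\phi^{-1}$ maps a $\Delta$-domain at $R_S$ to a set containing a $\Delta$-domain at $R_\Lambda$; this is the one routine technical point to verify. Using the Taylor expansion $R_S - \phi(u) = \phi'(R_\Lambda)(R_\Lambda - u)(1 + O(R_\Lambda - u))$ and standard manipulations, one obtains
\[\Lambda(u) = \tilde g(u) + \bigl(C_S\, \phi'(R_\Lambda)^\delta + o(1)\bigr)(R_\Lambda - u)^\delta,\]
where $\tilde g(u) = u^2/(1-u) + g_S(\phi(u))$ is analytic at $R_\Lambda$, giving the desired singularity of exponent $\delta$.

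Finally, for the bound $\Lambda'(R_\Lambda) < 1$, I would use the formula $\Lambda'(u) = (1-u)^{-2}(1 + S'(\phi(u))) - 1$ already derived in \cref{obs:conditionH1}. Since $(H2)$ assumes $S'$ is convergent at $R_S$, $\Lambda'$ is convergent at $R_\Lambda$, and using $1 - R_\Lambda = (1+R_S)^{-1}$ we get
\[\Lambda'(R_\Lambda) = (1+R_S)^2 \bigl(1 + S'(R_S)\bigr) - 1.\]
The assumption $S'(R_S) < 2/(1+R_S)^2 - 1$ rearranges precisely to $\Lambda'(R_\Lambda) < 1$; this is the same equivalence already observed in \cref{obs:conditionH1}, but in the reverse direction. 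The main potential obstacle is verifying that $\Delta$-domains transfer correctly through $\phi^{-1}$, but this is routine given that $\phi$ is biholomorphic at $R_\Lambda$ with nonvanishing derivative.
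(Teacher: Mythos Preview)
Your proof is correct and follows essentially the same approach as the paper. The paper's proof simply invokes \cref{lem:Comp} (Supercritical case) with $F=S$ and $G(u)=u/(1-u)$ for the first assertion, and handles the bound on $\Lambda'(R_\Lambda)$ via the same computation you give; your argument unfolds the content of that composition lemma explicitly (direct control of $|\phi(u)|$ on the boundary circle and transfer of the $\Delta$-domain through the biholomorphism $\phi$) rather than citing it.
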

\begin{proof}
  The first assertion follows from \cref{lem:Comp} (Supercritical case), using also that $R_S < \infty$. 
  The convergence of $\Lambda'$ at $R_\Lambda$ follows from that of $S'$ at $R_S$. 
  Finally, the inequality \hbox{$\Lambda'(R_\Lambda)<1$}
  is a straightforward computation from \eqref{eq:Hyp_Sp}
  (recall that $\Lambda'$ is given in \eqref{eq:LambdaPrime}).
\end{proof}

Recall that from \cref{eq:Tnonp} (p. \pageref{eq:Tnonp})
$T_{\nonp}$ is implicitly defined by the equation
\begin{equation}
  T_{\nonp}(z)=z + \Lambda(T_{\nonp}(z)).
  \label{eq:Tnonp2}
\end{equation}
As explained in \cref{Subsec:outline_proof}, the condition $\Lambda'(R_\Lambda)<1$ implies that the singularity
of $T_{\nonp}(z)$ is not a \emph{branch point}, but is inferred from the singularity of $\Lambda$.
%
%
\begin{lemma}
  Assume that $S$ satisfies hypothesis $(H2)$.
  Then there is  a unique $\rho>0$ such that $T_{\nonp}(\rho)=R_\Lambda$.
  Moreover, $\rho$ is the radius of convergence of $T_{\nonp}$
  and $T_{\nonp}$ has a unique dominant singularity of exponent $\delta$ in $\rho$.
  \label{lem:Analyse_Tnonp2}
\end{lemma}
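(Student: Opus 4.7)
The strategy is to invert the defining relation $T_{\nonp}(z)=z+\Lambda(T_{\nonp}(z))$. Setting $\Psi(u):=u-\Lambda(u)$, this becomes $z=\Psi(T_{\nonp}(z))$, so I would study $T_{\nonp}$ as (an analytic extension of) $\Psi^{-1}$. First I would check that $\Psi$ is a strictly increasing analytic bijection from $[0,R_\Lambda]$ onto $[0,\rho]$, where $\rho:=R_\Lambda-\Lambda(R_\Lambda)$. Indeed, $\Lambda'$ has nonnegative coefficients (\cref{obs:Lambda_at_0}), hence is nondecreasing on $[0,R_\Lambda)$, and by \cref{lem:H2_De_S_A_Lambda} it satisfies $\Lambda'(R_\Lambda)<1$, so $\Psi'(u)\geq 1-\Lambda'(R_\Lambda)>0$ throughout $[0,R_\Lambda]$. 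Since $\Lambda(0)=0$ and $\Psi$ is strictly increasing, $\rho=\Psi(R_\Lambda)>0$. Combined with the uniqueness statement in \cref{Prop:systeme1} and the analytic inverse function theorem applied to $\Psi$ on a complex neighborhood of $[0,R_\Lambda)$, this gives $T_{\nonp}=\Psi^{-1}$ on a complex neighborhood of $[0,\rho)$, with $T_{\nonp}(\rho)=R_\Lambda$; uniqueness of such a $\rho>0$ follows from strict monotonicity of $\Psi$.

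The radius of convergence of $T_{\nonp}$ is then at most $\rho$ (since $T_{\nonp}$ cannot continue past the point where it reaches the singularity $R_\Lambda$ of $\Lambda$) and at least $\rho$ (by analyticity on $[0,\rho)$), hence exactly $\rho$. For the singular behavior at $\rho$, I would split $\Lambda(u)=g_\Lambda(u)+\Lambda_{\mathrm{sing}}(u)$, where $g_\Lambda$ is analytic at $R_\Lambda$ and $\Lambda_{\mathrm{sing}}(u)\sim C_\Lambda(R_\Lambda-u)^\delta$ with $C_\Lambda\neq 0$. Because $\delta>1$, the singular part contributes nothing to $\Lambda'(R_\Lambda)$, so $\psi_0(u):=u-g_\Lambda(u)$ has derivative $1-\Lambda'(R_\Lambda)>0$ at $R_\Lambda$ and is therefore a local biholomorphism there, with $\psi_0(R_\Lambda)=\rho$. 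Writing $\tilde z=\psi_0(u)$, the relation $z=\Psi(u)=\tilde z-\Lambda_{\mathrm{sing}}(u)$ inverts, after using $R_\Lambda-u\sim(\rho-\tilde z)/\psi_0'(R_\Lambda)$ and the fact that $\delta>1$ makes the singular part negligible compared to the linear part, into an expansion of the form $T_{\nonp}(z)=\psi_0^{-1}(z)+K(\rho-z)^\delta(1+o(1))$ near $\rho$, with $K\neq 0$ an explicit function of $C_\Lambda$ and $\psi_0'(R_\Lambda)$. Since $\psi_0^{-1}$ is analytic at $\rho$, this is a singularity of exponent $\delta$ in the paper's sense.

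Uniqueness of the dominant singularity on the circle $|z|=\rho$ would follow from the aperiodicity of $\Lambda$ (\cref{obs:Lambda_at_0}), transferred through the implicit equation; this is a standard application of the complex-analytic toolbox collected in \cref{sec:complex_analysis}. The main obstacle is the singular-expansion step above: the identity derived on the real axis must be promoted to an expansion valid in a $\Delta$-neighborhood of $\rho$, so that the Transfer Theorem (\cref{thm:transfert}) may later extract coefficient asymptotics from it. This requires checking that $\psi_0^{-1}$ extends analytically to a full complex neighborhood of $\rho$ (which follows from $\psi_0'(R_\Lambda)\neq 0$) and that the singular expansion of $\Lambda$ near $R_\Lambda$ holds uniformly in a sector around the real axis---both being standard ingredients in analytic combinatorics and analogous to steps already used in \cref{Sec:Standard}.
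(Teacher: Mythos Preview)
Your outline coincides with the paper's argument: both view $T_{\nonp}$ as the inverse of $\Psi(u)=u-\Lambda(u)$, split $\Lambda=g_\Lambda+\Lambda_{\mathrm{sing}}$, locally invert the analytic map $\psi_0(u)=u-g_\Lambda(u)$ at $R_\Lambda$ (your $\psi_0^{-1}$ is the paper's $h_\Lambda$), and read the exponent $\delta$ off the remaining singular term. Existence and uniqueness of $\rho=R_\Lambda-\Lambda(R_\Lambda)$ go the same way.

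The obstacle you single out is genuine, and it is not dispatched by analogy with \cref{Sec:Standard}. There the square-root expansion of $T_{\nonp}$ is imported wholesale from \cite[Th.~1]{MathildeMarniCyril}, which treats the \emph{branch-point} situation ($\Lambda$ analytic at the relevant point, the singularity created by $\Lambda'=1$); here $\Lambda$ is itself singular at $R_\Lambda$, and no comparable black box is available. The paper fills this gap by Picard iteration: rewriting the equation as the fixed point $w=G(z,w)$ with
\[G(z,w)=w+\tfrac{1}{1-\Lambda'(R_\Lambda)}\bigl(z-w+\Lambda(w)\bigr),\]
singular differentiation gives $\partial_w G=\O\bigl((R_\Lambda-w)^{\delta_*-1}\bigr)$, so $G(z,\cdot)$ is a contraction on a small enough $\Delta$-neighborhood of $R_\Lambda$. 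One then checks that the successive approximants $\phi_0\equiv R_\Lambda$, $\phi_{j+1}=G(z,\phi_j)$ remain in that domain and converge locally uniformly on a $\Delta$-neighborhood of $\rho$, producing the analytic continuation of $T_{\nonp}$ together with the linear estimate $T_{\nonp}(z)-R_\Lambda=\tfrac{z-\rho}{1-\Lambda'(R_\Lambda)}+o(|z-\rho|)$. Only after this is established does the $h_\Lambda$ step upgrade the exponent to $\delta$. One further small point: for uniqueness of the dominant singularity the paper applies the Daffodil Lemma to the aperiodic series $T_{\nonp}$ rather than to $\Lambda$, obtaining $|T_{\nonp}(\zeta)|<R_\Lambda$ for $|\zeta|\le\rho$, $\zeta\ne\rho$, whence $|\Lambda'(T_{\nonp}(\zeta))|<\Lambda'(R_\Lambda)<1$ and \cref{lem:implicit} rules out a singularity at $\zeta$.
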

The proof is rather technical and postponed to \cref{sec:proof_Inv2}.

Now since from \cref{eq:T_Tnp}: $T=\tfrac{T_{\nonp}}{1-T_{\nonp}}$, at   
the singularity $\rho$ of $T_{\nonp}$, the denominator of $T(\rho)$ is
\hbox{$1-T_{\nonp}(\rho)=1-R_\Lambda>0$}, hence the singularity
of $T$ is inherited from the one of $T_{\nonp}$.
More precisely, from \cref{lem:Comp} (Subcritical case), we have the following corollary.
\begin{corollary}
  \label{cor:sing_T}
  Assume that $S$ satisfies hypothesis $(H2)$.
  Then $T$ has a unique dominant singularity of exponent $\delta$ in $\rho$, with $T(\rho) = \tfrac{R_\Lambda}{1-R_\Lambda} = R_S$.
\end{corollary}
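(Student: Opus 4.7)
The plan is essentially dictated by the paragraph preceding the corollary: apply the subcritical composition lemma (\cref{lem:Comp}) to the identity $T = T_{\nonp}/(1-T_{\nonp})$ established in \eqref{eq:T_Tnp}. All the real analytic work has already been done in \cref{lem:Analyse_Tnonp2}; this corollary is a routine transfer of that singular behavior through a rational function that is analytic at the critical value $T_{\nonp}(\rho)$.

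More precisely, I would write $T = \Phi(T_{\nonp})$ where $\Phi(u) = u/(1-u)$, and verify the subcriticality hypothesis. From \cref{lem:Analyse_Tnonp2}, $T_{\nonp}$ is analytic at $0$ with a unique dominant singularity of exponent $\delta$ at $\rho$, and $T_{\nonp}(\rho) = R_\Lambda$. From \cref{lem:H2_De_S_A_Lambda}, $R_\Lambda < 1$, which is precisely the radius of convergence of $\Phi$. Hence $\Phi$ is analytic in a neighborhood of $T_{\nonp}(\rho)$, placing us in the subcritical regime of \cref{lem:Comp}. Applying that lemma, $T = \Phi \circ T_{\nonp}$ has the same dominant singularity $\rho$ as $T_{\nonp}$, uniqueness of the dominant singularity is preserved, and the singular exponent is inherited from $T_{\nonp}$, namely $\delta$.

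For the value at $\rho$, I would simply substitute:
\[
T(\rho) = \frac{T_{\nonp}(\rho)}{1-T_{\nonp}(\rho)} = \frac{R_\Lambda}{1-R_\Lambda}.
\]
Then using the identity $R_\Lambda = R_S/(1+R_S)$ (which was recorded just after \eqref{eq:DefLambda}), a direct computation yields $R_\Lambda/(1-R_\Lambda) = R_S$, completing the statement.

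There is no real obstacle to this argument: all the analytic difficulty is packaged into \cref{lem:Analyse_Tnonp2} (whose proof is deferred to \cref{sec:proof_Inv2}) and \cref{lem:H2_De_S_A_Lambda}. Once one has those, the corollary is a one-line application of subcritical composition. The only thing to be slightly careful about is to invoke the \emph{subcritical} branch of \cref{lem:Comp} rather than the supercritical one, which is why the inequality $R_\Lambda < 1$ from \cref{lem:H2_De_S_A_Lambda} needs to be explicitly cited.
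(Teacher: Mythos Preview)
Your proposal is correct and follows essentially the same approach as the paper: the paper's proof (given in the paragraph immediately preceding the corollary) also writes $T = T_{\nonp}/(1-T_{\nonp})$, notes that $1 - T_{\nonp}(\rho) = 1 - R_\Lambda > 0$, and invokes the subcritical case of \cref{lem:Comp}. Your write-up is slightly more detailed (explicitly naming $\Phi$ and spelling out the computation $R_\Lambda/(1-R_\Lambda)=R_S$), but the argument is the same.
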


We now turn to the behavior of generating function of trees with one marked leaf.
\begin{lemma}
  Assume that $S$ satisfies hypothesis $(H2)$.
  Then each of the nine generating functions $T'$, $T^+$, \ldots, $T_{\nonm}^-$
   has a  unique dominant singularity of exponent $\delta-1$ in $\rho$.
  \label{lem:sing_Tp}
\end{lemma}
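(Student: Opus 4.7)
The plan is to split the nine series into two groups. The three "derivative" series $T'$, $T'_{\nonp}$ and $T'_{\nonm}$ are handled immediately by singular differentiation (\cref{thm:singular_diff}) from the expansions given by \cref{cor:sing_T} and \cref{lem:Analyse_Tnonp2} for $T$ and $T_{\nonp}=T_{\nonm}$, which have a unique dominant singularity of exponent $\delta$ at $\rho$. This directly yields the unique dominant singularity of exponent $\delta-1$ at $\rho$ for these three series.

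For the six root-constrained series, I will use \cref{Prop:Sol_TPlus} together with a simplification. Differentiating \cref{eq:DefLambda} gives, as in \cref{eq:LambdaPrime}, the identity
\[
1+\Lambda'(u) \;=\; \frac{1+S'\bigl(\tfrac{u}{1-u}\bigr)}{(1-u)^2}.
\]
Plugging $u=T_{\nonp}$ and using $W=\tfrac{1}{(1-T_{\nonp})^2}-1$ and $T=\tfrac{T_{\nonp}}{1-T_{\nonp}}$ yields $(1+W)(1+S'(T))=1+\Lambda'(T_{\nonp})$, so that $W+WS'(T)+S'(T)=\Lambda'(T_{\nonp})$. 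Therefore \cref{eq:tp,eq:tpm,eq:tpp} simplify to
\[
T^{+}=\frac{1}{1-\Lambda'(T_{\nonp})},\qquad T^{+}_{\nonm}=\frac{(1-T_{\nonp})^2}{1-\Lambda'(T_{\nonp})},\qquad T^{+}_{\nonp}=\frac{\Lambda'(T_{\nonp})\,(1-T_{\nonp})^2}{1-\Lambda'(T_{\nonp})}.
\]
By \cref{lem:Analyse_Tnonp2} and \cref{lem:H2_De_S_A_Lambda}, $T_{\nonp}(\rho)=R_\Lambda$ and $\Lambda'(R_\Lambda)<1$, so the denominator stays bounded away from $0$ near $\rho$ and these series have a unique dominant singularity at $\rho$ inherited from $\Lambda'(T_{\nonp})$. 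The three minus series then follow from the symmetry stated in \cref{Prop:Sol_TPlus}.

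The crux of the argument, and the main obstacle, is therefore to prove that $\Lambda'(T_{\nonp})$ has singularity exponent exactly $\delta-1$ at $\rho$. This is a \emph{critical} composition: $T_{\nonp}$ attains the singularity $R_\Lambda$ of $\Lambda'$ precisely at its own singularity $\rho$, so the subcritical composition lemma (\cref{lem:Comp}) does not apply. I will handle it by writing the singular decomposition $\Lambda'(u)=H(u)+C\,(R_\Lambda-u)^{\delta-1}$ near $R_\Lambda$, with $H$ analytic (this exists because $\Lambda$ has exponent $\delta$ at $R_\Lambda$, hence $\Lambda'$ has exponent $\delta-1$ there). On the one hand, $H(T_{\nonp}(z))$ inherits the exponent $\delta$ of $T_{\nonp}$ at $\rho$. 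On the other hand, from $T'_{\nonp}(z)=1/(1-\Lambda'(T_{\nonp}(z)))$ we read that $T'_{\nonp}(\rho)=1/(1-\Lambda'(R_\Lambda))$ is finite and strictly positive, so
\[
R_\Lambda-T_{\nonp}(z)\;=\;T'_{\nonp}(\rho)\,\rho\,(1-\tfrac{z}{\rho})\,(1+o(1)),
\]
and consequently $(R_\Lambda-T_{\nonp}(z))^{\delta-1}\sim (T'_{\nonp}(\rho)\rho)^{\delta-1}(1-\tfrac{z}{\rho})^{\delta-1}$. Since $\delta-1<\delta$, the singular part dominates the analytic contribution, which shows that $\Lambda'(T_{\nonp})$ has exponent $\delta-1$ at $\rho$.

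Finally, since $1-\Lambda'(T_{\nonp})$ is bounded away from $0$ near $\rho$, the function $1/(1-\Lambda'(T_{\nonp}))$ is a composition by an analytic function of $\Lambda'(T_{\nonp})$, hence inherits the same exponent $\delta-1$; this gives the result for $T^{+}$. Multiplication by $(1-T_{\nonp})^2$ (exponent $\delta$, nonzero at $\rho$) and by $\Lambda'(T_{\nonp})$ (exponent $\delta-1$, nonzero at $\rho$) preserves the dominant singular exponent $\delta-1$, which yields the conclusion for $T^{+}_{\nonm}$ and $T^{+}_{\nonp}$ and, by symmetry, for their minus counterparts.
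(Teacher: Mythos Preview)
Your proof is correct and follows essentially the same strategy as the paper's: handle $T'$, $T'_{\nonp}$, $T'_{\nonm}$ by singular differentiation, then show the denominator $1-WS'(T)-W-S'(T)$ has a unique dominant singularity of exponent $\delta-1$ at $\rho$ via a critical composition, and conclude for the remaining series by subcritical composition, products, and symmetry.

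The one genuine difference is your algebraic simplification $WS'(T)+W+S'(T)=\Lambda'(T_{\nonp})$, obtained from \cref{eq:LambdaPrime}. The paper does use this identity in \cref{Sec:Standard} (see \cref{eq:DevS'T}) but not here: in the proof of \cref{lem:sing_Tp} it instead analyzes $W$ and $S'(T)$ separately, applying the critical case of \cref{lem:Comp} to $S'\circ T$ (inner exponent $\delta$, outer exponent $\delta-1$, critical since $T(\rho)=R_S$) and then combining via \cref{lem:MultExp}. Your route is cleaner, since it reduces everything to the single composition $\Lambda'\circ T_{\nonp}$. Note, however, that what you reprove by hand for this composition is precisely the Critical case-A of \cref{lem:Comp} (with $F=\Lambda'$, $\delta_F=\delta-1$, and $G=T_{\nonp}$, $\delta_G=\delta>1$), so you could simply cite that lemma instead of redoing the singular-part computation; the aperiodicity of $T_{\nonp}$ then also gives uniqueness directly.
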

\begin{proof}
For $T'$ (resp. $T'_{\nonp}=T'_{\nonm}$) this follows immediately from \cref{cor:sing_T} (resp. \cref{lem:Analyse_Tnonp2}) 
by singular differentiation (\cref{lem:singular_diff}).

  Recall that $T^+$ is explicitly given in \cref{Prop:Sol_TPlus} as
\[T^+=\frac{1}{1- \W S'(T) - \W - S'(T)}\]
  where $\W=(\tfrac{1}{1-T_{\nonp}})^2-1$.
  We examine $\W$ and $S'(T)$ to determine the exponent of the singularity of $T^+$ at its radius of convergence (which we will prove to be $\rho$). 

Since $T_{\nonp}(\rho)=R_\Lambda <1$, again by subcritical composition, 
  \cref{lem:Analyse_Tnonp2} gives that $\W$ has a unique dominant singularity of exponent $\delta$ in $\rho$. 
  Moreover, $\W(\rho) = (\tfrac{1}{1-R_\Lambda})^2-1$. 

  As for $S'(T)$ we need to analyse $S'$ and $T$ and determine whether the composition is critical, supercritical or subcritical
  (see \cref{lem:Comp}).
  \begin{itemize}
   \item     By singular differentiation (\cref{lem:singular_diff}), it follows from $(H2)$ that 
  $S'$ has a dominant singularity of exponent $\delta -1$ in $R_S$. 
  In addition, $S'(R_S) < \frac{2}{(1+R_S)^2} -1$ by $(H2)$. 
\item Recall that from \cref{cor:sing_T}  $T$ has a unique dominant singularity of exponent $\delta$ in $\rho$, with $T(\rho) = \tfrac{R_\Lambda}{1-R_\Lambda} = R_S$. The composition $S' \circ T$ is therefore critical.
\item Moreover, $T$ is aperiodic since it counts a superset of the separable permutations.
  \end{itemize}
  From \cref{lem:Comp} (Critical case-A),  we obtain that $S'(T)$ has a unique dominant singularity 
  of exponent $\delta-1$ in $\rho$, and therefore so does $\W S'(T) + \W + S'(T)$ (using \cref{lem:MultExp}). 
  
  In $\rho$, the value of the series $\W S'(T) + \W + S'(T) = (\W +1) (S'(T)+1) -1 $ is less than 
  $(\tfrac{1}{1-R_\Lambda})^2 \tfrac{2}{(1+R_S)^2} -1 = 1$. 
  Therefore, by subcritical composition, $T^+=\frac{1}{1- \W S'(T) - \W - S'(T)}$ 
  has a unique dominant singularity of exponent $\delta-1$ in $\rho$. 
  Since $\frac{1}{1+\W}$ and $(\W S'(T) + \W + S'(T))$ have unique dominant singularities in $\rho$ of respective exponents $\delta$ and $\delta - 1$,
  the other cases follow by \cref{lem:MultExp}, using the formulas given in \cref{Prop:Sol_TPlus}.
\end{proof}

\subsection{Probability of given patterns}\label{sec:proba_pattern2}
Recall that the function $\Occ_{\theta}$ was defined in \cref{eq:defOcc} by $\Occ_\theta(z) = \sum_{\alpha\in \mathcal{S}} \mathrm{occ}(\theta,\alpha) z^{|\alpha|-|\theta|}$.

Unlike in the previous section, the functions $\Occ_{\theta}(z)$ 
will appear in the asymptotic behaviors, thus we need some additional assumptions on them.
First, as noticed in \cref{obs:RcvOcc>RS}, we have 
\begin{equation}\label{eq:relOccS}
\sum_{\theta \in \mathfrak S_k} \Occ_{\theta}(z)= \frac{1}{k!} S^{(k)}(z),
\end{equation}
which, under $(H2)$, has a dominant singularity of exponent $\delta-k$ in $R_S$ (see \cref{lem:singular_diff}, about singular differentiation).
The following hypothesis is thus reasonable.
\begin{definition}[Hypothesis $(CS)$]\label{Def:HypoCS}
Let $S$ have a dominant singularity of exponent $\delta>1$ in $R_S$.
  The family of simple permutations $\mathcal{S}$ satisfies the hypothesis $(CS)$ if,
  for each pattern $\theta$ of size $k$, the corresponding series $\Occ_{\theta}(z)$
  has a dominant singularity of exponent at least $\delta-k$ in $R_S$.
\end{definition}

Let $\patterntree$ be a substitution tree with $k$ leaves.  
Recall that $\Internal{\patterntree}$ is the set of internal nodes of $\patterntree$, 
and that for any node $v \in \Internal{\patterntree}$, $d_v$ denotes its number of children.
Recall also from \cref{Subsec:DecompTrees} (p. \pageref{Subsec:DecompTrees})
that $\TTT_{\patterntree}$ is the family of canonical trees with $k$ marked leaves 
which induce a tree equal to $\patterntree$, 
and that $T_{\patterntree}$ is its generating function.

Combining \cref{prop:Dec_TtS} and the above results, we obtain the following. 
\begin{proposition}
  For any substitution tree $\patterntree$ of size $k\geq2$, 
 assuming $(H2)$ and $(CS)$,
 the series $T_{\patterntree}$ has a unique dominant singularity of exponent at least 
 $\widetilde{e}_{t_0}$ in $\rho$, where
 \begin{itemize}
   \item $\widetilde{e}_{t_0} = \sum_{v: d_v > \delta} (\delta - d_v)$, if there is at least one node $v$ such that $d_v > \delta$;
   \item $\widetilde{e}_{t_0} = \delta - \max_{v \in \Internal{\patterntree}} d_v$ otherwise\footnote{Note for future reference that 
   the two expressions for $\widetilde{e}_{t_0}$ are equal when there is a unique $v$ such that $d_v > \delta$.\label{footnote_e_tilde}}.
 \end{itemize}
 \label{prop:sing_Tt2}
\end{proposition}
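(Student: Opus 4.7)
The plan is to work from the decomposition $T_{t_0} = \sum_{V_s} T_{t_0, V_s}$ given by \cref{prop:Dec_TtS}, where the sum runs over admissible $V_s \subseteq \Internal{t_0}$ (\emph{i.e.}, containing all nonlinear nodes of $t_0$) and each summand is an explicit product of a power of $z$, a root-type factor, and one factor $A_v$ per internal node $v$ of $t_0$. I will estimate the singular exponent of each $T_{t_0, V_s}$ at $\rho$ from its multiplicative structure and pass to the sum at the end.

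The building blocks are all controlled by earlier results: by \cref{cor:sing_T} and \cref{lem:Analyse_Tnonp2}, the series $T$ and $T_\nonp$ have a unique dominant singularity of exponent $\delta$ at $\rho$, with $T(\rho) = R_S$ and $T_\nonp(\rho) = R_\Lambda < 1$; by \cref{lem:sing_Tp}, the nine marked-leaf series have unique dominant singularity of exponent $\delta - 1 > 0$ at $\rho$, hence converge at $\rho$ with finite nonzero limits; and $(CS)$ gives that $\Occ_\theta$ has exponent at least $\delta - |\theta|$ at $R_S$. For the critical composition $\Occ_\theta \circ T$, I use that $\delta > 1$ ensures $R_S - T(z) = c(1-z/\rho)(1 + o(1))$ with $c > 0$: splitting $\Occ_\theta$ into its analytic part plus a singular part of the form $C(u)(R_S - u)^{\alpha_\theta}$ with $\alpha_\theta \geq \delta - |\theta|$, subcritical composition in the analytic part (via the subcritical case of \cref{lem:Comp}) and direct substitution in the singular part show that $\Occ_\theta(T)$ has exponent at least $\min(\delta, \alpha_\theta) \geq \delta - |\theta|$ at $\rho$. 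Likewise $(1-T_\nonp)^{-(d_v+1)}$ is a subcritical composition and remains bounded at $\rho$ with exponent $\delta$.

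I then combine factors using \cref{lem:MultExp}. For $v \in V_s$, the factor $A_v = \Occ_{\theta_v}(T)\,(T')^{d'_v}(T^+)^{d_v^+}(T^-)^{d_v^-}$ has exponent at least $\delta - d_v$, since the marked-leaf factors are bounded at $\rho$ with exponent $\delta - 1 \geq \delta - d_v$ (as $d_v \geq 2$). For $v \notin V_s$, the factor $A_v$ is a product of series that all converge at $\rho$, so it is bounded there with exponent at least $\delta - 1$; the root factor is bounded similarly. Two cases then arise naturally. If at least one node satisfies $d_v > \delta$, the only factors that can diverge at $\rho$ are those $\Occ_{\theta_v}(T)$ with $v \in V_s$ and $d_v > \delta$, whose (negative) exponents add, so $T_{t_0, V_s}$ has exponent at least $\sum_{v\in V_s,\, d_v > \delta}(\delta - d_v) \geq \widetilde{e}_{t_0}$, the minimum being reached when $V_s$ contains every ``big'' node. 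Otherwise every factor is bounded at $\rho$, so $T_{t_0, V_s}$ has exponent at least the minimum of its component exponents, which is $\min_{v \in V_s}(\delta - d_v) \geq \delta - \max_{v} d_v = \widetilde{e}_{t_0}$.

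Summing over $V_s$ preserves these lower bounds (the exponent of a finite sum is at least the minimum of the summand exponents), and uniqueness of the dominant singularity at $\rho$ follows because every constituent series has this property. The principal technical difficulty is the critical composition $\Occ_{\theta_v} \circ T$: one must verify that the exponent of the composed series does not drop below $\delta - |\theta_v|$, which relies on $\delta > 1$ through the presence of a linear term in $R_S - T(z)$ that ensures raising to a power $\alpha$ preserves the exponent $\alpha$ rather than scaling it by a fractional exponent inherited from $T$.
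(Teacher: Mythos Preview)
Your proof is correct and follows essentially the same route as the paper: decompose $T_{t_0}$ as $\sum_{V_s} T_{t_0,V_s}$ via \cref{prop:Dec_TtS}, bound the exponent of each factor $A_v$ at $\rho$ (using $(CS)$, \cref{cor:sing_T}, \cref{lem:Analyse_Tnonp2}, \cref{lem:sing_Tp}), and combine with \cref{lem:MultExp}. The only difference is cosmetic: where the paper invokes the Critical case-A of \cref{lem:Comp} directly to handle $\Occ_{\theta_v}\circ T$, you reprove that case by hand (splitting $\Occ_{\theta_v}$ into analytic and singular parts and using that $R_S-T(z)$ is asymptotically linear in $\rho-z$ since $\delta>1$); and you are slightly more explicit than the paper about the distinction between $v\in V_s$ and $v\notin V_s$, which the paper glosses over.
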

\begin{proof}
  First recall that $\TTT_{\patterntree}$ can be decomposed as a union $\bigcup_{V_s} \TTT_{\patterntree,V_s}$,
  where $V_s$ are subsets of $\Internal{\patterntree}$ 
  which contain all nonlinear nodes (see \cref{Subsec:CombSyst}).
  It is therefore enough to prove that each series $T_{\patterntree,V_s}$
  has a unique dominant singularity in $\rho$ with at least the desired exponent.

  Recall from \cref{prop:Dec_TtS} the following formula for $T_{\patterntree,V_s}(z)$:
$$
T_{\patterntree,V_s}= z^k T^{\text{type of root}}\prod_{v\in \Internal{\patterntree}} A_v,
$$
where each $A_v$ is given by \eqref{eq:A_v} and depends on the type of $v$.
Using hypothesis $(CS)$, \cref{cor:sing_T} and \cref{lem:Comp} (Critical case-A),
  we know that $\Occ_{\theta_v}(T)$ has a dominant singularity of exponent at least $\delta-d_v$
  in $\rho$, 
  and, from \cref{lem:sing_Tp},  that 
  it  is the term with the lowest exponent and
  the only possibly divergent term arising in $A_v$. 
  It then follows from \cref{lem:MultExp} that $A_v$ has a dominant singularity of exponent at least $\delta-d_v$        
  in $\rho$.                           

  The series $T_{\patterntree,V_s}(z)$ is then the product of the $A_v$'s and of some convergent series. 
  The result of the proposition is obtained from \cref{lem:MultExp}: 
  $T_{\patterntree,V_s}(z)$ has a dominant singularity of exponent at least $\sum_{v: d_v > \delta} (\delta - d_v)$ in $\rho$,
  if this sum is nonempty (\emph{i.e.}, when one of the series is divergent) and 
  of the smallest singularity exponent among them, that is
  $\delta - \max_{v \in \Internal{\patterntree}} d_v$  otherwise, \emph{i.e.}, if all factors are convergent.
\end{proof}

As in \cref{section:TreePatterns} (p. \pageref{section:TreePatterns}), we take a uniform random tree with $n$ leaves in $\TTT$ with $k$ marked leaves  (chosen also uniformly at random).
Denote as before by $\mathbf{t}_n^k$ the tree induced by the $k$ marked leaves.

 \begin{corollary}
   Assume $(H2)$ and $(CS)$.
   For any substitution tree $\patterntree$ with $k \geq 2$ leaves,
   the probability $\proba({\mathbf{t}^{(n)}_k}=\patterntree)$ tends to $0$,
   unless $\patterntree$ has only one internal node.
   \label{cor:SubCritical_OnlyOneIntVertex}
 \end{corollary}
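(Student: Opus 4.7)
The plan is to combine the singularity estimates in \cref{cor:sing_T} and \cref{prop:sing_Tt2} with the transfer theorem (\cref{thm:transfert}) and a short combinatorial case analysis. Writing
\[
\proba(\mathbf{t}^{(n)}_k = \patterntree) = \frac{[z^n]\, T_{\patterntree}(z)}{\binom{n}{k}\, [z^n]\, T(z)}
\]
and applying the transfer theorem separately to $T$ and $T_{\patterntree}$, one obtains $[z^n]\, T(z) = \Theta(\rho^{-n} n^{-\delta - 1})$ and $[z^n]\, T_{\patterntree}(z) = O(\rho^{-n} n^{-\widetilde{e}_{\patterntree} - 1})$ (with possible logarithmic corrections that do not affect the polynomial order). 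Thus
\[
\proba(\mathbf{t}^{(n)}_k = \patterntree) = O\bigl(n^{\delta - \widetilde{e}_{\patterntree} - k}\bigr),
\]
and the whole task reduces to showing that $\delta - \widetilde{e}_{\patterntree} - k < 0$ whenever $\patterntree$ has at least two internal nodes.

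Set $q := |\Internal{\patterntree}|$, let $M := \{v \in \Internal{\patterntree} : d_v > \delta\}$, $m := |M|$, and assume throughout that $q \geq 2$. The key combinatorial ingredient is the identity $\sum_{v \in \Internal{\patterntree}} d_v = k + q - 1$, valid because a rooted tree with $k$ leaves and $q$ internal nodes has exactly $k + q - 1$ edges.

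If $m \geq 1$, then $\widetilde{e}_{\patterntree} = m\delta - \sum_{v \in M} d_v$. Using $d_v \geq 2$ for $v \notin M$ together with the identity above, $\sum_{v \in M} d_v \leq k + q - 1 - 2(q-m) = k - q + 2m - 1$, so
\[
\delta - \widetilde{e}_{\patterntree} - k \;\leq\; \delta(1-m) + 2m - q - 1 \;=\; (m-1)(1-\delta) - (q - m).
\]
For $m = 1$ this equals $-(q-1) \leq -1$; for $m \geq 2$ it is strictly negative, since $(m-1)(1-\delta) < 0$ (as $\delta > 1$) and $q - m \geq 0$. If instead $m = 0$, then $\widetilde{e}_{\patterntree} = \delta - \max_v d_v$, so $\delta - \widetilde{e}_{\patterntree} - k = \max_v d_v - k$; but $q \geq 2$ forces $\max_v d_v < k$, for otherwise the node attaining the maximum would have $k$ leaf-children and would be the unique internal node.

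In every case the exponent $\delta - \widetilde{e}_{\patterntree} - k$ is strictly negative, proving the corollary. The main obstacle is the combinatorial case analysis above, not the analytic step: the passage from singularity exponents to coefficient asymptotics is a routine application of \cref{thm:transfert}, and no further estimates on the generating functions beyond \cref{cor:sing_T} and \cref{prop:sing_Tt2} are required.
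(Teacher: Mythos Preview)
Your proof is correct and follows essentially the same route as the paper: the same ratio formula, the same application of the transfer theorem to $T$ and $T_{\patterntree}$, and the same combinatorial case analysis on the set $M=\{v:d_v>\delta\}$ using the edge-count identity $\sum_v d_v = k+q-1$. The only minor imprecision is your justification in the case $m=0$ (``would have $k$ leaf-children''); the cleaner argument is simply $\max_v d_v \le \sum_v d_v - 2(q-1) = k-q+1<k$ for $q\ge 2$, which you have already set up.
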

 \begin{proof}
   As in \cref{section:TreePatterns}, we use the formula
   \[\proba({\mathbf{t}^{(n)}_k}=\patterntree) = \frac{[z^n] T_{\patterntree}(z)}{\binom{n}{k} [z^n] T(z) }\]

   From \cref{thm:transfert} and \cref{prop:sing_Tt2} (and using the notation $\widetilde{e}_{t_0}$ herein defined),
   we get that
   \[ [z^n] T_{\patterntree}(z) = (\tilde{C}_{\patterntree} +o(1)) \rho^{-n} n^{-1-\widetilde{e}_{t_0} } \]
   for some constant $\tilde{C}_{\patterntree}$, possibly equal to $0$.
   On the other hand, \cref{thm:transfert} and \cref{cor:sing_T} imply that 
   \[ [z^n] T(z) = C \rho^{-n} n^{-\delta-1} (1+o(1)),\]
   for some constant $C\ne 0$.
   Putting everything together, we obtain 
   \[ \proba(\mathbf{t}^{(n)}_k=\patterntree)  =\frac{k! \, (\tilde{C}_{\patterntree}+o(1))}{C} n^{e_{\patterntree}},\]
   where $e_{\patterntree}= \delta-k-\widetilde{e}_{t_0}$.

   For any subset $A$ of the internal nodes of a tree $t$ with $k$ leaves,
   we claim  that the following inequality holds: $\sum_{v \in A} d_v \le |A|+k-1$.
   It is clear when $k=1$. 
   For $k>1$, we decompose $t$ as a root $\varnothing$ with $d\geq2$ subtrees $t_1, \dots, t_d$. 
   The chosen set $A$ of nodes of $t$ determines a set $A_i$ of nodes in each tree $t_i$ that has $k_i$ leaves. 
   Assume that $\sum_{v \in A_i} d_v  \leq |A_i|+k_i-1$ for all $i$.
   Then, we have 
\begin{multline*}
\sum_{v \in A} d_v = d \cdot \One_{\varnothing \in A} + \sum_{i=1}^d \sum_{v \in A_i} d_v 
 \leq d \cdot \One_{\varnothing \in A} + \sum_{i=1}^d |A_i|+k_i-1 \\
 = d \cdot \One_{\varnothing \in A} + |A| - \One_{\varnothing \in A} +k -d = |A|+k-1 + (d-1)(\One_{\varnothing \in A}-1),
\end{multline*}
and with the observation that $(d-1)(\One_{\varnothing \in A}-1) \leq 0$, this proves our claim.
   Set $A=\{v \in \Internal{\patterntree}: d_v >\delta\}$; if $|A| \ge 1$, one has
   \[ e_{\patterntree} = \delta-k-\delta|A| + \sum_{v \in A} d_v \le \delta -k -\delta |A| + |A|+k-1 = (1-|A|) (\delta -1),\]
   which is negative for $|A| \ge 2$ (indeed, $\delta >1$ by $(H2)$).
   When $|A|=0$ or $|A|=1$, we have  $\widetilde{e}_{t_0}=\delta - \max_{v \in \Internal{\patterntree}} d_v$
   (see also \cref{footnote_e_tilde})
   and, therefore, $e_{\patterntree}= \max_{v \in \Internal{\patterntree}} d_v -k$ is negative unless 
   $\patterntree$ has exactly one internal node (which is then of degree $k$).
    \end{proof}
 
It is now straightforward to translate this result in terms
of the probability to find a given pattern in a random permutation in the set $\CCC:=\langle \SSS \rangle$. 
As recalled in \cref{sec:singularity}, the hypothesis $(CS)$ is equivalent to the following: for every $k \ge 1$
  and every permutation $\theta$ of size $k$,
  there exists an analytic function $g_\theta$ and a constant $C_\theta$ (possibly equal to $0$)
  such that, on an $\Delta$-neighborhood of $R_S$, it holds that
  \begin{equation}
    \Occ_\theta(z)=g_\theta(z) + (C_\theta+o(1)) (R_S - z)^{\delta-k}.
    \label{eq:CS}
  \end{equation}
  The quantities $C_\theta$ are involved in the statement of the following theorem.
\begin{theorem}\label{Th:MainH2}
  Let $\Si_n$ be a uniform random permutation in $\langle \mathcal{S}\rangle_n$.
  We assume hypotheses $(H2)$ and $(CS)$. Then, for any $k\geq 2$ and for any $\pi\in \Sn_k$,
  \begin{equation}
    \lim_{n\to \infty} \esper [\occ(\pi,\Si_n)] = \frac{C_\pi}{\sum_{\theta \in \Sn_k} C_\theta}. 
    \label{eq:limit_Pattern_Supercritical}
  \end{equation}
  Consequently, there exists a random permuton $\Mu_\CCC$ with
  \begin{equation}
    \esper [\occ(\pi,\Mu_\CCC)]= \frac{C_\pi}{\sum_{\theta \in S_k} C_\theta} 
    \label{eq:def_MuCsc}
  \end{equation}
such that $\left(\mu_{\Si_n}\right)_n$ tends to $\Mu_\CCC$ in distribution.
\end{theorem}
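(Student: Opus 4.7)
The plan is to prove the first assertion, \emph{i.e.}\ the convergence of $\esper[\occ(\pi,\Si_n)]$ to $C_\pi / \sum_{\theta \in \Sn_k} C_\theta$ for every $k \ge 2$ and every $\pi \in \Sn_k$; the second assertion then follows immediately from implication (c)$\Rightarrow$(a) of \cref{thm:randompermutonthm} combined with \cref{eq:E(occ)=P(perm)}, giving both the existence of $\Mu_\CCC$ and the weak convergence in distribution $\mu_{\Si_n} \to \Mu_\CCC$.

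Following the approach of \cref{section:TreePatterns}, I would first write
\[ \esper[\occ(\pi,\Si_n)] = \sum_{\patterntree:\ \perm(\patterntree) = \pi} \proba(\mathbf{t}^{(n)}_k = \patterntree), \]
where the sum is over the finite set of substitution trees with $k$ leaves encoding $\pi$. By \cref{cor:SubCritical_OnlyOneIntVertex}, every summand indexed by a tree with more than one internal node tends to $0$. Only the unique substitution tree $\patterntree_\pi$ with a single internal node $v$ of degree $k$ labeled by $\pi$ survives, so it remains to compute the limit of $\proba(\mathbf{t}^{(n)}_k = \patterntree_\pi)$. For this tree, \cref{prop:Dec_TtS} with $V_s = \{v\}$ (which is the only admissible choice if $\pi$ is nonlinear, and the dominant contribution if $\pi$ is linear, since the $V_s = \emptyset$ summand has singular exponent $\delta-1 > \delta-k$ for $k \ge 2$) gives
\[ T_{\patterntree_\pi,\{v\}}(z) = z^k\, (T'(z))^{k+1}\, \Occ_\pi(T(z)). \]

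The core technical step is to determine the dominant singular behavior of the composition $\Occ_\pi \circ T$ at $\rho$: this is the boundary case where $T(\rho) = R_S$ coincides with the singularity of $\Occ_\pi$, but the analysis is simpler than in the standard case because $\delta > 1$ forces $T'$ to be convergent at $\rho$. Writing $T(z) = g_T(z) + (C_T + o(1))(\rho-z)^\delta$ with $g_T$ analytic and $g_T(\rho) = R_S$, a first-order Taylor expansion of $g_T$ combined with $\delta > 1$ yields
\[ R_S - T(z) = T'(\rho)(\rho - z) + o(\rho-z) \quad \text{as } z \to \rho. \]
Plugging this into the expansion \eqref{eq:CS} furnished by hypothesis $(CS)$ gives
\[ \Occ_\pi(T(z)) = \bigl[\text{analytic part and terms less singular than }(\rho-z)^{\delta-k}\bigr] + \bigl(C_\pi\, T'(\rho)^{\delta-k} + o(1)\bigr)(\rho-z)^{\delta-k}. \]
Since $z^k (T'(z))^{k+1}$ is continuous at $\rho$ with a singular part of order $(\rho-z)^{\delta-1}$, which is dominated by $(\rho-z)^{\delta-k}$ for $k \ge 2$, \cref{lem:MultExp} yields
\[ T_{\patterntree_\pi}(z) = [\text{less singular}] + \bigl(C_\pi\, \rho^k\, T'(\rho)^{\delta+1} + o(1)\bigr)(\rho-z)^{\delta-k}. \]
I expect this composition to be the main obstacle, since one must verify that the Taylor-type term $T'(\rho)(\rho-z)$ really dominates the singular correction $(C_T+o(1))(\rho-z)^\delta$ of $T$ itself, which is exactly the content of $\delta>1$.

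To finish, applying \cref{thm:transfert} to this expansion and to $T$ (whose singular behavior $(C_T+o(1))(\rho-z)^\delta$ at $\rho$ follows from \cref{cor:sing_T}), and forming the ratio using $\binom{n}{k} \sim n^k/k!$, yields
\[ \proba(\mathbf{t}^{(n)}_k = \patterntree_\pi) \xrightarrow[n\to\infty]{} K\, C_\pi \quad \text{with}\quad K = \frac{k!\, T'(\rho)^{\delta+1}\, \Gamma(-\delta)}{C_T\, \Gamma(k-\delta)}, \]
a positive constant independent of $\pi$. Finally, summing the almost-sure identity $\sum_{\theta \in \Sn_k}\occ(\theta,\Si_n) = 1$ over $\theta$ and passing to the limit (by dominated convergence, as each summand lies in $[0,1]$) forces $K \sum_\theta C_\theta = 1$. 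In particular $\sum_\theta C_\theta > 0$ and $K = 1/\sum_\theta C_\theta$, giving \cref{eq:limit_Pattern_Supercritical} and completing the proof.
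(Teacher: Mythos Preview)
Your proof is correct and follows essentially the same route as the paper's: reduce to the single-node tree $\patterntree_\pi$ via \cref{cor:SubCritical_OnlyOneIntVertex}, extract the leading singular term $(C_\pi\,\rho^k T'(\rho)^{\delta+1}+o(1))(\rho-z)^{\delta-k}$ of $T_{\patterntree_\pi}$ by analyzing the critical composition $\Occ_\pi\circ T$ (the paper invokes \cref{lem:Comp} Critical case-A, you unfold it by hand via $R_S-T(z)=T'(\rho)(\rho-z)+o(\rho-z)$, which is the same computation), apply the Transfer Theorem, and normalize using $\sum_{\theta}\occ(\theta,\Si_n)=1$. The only cosmetic remark is that your appeal to ``dominated convergence'' in the last step is superfluous: the sum over $\theta\in\Sn_k$ is finite, so interchanging limit and sum is immediate.
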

\begin{proof}
  The starting point is the same as in the standard case (\cref{prop:proba_patterns_general}).
  As before we denote by $\bm I$ a random uniform $k$-element subset of $[n]$, and ${\mathbf{t}^{(n)}_k}$ is the tree of size $k$ induced by $k$ uniform leaves in a uniform canonical tree of size $n$ in $\TTT$.
  From \cref{lem:DiagrammeCommutatif}, we have
	\[
		\esper(\occ(\pi,\Si_n)) = \proba(\pat_{\bm I}(\Si_n) = \pi)
		=\proba(\perm({\mathbf{t}^{(n)}_k}) = \pi) 
		= \sum_{\patterntree:\perm(\patterntree) = \pi} \proba ({\mathbf{t}^{(n)}_k} = \patterntree),
	\]
    where the sum runs over all substitution trees encoding $\pi$.
    Denote by $\patterntree^\pi$ the substitution tree with only one internal node
    labeled by $\pi$.
    When $n$ tends to infinity, using \cref{cor:SubCritical_OnlyOneIntVertex},
    we know that every term in the above sum vanishes,
    but the term corresponding
    to $\patterntree^\pi$:
    \begin{equation}
      \lim_{n \to \infty} \esper(\occ(\pi,\Si_n)) = 
    \lim_{n \to \infty} \proba ({\mathbf{t}^{(n)}_k} = \patterntree^\pi).
    \label{eq:OccPerm_OccTree_Super}
  \end{equation}
  Now we can compute directly from \cref{prop:Dec_TtS} that
  \[T_{\patterntree^\pi} = \One_{\pi \text{ linear}}z^kT^+\left( \tfrac 1 {1-T_\nonp} \right)^{k+1} (T')^{k} + 
  z^k\Occ_{\pi}(T)(T')^{k+1}.
  \]
  The first term has a dominant singularity of exponent at least $\delta-1$,
  while the dominant singularity exponent of the second term is at least $\delta-k$.
We therefore focus on the second term and get by an easy computation
that, around $z=\rho$, we have
 $$ \Occ_{\pi}(T(z))= g(z) + \left(C_\pi T'(\rho)^{\delta-k} +o(1) \right) (\rho-z)^{\delta-k},$$
  and thus $$T_{\patterntree^\pi}(z) = h(z) + \left (\rho^k T'(\rho)^{\delta+1} C_\pi +o(1) \right ) (\rho-z)^{\delta-k},$$
  where $g$ and $h$ are analytic functions.
  Then we can apply the Transfer Theorem (\cref{thm:transfert}) to $T$ and $T_{\patterntree^\pi}$
  (as in the proof of \cref{cor:SubCritical_OnlyOneIntVertex})
    and obtain that $\lim_{n \to \infty} \proba ({\mathbf{t}^{(n)}_k} = \patterntree^\pi)$
    is proportional to $C_\pi$ for $\pi \in \Sn_{k}$.
    Since the left-hand side of \cref{eq:OccPerm_OccTree_Super} sums to one
    (when summed over $\pi \in \Sn_k$, for a fixed $k$),
    this proves \cref{eq:limit_Pattern_Supercritical}.
    
    The rest of the statement follows immediately, using \cref{thm:randompermutonthm} (with \cref{obs:StartAt2}).
\end{proof}

\subsection{Hypothesis $(CS)$ and convergence of uniform random simple permutations}
We may wish to replace the hypothesis $(CS)$ with a less technical hypothesis,
such as the
convergence of a random \emph{simple} permutation in our set $\SSS$
to the random permuton $\Mu_\SSS$ uniquely determined by \cref{eq:def_MuCsc}.
We show here that, though not equivalent, these hypotheses are strongly related.
Remark that we do not assume $(H2)$ here, so that the existence of  $\bm \mu_\SSS$ cannot be inferred from \cref{Th:MainH2}
and needs a separate proof.

%
%
%
\begin{proposition}\label{Prop:H2degenerate}
Suppose that $S$ has a dominant singularity of exponent $\delta>1$ and 
assume condition $(CS)$. Then there exists a permuton $\Mu_{\SSS}$ such that
\begin{equation}
\esper [\occ(\pi,\Mu_\SSS)]= \frac{C_\pi}{\sum_{\theta \in S_k} C_\theta},
\label{eq:def_MuCsc2}
\end{equation}
where the $C_\pi$ are given by \cref{eq:CS} (which holds under hypothesis $(CS)$).

Let $\bm\alpha_n$ be a uniform random permutation of size $n$ in $\SSS$. 
If $\left(\mu_{\bm\alpha_n}\right)$ converges in distribution, then its limit is $\Mu_{\SSS}$. 
Conversely, if we assume  that $S$ and all series
      $\Occ_\theta$ have a unique dominant singularity,
      then $\left(\mu_{\bm\alpha_n}\right)$ converges in distribution
      (and the limit must be $\Mu_\SSS$, using the first part of the proposition).
\end{proposition}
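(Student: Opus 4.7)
The plan is to build the permuton $\bm\mu_\SSS$ from a consistent family of pattern distributions via \cref{Prop:existence_permuton}, and then treat the two convergence statements separately: the forward direction via an Abel-type argument that only needs $(CS)$, and the converse via the transfer theorem under the additional unique-singularity hypothesis. The central tool is the combinatorial identity
$$\sum_{\sigma\in\Sn_n}\mathrm{occ}(\pi,\sigma)\,\Occ_\sigma(z) \;=\; \frac{1}{(n-k)!}\,\Occ_\pi^{(n-k)}(z) \qquad (k:=|\pi|,\,n\geq k),$$
which I would obtain by counting, for each $\alpha\in\SSS$, the pairs $I\subset J\subset[|\alpha|]$ with $|I|=k$, $|J|=n$ and $\pat_I(\alpha)=\pi$ in two ways, then multiplying by $z^{|\alpha|-n}$ and summing over $\alpha$.

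For the existence of $\bm\mu_\SSS$, I would expand both sides of this identity at $z=R_S$ using $(CS)$ and match the coefficients of $(R_S-z)^{\delta-n}$. Setting $\Sigma_k := \sum_{\theta\in\Sn_k}C_\theta$ and using $\sum_{\theta\in\Sn_k}\Occ_\theta = S^{(k)}/k!$ to express $\Sigma_k$ in terms of $C_S$ via a ratio of $\Gamma$-values, this rearranges into the consistency relation
$$\sum_{\sigma\in\Sn_n}\occ(\pi,\sigma)\,\frac{C_\sigma}{\Sigma_n} \;=\; \frac{C_\pi}{\Sigma_k} \qquad (n\geq k).$$
For $k>\delta$, the singular part of $\Occ_\pi$ at $R_S$ diverges and dominates its analytic part, so the nonnegativity of the coefficients of $\Occ_\pi$ forces $C_\pi\geq 0$, and a similar argument applied to $S^{(k)}/k!$ yields $\Sigma_k>0$. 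The numbers $C_\pi/\Sigma_k$ thus define a probability distribution on $\Sn_k$ for every $k>\delta$; the consistency relation makes the family $(\bm\rho_k)_{k>\delta}$ with $\proba(\bm\rho_k=\pi)=C_\pi/\Sigma_k$ consistent, and extending by projection to all $k\geq 1$ and invoking \cref{Prop:existence_permuton} produces the required $\bm\mu_\SSS$.

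For the forward direction, assume $\mu_{\bm\alpha_n}\to\bm\mu_\infty$ in distribution; \cref{thm:randompermutonthm}, implication (a)$\Rightarrow$(c), then gives $\esper[\occ(\pi,\bm\alpha_n)]\to \ell_\pi := \esper[\occ(\pi,\bm\mu_\infty)]$. Using $\esper[\occ(\pi,\bm\alpha_n)]=[z^{n-k}]\Occ_\pi/(\binom{n}{k} s_n)$ and $\sum_n s_n\binom{n}{k} z^{n-k} = S^{(k)}(z)/k!$, one writes
$$\frac{\Occ_\pi(z)}{S^{(k)}(z)/k!} \;=\; \frac{\sum_n \esper[\occ(\pi,\bm\alpha_n)]\,s_n\binom{n}{k}\,z^{n-k}}{\sum_n s_n\binom{n}{k}\,z^{n-k}}.$$
For $k>\delta$, the denominator series diverges at $z=R_S$, so by Abel's theorem (applied with the positive weights $s_n\binom{n}{k}$ and the convergent sequence $\esper[\occ(\pi,\bm\alpha_n)]$), the right-hand side tends to $\ell_\pi$ as $z\to R_S^-$; the singular expansions from $(CS)$ show that the left-hand side tends to $C_\pi/\Sigma_k$. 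Hence $\ell_\pi=C_\pi/\Sigma_k$ for all $k>\delta$ and all $\pi\in\Sn_k$, and \cref{Prop:CaracterisationLoiPermuton} identifies $\bm\mu_\infty \stackrel{d}{=} \bm\mu_\SSS$. For the converse, the unique-dominant-singularity hypothesis on $S$ and on each $\Occ_\theta$ allows a direct application of the transfer theorem (\cref{thm:transfert}), yielding $s_n\sim C_S R_S^{-n} n^{-\delta-1}/\Gamma(-\delta)$ and $[z^{n-k}]\Occ_\pi\sim C_\pi R_S^{k-n}(n-k)^{k-\delta-1}/\Gamma(k-\delta)$; taking ratios and simplifying yields $\esper[\occ(\pi,\bm\alpha_n)]\to C_\pi/\Sigma_k$ for every $k\geq 2$, and \cref{thm:randompermutonthm}, implication (c)$\Rightarrow$(a), concludes.

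The main obstacle lies in the forward direction, where the absence of a unique-dominant-singularity assumption prevents a direct extraction of coefficient asymptotics from $\Occ_\pi$ or $S$. The Abel-type argument bypasses this by working with the ratio of full generating functions, which only needs the divergence of the denominator at $R_S$; this forces the restriction $k>\delta$, but \cref{Prop:CaracterisationLoiPermuton} guarantees that pattern densities for arbitrarily large $k$ already characterize a random permuton, so the restriction is harmless. A secondary technicality is handling the possibly degenerate case where $\delta$ is a positive integer (potentially introducing logarithmic corrections to the singular expansions); in that case the same strategy applies once $(CS)$ is read with the appropriate logarithmic modification of \eqref{eq:CS}.
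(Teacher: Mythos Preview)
Your proof is correct and follows essentially the same approach as the paper: the same combinatorial identity relating $\Occ_\pi^{(n-k)}$ to $\sum_\sigma \mathrm{occ}(\pi,\sigma)\Occ_\sigma$ is used to establish consistency and hence existence of $\bm\mu_\SSS$, and the converse is handled identically via the transfer theorem. The only cosmetic difference is in the forward direction, where you invoke an Abel-type weighted-average argument while the paper writes out the equivalent polynomial sandwich bounds $(\tfrac{\Delta_\pi}{k!}\pm\eps)S^{(k)}(z)+g_\pm(z)$ explicitly; these are two phrasings of the same idea, both relying on the divergence of $S^{(k)}$ at $R_S$ for $k>\delta$.
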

Before giving the proof, let us do the following observation.
If both $(H2)$ and $(CS)$ are satisfied, then we can apply both \cref{Th:MainH2,Prop:H2degenerate}.
By comparing \cref{eq:def_MuCsc,eq:def_MuCsc2}, we have $\Mu_\CCC=\Mu_\SSS$ in distribution
(recall that the distribution of a random permuton is determined by its expected pattern densities, 
see \cref{Prop:CaracterisationLoiPermuton}).
In particular, assuming that a uniform random {\em simple} permutation in the class converges in distribution to some random permuton,
then this random permuton is $\Mu_\SSS=\Mu_\CCC$, that is the limit of a uniform random permutation in the class.
This justifies a claim in the introduction.
\begin{proof}
  We start with the existence of $\Mu_{\SSS}$. For every $k\geq 1$, let $\bm \rho_k$ be a random permutation in $\Sn_k$ such that $\proba(\bm \rho_k = \pi) = C_\pi /({\sum_{\theta \in S_k} C_\theta})$. By \cref{Prop:existence_permuton}, we only need  to show that $(\bm \rho_k)_k$ forms a consistent family. Let $1\leq k \leq n$, then for $\pi \in \Sn_k$,
  \begin{align}
  	\proba(\pat_{{\bm I}_{n,k}}(\bm \rho_n) = \pi) = \frac 1 {\binom n k \sum_{\theta \in S_k} C_\theta} \sum_{\sigma\in \Sn_n} \,\mathrm{occ}(\pi,\sigma) C_\sigma.
  	\label{eq:tech10}
  \end{align}
  On the other hand, 
  the following combinatorial identity can be derived from the definition of the $(\Occ_\theta)_{\theta \in \Sn}$:
  \begin{equation}
    \frac{1}{(n-k)!} \Occ_\pi^{(n-k)}(z) = \sum_{\sigma\in \Sn_n} \,\mathrm{occ} (\pi,\sigma) \Occ_\sigma(z).
    \label{eq:Tech12}
  \end{equation}
  Indeed, the left-hand side is the series of simple permutations in $\mathcal S$ whose entries are partitoned into a 
   set of $k$ marked entries forming a pattern $\pi$
  and a set of $n-k$ marked entries.
  The right-hand side counts the same object,
  according to the pattern $\sigma$ formed by all the $n$ marked entries.
  To distinguish the marked entries of the first set from the ones of the second set,
  we need to specify a subpattern $\pi$ inside the pattern $\sigma$,
  which explains the factor $\mathrm{occ}(\pi,\sigma)$.

  We now differentiate both sides of \cref{eq:Tech12} $m$ times so that $\delta - n - m <0$, and replace all series with their asymptotic estimates obtained  thanks to hypothesis $(CS)$, \cref{eq:CS} and singular differentiation (\Cref{thm:singular_diff})
  \footnote{For $x\in \mathbb C$ and $r\in \mathbb N$, we denote by $(x)_r$ the falling factorial $x(x-1)\cdots(x-r+1)$}.
  \begin{multline*}
   g_\pi^{(m)}(z) +(-1)^{m+n - k}(\delta-k)_{m+n-k}(C_\pi+o(1))(R_S - z)^{\delta - m - n} \\
  = (n-k)! \sum_{\sigma\in \Sn_n} \,\mathrm{occ} (\pi,\sigma)
  \left[ g_\sigma^{(m)}(z) + (-1)^m(\delta - n)_{m} (C_\sigma + o(1))(R_S - z)^{\delta - m - n} \right]
  \end{multline*}
  As only the singular parts diverge, taking the limit in $z\to R_S$ allows to identify the constants, yielding $\sum_{\sigma \in \Sn_n}\mathrm{occ} (\pi,\sigma) C_\sigma = (-1)^{n-k}\binom {\delta - k}{n-k} C_\pi$. Plugging this back in \cref{eq:tech10} yields 
  \begin{equation*}
  \proba(\pat_{{\bm I}_{n,k}}(\bm \rho_n) = \pi) \propto C_\pi, \pi \in \Sn_k.
  \end{equation*}
  As probabilities sum to $1$, we get $\proba(\pat_{{\bm I}_{n,k}}(\bm \rho_n) = \pi) = \proba(\bm \rho_k = \pi)$, proving the consistency of $(\bm \rho_k)_k$.
  \medskip
  
  As noticed in the proof of \cref{Th:MainH2}, the convergence of $\left(\mu_{\bm\alpha_n}\right)$ to $\Mu_\SSS$ is equivalent to the following:
  for any fixed $k \ge 2$ and any $\pi \in \Sn_k$, the limit
  $\lim_{n\to\infty} \esper\big[ \occ(\pi,\bm\alpha_n)  \big]$ exists and is proportional to $C_\pi$ for $\pi \in \Sn_k$. Furthermore, by consistency, we only need to show it for large $k$.
  Directly from the definitions, we have
  \begin{equation}
   \esper\big[ \occ(\pi,\bm\alpha_n)  \big] =\,
  \frac{k!\,[z^{n-k}] \Occ_\pi(z)}{[z^{n-k}]S^{(k)}(z)}. 
  \label{eq:Tech11}
\end{equation}
  
  For the proof of the direct implication, we assume that $\mu_{\bm\alpha_n}$ converges in distribution.
  From \cref{thm:randompermutonthm}, this means that $\esper\big[ \occ(\pi,\bm\alpha_n)  \big]$ has a limit $\Delta_\pi$ for every $\pi$ of size $k \ge 2$.
  Then when $n$ goes to infinity,
  \[[z^{n}] \Occ_\pi(z) = \frac {\Delta_\pi + o(1)}{k!} [z^{n}] S^{(k)}(z).\]
  As a consequence, for any fixed $\pi$ and $\eps>0$, there exists polynomials $g_-$,$g_+$ such that for any real $z$ in $[0,R_S)$, 
  \begin{equation}\label{eq:inegal_cvCS}
  \left(\frac{\Delta_\pi}{k!} - \eps\right)S^{(k)}(z) + g_-(z) \leq \Occ_{\pi}(z) \leq \left(\frac{\Delta_\pi}{k!} + \eps\right)S^{(k)}(z) + g_+(z).
  \end{equation}
  Hypothesis $(CS)$ implies that in $R_S$ we have $\Occ_{\pi}(z) = g_\pi(z) +(C_\pi+o(1))(R_S - z)^{\delta - k}$ for some analytic function $g_\pi$. Also $S^{(k)}$ has a dominant singularity of exponent $\delta-k$ in $R_S$ so $S^{(k)}(z) = g_{S^{(k)}}(z) + (C_{S^{(k)}}+o(1))(R_S - z)^{\delta-k}$ for some analytic function $g_{S^{(k)}}$ and constant $C_{S^{(k)}}>0$. Plugging these asymptotic estimates into \eqref{eq:inegal_cvCS} yields
  \begin{multline*}\left(\frac{\Delta_\pi}{k!} - \eps\right)
  \left[(C_{S^{(k)}} +o(1)) (R_S - z)^{\delta - k} +g_{S^{(k)}} \right]
  +g_-\\
  \leq (C_\pi +o(1))\left[(R_S - z)^{\delta  - k} +g_\pi\right] \\
  \leq \left(\frac{\Delta_\pi}{k!} + \eps\right)
  \left[(C_{S^{(k)}} +o(1)) (R_S - z)^{\delta - k} +g_{S^{(k)}} \right]
  +g_+.
  \end{multline*}
  Let $k$ be such that $\delta - k < 0$, so that the singular parts are the only diverging quantities when $z\to R_S$. After taking the limit we get $\left|C_\pi - \frac{C_{S^{(k)}}}{k!}\Delta_\pi \right| \leq \eps$ for every $\eps$ and hence equality. We have proven that $(\Delta_\pi)_{\pi\in \Sn_k}$ is proportional to $(C_\pi)_{\pi\in \Sn_k}$ for large $k$, as desired.

  For the converse, we start from \cref{eq:Tech11}.
  \cref{thm:transfert} (which we can apply because of the hypotheses on $S$
  and $\Occ_\theta$) gives the following asymptotic behavior when $n\to\infty$:
  \[ \esper\big[ \occ(\pi,\bm \alpha_n)  \big] 
  = k!\, n^{-k} 
  \,\frac{(C_\pi+o(1)) R_S^{-n+k} n^{-\delta+k-1}}{C_S R_S^{-n} n^{-\delta-1}}.\]
  For fixed $k$, the limit of the right-hand side is proportional to $C_\pi$,
  which concludes the proof of the proposition.
\end{proof}

\section{Asymptotic analysis: The critical case $S'(R_S)=2/(1+R_S)^2-1$}
\label{sec:critical}

The goal of this section is to describe the limiting permuton
of a uniform permutation in a substitution-closed class $\CCC$,
whose set of simple permutations satisfies the following hypothesis.
\begin{definition}
  [Hypothesis (H3)]
  A family $\mathcal S$ of simple permutations is said to satisfy hypothesis $(H3)$ 
  if the generating function $S$ meets the following conditions at its radius of convergence $R_S>0$:
  \begin{itemize}
    \item $S$ has a      
      dominant singularity of exponent $\bm{\delta>1}$ in $R_S$;
    \item $S'$ is convergent at $R_S$ and 
\[ S'(R_S) = \frac{2}{(1+R_S)^2} -1.\]
  \end{itemize}
\end{definition}

This hypothesis implies the following behavior of $\Lambda$ near its singularity.
\begin{lemma}
  Assume that $S$ satisfies hypothesis $(H3)$ and set, as before,
\[
    \Lambda(u)= \frac{u^2}{1-u} +S\left( \frac{u}{1-u} \right).
\]
  Then $\Lambda$ has a unique dominant singularity of exponent $\delta$
  in $R_\Lambda:=\tfrac{R_S}{1+R_S}<1$.
  Moreover, $\Lambda'$ is convergent at $R_\Lambda$ and
      $\Lambda'(R_\Lambda)=1$.
  \label{lem:H3_De_S_A_Lambda}
\end{lemma}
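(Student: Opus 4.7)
The plan is to mimic almost verbatim the proof of \cref{lem:H2_De_S_A_Lambda} for the degenerate case, with the only substantive change occurring at the very last step, where the strict inequality $\Lambda'(R_\Lambda)<1$ is replaced by an equality. The key inputs are \cref{lem:Comp} on supercritical composition, the explicit formula \eqref{eq:DefLambda} for $\Lambda$, and its derivative \eqref{eq:LambdaPrime}.

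First I would establish the singularity statement. Since $(H3)$ postulates a dominant singularity of exponent $\delta>1$ at $R_S$, in particular $R_S<\infty$, so $R_\Lambda=\tfrac{R_S}{1+R_S}<1$. Write $\phi(u)=\tfrac{u}{1-u}$; this is analytic on $\{|u|<1\}$, with $\phi(R_\Lambda)=R_S$ and $\phi'(R_\Lambda)\neq 0$. The composition $S\circ\phi$ is therefore supercritical, and by \cref{lem:Comp} it has a unique dominant singularity of exponent $\delta$ located at $R_\Lambda$. The remaining summand $u^2/(1-u)$ is analytic in a neighborhood of $R_\Lambda$ (since $R_\Lambda<1$) and so is subdominant there. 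Applying \cref{lem:MultExp} to the sum then yields that $\Lambda$ itself has a unique dominant singularity of exponent $\delta$ at $R_\Lambda$.

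Next I would handle the quantitative behavior of $\Lambda'$. Recall from \eqref{eq:LambdaPrime} that
\[
\Lambda'(u) = \frac{1}{(1-u)^2}\Bigl(1 + S'\bigl(\tfrac{u}{1-u}\bigr)\Bigr) - 1.
\]
Because $(H3)$ ensures that $S'$ is convergent at $R_S$, and because $\tfrac{1}{(1-u)^2}$ is analytic at $R_\Lambda<1$, the function $\Lambda'$ is convergent at $R_\Lambda$. Using $1-R_\Lambda=\tfrac{1}{1+R_S}$, one obtains
\[
\Lambda'(R_\Lambda) = (1+R_S)^2\bigl(1+S'(R_S)\bigr) - 1,
\]
and substituting the equality $S'(R_S)=\tfrac{2}{(1+R_S)^2}-1$ from $(H3)$ gives $\Lambda'(R_\Lambda)=2-1=1$, as desired.

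There is no real obstacle here: the argument is a direct adaptation of the degenerate-case proof, and the equality $\Lambda'(R_\Lambda)=1$ reduces to a single substitution once the formulas are in hand. The only point that requires a brief justification is the finiteness $R_S<\infty$ (ensuring $R_\Lambda<1$), which is automatic from the existence of a dominant singularity of exponent $\delta>1$ at $R_S$.
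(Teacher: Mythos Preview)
Your proof is correct and mirrors the paper's own argument, which simply cites the supercritical case of \cref{lem:Comp} for the singularity claim and performs the same direct computation from \eqref{eq:LambdaPrime} and $(H3)$ to obtain $\Lambda'(R_\Lambda)=1$. One tiny quibble: \cref{lem:MultExp} is stated for products rather than sums, but the fact you need (adding an analytic function preserves the dominant singularity) is immediate from the definition in \eqref{eq:def_Exp}.
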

\begin{proof}
  The statement on the singularity exponent follows from \cref{lem:Comp} (Supercritical case).
  The statement on $\Lambda'$ is a simple computation from (H3) and \cref{eq:LambdaPrime}.
\end{proof}

In the following, we denote $\delta_*=\min(\delta,2)$.
The behavior of $T_{\nonp}$ is given in the following lemma,
whose proof is postponed to \cref{sec:proof_Inv3}.
Note that the exponent is different from the one observed
in \cref{sec:Asymp2} (the singularity comes here from 
a mixture of a branch point and of the singularity of $\Lambda$, as explained in \cref{Subsec:outline_proof}).
\begin{lemma}
  \label{lem:Inversion3}
  Assume that $S$ satisfies hypothesis $(H3)$.
  Then there is  a unique $\rho>0$ such that $T_{\nonp}(\rho)=R_\Lambda$.
  Moreover, $\rho$ is the radius of convergence of $T_{\nonp}$
  and $T_{\nonp}$ has a unique dominant singularity of exponent $1/\delta_*$ in $\rho$.
\end{lemma}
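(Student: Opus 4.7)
The plan is to express $T_\nonp$ as the analytic inverse of $\Phi(u) := u - \Lambda(u)$ and to carry out a singularity analysis of this inverse near $u=R_\Lambda$, where two phenomena collide: the vanishing of $\Phi'$ at $R_\Lambda$ (a branch-point behaviour coming from the implicit equation) and the singularity of $\Lambda$ itself. The exponent $1/\delta_*$ will be determined by whichever of these phenomena dominates.

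First I would set up the inversion. From $T_\nonp = z + \Lambda(T_\nonp)$ one has $z = \Phi(T_\nonp(z))$. Since $\Lambda$ has nonnegative coefficients starting at $u^2$ (\cref{obs:Lambda_at_0}), $\Lambda'$ is strictly increasing on $[0,R_\Lambda]$ from $0$ to $\Lambda'(R_\Lambda)=1$ (by \cref{lem:H3_De_S_A_Lambda}). Hence $\Phi'=1-\Lambda'$ is strictly positive on $[0,R_\Lambda)$ and vanishes at $R_\Lambda$, so $\Phi$ is a continuous strictly increasing bijection $[0,R_\Lambda]\to[0,\rho]$ with $\rho:=\Phi(R_\Lambda)=R_\Lambda-\Lambda(R_\Lambda)>0$. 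Its continuous inverse is the unique map $T_\nonp:[0,\rho]\to[0,R_\Lambda]$ with $T_\nonp(\rho)=R_\Lambda$. The analytic implicit function theorem applied at each $u_0\in[0,R_\Lambda)$ (where $\Phi'(u_0)\neq 0$) shows $T_\nonp$ is locally analytic on $[0,\rho)$, and Pringsheim's theorem together with the nonnegativity of its coefficients identifies $\rho$ as its radius of convergence.

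Next I would derive the singular expansion of $\Phi$. Writing the singular expansion of $\Lambda$ as $\Lambda(u)=g(u)+C(R_\Lambda-u)^{\delta}(1+o(1))$ in a $\Delta$-neighborhood of $R_\Lambda$, with $g$ analytic and $C\neq 0$, singular differentiation (\cref{thm:singular_diff}) combined with $\Lambda'(R_\Lambda)=1$ yields $g'(R_\Lambda)=1$. Setting $h(u):=u-g(u)$, one has $h(R_\Lambda)=\rho$ and $h'(R_\Lambda)=0$, so Taylor expansion of $h$ around $R_\Lambda$ combined with the singular expansion gives
\[ \rho-\Phi(u)=\tfrac{1}{2}\,g''(R_\Lambda)\,(R_\Lambda-u)^2+C\,(R_\Lambda-u)^{\delta}+o\bigl((R_\Lambda-u)^{\delta_*}\bigr). \]
Comparison of the exponents $2$ and $\delta$ is then elementary, and the sign of the leading coefficient is forced by the positivity of $\rho-\Phi(u)$ for real $u\in(0,R_\Lambda)$: if $1<\delta<2$ the singular term dominates, hence $C>0$; if $\delta>2$ the quadratic term dominates, and positivity is automatic since $g''(R_\Lambda)=\Lambda''(R_\Lambda)>0$ by \cref{obs:Lambda_at_0}; if $\delta=2$ the two contributions combine, their sum being positive by the same real-positivity argument. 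In every case
\[ \rho-\Phi(u)=A\,(R_\Lambda-u)^{\delta_*}(1+o(1))\qquad\text{with }A>0. \]

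Inverting this relation along the positive real axis and extending analytically into a $\Delta$-domain at $\rho$ yields the expected
\[ R_\Lambda-T_\nonp(z)=A^{-1/\delta_*}(\rho-z)^{1/\delta_*}(1+o(1)), \]
i.e.\ $T_\nonp$ has a singularity of exponent $1/\delta_*$ at $\rho$. Uniqueness of this dominant singularity then follows from aperiodicity: $\Lambda$ is aperiodic by \cref{obs:Lambda_at_0}, so $T_\nonp$ is too, and for any $z_0$ with $|z_0|=\rho$ and $z_0\neq\rho$ the standard nonnegative-coefficient comparison gives $|T_\nonp(z_0)|<T_\nonp(\rho)=R_\Lambda$, so $\Phi'(T_\nonp(z_0))\neq 0$ and the analytic implicit function theorem extends $T_\nonp$ analytically past $z_0$. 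The main technical step will be formalizing the inversion in the regime $1<\delta<2$, since $\Phi$ has a non-smooth singular term of irrational exponent at $R_\Lambda$; the cleanest resolution is either an analytic inversion theorem on a $\Delta$-domain designed for branch points with non-square-root exponents, or a direct contraction argument on the fixed-point equation $R_\Lambda-u=(A^{-1}(\rho-z))^{1/\delta_*}(1+w)$ for a small analytic perturbation $w$.
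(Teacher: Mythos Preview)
Your proposal is correct and follows essentially the same route as the paper: both start from the expansion $\rho-z = A\,(R_\Lambda-w)^{\delta_*}(1+o(1))$ of the implicit equation, invert it to obtain the $1/\delta_*$ exponent, and use aperiodicity with the analytic implicit function theorem to rule out other singularities on the circle. The paper resolves the ``main technical step'' you flag exactly by the second option you mention: it rewrites the equation as a fixed-point problem $w=G(z,w)$ with $G(z,w)=R_\Lambda-\bigl(A^{-1}(\rho-z)/(1+\eps(w))\bigr)^{1/\delta_*}$ and runs Picard iteration on a carefully chosen $z$-dependent disk around the first approximant $\phi_1(z)$, establishing analyticity on a $\Delta$-neighborhood; your sketched contraction argument is precisely this. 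One small remark: the case $\delta=2$ does not arise, since the paper's notion of ``dominant singularity of exponent $\delta$'' requires $\delta$ to be a noninteger.
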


From here, the strategy is similar as the previous sections,
we therefore skip unnecessary details.
As in the previous section, we deduce immediately the asymptotic behavior
of all generating functions of marked trees:
\begin{corollary}
  Assume that $S$ satisfies hypothesis $(H3)$ and define $\rho$ as above.
  Then $T$ has a unique dominant singularity of exponent $1/\delta_*$ in $\rho$, with $T(\rho) = R_S$.
  Moreover each of the generating functions $T'$, $T_{\nonp}'$, \ldots, $T^-_{\nonm}$
  has a unique dominant singularity of exponent $1/\delta_*-1$ in $\rho$.
\end{corollary}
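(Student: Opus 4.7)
The proof will parallel the strategy used in the degenerate case (Section 6) but with the critical-composition subtlety treated carefully. The starting point is Lemma~\ref{lem:Inversion3}, which supplies the asymptotic behavior $R_\Lambda - T_\nonp(z) \sim c(\rho-z)^{1/\delta_*}$ and the uniqueness of the dominant singularity of $T_\nonp$ at $\rho$. From there, all nine generating functions of interest can be expressed rationally in terms of $T_\nonp$ (via \eqref{eq:T_Tnp} and Proposition~\ref{Prop:Sol_TPlus}), so the task is to propagate the exponent $1/\delta_*$ through these algebraic manipulations.

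First, for $T$ I would use $T = T_\nonp/(1-T_\nonp)$. Since $T_\nonp(\rho) = R_\Lambda < 1$ by Lemma~\ref{lem:Inversion3}, the denominator does not vanish at~$\rho$. This is a subcritical composition (Lemma~\ref{lem:Comp}), so $T$ inherits the unique dominant singularity at $\rho$ with the same exponent $1/\delta_*$, and $T(\rho) = R_\Lambda/(1-R_\Lambda) = R_S$. Then $T'$ and $T_\nonp' = T_\nonm'$ are handled by singular differentiation (Theorem~\ref{thm:singular_diff}), each acquiring exponent $1/\delta_* - 1$, and uniqueness of the dominant singularity is preserved.

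The heart of the proof is the analysis of $T^+$. Using the identity $1 + S'(T) = (1-T_\nonp)^2(1 + \Lambda'(T_\nonp))$ from \eqref{eq:DevS'T} and the definition $1 + W = 1/(1-T_\nonp)^2$, one finds $(1+W)(1+S'(T)) = 1 + \Lambda'(T_\nonp)$, which collapses the formula in Proposition~\ref{Prop:Sol_TPlus} into the clean expression
\[ T^+ = \frac{1}{1 - \Lambda'(T_\nonp)}. \]
Since $\Lambda'(R_\Lambda) = 1$ under (H3) (Lemma~\ref{lem:H3_De_S_A_Lambda}), this denominator vanishes at $\rho$, and the issue becomes estimating its rate of vanishing. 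Writing the singular expansion $\Lambda'(u) = g(u) + (C+o(1))(R_\Lambda - u)^{\delta-1}$ with $g$ analytic and $g(R_\Lambda) = 1$, and combining with the expansion of $T_\nonp$ around $\rho$, I would split into three regimes: when $\delta > 2$ the analytic part of $\Lambda'$ contributes linearly (since $\Lambda''(R_\Lambda)$ is finite) and the singular part is negligible, giving $1 - \Lambda'(T_\nonp(z)) = \Theta((\rho-z)^{1/2})$; when $\delta < 2$ the singular part dominates and gives $\Theta((\rho-z)^{(\delta-1)/\delta})$; when $\delta = 2$ both contributions are of the same order. In every case the exponent equals $1 - 1/\delta_*$, so $T^+$ has exponent $1/\delta_* - 1$. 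Uniqueness of this singularity follows from uniqueness of the dominant singularity of $T_\nonp$ together with the strict inequality $|\Lambda'(T_\nonp(z))| < 1$ for $|z| = \rho$, $z \ne \rho$, which is implied by positivity of coefficients and aperiodicity of $T_\nonp$ (Observation~\ref{obs:Lambda_at_0} transfers aperiodicity to $\Lambda$, and $T_\nonp$ inherits it since it counts a superset of separable permutations).

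Finally, for the remaining series I would invoke the formulas of Proposition~\ref{Prop:Sol_TPlus}: $T_\nonm^+ = T^+/(1+W)$ with $1+W(\rho) = (1+R_S)^2 \ne 0$, and $T_\nonp^+ = \Lambda'(T_\nonp) \cdot T_\nonm^+$ where $\Lambda'(T_\nonp(\rho)) = 1$; both factors that multiply $T^+$ (or $T_\nonm^+$) are analytic and nonzero at $\rho$, so the exponent $1/\delta_* - 1$ is preserved (Lemma~\ref{lem:MultExp}). The three minus-superscript series are obtained by the $\oplus \leftrightarrow \ominus$ symmetry. The main obstacle, as foreshadowed, is the critical-composition analysis for $T^+$: both $\Lambda'$ is being evaluated exactly at its critical value $R_\Lambda$ and $T_\nonp$ is at its singularity $\rho$, so one must carefully track which of the two contributions (the analytic vanishing of $1-\Lambda'$ at $R_\Lambda$ versus its singular part of exponent $\delta-1$) dominates, which is precisely what forces the appearance of $\delta_* = \min(\delta, 2)$.
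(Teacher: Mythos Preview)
Your proof is correct and fleshes out what the paper only sketches (``As in the previous section, we deduce immediately\ldots''). The identity $T^+=1/(1-\Lambda'(T_{\nonp}))$, obtained by combining \eqref{eq:DevS'T} with $1+W=(1-T_{\nonp})^{-2}$, is exactly the right tool: it makes transparent why the denominator vanishes at $\rho$ (since $\Lambda'(R_\Lambda)=1$ under $(H3)$), and your case analysis in $\delta$ correctly yields exponent $1-1/\delta_*$ for $1-\Lambda'(T_{\nonp})$.

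One small correction: in the last paragraph, the factors $1/(1+W)$ and $\Lambda'(T_{\nonp})$ are \emph{not} analytic at $\rho$---each carries a unique dominant singularity there, of positive exponent $1/\delta_*$ and $1-1/\delta_*$ respectively (the latter by \cref{lem:Comp}, Critical case-B, applied to $\Lambda'\circ T_{\nonp}$). What matters is that they are convergent at $\rho$ with nonzero value, so \cref{lem:MultExp} still gives the product the divergent exponent $1/\delta_*-1$. The conclusion is unaffected.
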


To go further, we have to assume hypothesis $(CS)$,  \emph{i.e.} that, for any $\theta$ of size $k$,
the series $\Occ_\theta(z)$ has a unique dominant singularity of exponent at least $\delta-k$ in $R_S$.
The composition $\Occ_\theta \circ T$ is then critical,
and from \cref{lem:Comp} (Critical case-B), $\Occ_\theta(T(z))$ has a unique singularity of exponent
at least $\tfrac{1}{\delta_*}\min(\delta-k,1)$ in $\rho$.

By a straightforward computation from \cref{prop:Dec_TtS}, we get the following singularity for the series $T_{\patterntree,V_s}$.
\begin{lemma} \label{lem:Sing_Tt3}
	Fix a tree $\patterntree$ and a subset $V_s$ of its internal nodes.
	Then the series 	$T_{\patterntree,V_s}$ has a singularity of exponent at least
	\[ \widetilde{e}_{(\patterntree,V_s)}:=
	(|E|+1)  (\tfrac{1}{\delta_*}-1) + \sum_{v \in V_s} \tfrac{1}{\delta_*} \min(\delta-d_v,0).\]
\end{lemma}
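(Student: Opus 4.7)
The plan is to start from the explicit factorization of $T_{\patterntree,V_s}$ given by \cref{prop:Dec_TtS}:
\[ T_{\patterntree,V_s} = z^k \, T^{\text{type of root}} \prod_{v \in \Internal{\patterntree}} A_v, \]
and to determine the singular exponent of each factor at $\rho$, then combine them via \cref{lem:MultExp}. Since $z^k$ is entire and $T^{\text{type of root}}$ is one of $T'$, $T_{\nonp}'$, $T_{\nonm}'$, it contributes a divergent exponent $1/\delta_* - 1$ (by the corollary to \cref{lem:Inversion3} on marked-leaf generating functions). The factors $A_v$ are handled case by case.

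For $v \notin V_s$ (so $\theta_v \in \{\oplus,\ominus\}$), the factor $A_v$ is a product of $\big(\tfrac{1}{1-T_{\nonp}}\big)^{d_v+1}$ with $d_v$ marked-leaf generating functions taken among $T_{\nonp}'$, $T_{\nonp}^+$, $T_{\nonp}^-$ (or their $\nonm$ analogues). Since $T_{\nonp}(\rho) = R_\Lambda < 1$ by \cref{lem:Inversion3}, the first factor is analytic at $\rho$ by subcritical composition (\cref{lem:Comp}), hence does not contribute to the singular exponent; the $d_v$ marked-leaf factors each contribute exponent $1/\delta_* - 1 < 0$. For $v \in V_s$, the factor $A_v$ is a product of $\Occ_{\theta_v}(T)$ with $d_v$ marked-leaf generating functions; under hypothesis $(CS)$, composing the estimate $\Occ_{\theta_v}(z) = g(z) + O((R_S-z)^{\delta - d_v})$ with $T$ (which has exponent $1/\delta_*$ at $\rho$ and satisfies $T(\rho) = R_S$) is a critical composition, and \cref{lem:Comp} (Critical case-B) gives that $\Occ_{\theta_v}(T)$ has a unique dominant singularity of exponent at least $\tfrac{1}{\delta_*}\min(\delta - d_v, 1)$.

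To combine these, I would apply \cref{lem:MultExp}. The key observation is that there are $|E|+1$ marked-leaf factors in total across all the $A_v$'s and the root factor (where $|E| = \sum_{v} d_v$ counts the edges), each contributing the same divergent exponent $1/\delta_* - 1$. Since all these factors diverge, their exponents add up to $(|E|+1)(1/\delta_* - 1)$. The remaining nontrivial factors $\Occ_{\theta_v}(T)$ (for $v \in V_s$) either converge (when $d_v < \delta$, giving a nonnegative exponent which does not affect the leading divergence) or diverge with additional exponent $(\delta - d_v)/\delta_*$ (when $d_v \geq \delta$). In either case the contribution can be uniformly written as $\tfrac{1}{\delta_*}\min(\delta - d_v, 0)$, summed over $v \in V_s$, yielding precisely the claimed exponent $\widetilde{e}_{(\patterntree,V_s)}$.

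The main subtlety is the correct bookkeeping in \cref{lem:MultExp}: one must verify that divergent singular parts do combine additively (no cancellation), which follows from the positivity of the relevant leading constants, and that convergent factors with positive (possibly small) exponents genuinely leave the overall divergent exponent unchanged. Since positivity of constants is inherited from positivity of coefficients in the underlying combinatorial generating functions, this presents no real obstacle. The only other delicate point is the edge case $d_v = \delta$ (where $\min(\delta - d_v, 1) = 0$ could in principle yield a logarithmic singularity in $\Occ_{\theta_v}(T)$), but the statement only claims an inequality on the exponent, so logarithmic terms are harmless.
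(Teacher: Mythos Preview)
Your proposal is correct and follows essentially the same route as the paper, which presents this lemma as a ``straightforward computation from \cref{prop:Dec_TtS}'' without giving a detailed proof. One small slip: the root factor $T^{\text{type of root}}$ is one of $T'$, $T^+$, $T^-$ (not $T'$, $T_{\nonp}'$, $T_{\nonm}'$), but since all nine marked-leaf series share the same singular exponent $1/\delta_* - 1$, this has no effect on the argument.
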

The behavior of the uniform permutation in $\langle \SSS \rangle_n$ now depends on whether $\delta$ is smaller or larger than $2$.
\subsection{The case $\delta\in (1,2)$.}\label{ssec:deltaPlusPetit}
\begin{theorem}\label{Th:Main4}
   	Let $\mathcal S$ be a family of simple permutations verifying hypothesis $(H3)$ and $(CS)$, with $\delta \in(1,2)$.
    We consider the permuton $\Mu_{\SSS}$ as in \Cref{Prop:H2degenerate} and denote for $\pi \in \Sn_k$
   	\[\Delta_\pi = \esper[\occ(\pi,\bm\mu_{\SSS})] = \frac{C_\pi}{\sum_{\theta \in \Sn_k} C_\theta}.\]
    Let also 
   	\[\nu_{\delta,k}(\patterntree) = \frac{k!}{(\delta -1) \cdots ((k-1)\delta - 1)}\prod_{v\in \Internal\patterntree} \frac{(d_v - 1 - \delta) \cdots (2-\delta)(\delta - 1)}{d_v!}\]
 be 
   	the probability distribution of the induced subtree with $k$ leaves 
    in the $\delta$-stable tree (see \cite[Thm 3.3.3]{DuquesneLeGall}).

    If $\Si_n$ is a uniform permutation in $\langle \mathcal{S}\rangle_n$, then
   	\begin{equation}\label{eq:limoccStableCase} \lim_{n\to\infty}{\esper[\occ(\pi,\Si_n)]} = \sum_{\patterntree:\perm(\patterntree) = \pi} \nu_{\delta,k}(\patterntree)\prod_{v\in \Internal\patterntree} \Delta_{\theta_v}.\end{equation}
   	As a consequence, $\mu_{\Si_n}$ converges in distribution to a random permuton, whose average pattern densities are determined by \cref{eq:limoccStableCase} and depend only on $\delta$ and $\bm\mu_{\SSS}$. We call it the $\delta$-stable permuton driven by $\bm\mu_{\SSS}$.
\end{theorem}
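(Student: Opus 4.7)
The plan is to extend the strategy of \cref{Th:Main} to the critical regime, using the singularity exponents established in \cref{lem:Inversion3,lem:Sing_Tt3}. By \cref{thm:randompermutonthm} (implication $(c) \Rightarrow (a)$) and \cref{obs:StartAt2}, it suffices to show that for every $\pi \in \Sn_k$ with $k \geq 2$,
\[
\lim_{n \to \infty} \esper[\occ(\pi, \Si_n)] = \sum_{\patterntree : \perm(\patterntree) = \pi} \nu_{\delta,k}(\patterntree) \prod_{v \in \Internal{\patterntree}} \Delta_{\theta_v}.
\]
Combining \cref{lem:DiagrammeCommutatif} with \cref{eq:E(occ)=P(pat)} reduces this to computing the limit of
\[ \proba(\mathbf{t}^{(n)}_k = \patterntree) = \frac{[z^n] T_\patterntree(z)}{\binom{n}{k} [z^n] T(z)} \]
for each substitution tree $\patterntree$ encoding $\pi$, and summing.

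The denominator is controlled by \cref{lem:Inversion3} together with the subcritical composition $T = T_\nonp/(1 - T_\nonp)$, which via \cref{thm:transfert} give $[z^n] T(z) \sim C_T \, \rho^{-n} n^{-1/\delta - 1}$. For the numerator, the decomposition $T_\patterntree = \sum_{V_s} T_{\patterntree, V_s}$ singles out one dominating subset. The exponent in \cref{lem:Sing_Tt3} decreases by $(d_v - \delta)/\delta$ each time $V_s$ is enlarged, and this increment is strictly positive since $\delta \in (1,2)$ and every internal vertex has $d_v \geq 2 > \delta$. Consequently the unique dominant contribution is $T_{\patterntree, \Internal{\patterntree}}$, whose singular exponent equals $1/\delta - k$. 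A direct computation from \cref{prop:Dec_TtS}, combined with $(CS)$ and the critical composition of \cref{lem:Comp} (Critical case-B) applied to each factor $\Occ_{\theta_v}(T)$, gives
\[ T_\patterntree(z) = \bigl( K_\patterntree + o(1) \bigr) (1 - z/\rho)^{1/\delta - k}, \quad z \to \rho, \]
where $K_\patterntree$ is an explicit constant proportional to $\prod_{v \in \Internal{\patterntree}} C_{\theta_v}$, the remaining proportionality depending only on $\delta$, $\rho$, the singular coefficient of $T_\nonp$, and the degree sequence $(d_v)_v$. The transfer theorem then yields convergence of $\proba(\mathbf{t}^{(n)}_k = \patterntree)$ to an explicit positive multiple of $\prod_v C_{\theta_v}$.

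The main obstacle is the algebraic identification of that limit constant with $\nu_{\delta,k}(\patterntree) \prod_v \Delta_{\theta_v}$. The key input, obtained by differentiating the singular expansion of $S$ through \cref{obs:RcvOcc>RS}, is the identity
\[ \sum_{\theta \in \Sn_d} C_\theta = \frac{C_S}{d!}\,\delta(\delta - 1)(2 - \delta)(3 - \delta) \cdots (d - 1 - \delta), \]
which converts each ratio $C_{\theta_v}/\sum_\alpha C_\alpha$ into $\Delta_{\theta_v}$ and produces precisely the vertex-weight of $\nu_{\delta,k}$ at $v$. The residual normalizing factor $(\delta - 1)(2\delta - 1)\cdots((k-1)\delta - 1)$ of $\nu_{\delta,k}$ comes from the gamma-function ratio $-\Gamma(-1/\delta)/\Gamma(k - 1/\delta)$ supplied by the transfer theorem, via the functional identity $\Gamma(k - 1/\delta) = (1/\delta)|\Gamma(-1/\delta)|\prod_{j=1}^{k-1}(j - 1/\delta)$; the matching also requires the relation between the singular coefficient of $T$ and $C_S$, namely $\tilde B_T^{\,\delta} = \rho/C_S$, which follows from the implicit equation $T_\nonp = z + \Lambda(T_\nonp)$ expanded near its critical point (\cref{lem:H3_De_S_A_Lambda}). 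Summing the resulting formula over substitution trees $\patterntree$ encoding $\pi$ gives \cref{eq:limoccStableCase}, and the existence of a limiting random permuton together with the convergence $\mu_{\Si_n} \to \bm \mu$ in distribution is then a direct consequence of \cref{thm:randompermutonthm}.
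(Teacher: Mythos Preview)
Your proposal is correct and follows essentially the same approach as the paper: you isolate the dominant contribution $V_s = \Internal{\patterntree}$ via the exponent computation of \cref{lem:Sing_Tt3}, compute the leading singular term of $T_{\patterntree,\Internal{\patterntree}}$ from \cref{prop:Dec_TtS}, and identify the resulting constant using the identity $\sum_{\theta \in \Sn_d} C_\theta = \tfrac{C_S}{d!}\,\delta(\delta-1)(2-\delta)\cdots(d-1-\delta)$ together with the relation between the singular coefficient of $T$ and $C_S$. The only minor imprecision is the reference for the relation $\tilde B_T^{\,\delta} = \rho/C_S$: this comes from the explicit constant in the proof of \cref{lem:Inversion3} (namely $-C_{T_\nonp} = C_\Lambda^{-1/\delta}$) combined with the conversion formulas $C_\Lambda = C_S/(1-R_\Lambda)^{2\delta}$ and $C_T = C_{T_\nonp}/(1-R_\Lambda)^2$, rather than from \cref{lem:H3_De_S_A_Lambda} itself.
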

A construction of the $\delta$-stable permuton driven by $\bm \nu$ for every $\delta \in (1,2)$ and random permuton $\bm \nu$ is given in \cref{Lem:ConstructionElementaireStable}.
\begin{remark}
	In this case, all possible patterns, in particular nonseparable ones,
    appear with positive probability in the limit
    (as long as they appear with positive probability in a uniform simple permutation in the class).
    More precisely, the proof will show the following:
    $k$ random leaves in a uniform canonical tree induce substitution trees with arbitrary large node degrees, and  the first common ancestors of those leaves are all simple permutations with probability tending to 1.
\end{remark}
\begin{proof}
	We start from the estimate of \cref{lem:Sing_Tt3}. In the present case $\delta_*=\delta$ and $\delta-d_v$ is always negative.
	If we fix $(\patterntree,V_s)$ and apply \cref{thm:transfert} to $T$ and $T_{\patterntree,V_s}$, we get
	\[\frac{[z^n]T_{\patterntree,V_s}}{\binom n k [z^n]T}=(\mathrm{cst}+o(1))n^{e_{(\patterntree,V_s)}},\]
	with
        \begin{equation}
        e_{(\patterntree,V_s)}
        = \tfrac{1}{\delta} - k - \widetilde{e}_{(\patterntree,V_s)}
        =\tfrac{1}{\delta} - k -(|E|+1)  \left(\tfrac{1}{\delta}-1\right) +\sum_{v \in V_s}(\tfrac{d_v}{\delta} - 1) 
	= -\sum_{v \in \Internal{\patterntree} \setminus V_s} (\tfrac{d_v}{\delta} - 1),
	\label{eq:exposant}
        \end{equation}
	where the last identity uses $|E|-|\Internal{\patterntree}| -k +1=0$.
	Therefore $e_{(\patterntree,V_s)}<0$
	unless $V_s=\Internal{\patterntree}$,  \emph{i.e.}
	all internal nodes of $\patterntree$ are in $V_s$.	As a consequence, if $\bm t^{(n)}$ is a uniform canonical tree of size $n$ and $\bm I$ is a uniform subset of the leaves of length $k$, 
	\[\proba(\bm t^{(n)}_{\bm I} = \patterntree) = \sum_{V_s \subseteq\, \Internal {t_0}} \frac{[z^n]T_{\patterntree,V_s}}{\binom n k [z^n]T}  = \frac{[z^n]T_{\patterntree,\Internal{t_0}}}{\binom n k [z^n]T} +o(1).\]
	
Now we focus on the case $V_s = \Internal {t_0}$. The series $T_{\patterntree,\Internal{\patterntree}}$ has a dominant singularity of exponent 
$\widetilde{e}_{(\patterntree,\Internal {t_0})} = 1/\delta-k$ (see \cref{eq:exposant}). 
We are left with identifying the constant in its singular expansion.
    In what follows, we will always denote by $C_A$ 
    the constant in front of the singular part of the expansion of the analytic function $A$,
    \emph{i.e.} $A(z) = g_A(z) + C_A(R_A - z)^{\delta_A}$, with $R_A$ the radius of convergence of $A$ and $g_A$ analytic at $R_A$. 
    In particular, given that $\delta \in (1,2)$ and $k \ge 2$, we have the following expansions (recall $T(\rho)=R_S$):
    \begin{align*}
      &T(z) = R_S +(C_T+o(1))(\rho-z)^{1/\delta},
      \ \text{which implies}\ \begin{cases}
        R_S-T(z)=-(C_T+o(1))(\rho-z)^{1/\delta};\\
        T'(z)=-(\tfrac{C_T}{\delta}+o(1))(\rho-z)^{1/\delta-1};
      \end{cases}\\
       &\Occ_{\theta}(w)= (C_{\theta}+o(1)) (R_S-w)^{\delta-k}.
    \end{align*}
    Using \Cref{prop:Dec_TtS}, we can find the singular expansion of $T_{\patterntree,\Internal{\patterntree}}$:
	\begin{align*}
        T_{\patterntree,\Internal{\patterntree}}(z)
	&= z^k (T')^{|E|+1} \prod_{v} \Occ_{\theta_v}(T)\\
        &= \left[\rho^k \left(\frac{-C_T}{\delta}\right)^{|E|+1} \prod_{v\in \Internal{\patterntree}} C_{\theta_v} (-C_T)^{\delta - d_v} +o(1) \right]\,(\rho-z)^{1/\delta - k}.
	\end{align*}
    From the Transfer Theorem (\cref{thm:transfert}) applied to $T$ and $T_{\patterntree}$ we deduce
	\begin{align} 
	\proba(\bm t^{(n)}_{\bm I} = t_0)
	&= \frac{ \Gamma(-1/\delta) \rho^{-n +(1/\delta - k)}n^{-(1/\delta - k+1)} \left[\rho^k \left(\frac{-C_T}{\delta}\right)^{|E|+1} \prod_{v\in \Internal{\patterntree}} C_{\theta_v} (-C_T)^{\delta - d_v} +o(1) \right] }
	{\binom n k \Gamma(k-1/\delta) \rho^{-n +1/\delta}n^{-(1/\delta +1)} [C_T +o(1)] }\nonumber \\
	&= \frac{-k! \Gamma(-1/\delta)}{\Gamma(k-1/\delta)} \left[ \frac{(-C_T)^{|E|}}{\delta^{|E|+1}}\prod_{v\in \Internal{\patterntree}} C_{\theta_v} (-C_T)^{\delta - d_v} +o(1) \right] \nonumber \\
	&= \frac{-k!\Gamma(-1/\delta)}{\Gamma(k-1/\delta)\delta^k}
	\prod_{v\in \Internal{\patterntree}} \frac {1}\delta C_{\theta_v} (-C_T)^{\delta} + o(1).\label{eq:intermediaireThMain3}
	\end{align}
	The recursive property of the Gamma function gives 
    $\frac{ -k! \Gamma(-1/\delta)}{\Gamma(k-1/\delta)\delta^k} = \frac {k!}{(\delta -1) \cdots ((k-1)\delta - 1)}$. 
	Furthermore, by definition of $\Delta_{\theta_v}$, relation \eqref{eq:relOccS} and singular differentiation of $S$, we get
	\[C_{\theta_v} = \Delta_{\theta_v} \frac  {C_{S^{(d_v)}}}{d_v!} = \Delta_{\theta_v}  \frac{C_S}{d_v!} (-1)^{d_v}\delta(\delta - 1) \ldots (\delta - d_v + 1) = \Delta_{\theta_v} \frac{C_S}{d_v!} \delta(\delta-1)(2-\delta)\cdots(d_v -1 - \delta).\]
	This allows us to rewrite \eqref{eq:intermediaireThMain3} as
    \begin{multline}
      \lim_{n \to \infty}\proba(\bm t^{(n)}_{\bm I} = t_0)\\
      =\tfrac {k!}{(\delta -1) \cdots ((k-1)\delta - 1)}
      \prod_{v\in \Internal{\patterntree}} \frac{(\delta-1)(2-\delta)\cdots(d_v -1 - \delta)}{d_v!} \Delta_{\theta_v}C_{S} (-C_T)^{\delta}. 
      \label{eq:Tech13}
    \end{multline} 
	Now the proof of \cref{lem:Inversion3} (see \cref{eq:Tech8}) yields $-C_{T_\nonp} = C_{\Lambda}^{-1/\delta}$.
    We have also the following relations between the various constants
    (see \cref{eq:CT,eq:CLa} in the appendix):
    \begin{equation}
      C_\Lambda=\frac{C_S}{(1-R_\Lambda)^{2\delta}}\ \text{  and  } \ C_T = \frac{C_{T_\nonp}  }{(1-R_\Lambda)^2}.
      \label{eq:CLa_CT}
    \end{equation}
 From there we deduce $-C_T = C_S^{-1/\delta}$.
 Therefore \cref{eq:Tech13} rewrites
    \begin{equation}
      \lim_{n \to \infty}\proba(\bm t^{(n)}_{\bm I} = t_0) 
      =\tfrac {k!}{(\delta -1) \cdots ((k-1)\delta - 1)}
       \prod_{v\in \Internal{\patterntree}} \frac{(\delta-1)(2-\delta)\cdots(d_v -1 - \delta)}{d_v!} \Delta_{\theta_v}.
      \label{eq:Tech14}
 \end{equation} 
 Summing over trees $t_0$ with $\perm(t_0)=\pi$ gives the theorem.
\end{proof}

\bigskip

\subsection{The case $\delta>2$.}\label{ssec:deltaPlusGrand}
    \begin{theorem}\label{Th:Main3}
    	Let $\mathcal S$ be a family of simple permutations verifying hypotheses $(H3)$ and $(CS)$, with $\delta >2$. If $\Si_n$ is a uniform permutation in $\langle \mathcal{S}\rangle_n$, then $\Si_n$ converges in distribution to the biased Brownian separable permuton of parameter $p$, where
    	\begin{align*}
    	p &= \frac{ (1+R_S)^3\Occ_{12} (R_S)  +1}
    	{(1+R_S)^3(\Occ_{12} (R_S)+\Occ_{21} (R_S)) +2}.
    	\end{align*}
    \end{theorem}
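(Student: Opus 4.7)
The strategy is to mirror the proof of Theorem~\ref{Th:Main} (the standard case) given in Section~\ref{Sec:Standard}, exploiting the fact that under hypothesis $(H3)$ with $\delta > 2$, Lemma~\ref{lem:Inversion3} already tells us that $T_\nonp$ has a unique dominant singularity at $\rho$ of exponent $1/\delta_* = 1/2$---the same exponent as in the standard case. The main qualitative change is that the ``branch point'' of $T_\nonp$ now sits exactly at the boundary of the disk of convergence of $\Lambda$, namely at $R_\Lambda = T_\nonp(\rho)$. Consequently, in all formulas of Section~\ref{Sec:Standard} one must replace $\tau$ by $R_\Lambda = R_S/(1+R_S)$, and therefore $T(\rho) = R_S$ rather than $\kappa$. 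Hypothesis $(H3)$ gives $\Lambda'(R_\Lambda) = 1$, which is the analog of the defining equation $\Lambda'(\tau) = 1$ used in Proposition~\ref{lem:MMCversionSerie}.

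Since $\delta > 2$, $\Lambda''(R_\Lambda)$ is finite; applying a Newton--Puiseux analysis to the implicit equation $T_\nonp(z) = z + \Lambda(T_\nonp(z))$, using the second-order Taylor expansion of $\Lambda$ at $R_\Lambda$, yields
\[
T_\nonp(z) = R_\Lambda - \beta\sqrt{1-z/\rho} + o\!\left(\sqrt{1-z/\rho}\right),\qquad \beta = \sqrt{2\rho/\Lambda''(R_\Lambda)},
\]
which is the exact analog of \eqref{eq:DevT}. (This expansion is presumably produced in the proof of Lemma~\ref{lem:Inversion3}, postponed to the appendix.) From here, I would mimic Proposition~\ref{prop:DevMarkedleaves} verbatim to obtain the singular expansions of $T'$, $T^\pm$, $T^\pm_\nonp$, $T^\pm_\nonm$, all with dominant singularity of exponent $-1/2$ at $\rho$. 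The crucial identity $(1+W)(1+S'(T)) = 2$ at $z=\rho$ (which in Section~\ref{Sec:Standard} was equivalent to $\Lambda'(\tau) = 1$) now holds directly: $1+W(\rho) = (1-R_\Lambda)^{-2} = (1+R_S)^2$ and $1+S'(R_S) = 2/(1+R_S)^2$ by $(H3)$.

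The remainder is essentially formal. Applying Proposition~\ref{prop:Dec_TtS} to decompose $T_{\patterntree,V_s}$, hypothesis $(CS)$ together with $\delta > 2$ ensures that $\Occ_{\theta_v}$ is continuous at $R_S$ whenever $|\theta_v| \leq 2$, so $\Occ_{\theta_v}(T(z)) \to \Occ_{\theta_v}(R_S)$ as $z\to\rho$. For simple labels $\theta_v$ of size at least $3$ (which can only occur inside expanded trees of nonseparable patterns), Lemma~\ref{lem:Sing_Tt3} already shows that the associated singularity exponent is strictly larger than $-(|E|+1)/2$, so such trees yield a vanishing contribution. Following the computation of Proposition~\ref{prop:proba_patterns_general} with the substitution $\tau \mapsto R_\Lambda$, one obtains, for any separable pattern $\pi$ of size $k$,
\[
\lim_{n\to\infty} \esper[\occ(\pi,\Si_n)] = \frac{N_\pi}{\Cat_{k-1}}\, p^{r_+(\pi)}(1-p)^{r_-(\pi)},
\]
where $p = (\nu_+ + (1-R_\Lambda)^3)/(\nu_+ + \nu_- + 2(1-R_\Lambda)^3)$ with $\nu_\pm = \Occ_{12/21}(R_S)$; since $(1-R_\Lambda)^3 = (1+R_S)^{-3}$, multiplying numerator and denominator by $(1+R_S)^3$ gives exactly the stated formula. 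For nonseparable $\pi$ one has $\db(\pi) > 0$ and the same analysis yields $\esper[\occ(\pi,\Si_n)] \to 0$. Invoking Theorem~\ref{thm:randompermutonthm} together with Definition~\ref{Def:PermutonBiaise} then concludes.

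The main obstacle is the bookkeeping of the singular analysis at $\rho$: because $T(\rho) = R_S$ now coincides with the singularity of $S$, every composition $\Occ_\theta \circ T$ is critical in the sense of the compositional lemma used throughout the paper (what the paper calls \texttt{lem:Comp}), whereas in Section~\ref{Sec:Standard} these compositions were harmlessly subcritical. The saving grace is that the constraint $\delta > 2$ ensures that the only two series entering the limiting parameter $p$, namely $\Occ_{12}$ and $\Occ_{21}$, are convergent at $R_S$, so the leading-order behavior is controlled purely by the square-root singularity of $T_\nonp$; all remaining compositions contribute strictly lower-order singularities that do not affect the limit. With this verification, the formal chain of identities from Section~\ref{Sec:Standard} transports verbatim.
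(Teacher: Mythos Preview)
Your proposal is correct and follows essentially the same route as the paper: both observe that under $(H3)$ with $\delta>2$ the series $T_\nonp$ keeps a square-root singularity at $\rho$ with $T_\nonp(\rho)=R_\Lambda$, so the machinery of Section~\ref{Sec:Standard} transports verbatim after replacing $\tau\mapsto R_\Lambda$, $\kappa\mapsto R_S$, and upgrading the relevant subcritical compositions to the critical case of Lemma~\ref{lem:Comp}. One small correction: your parenthetical ``which can only occur inside expanded trees of nonseparable patterns'' is false (the one-node tree of $123$ already has a degree-$3$ node), and Lemma~\ref{lem:Sing_Tt3} does not by itself give an exponent \emph{larger} than $-(|E|+1)/2$; what actually kills nonbinary $\patterntree$ is the short inequality \eqref{eq:exp_Case3b}, which shows the normalized exponent $e_{(\patterntree,V_s)}\le 0$ with equality only in the binary case.
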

    \begin{remark}
      While the limiting permuton in this case is independent of $\delta>2$ and is the same as in the standard case,
      the fine details of this convergence might be different.
      In particular, if $\pi$ is a nonseparable pattern,
      the order of magnitude of $\esper[\occ(\pi,\bm\si_n)]$ depends on $\delta$
      and is in general bigger than in the standard case
      -- compare \cref{eq:exp_Case3b} to \cref{prop:momentsnonseparables}.
    \end{remark}
	\begin{proof}
		Let $(\patterntree,V_s)$ be a decorated tree with $k$ leaves. Once again, applying \cref{thm:transfert} to $T$ and $T_{\patterntree,V_s}$ leads to
		\[\frac{[z^n]T_{\patterntree,V_s}}{\binom n k [z^n]T}=(\mathrm{cst}+o(1))n^{e_{(\patterntree,V_s)}},\]
		But in this case, 
		\begin{multline}
		e_{(\patterntree,V_s)}= \tfrac{1}{2} -k - \widetilde{e}_{(\patterntree,V_s)} =
		\tfrac{1}{2} +  - k + \tfrac{1}{2}(|E|+1) 
		+ \tfrac{1}{2}\sum_{v \in V_s; d_v > \delta} (d_v-\delta) \\
		\le \tfrac{1}{2}|E|+1  -k +  \tfrac{1}{2}\sum_{v \in \Internal{\patterntree}} (d_v-2)
		= |E|-|\Internal{\patterntree}|-k+1 =0.
		\label{eq:exp_Case3b}
		\end{multline}
		The above inequality is justified as follows:
$$
\sum_{v \in V_s; d_v > \delta} (d_v-\delta)\leq \sum_{v \in V_s; d_v > \delta} (d_v-2)\leq \sum_{v \in \Internal{\patterntree}} (d_v-2).
$$
The first inequality is an equality if and only if $d_v\leq \delta$ for all $v \in V_s$ (recall that $\delta >2$). 
In the second part, the equality case occurs when for all $v$ in $\Internal{\patterntree}$, either $d_v >\delta$ or $d_v=2$.
This implies that if $\patterntree$ is not binary, \cref{eq:exp_Case3b} is a strict inequality (regardless of $V_s$) and
		\begin{equation}
		\proba(\bm t^{(n)}_{\bm I}=t_0) =o(1).
		\label{eq:Cas7.2-nonbinaire}
		\end{equation}
		
		We can show that, up to replacing $\tau$ with $R_\Lambda$ and $\kappa$ with $R_S$, the estimates of the singular parts of $T$,$T_\nonp$,$T'$, $T^+$, \ldots, $T^-_{\nonm}$ in \cref{lem:MMCversionSerie,prop:DevMarkedleaves} still hold. Indeed, the proofs can be transposed verbatim up to replacing some calls to the standard case of
		\cref{lem:Comp} with the critical case.
		
		\cref{prop:Asymp_Tt} does not however hold in its generality anymore, because $\Occ_{\theta}$ is not necessarily convergent at $\tfrac{\tau}{1-\tau}$ for every $\theta$.  It is nonetheless convergent when $|\theta|=2$, since the singularity of $\Occ_{\theta}$ is in $\delta - 2$. This is enough to show that \cref{prop:Asymp_Tt} still holds for binary trees. Moreover nonbinary trees still disappear in the limit according to \cref{eq:Cas7.2-nonbinaire}. This allows us to conclude as in \cref{Sec:Standard}.
	\end{proof}

\appendix %
\section{Complex analysis toolbox} \label{sec:complex_analysis}

\subsection{Aperiodicity and Daffodil Lemma}
\label{ssec:aperiodicity}
To study the asymptotic behavior of combinatorial generating functions,
it is important to locate dominant singularities.
The following lemma is useful to this purpose.

Recall that a function $A$ analytic at $0$ is {\em aperiodic} if there do not
exist two integers $r\geq 0$ and $d\geq 2$ and a function $B$ analytic
at $0$ such that $A(z)=z^rB(z^d)$.

\begin{lemma}[Daffodil Lemma] \label{lem:daffodil}
Let $A$ be a generating function (with nonnegative coefficients) analytic in $|z|<R_A$.
If $A$ is aperiodic, then $|A(z)|<A(|z|) \le A(R_A)$ for $|z| \le R_A$ and $z \neq |z|$.
(The case $|z|=R_A$ can only be considered if $A(R_A)<\infty$.)
\end{lemma}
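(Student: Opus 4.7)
The plan is to separate the statement into the two inequalities and handle them independently. The weak inequality $A(|z|)\le A(R_A)$ for $|z|\le R_A$ is essentially trivial: since $A(r)=\sum_{n\ge 0} a_n r^n$ with $a_n\ge 0$, the function $r\mapsto A(r)$ is nondecreasing on $[0,R_A]$, so $A(|z|)\le A(R_A)$ whenever $|z|\le R_A$ (and the statement is only meaningful at $|z|=R_A$ when $A(R_A)<\infty$, as noted).

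For the other inequality, first I would write $z=re^{i\theta}$ with $r=|z|$ and $\theta\in(0,2\pi)$ (legal because $z\ne|z|$). From $A(z)=\sum_{n\ge 0}a_n r^n e^{in\theta}$ and the triangle inequality,
\[
|A(z)|\ \le\ \sum_{n\ge 0} a_n r^n\ =\ A(r)\ =\ A(|z|),
\]
which already gives the weak version. The heart of the argument is showing that the inequality is strict under the aperiodicity hypothesis.

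The key step is to analyze when equality holds in the triangle inequality applied to a series of complex numbers. Equality forces all the nonzero terms $a_n r^n e^{in\theta}$ (for $n$ in the support $N:=\{n:a_n>0\}$) to share a common argument. Hence, for any $m,n\in N$, one has $(m-n)\theta\in 2\pi\mathbb{Z}$. Let $r_0:=\min N$ and $d:=\gcd\{n-r_0:n\in N\}$. Then $N\subseteq r_0+d\mathbb{Z}_{\ge 0}$, so we can factor $A(z)=z^{r_0}B(z^d)$, where $B(w):=\sum_{k\ge 0}a_{r_0+kd}w^k$ is analytic at $0$. Two cases remain: if $d\ge 2$, this contradicts the aperiodicity of $A$; if $d=1$, then from $d\theta\in 2\pi\mathbb{Z}$ one gets $\theta\in 2\pi\mathbb{Z}$, contradicting $\theta\in(0,2\pi)$. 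In either case equality is impossible, giving $|A(z)|<A(|z|)$.

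The main (minor) obstacle is the bookkeeping at the equality case of the triangle inequality, specifically extracting the arithmetic-progression structure of the support and matching it with the precise definition of aperiodicity given in \cref{ssec:aperiodicity}. Everything else (monotonicity, triangle inequality, continuity at $R_A$ when $A(R_A)<\infty$) is routine.
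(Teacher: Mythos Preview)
Your proof is correct and is the standard argument (triangle inequality plus analysis of its equality case, which forces the support of $A$ into an arithmetic progression and hence contradicts aperiodicity). The paper does not actually give a proof of this lemma but simply cites \cite[Lemma IV.1]{Violet}; your write-up supplies exactly the classical justification found there, including the boundary case $|z|=R_A$ which the paper notes is handled the same way.
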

This lemma can be found in \cite[Lemma IV.1, p. 266]{Violet}.
Note that this reference does not consider the case of $z$ on the circle of convergence,  \emph{i.e.} $|z|=R_A$
(although this case is used later in the book,  \emph{e.g.} in the proof of Theorem VI.6, p.~405);
the proof of the lemma in this case is similar to $|z|<R_A$.
The complete statement of Daffodil Lemma in \cite{Violet}
also deals with cases where the function $A$ is periodic,
but we do not need these cases in our work.

\subsection{Transfer theorem}

We start by defining the notion of $\Delta$-domain. We use $\Arg(z)$ for the principal determination of the argument of $z$ in $\mathbb{C}\setminus \mathbb{R}^-$ taking its values in $(-\pi,\pi)$.
\begin{definition}[$\Delta$-domain and $\Delta$-neighborhood]\label{Def:DeltaDomaine}
A domain $\Delta$ is a {\em $\Delta$-domain at $1$} if there exist two real numbers  $R>1$ and $\pi/2<\phi <\pi$ such that
$$
\Delta=\{z \in \mathbb{C} \mid |z|<R,\, z\neq 1,
|\Arg(1-z)|<\phi\}.
$$ 
By extension, for a complex number $\rho \neq
0$, a domain is a {\em $\Delta$-domain at $\rho$} if it the image by
the mapping $z\rightarrow \rho z$ of a $\Delta$-domain at $1$.  A
{\em $\Delta$-neighborhood} of $\rho$ is the intersection of a
neighborhood of $\rho$ and a $\Delta$-domain at $\rho$.
\end{definition}
We will make use of the following family of $\Delta$-neighborhoods: for $\rho \neq 0 \in \mathbb C$, $0<r<|\rho|$, $\varphi >\pi/2$, set $\Delta(\varphi,r,\rho) = \{z\in \mathbb C, |\rho-z|<r, |\Arg(\rho - z)|<\varphi\}$.

When a function $A$ is analytic on a $\Delta$-domain at some $\rho$,
the asymptotic behavior of its coefficients is closely related to
the behavior of the function near the singularity $\rho$.
The following theorem is a corollary of \cite[Theorem VI.3 p.~390]{Violet}.

\begin{theorem}[Transfer Theorem]\label{thm:transfert}
  Let $A$ be a function analytic on a $\Delta$-domain $\Delta$ at $R_A$, $\delta$ be an arbitrary real number in $\mathbb{R} \setminus \mathbb{Z}_{\geq 0}$ 
  and $C_A$ a constant possibly equal to $0$.
  
  Suppose $A(z) = (C_A + o(1))(1-\tfrac{z}{R_A})^{\delta}$ when $z$ tends to $R_A$ in $\Delta$.
  Then the  coefficient of $z^n$ in $A$ satisfies 
$$[z^n]A(z) = (C_A +o(1)) \frac{1}{R_A^{n}} \ \frac{n^{-(\delta+1)}}{\Gamma(-\delta)}.$$
\end{theorem}

\subsection{Singular differentiation}
The next result is also useful to us.
\begin{theorem}[Singular differentiation]\label{thm:singular_diff}
  Let $A$ be an analytic function in a $\Delta$-neighborhood of $R_A$ with the following singular expansion near its singularity $R_A$ 
  $$A(z)=\sum_{j=0}^J C_j(R_A-z)^{\delta_j}+\O((R_A-z)^{\delta}),$$
  where $\delta_j, \delta \in \mathbb{C}$.

  Then, for each $k>0$, the $k$-th derivative $A^{(k)}$ is analytic in some $\Delta$-domain at $R_A$ and
  $$A^{(k)}(z)=(-1)^k \sum_{j=0}^J C_j \, 
  \delta_j (\delta_j-1) \cdots (\delta_j-k+1)\, (R_A-z)^{\delta_j-k}+\O((R_A-z)^{\delta-k}).$$
\end{theorem}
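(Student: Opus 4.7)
The plan is to reduce the statement by linearity to two ingredients: an exact computation on the explicit singular terms $(R_A-z)^{\delta_j}$, and a bound of the shape ``$k$ derivatives of a $\O((R_A-z)^{\delta})$ remainder is a $\O((R_A-z)^{\delta-k})$ remainder''. The first ingredient is trivial: direct differentiation of $(R_A-z)^{\delta_j}$ yields exactly $(-1)^k \delta_j(\delta_j-1)\cdots(\delta_j-k+1)(R_A-z)^{\delta_j-k}$, which is the main sum in the claimed expansion. The only real content of the theorem therefore lies in controlling the remainder, and this is the step I would isolate as the core of the proof.

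For the remainder, let $B$ be analytic in a $\Delta$-neighborhood $\Delta(\varphi,r,R_A)$ of $R_A$ and satisfy $|B(z)| \le M|R_A-z|^{\Re(\delta)}$ there. I will apply Cauchy's integral formula
\[
B^{(k)}(z) = \frac{k!}{2\pi i} \oint_{\gamma_z} \frac{B(\zeta)}{(\zeta-z)^{k+1}}\, d\zeta,
\]
on a circle $\gamma_z$ of radius $\eta |R_A - z|$ centered at $z$. The geometric heart of the argument is to choose $\eta \in (0,1)$ small enough (depending only on $\varphi$) so that for every $z$ in a slightly narrower $\Delta$-subdomain $\Delta(\varphi',r',R_A)$, the disk of radius $\eta|R_A-z|$ around $z$ is entirely contained in $\Delta(\varphi,r,R_A)$, and moreover satisfies $|R_A-\zeta| \le (1+\eta)|R_A-z|$ for all $\zeta \in \gamma_z$. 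Both conditions are easily checked by elementary plane geometry in the $\Delta$-domain (they are the standard ``cone-inside-cone'' constructions).

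Granted this geometric step, estimating trivially the integrand gives
\[
|B^{(k)}(z)| \le \frac{k!}{2\pi}\cdot \frac{M(1+\eta)^{\Re(\delta)}|R_A-z|^{\Re(\delta)}}{(\eta|R_A-z|)^{k+1}}\cdot 2\pi\eta|R_A-z| = \O\!\left(|R_A-z|^{\Re(\delta)-k}\right),
\]
which is exactly the desired bound. The analyticity of $A^{(k)}$ on the narrower $\Delta$-subdomain is automatic since $A$ is analytic on $\Delta(\varphi,r,R_A)$.

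Combining the explicit differentiation of the finite sum $\sum_{j=0}^J C_j(R_A-z)^{\delta_j}$ with the Cauchy bound applied to $B(z) := A(z) - \sum_{j=0}^J C_j(R_A-z)^{\delta_j}$ yields the announced expansion. The only delicate step is the geometric construction of $\gamma_z$, which I expect to be the main technical obstacle; everything else is either the computation of a derivative of a power or the standard $ML$-inequality applied to Cauchy's formula. This proof is classical (see Flajolet--Sedgewick, \emph{Analytic Combinatorics}, Theorem VI.8), and we would simply record it for completeness.
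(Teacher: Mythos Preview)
Your proposal is correct and follows exactly the standard approach: the paper does not give its own proof but simply refers to Flajolet--Sedgewick, Theorem~VI.8, which is precisely the Cauchy-integral argument on circles of radius $\eta|R_A-z|$ that you sketch. The only remark worth making is that the conclusion should read ``analytic in some $\Delta$-\emph{neighborhood}'' rather than ``$\Delta$-domain'', since the hypothesis only gives analyticity on a $\Delta$-neighborhood; the paper itself notes this discrepancy when citing the reference.
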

We refer the reader to  \cite[Theorem VI.8 p. 419]{Violet} for a proof of this theorem (this reference considers functions defined on a $\Delta$-domain, but the proof still works with functions defined on a $\Delta$-neighborhood).

\subsection{Exponents of dominant singularity}\label{sec:singularity}
In this section, we introduce some compact terminology and easy lemmas to keep track of the exponent $\delta$ of the singularities
and of the shape of the domain of analycity without computing the functions explicitly.

Recall that the radius of convergence $R_A$ of an analytic function $A$ is the modulus of the singularities closest to the origin, called \textit{dominant singularities}. Recall also that for series with positive real coefficients, by Pringsheim's theorem \cite[Th. IV.6 p. 240]{Violet}, $R_A$ is necessarily a dominant singularity. This justifies the following definition:

Let $\delta$ be a real, which is not an integer. 
We say that 
a series $A$ with radius of convergence $R_A$ \emph{has a dominant singularity of exponent $\delta$ in $R_A$}
(resp. \emph{of exponent at least $\delta$})
if $A$ has an analytic continuation on a $\Delta$-neighborhood $\Delta_A$
of $R_A$ and, on $\Delta_A$, we have
  \begin{equation}
A(z)= g_A(z) + (C_A +o(1)) \, (R_A - z)^\delta,
\label{eq:def_Exp}
\end{equation}
where $g_A(z)$ is an analytic function on a neighbourhood of $R_A$ (called the {\em analytic part}), and $C_A$ a nonzero constant 
(resp. any constant); $(C_A +o(1)) \, (R_A - z)^\delta$ is sometimes referred to as the {\em singular part}.

If furthermore, $A$ has no other singularity on the disk of convergence,
we say that it has a {\em unique} dominant singularity of exponent $\delta$ 
(resp. at least $\delta$) in $R_A$.
Since we assumed that $A$ has an analytic continuation on a $\Delta$-neighborhood $\Delta_A$
of $R_A$, by a standard compactness argument,
this is equivalent to say that $A$ can be extended to a $\Delta$-domain in $R_A$.

We make the following observation. 
According to the value of $\delta$, 
we may move (part of) $g_A(z)$ in the error term
and write \cref{eq:def_Exp} in a simpler form, still on a $\Delta$-neighborhood of $R_A$. 
\begin{itemize}
 \item For $\delta<0$, $g_A(z) = o((R_A - z)^\delta)$ so $A(z)= (C_A +o(1)) \, (R_A - z)^\delta$.
 \item For $0<\delta<1$, considering the constant term is the Taylor series expansion of $g_A(z)$ we find that
   $A(z)= A(R_A) + (C_A +o(1)) \, (R_A - z)^\delta$.
 \item Similarly, for $\delta>1$, we obtain 
   \[A(z)= A(R_A) + A'(R_A) (z-R_A) + \dots + (C_A +o(1)) \, (R_A - z)^\delta,\]
   in which the third dominant term (after the constant and the linear term)
   depends on how $\delta$ compares with $2$.
   But in each case, we have
\begin{equation}
    A(z)= A(R_A) + A'(R_A) (z-R_A) +\O( (R_A - z)^{\delta_*}),
    \label{eq:ErrorDeltaStar}
\end{equation}
where $\delta_*=\min(\delta,2)$.
\end{itemize}
  
We now record a few easy lemmas to manipulate these notions.
First consider the stability by product.
\begin{lemma}
  Let $F$ and $G$ be series with nonnegative coefficients
  and the same radius of convergence  $R=R_F=R_G \in (0,\infty)$.
  Assume they have each a dominant singularity of exponent $\delta_F$
  and $\delta_G$ respectively in $R$.
  Then $F \cdot G$ has a dominant singularity in $R$ of exponent $\delta$ defined by
  \begin{itemize}
    \item $\delta=\delta_F + \delta_G$ if both $\delta_F$ and $\delta_G$ are negative;
    \item $\delta=\min(\delta_F,\delta_G)$ otherwise.
  \end{itemize}
  \label{lem:MultExp}
  Moreover, if both $F$ and $G$ have unique dominant singularities, so has $F \cdot G$.
\end{lemma}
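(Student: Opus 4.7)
The plan is to expand the product $FG$ directly from the singular expansions of $F$ and $G$ about $R$, then read off the dominant exponent by comparing powers of $(R-z)$. First I will fix a common $\Delta$-neighborhood of $R$ on which both expansions hold (obtained as the intersection of those provided for $F$ and $G$), and write
\[
F(z) = g_F(z) + (C_F + o(1))(R-z)^{\delta_F}, \qquad G(z) = g_G(z) + (C_G + o(1))(R-z)^{\delta_G},
\]
with $g_F, g_G$ analytic at $R$ and $C_F, C_G \neq 0$. Distributing the product yields
\begin{align*}
F(z)G(z) &= g_F(z)g_G(z) \\
 &\quad+ g_G(z)(C_F + o(1))(R-z)^{\delta_F} + g_F(z)(C_G + o(1))(R-z)^{\delta_G} \\
 &\quad+ (C_F C_G + o(1))(R-z)^{\delta_F+\delta_G}.
\end{align*}
The first summand is analytic at $R$ and plays the role of the analytic part of the expansion of $FG$; the remaining three are singular candidates.

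The bulk of the argument is a case analysis based on the simple observation that $(R-z)^{\delta_F+\delta_G}=o((R-z)^{\delta_F})$ if and only if $\delta_G>0$, and symmetrically. If both $\delta_F$ and $\delta_G$ are negative, the product term $(R-z)^{\delta_F+\delta_G}$ is strictly the most singular, with leading constant $C_F C_G\neq 0$; this gives exponent $\delta_F+\delta_G$. In the complementary case (at least one exponent nonnegative), the product term is dominated by at least one of the two individual singular contributions, and the dominant exponent is $\min(\delta_F,\delta_G)$.

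The main obstacle is verifying that the leading constant is nonzero in the "otherwise" case, so that the exponent is exactly $\min(\delta_F,\delta_G)$ and not strictly larger. If the minimum is attained strictly, say $\delta_F<\delta_G$, the constant is $g_G(R)\,C_F$; I will use that $G$ has nonnegative coefficients with $G(R)=g_G(R)>0$ when $\delta_G>0$ (or even that $g_G(R)$ needs only be checked nonzero, using $\delta_G>\delta_F$) to conclude this is nonzero. The delicate subcase is $\delta_F=\delta_G>0$, where the constant is $g_F(R)C_G+g_G(R)C_F$; here I plan to apply the Transfer Theorem \cref{thm:transfert} to $F$ and $G$ at their common exponent $\delta_F$: nonnegativity of the coefficients forces both $C_F$ and $C_G$ to share the sign of $1/\Gamma(-\delta_F)$, so the two summands add rather than cancel and the constant is again nonzero. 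Finally, the uniqueness assertion is immediate: if $F$ and $G$ each extend analytically to a $\Delta$-domain at $R$, then so does $FG$, since the intersection of two $\Delta$-domains at $R$ is again a $\Delta$-domain at $R$.
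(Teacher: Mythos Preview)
Your proof is correct and follows essentially the same approach as the paper: multiply the two singular expansions on a common $\Delta$-neighborhood and identify the dominant term, using that the analytic part at $R$ is positive when the exponent is positive (nonnegative coefficients). Your treatment is in fact more careful than the paper's sketch, in particular your use of the Transfer Theorem to rule out cancellation in the subcase $\delta_F=\delta_G>0$, which the paper leaves implicit.
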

\begin{proof}
  The proof is easy.
  The analytic function $F \cdot G$ can be extended to the intersection
  of the domain of $F$ and $G$.
  The exponent of the singular expansion around $R$ is obtained
  by multiplying singular expansion of $F$ and $G$: note that, if $\delta_F$ is negative,
  the series $F$ is divergent and the singular part is the dominant part around $R$.
  On the opposite, when $\delta_F$ is positive, the dominant part of the expansion is the value $F(R)$
  of the analytic part at point $R$, which is always positive, since the series has nonnegative coefficients.
  The same holds of course for $G$, which explains the case distinction in the lemma.
\end{proof}

We now consider the composition $F \circ G$.
We should differentiate cases where $G(R_G)>R_F$, $G(R_G)<R_F$ or $G(R_G)=R_F$
(called sometimes supercritical, subcritical and critical cases \cite[Sec.VI.9]{Violet}).

\begin{lemma}[Dominant singularity of $F\circ G$]\label{lem:Comp}
Let $F$ and $G$ be series with nonnegative coefficients with 
radii of convergence $R_F,R_G$ in $(0,\infty)$.\\
\noindent{\bf Supercritical case: } Assume that $G(0)<R_F<G(R_G)$ ($G(R_G)$ may be finite or infinite).\\
Call $\rho < R_G$ the unique positive number with $G(\rho)=R_F$.

We assume that $F$ has a dominant singularity of exponent $\delta_F$ in $R_F$.
Then:
\begin{enumerate}
\item $F \circ G$ has also a dominant singularity of exponent $\delta_F$ in $\rho$.
\item Moreover, if $G$ is aperiodic, then the dominant singularity of $F \circ G$ is unique.
\end{enumerate}

\noindent{\bf Subcritical case: } Assume that $G(R_G)<R_F$.\\ %
We assume that $G$ has a dominant singularity of exponent $\delta_G$ in $R_G$. Then:
\begin{enumerate}
  \item $F \circ G$ has also a dominant singularity of exponent $\delta_G$ in $R_G$.
  \item Moreover, if the dominant singularity of $G$ is unique, 
  then the dominant singularity of $F \circ G$ is unique.
\end{enumerate}

\noindent{\bf Critical case-A: } Assume that $G(R_G)=R_F$.\\ %
We assume that $F$ and $G$ both have a dominant singularity of respective exponents $\delta_F$ and $\delta_G$.
Suppose furthermore \bm{$\delta_G>1$}.
Then:
\begin{enumerate}
  \item $F \circ G$ has also a dominant singularity of exponent $\min(\delta_G,\delta_F)$ in $R_G$.
  \item Moreover, if $G$ is aperiodic,  then the dominant singularity of $F \circ G$ is unique.
\end{enumerate}

\noindent{\bf Critical case-B:}
Assume again that $G(R_G)=R_F$.
We assume that $F$ and $G$ both have a dominant singularity of respective exponents $\delta_F$ and $\delta_G$.
Suppose furthermore \bm{$\delta_G \in (0,1)$}.
Then:
\begin{enumerate}
  \item $F\circ G$ has a dominant singularity of exponent $\min(\delta_F,1)\delta_G$ in $R_G$. 
  \item Moreover, if $G$ is aperiodic, then the singularity is unique.
\end{enumerate}
\end{lemma}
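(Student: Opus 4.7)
The plan is to treat each of the four cases separately, following a common strategy: combine the singular expansion of $F$ (or $G$, or both) with a local expansion of the other factor to read off the expansion of $F\circ G$, then use the Daffodil Lemma (\cref{lem:daffodil}) together with a $\Delta$-domain argument to ensure both uniqueness of the dominant singularity and analyticity on a full $\Delta$-neighborhood.

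In the supercritical case, monotonicity and continuity of $G$ on $[0,R_G)$ yield a unique $\rho \in (0,R_G)$ with $G(\rho)=R_F$, at which $G$ is analytic with $G'(\rho)>0$ (since $G$ has nonnegative coefficients and is nonconstant). Locally $R_F-G(z)=G'(\rho)(\rho-z)(1+o(1))$, so substituting into $F(w)=g_F(w)+(C_F+o(1))(R_F-w)^{\delta_F}$ yields $F(G(z))=g_F(G(z))+(C_F(G'(\rho))^{\delta_F}+o(1))(\rho-z)^{\delta_F}$, with the first summand analytic at $\rho$. For uniqueness, aperiodicity of $G$ and \cref{lem:daffodil} give $|G(z)|<R_F$ for $|z|\leq \rho$, $z\neq \rho$, so $F\circ G$ extends analytically past these points; conformality of $G$ at $\rho$ sends a $\Delta$-neighborhood of $\rho$ into a $\Delta$-neighborhood of $R_F$ where $F$ is $\Delta$-analytic. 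The subcritical case is easier: since $G(R_G)<R_F$, continuity of $G$ keeps a $\Delta$-neighborhood of $R_G$ inside the disk of convergence of $F$, and Taylor-expanding $F$ at $G(R_G)$ while substituting $G(z)=g_G(z)+(C_G+o(1))(R_G-z)^{\delta_G}$ gives $F(G(z))=$ analytic $+(F'(G(R_G))C_G+o(1))(R_G-z)^{\delta_G}$, with $F'(G(R_G))>0$ whenever $F$ is nonconstant; uniqueness is inherited directly from that of $G$.

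Both critical cases share the setup $G(R_G)=R_F$ and require weighing two competing singular contributions. In case-A ($\delta_G>1$), $G'(R_G)$ is finite and positive, so $R_F-G(z)=G'(R_G)(R_G-z)(1+o(1))$; expanding $g_F$ around $R_F$ through the expansion of $G$ contributes a term of exponent $\delta_G$ (from the singular part of $G$, with constant $-g_F'(R_F)C_G$), while $(R_F-G(z))^{\delta_F}$ contributes a term of exponent $\delta_F$ (with constant $C_F(G'(R_G))^{\delta_F}$), giving overall exponent $\min(\delta_F,\delta_G)$. In case-B ($\delta_G\in(0,1)$), the derivative of $G$ is infinite at $R_G$ and the dominant behavior is $R_F-G(z)\sim -C_G(R_G-z)^{\delta_G}$; thus $(R_F-G(z))^{\delta_F}$ has exponent $\delta_F\delta_G$ while $g_F(G(z))$ still has exponent $\delta_G$, yielding $\min(\delta_F,1)\delta_G$. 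In both critical cases uniqueness of the singularity again comes from \cref{lem:daffodil}: aperiodicity of $G$ forces $|G(z)|<R_F$ strictly away from $R_G$ on the circle $|z|=R_G$, so $F\circ G$ extends analytically past those points.

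The hard part will be the uniform $\Delta$-domain extension in the critical cases, especially case-B: one must check that the local map $z\mapsto G(z)$ sends a narrow wedge at $R_G$ into a wedge at $R_F$ on which $F$ is $\Delta$-analytic. In case-A this is routine since $G$ is locally conformal at $R_G$; in case-B the argument map $\arg(R_G-z)\mapsto \arg(R_F-G(z))$ rescales by $\delta_G<1$, which still preserves the $\Delta$-shape provided we shrink the wedge appropriately. Once this geometric control is in place, the singular expansions above immediately deliver the claimed exponents, and the vanishing of $F\circ G$'s singularities elsewhere on the circle of convergence follows from the Daffodil estimates.
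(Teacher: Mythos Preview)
Your proposal is correct and follows essentially the same approach as the paper's proof: in each case you substitute the relevant singular expansion into the other factor, read off the resulting exponent, and invoke the Daffodil Lemma plus local conformality (or the $\delta_G$-rescaling of the wedge in case-B) for the $\Delta$-domain extension and uniqueness. The only small point you gloss over that the paper makes explicit is that in Critical case-A, when $\delta_F=\delta_G$, the two singular contributions cannot cancel because their leading constants have the same sign.
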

\begin{proof}
\noindent{\bf Supercritical case: }
  It is clear that $F \circ G$ is analytic around any $r \in [0,\rho)$
  and has nonnegative coefficients,
  hence it has radius of convergence at least $\rho$.

  To show that $F \circ G$ is defined in a $\Delta$-neighborhood $\Delta$ of $\rho$,
  we show that $G(\Delta)$ is included in $\Delta_F$.
  This follows easily from the fact that $G$ is analytic in $\rho$ 
  and has a derivative $G'(\rho)$ which is a positive real number.

When $z$ is close to $\rho$, plugging $G(z)$ in the expansion \eqref{eq:def_Exp} of $F$ we obtain
\begin{equation}\label{eq:FGdef_Exp}
F(G(z))=g_F(G(z))+ (C_F+o(1))(R_F-G(z))^{\delta_F}.
\end{equation}
The first term $g_F(G(z))$ is analytic at $\rho$. Since $G(\rho)=R_F$ and $G$ is differentiable at $\rho$ we have
\begin{equation}\label{eq:TaylorG}
R_F-G(z)=(G'(\rho)+o(1))(\rho-z).
\end{equation}
Combining these two expansions yields
\begin{equation}\label{eq:TaylorFG}
F(G(z))=g_F(G(z))+ (C_FG'(\rho)^{\delta_F}+o(1))(\rho-z)^{\delta_F},
\end{equation}
which proves i).

  Item ii) is also easy. In the case where we assume $G$ aperiodic,
  we need \cref{lem:daffodil},
  which ensures that $|G(\zeta)|<R_F$ for $|\zeta| \leq |\rho|$, $\zeta \neq \rho$.
\medskip

\noindent{\bf Subcritical case.} 
Most arguments are similar to the ones of the supercritical case. 
Therefore we only explain the differences in the singular expansion of $F(G(z))$.
Using the singular expansion \eqref{eq:def_Exp} of $G$, we have
\[F(G(z))=F\big[g_G(z)+(C_G+o(1))(R_G-z)^{\delta_G}\big].\]
Since $G(R_G)<R_F<+\infty$, the exponent $\delta_G$ is positive and the term $(R_G-z)^{\delta_G}$
tends to $0$ at $R_G$. Both $G(z)$ and $g_G(z)$ tend to $G(R_G)$ as $z \to R_G$,
so that, by standard calculus arguments, we have
\begin{align}
  F(G(z))&=F(g_G(z))+F'(G(R_G)) (C_G+o(1))(R_G-z)^{\delta_G} +o\big((R_G-z)^{\delta_G}\big)\nonumber\\
  &=F(g_G(z))+\big(C_G\, F'(G(R_G)) +o(1) \big) (R_G-z)^{\delta_G}. \label{eq:Exp_FG_SubCritical}
\end{align}
Since $F$ and $g_G$ are analytic at $G(R_G)$ and $R_G$ respectively,
this expansion is of the desired form.
\medskip

\noindent{\bf Critical case-A.}
As above, we focus on the expansion of $F(G(z))$.
Since $\delta_G >1$, $G$ is differentiable at $\rho=R_G$ and \cref{eq:TaylorFG} still holds.
The difference is that $g_F(G(z))$ is not analytic anymore. Namely, when $z$ is close to $\rho$,
\begin{equation}\label{eq:Expansion_gFG}
g_F(G(z))=g_F(g_G(z))+g_F'\left(g_G(R_G)\right)(C_G+o(1))(R_G-z)^{\delta_G}.
\end{equation}
Then
$$
F(G(z))=g_F(g_G(z))+ \big(g_F'\left(g_G(R_G)\right)C_G+o(1)\big)(R_G-z)^{\delta_G}+ (C_FG'(\rho)^{\delta_F}+o(1))(\rho-z)^{\delta_F}.
$$
Since $g_F(g_G(z))$ is analytic at $\rho$, the  exponent of the dominant singularity of $F\circ G$ is $\min(\delta_F,\delta_G)$. Note that the singular terms cannot cancel each other since when $\delta_F=\delta_G$ the constants have the same sign.
\medskip

\noindent{\bf Critical case-B.}
Again, we focus on the singular expansion of $F(G(z))$. Now, since $\delta_G <1$, $G$ is not differentiable at $\rho=R_G$. Instead of \eqref{eq:TaylorG} we have
$$
R_F-G(z)=-(C_G+o(1))(\rho-z)^{\delta_G}.
$$
Eq. \eqref{eq:TaylorFG} becomes
$$
F(G(z))=g_F(G(z))+ (C_F(-C_G)^{\delta_F}+o(1))(\rho-z)^{\delta_F\delta_G}.
$$
(In this case, $C_G$ must be negative, as can be observed by writing the transfer theorem for the coefficients of $G$ which are non-negative by assumption.) 
As for $g_F(G(z))$, \eqref{eq:Expansion_gFG} still holds. We obtain
$$
F(G(z))=g_F(g_G(z))+ \big(g_F'\left(g_G(R_G)\right)C_G+o(1)\big)(R_G-z)^{\delta_G}+(C_F(-C_G)^{\delta_F}+o(1))(\rho-z)^{\delta_F\delta_G}.
$$
We conclude that the exponent of the dominant singularity is $\min(\delta_F,1)\delta_G$.
\end{proof}

We note that the above proof also yields the constant $C_{F \circ G}$ 
appearing in the singular expansion of $F \circ G$. 
The two following particular cases were used in \cref{sec:critical}, p. \pageref{eq:CLa_CT}
(in particular, we assume in the discussion below that hypothesis $(H3)$ holds).
\begin{itemize}
  \item We take $F(z)=S(z)$ and $G(u)=\frac{u}{1-u}$. The composition is $F\circ G(u)=\Lambda(u)-\tfrac{u^2}{1-u}$,
    and $\tfrac{u^2}{1-u}$ is analytic at $R_\Lambda<1$.
    Since $\frac{u}{1-u}$ diverges at its singularity and $S$ has a finite radius of convergence,
    the composition is supercritical.
    Extracting the constant from \eqref{eq:TaylorFG}, we get
    \begin{equation}
        \label{eq:CLa}
        C_{\Lambda}=C_{F \circ G}= C_F\, G'(R_{F \circ G})^{\delta_F} = \frac{C_S}{(1-R_\Lambda)^{2 \delta}},
    \end{equation}
    since in this case $C_F=C_S$, $\delta_F=\delta$ and $R_{F \circ G}=R_\Lambda$.
  \item We take $F(u)=\frac{u}{1-u}$ and $G(z)=T_{\nonp}(z)$. The composition is 
    $T(z)=\frac{T_{\nonp}(z)}{1-T_{\nonp}(z)}$ (\cref{Prop:systeme1}).
    Since $G(R_G)=T_{\nonp}(\rho)=R_\Lambda<1=R_F$, (here we use Hypothesis (H3) and $\rho$ is defined in \cref{lem:Inversion3}) the composition is subcritical.
    Extracting the constant from \eqref{eq:Exp_FG_SubCritical},
    we get 
    \begin{equation}
      C_T=C_G\, F'(G(R_G))= C_{T_\nonp}\, F'(T_{\nonp}(\rho))= \frac{C_{T_\nonp} }{(1-R_\Lambda)^2}.
      \label{eq:CT}
    \end{equation}
\end{itemize}
\bigskip

Finally, we state the following result, which follows from
\cref{thm:singular_diff}.

\begin{lemma}[Singular differentiation]
If $F$ has a (unique) dominant singularity of exponent (at least) $\delta$ in $\rho$,
then its $k$-th derivative $F^{(k)}$ has a (unique) dominant singularity of exponent (at least) $\delta-k$ in $\rho$.
\label{lem:singular_diff}
\end{lemma}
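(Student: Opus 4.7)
The plan is to derive this lemma directly from the singular differentiation theorem (Theorem~\ref{thm:singular_diff}), which does almost all of the work. Starting from the assumed expansion
\[
F(z) = g_F(z) + (C_F + o(1))(R_F - z)^\delta,
\]
valid in a $\Delta$-neighborhood of $R_F$, I would apply Theorem~\ref{thm:singular_diff} with $J=0$, $\delta_0 = \delta$, and $C_0 = C_F$ (the analytic summand $g_F$ contributes terms that can be absorbed into finitely many $C_j(R_F - z)^{\delta_j}$ with $\delta_j \in \mathbb{Z}_{\ge 0}$, or equivalently expanded as a Taylor polynomial with controlled remainder). This yields, in some $\Delta$-domain at $R_F$,
\[
F^{(k)}(z) = g_F^{(k)}(z) + \bigl((-1)^k C_F \,\delta(\delta-1)\cdots(\delta-k+1) + o(1)\bigr)(R_F - z)^{\delta - k}.
\]

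Next I would verify that this is the required form. The function $g_F^{(k)}$ is analytic on a neighborhood of $R_F$ since $g_F$ is, and plays the role of the analytic part of $F^{(k)}$. The new singular constant is
\[
C_{F^{(k)}} \;=\; (-1)^k C_F \,\delta(\delta-1)\cdots(\delta-k+1).
\]
Because $\delta$ is assumed to be noninteger (as required by the very definition of having a dominant singularity of exponent $\delta$ in Section~\ref{sec:singularity}), none of the factors $\delta - j$ for $j = 0,\ldots,k-1$ vanishes, so $C_{F^{(k)}} \neq 0$ whenever $C_F \neq 0$. This handles the statement with exact exponent $\delta$; for the ``at least'' version, the same computation applies with $C_F$ possibly zero, giving a (possibly zero) constant in front of $(R_F - z)^{\delta - k}$, which is precisely the ``at least $\delta - k$'' case.

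Finally, for the uniqueness statement: by assumption $F$ admits an analytic continuation to a $\Delta$-domain $\Delta_F$ at $R_F$ (this is what uniqueness of the dominant singularity on the disk of convergence combined with the required $\Delta$-neighborhood expansion gives, as noted in Section~\ref{sec:singularity}). Since differentiation preserves analyticity on any open set, $F^{(k)}$ is analytic on $\Delta_F$ as well, so it has no other singularity on $|z| \le R_F$ than $R_F$ itself. Thus $F^{(k)}$ has a unique dominant singularity in $R_F$. There is no real obstacle here: the statement is essentially a bookkeeping consequence of Theorem~\ref{thm:singular_diff}, the only subtle point being to note that the noninteger assumption on $\delta$ guarantees that the derivative constant does not accidentally vanish.
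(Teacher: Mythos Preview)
Your proposal is correct and follows exactly the paper's approach: the paper simply states that this lemma ``follows from \cref{thm:singular_diff}'' without further detail, and you have spelled out precisely how (including the observation that the falling factorial $\delta(\delta-1)\cdots(\delta-k+1)$ does not vanish because $\delta\notin\mathbb{Z}$, and that analyticity on a $\Delta$-domain is preserved under differentiation).
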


\subsection{An analytic implicit function theorem}
The following theorem allows to locate the dominant singularity of
series defined by an implicit equation.
\begin{lemma}[Analytic Implicit Functions]\label{lem:implicit}
  Let $F(z,w)$ be a bivariate function analytic at $(z_0, w_0)$, we denote $F_w=\tfrac{\partial F}{\partial w}$.
 If  $F(z_0, w_0) = 0$ and $F_w(z_0, w_0) \neq 0$, then there exists a unique function $\phi(z)$ analytic in a neighbourhood of $z_0$ such that $\phi(z_0) = w_0$ and $F(z, \phi(z)) = 0$.
\end{lemma}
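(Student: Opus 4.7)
The plan is to prove this via a contour-integral representation combined with the argument principle, which is the standard analytic approach yielding analyticity directly (rather than, say, constructing $\phi$ first by a real-variable fixed-point argument and then upgrading regularity).

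First, I would fix the local setting. By assumption, $F$ is analytic on some polydisc $D(z_0,R) \times D(w_0,S)$. Since $F(z_0,w_0)=0$ and $F_w(z_0,w_0)\neq 0$, the holomorphic function $w \mapsto F(z_0,w)$ has an isolated simple zero at $w_0$. Choose $r \in (0,S)$ so small that $F(z_0,w) \neq 0$ for all $w$ in the closed disc $\overline{D(w_0,r)} \setminus \{w_0\}$; in particular $|F(z_0,w)| \geq m > 0$ on the circle $\gamma = \partial D(w_0,r)$. By uniform continuity of $F$ on compact subsets of the polydisc, there exists $\eta \in (0,R)$ such that $|F(z,w) - F(z_0,w)| < m$ for all $(z,w) \in D(z_0,\eta) \times \gamma$. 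In particular $F(z,w) \neq 0$ on $D(z_0,\eta) \times \gamma$.

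Second, I would apply the argument principle (or Rouché's theorem) for each fixed $z \in D(z_0,\eta)$: the function $w \mapsto F(z,w)$ has exactly one zero, counted with multiplicity, in $D(w_0,r)$, namely the unique zero of $w \mapsto F(z_0,w)$ continuously deformed. Call this zero $\phi(z)$; this defines $\phi : D(z_0,\eta) \to D(w_0,r)$ with $\phi(z_0) = w_0$ and $F(z,\phi(z)) = 0$. Uniqueness of $\phi$ with value in $D(w_0,r)$ is built into the construction, and uniqueness among all analytic functions near $z_0$ taking value $w_0$ at $z_0$ follows because such a function must map a neighborhood of $z_0$ into $D(w_0,r)$.

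Third, I would establish analyticity of $\phi$ via the residue formula
\[
\phi(z) \;=\; \frac{1}{2\pi i}\oint_{\gamma} w\,\frac{F_w(z,w)}{F(z,w)}\,dw.
\]
Since $F(z,w) \neq 0$ on $D(z_0,\eta) \times \gamma$, the integrand is jointly continuous in $(z,w)$ and holomorphic in $z$ for each fixed $w \in \gamma$. By a standard theorem on parameter integrals (or Morera's theorem combined with Fubini), the right-hand side is holomorphic in $z$ on $D(z_0,\eta)$. This concludes the proof.

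The only mildly subtle point is the application of the argument principle to confirm that the unique root in $D(w_0,r)$ is picked out correctly; this is where the careful choice of $r$ (isolating $w_0$ as a simple zero of $F(z_0,\cdot)$) and $\eta$ (ensuring nonvanishing on $D(z_0,\eta)\times \gamma$) matters. Everything else is routine complex analysis.
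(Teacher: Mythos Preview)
Your proof is correct and is in fact the classical contour-integral/argument-principle proof of the analytic implicit function theorem. The paper itself does not give a proof of this lemma: it simply refers the reader to \cite[Lemma VII.2, p.~469]{Violet}, so there is nothing substantive to compare.
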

We refer the reader to  \cite[Lemma VII.2, p. 469]{Violet} for a proof of this result.
\subsection{Proof of \cref{lem:Analyse_Tnonp2}}
\label{sec:proof_Inv2}
  Let $R_{\nonp}$ be the radius of convergence of $T_{\nonp}$.
  If $T_{\nonp}(R_{\nonp}) \ge R_\Lambda$, then 
  by intermediate value theorem, we know that there exists $\rho$
  as in the lemma.

  We will prove this by contradiction.
 Assume $T_{\nonp}(R_{\nonp}) < R_\Lambda$.
 We apply \cref{lem:implicit}.
  The bivariate function we consider is $(z,w) \mapsto z-w+\Lambda(w)$.
  It vanishes at $(R_{\nonp},T_{\nonp}(R_{\nonp}))$ and the derivative
  with respect to $w$ at that point is nonzero since
  \[ \Lambda'(T_{\nonp}(R_{\nonp})) < \Lambda'(R_\Lambda)<1.\]
(Recall that this last inequality is equivalent to \eqref{eq:Hyp_Sp} in Hypothesis (H2).)\\
  Therefore, $T_{\nonp}$
  has an analytic continuation on a neighborhood of $R_{\nonp}$.
  Since it has positive coefficients, by Pringsheim's theorem \cite[Th. IV.6 p. 240]{Violet}, this is in contradiction
  with the fact that $R_{\nonp}$ is the radius of convergence of $T_{\nonp}$.

  We have therefore proved that there exists $\rho\leq R_{\nonp}$
  such that $T_{\nonp}(\rho)=R_\Lambda$. Note that it implies the relation $\rho = R_\Lambda - \Lambda(R_\Lambda)$.
\bigskip

  We now consider $T_{\nonp}$ around $z=\rho$.
  Equation \eqref{eq:Tnonp2} defining $T_{\nonp}(z)$ can be rewritten as 
  $T_{\nonp}(z)=G(z,T_{\nonp}(z))$, where
  \[G(z,w)= w+\frac{1}{1-\Lambda'(R_\Lambda)}(z-w+\Lambda(w)).  \]
  Since $\Lambda$ has a dominant singularity of exponent $\delta>1$ in $R_\Lambda$,
  Equation \eqref{eq:ErrorDeltaStar}, together with elementary computations, yield the following:
  for $w$ in a $\Delta$-neighborhood $D_\Lambda$ of $R_\Lambda$,
  \begin{equation}
    G(z,w)= R_\Lambda +\frac{ z-\rho}{1-\Lambda'(R_\Lambda)} + \O( (R_\Lambda-w)^{\delta_*}).
    \label{eq:Tech1}
  \end{equation}
  We now use Picard's method of successive approximants to show the existence and analycity of $T_{\nonp}$
  on a $\Delta$-neighborhood $D_T$ of $\rho$.
  We refer to \cite[Appendix B.5 p. 753]{Violet} for a synthetic description of the method in the case 
  where $\Lambda$ is analytic in $R_\Lambda$; we have to adapt it carefully to our setting.

  Define $\phi_0(z)=R_\Lambda$ and $\phi_{j+1}(z)=G(z,\phi_j(z))$ whenever $\phi_j(z)$ is in $D_\Lambda$.
  We have $\phi_1(z)-\phi_0(z)=\tfrac{z-\rho}{1-\Lambda'(R_\Lambda)}$.
  Also, \cref{thm:singular_diff} of singular differentiation %
  applied to \cref{eq:Tech1}
  implies that
  \[\tfrac{\partial  G(z,w)}{\partial w}= \O( (R_\Lambda-w)^{\delta_*-1}).\]
  Therefore\footnote{There is a slight subtlety here: we would like to apply the classical inequality
  $|f(w)-f(w')| \le \|f'\|_{\infty} |w - w'|$, but this is not possible since the domain $D_\Lambda$ is
  not convex. Note however that a $\Delta$-neighborhood $D$ is always \emph{a quasi-convex set}, in the sense that
  we can always find a path between $w$ and $w'$ whose length is smaller than $K|w-w'|$,
  where $K$ depends on the angle defining $D$ but not on $w$ and $w'$. Therefore the following weaker inequality holds: $|f(w)-f(w')| \le K \|f'\|_{\infty} |w - w'|$, which is good enough for our purpose (the constant $K$ disappears in the $\O$ symbol).}
  for $j \ge 1$, if $\phi_j(z)$ and $\phi_{j+1}(z)$ are defined and lie in $D_\Lambda$, we have 
  \[\phi_{j+1}(z)-\phi_j(z) = \O\big(\eta^{\delta_*-1}|\phi_{j}(z)-\phi_{j-1}(z)|  \big),\]
  where $\eta=\sup_{w \in D_\Lambda} |R_\Lambda-w|$.
  Fix $\eps>0$. Up to reducing the radius of $D_\Lambda$, we can therefore assume that 
  \begin{equation}
    |\phi_{j+1}(z)-\phi_j(z)| \le \eps |\phi_{j}(z)-\phi_{j-1}(z) |.
    \label{eq:Tech2}
  \end{equation}
  Thus, if $\phi_j(z)$ is in $D_\Lambda$ for every $i \le m$, then $\phi_M(z)$ is defined and we have
  \begin{equation}
    |\left(\phi_{M}(z)-R_\Lambda\right) - \tfrac{z-\rho}{1-\Lambda'(R_\Lambda)} | = |\phi_{M}(z)-\phi_1(z)| \le \frac{\eps}{1-\eps} |\phi_1(z)-\phi_0(z)| =  \frac{\eps}{1-\eps} \, \left|\tfrac{z-\rho}{1-\Lambda'(R_\Lambda)}\right|.
    \label{eq:Tech3}
  \end{equation}
  If we take $\eps$ small enough, the argument of $\phi_{M}(z)-R_\Lambda$ is close to the one of $z-\rho$.
  Furthermore if the modulus of $z-\rho$ is small so is the one of $\phi_{M}(z)-R_\Lambda$.
  This ensures the existence of a $\Delta$-neighborhood $D_T$ of $\rho$ (not depending on $M$ and $z$),
  such that for $z\in D_T$ and $M\geq 1$, $\phi_{M}(z)$ is in $D_\Lambda$ as long as it is defined. 
  In particular, $\phi_{M+1}(z)$ is also defined and by immediate induction, 
  all $\phi_j$ are defined and analytic on $D_T$.

  \cref{eq:Tech2} also implies that $\phi_j$ converges locally uniformly on $D_T$.
  The limit is the unique solution $w$ in $D_\Lambda$ of the fixed point equation $w=G(z,w)$
  (the uniqueness of the solution comes from the fact that for every $z\in D_T$, $w \mapsto G(z,w)$
  is a contraction for $w$ in $D_\Lambda$).
  This limit is therefore an analytic continuation of $T_{\nonp}(z)$ to $D_T$.
  Note also that from \cref{eq:Tech3}, the following estimate holds on $D_T$:
  \[T_{\nonp}(z)-R_\Lambda=\tfrac{z-\rho}{1-\Lambda'(R_\Lambda)} + o(|z-\rho|).\]
\medskip

 Using the expansion given in \cref{eq:def_Exp} of $\Lambda$ around $R_\Lambda$,
  we have for $z\in D_T$,
  \[T_{\nonp}(z)=\rho + (z-\rho) +
  g_\Lambda(T_{\nonp}(z)) + (C_\Lambda +o(1))\, (T_{\nonp}(z)-R_\Lambda)^\delta.\]
  As $\id - g_\Lambda$ is analytic at $R_\Lambda$ with a nonzero derivative $1-\Lambda'(R_\Lambda)$, it can be inverted analytically around $R_\Lambda$ by an analytic function $h_\Lambda$ and hence
  \[T_{\nonp}(z) = h_\Lambda \left(z+ (C_\Lambda +o(1))\, (T_{\nonp}(z)-R_\Lambda)^\delta \right)\]
 As $T_{\nonp}(z)-R_\Lambda=\frac{1+o(1)}{1-\Lambda'(R_\Lambda)}(z-\rho)$, it follows from the Taylor expansion of $h_\Lambda$ up to exponent $\lceil \delta \rceil$
 that $T_{\nonp}$ has a singularity of exponent exactly $\delta$ in $\rho$.
 In particular $T_{\nonp}$ has a singularity of exponent $\delta$ in $\rho$ and hence $\rho = R_\nonp$.\bigskip

  We now prove that $T_{\nonp}$ has no singularity $\zeta$ with $|\zeta| \le \rho$,
  except $\zeta=\rho$. By a classical compactness argument (see  \emph{e.g.}
  \cite[end of proof of Theorem 2.19]{Drmota}), this implies
  that $T_{\nonp}$ is analytic on a $\Delta$-domain at $\rho$.

  Take such a singularity.  Since $T_{\nonp}$ has nonnegative
  coefficients, the triangular inequality gives $|T_{\nonp}(\zeta)|
  \le T_{\nonp}(\rho)$ and since $T_{\nonp}(z)$ is aperiodic,
  from \cref{lem:daffodil}
  we have a strict
  inequality unless $\zeta=\rho$ .  Therefore, if $|\zeta| \le \rho$
  and $\zeta\neq \rho$, we have
  $|\Lambda'(T_{\nonp}(\zeta))|<\Lambda'(R_\Lambda)<1$ and we can
  apply \cref{lem:implicit} %
  as above as in the second paragraph of this proof to argue that
  $\zeta$ cannot be a singularity. \qed

\subsection{Proof of \cref{lem:Inversion3}}\label{sec:proof_Inv3}
  As in the proof of \cref{lem:Analyse_Tnonp2},
  the existence of $\rho$ and the fact that 
  the convergence  radius of $T_{\nonp}$ is at least $\rho$ is straightforward.
  The key point is to prove that $T_{\nonp}$ has an analytic continuation
  to a $\Delta$-neighborhood of $\rho$.

  By assumption, $\Lambda$ is analytic on
  a $\Delta$-neighborhood $D_\Lambda=\Delta(\varphi_\Lambda,r_\Lambda,R_\Lambda)$ of $R_\Lambda$, and the following approximation holds:
\[\Lambda(w) = \Lambda(R_\Lambda) - (R_\Lambda-w) 
+ C'_\Lambda (R_\Lambda-w)^{\delta_\star}(1+\eps(w)),\]
where as before, $\delta_*=\min(\delta,2)$;
$C'_\Lambda$ is $C_\Lambda$ or $\tfrac{1}{2}\Lambda''(R_\Lambda)$ depending on whether $\delta$ is smaller or bigger than $2$;
and $\eps(w)$ is an analytic function on $D_\Lambda$ tending to $0$ in $R_\Lambda$.

  Fix $z$ in a $\Delta$ neighborhood $D_T$ of $\rho$,
  whose parameters $r_T$ and $\varphi_T$ will be made precise later.
The equation $w=z + \Lambda(w)$ then rewrites as
\begin{equation}
  \rho-z=C'_\Lambda (R_\Lambda-w)^{\delta_*} (1 +\eps(w)),
  \label{eq:Tech4}
\end{equation}
or, as a fixed point equation $w=G(z,w)$ for
\[G(z,w):= R_\Lambda-\left( \tfrac{1}{C'_\Lambda} (\rho-z)
\cdot \frac{1}{1 +\eps (w)}\right)^{1/\delta_*}. 
\]

We again use Picard's method of successive approximants to find
an analytic solution $w(z)$ for \eqref{eq:Tech4},
which will be the analytic continuation of $T_{\nonp}(z)$ that we are looking for.
For $z\in D_T$, set $\phi_0(z)=R_\Lambda$ and, whenever $\phi_i(z)$ lies in $D_\Lambda \cup \{R_\Lambda\}$,
set $\phi_{i+1}(z)=G(z,\phi_i(z))$.
In particular, 
\[R_\Lambda - \phi_1(z) = \left( \tfrac{1}{C'_\Lambda} (\rho-z) \right)^{1/\delta_*}.\]
Since $1/\delta_*<1$, we have $\Arg(R_\Lambda -\phi_1(z))=\tfrac{1}{\delta_*}\Arg(\rho-z)$.
We choose the parameters defining the $\Delta$-neighborhood $D_T$ of $\rho$
to be $\varphi_T=\varphi_\Lambda$ and $r_T=C'_\Lambda \,(\tfrac{ r_\Lambda}{2})^{\delta_*}$.
In this way, if $z$ is in $D_T$, then
then
$\phi_1(z)$ lives in
$\widetilde{D_\Lambda}=\Delta(\widetilde{\varphi_\Lambda},\tfrac{r_\Lambda}{2},R_\Lambda)$,
for some $\widetilde{\varphi_\Lambda}<\varphi_\Lambda$.

We define an intermediate $\Delta$-neighborhood $D'_\Lambda = \Delta(\frac {\varphi_\Lambda+\widetilde{\varphi_\Lambda}} 2,\tfrac{3 r_\Lambda}{4},R_\Lambda)$. This ensures that we have a constant $0<r_0<1$, depending only on $\varphi_{\Lambda}$ and $\widetilde{\varphi_{\Lambda}}$, such that the circle $\gamma_w$ of center $w$ and radius $r_0\, |R_\Lambda-w|$
is contained in $D_\Lambda$ for every $w \in D'_\Lambda$ and in $D'_\Lambda$ for every $w \in \widetilde{D_\Lambda}$ (cf. \cref{fig:DeltaDomaines}).
\begin{figure}[htbp]
	\centering
	\includegraphics{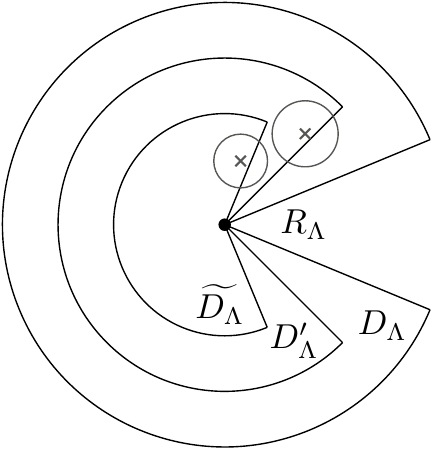}
	\caption{ Illustration of $D_\Lambda, D'_\Lambda, \widetilde{D_\Lambda}$, with two examples of circles $\gamma_w$ represented in gray.
	\label{fig:DeltaDomaines}}
\end{figure}

Consider the partial derivative
\begin{equation}
  \frac{\partial G}{\partial w}(z,w)=\frac{\eps'(w) }{\delta_*} \cdot \left( \frac{1}{C'_\Lambda} (\rho-z)
\right)^{1/\delta_*} \cdot \left(\frac{1}{1 +\eps (w)}\right)^{1/\delta_*+1}.
\label{eq:Tech5}
\end{equation}
We take $w$ in the domain $D'_\Lambda$. The quantity $\eps'(w)$ can now be evaluated through a contour integral on $\gamma_w\subset D_\Lambda$:
\[ \eps'(w) = \frac{1}{2 \pi i} \oint_{\gamma_w} \frac{\eps(u) du}{(u-w)^2}. \]
This yields the inequality 
\[ |\eps'(w)| = \O\left( \frac{\sup_{u \in D_\Lambda} |\eps(u)|}{|R_\Lambda-w|} \right).\]
Plugging this back in \cref{eq:Tech5},
we get, for $w$ in $D'_\Lambda$
\begin{equation}
\left|\frac{\partial G}{\partial w}(z,w)\right|= \O\left(\frac{|z-\rho|^{1/\delta_*} \cdot \sup_{u \in D_\Lambda} |\eps(u)|}{|R_\Lambda-w|} \right).
\label{eq:Tech5.5}
\end{equation}

Now we shall find a domain where we have enough control on $|\tfrac{\partial G}{\partial w}(z,w)|$ as to guarantee the stability of the iterates. A subtlety here is that this control is impossible near $\phi_0(z) = R_\Lambda$. So we need to consider a domain around $\phi_1(z)$, hence that depends on $z$. For every $z\in D_T$, we have $\phi_1(z) \in \widetilde{D_\Lambda}$, so the disk
\[\Gamma_z:=\{w: |w-\phi_1(z)| \le \tfrac{1}{r_0} |\phi_1(z)-R_\Lambda| \}\]
is included in $D'_\Lambda$.
For $w$ in $\Gamma_z$,
we have 
\[|R_\Lambda-w|=\Theta(|\phi_1(z)-R_\Lambda|) = \Theta\big( |\rho-z|^{1/\delta_*}\big),\]
which implies after plugging back into \cref{eq:Tech5.5}
\[\left|\frac{\partial G}{\partial w}(z,w)\right|= \O\left(\sup_{u \in D_\Lambda} |\eps(u)|\right).\]
By possibly reducing the radius $r_\Lambda$ of $D_\Lambda$, we can make $\sup_{u \in D_\Lambda} |\eps(u)|$
as small as wanted: for any $w$ in $\Gamma$,
\begin{equation}
  \left|\frac{\partial G}{\partial w}(z,w)\right| \le \frac{1}{r_0+1}.
  \label{eq:Tech6}
\end{equation}
Similarly,
\[|\phi_2(z)-\phi_1(z)|= \left( \tfrac{1}{C'_\Lambda} |\rho-z| \right)^{1/\delta_*} \cdot
\left| \left( \frac{1}{1+\eps(\phi_1(z))} \right)^{1/\delta_*} -1 \right| \]
can be made smaller than $\tfrac{1}{r_0+1} |\phi_1(z)-R_\Lambda|$ by reducing $r_\Lambda$.
In particular, $\phi_2(z)$ is in $\Gamma_z$.

For $m \ge 2$, assume that $\phi_1(z),\cdots,\phi_m(z)$ lie in $\Gamma_z$.
Then for each $i \le m$, using \eqref{eq:Tech6},
\begin{equation}
  |\phi_{i+1}(z)-\phi_i(z)| \le \left( \tfrac{1}{r_0+1} \right)
|\phi_i(z)-\phi_{i-1}(z)| \le \cdots \le \left( \tfrac{1}{r_0+1} \right)^{i-1} |\phi_2(z)-\phi_1(z)|.
\label{eq:Tech7}
\end{equation}
Since $\phi_m(z)$ lies in $\Gamma_z \subset D_\Lambda$,
the next term $\phi_{m+1}(z)$ is defined
and 
\[|\phi_{m+1}(z)-\phi_1(z)|\le \sum_{i=1}^m |\phi_{i+1}(z)-\phi_i(z)|
\le \left[ \sum_{i=1}^m \left( \tfrac{1}{r_0+1} \right)^{i-1} \right] |\phi_2(z)-\phi_1(z)|
\le \tfrac{1}{r_0} |\phi_1(z)-R_\Lambda|.\]
In particular, $\phi_{m+1}(z)$ also lies in $\Gamma_z$ and an immediate
induction shows that this is indeed the case for all $m \ge 1$.

By \eqref{eq:Tech7}, the series $\sum_{i \ge 0} \phi_{i+1}(z) -\phi_i(z)$
is uniformly bounded
by a geometric series and converges towards an analytic function $\phi$ on $D_T$.
The limit $\phi(z)$ is a solution of
$\phi(z)=z+\Lambda(\phi(z))$ and is the analytic continuation of $T_{\nonp}(z)$ that 
we were looking for.

A small modification of the above argument shows that,
when $r_T$, or equivalently $r_\Lambda$, tends to $0$,
the quotient
\[\frac{|\phi_{m+1}(z)-\phi_1(z)|}{|\phi_1(z)-R_\Lambda|}\]
also tends to $0$. This proves that
\begin{equation}
	\phi(z) - R_\Lambda= (\phi_1(z)-R_\Lambda) (1+o(1))
	=-\left( \tfrac{1}{C'_\Lambda} (\rho-z) \right)^{1/\delta_*}(1+o(1)).
	\label{eq:Tech8}
\end{equation}

The proof that $T_{\nonp}(z)$ has no other singularities than $\rho$
on the circle of convergence is similar to that of \cref{lem:Analyse_Tnonp2}.
\qed

\section{On the simulations given in the introduction}\label{sec:AppendicePermuton}

In this appendix, we explain how the simulations in \cref{Fig:SimusPermuton,Fig:SimusPermutonStable}
have been obtained.
\subsection{Biased Brownian permuton}
  Fix $p$ in $(0,1)$
  and consider  a uniform binary planar tree $\bm b_n^{(p)}$ with $n$ leaves, 
  where each internal node is labeled $\oplus$ (resp. $\ominus$) with probability $p$ (resp. $1-p$), independently from each other.
  As mentioned in \cref{ssec:BrownianPermuton},
  $\tau_n^{(p)}:=\perm(\bm b_n^{(p)})$ forms a consistent family of random permutations.
  Therefore, from \cref{Prop:existence_permuton}, we have the following lemma (using the notation $\Perm(.,.)$ defined in \cref{Sec:ExtractedPermutations})
\begin{lemma}\label{Lem:ConstructionElementaire}
  There exists a random permuton $\bm\mu^{(p)}$, 
  whose induced subpermutation are the $\tau_n^{(p)}$ ( \emph{i.e.}
  for all $n$, $\bm \tau_n^{(p)} \stackrel d = \Perm(\Mn,\Mu^{(p)})$)
  and
 we have the convergence in distribution
$$
\mu_{\bm \tau_n^{(p)}} \stackrel{n\to +\infty}{\to} \bm\mu^{(p)}.
$$
\end{lemma}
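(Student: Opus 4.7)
The plan is to apply \cref{Prop:existence_permuton} to the sequence $(\bm\tau_n^{(p)})_{n\geq 1}$. The only nontrivial thing to check is the consistency condition of \cref{Def:consistency}: for any $n\geq k\geq 1$ and any uniform $k$-subset $\bm I_{n,k}$ of $[n]$ independent of $\bm\tau_n^{(p)}$, we must show
\[
\pat_{\bm I_{n,k}}(\bm\tau_n^{(p)}) \stackrel{d}{=} \bm\tau_k^{(p)}.
\]
Once this is established, \cref{Prop:existence_permuton} directly provides the random permuton $\bm\mu^{(p)}$ with $\Perm(\Mn,\bm\mu^{(p)}) \stackrel{d}{=} \bm\tau_n^{(p)}$, and the last assertion of the same proposition yields the announced convergence $\mu_{\bm\tau_n^{(p)}} \to \bm\mu^{(p)}$ in distribution.

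The first step is a purely combinatorial statement at the level of unlabeled trees: if $\bm b_n$ denotes a uniform binary planar tree with $n$ leaves and $\bm I$ is a uniform random $k$-subset of its leaves (independent of $\bm b_n$), then the induced subtree $(\bm b_n)_{\bm I}$ (in the sense of \cref{dfn:induced_subtree}, ignoring labels) is distributed as a uniform binary planar tree with $k$ leaves. This is a classical consequence of Rémy's algorithm \cite{Remy}, which constructs $\bm b_{n+1}$ from $\bm b_n$ by inserting a leaf at a uniformly chosen position. Reversing the insertion shows that deleting a uniformly chosen leaf from $\bm b_{n+1}$ produces a uniform binary tree of size $n$; iterating gives the desired statement for $(\bm b_n)_{\bm I}$.

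The second step incorporates the signs. The tree $\bm b_n^{(p)}$ is obtained from $\bm b_n$ by labeling each internal node independently by $\oplus$ (prob. $p$) or $\ominus$ (prob. $1-p$), independently of $\bm b_n$ and $\bm I$. By \cref{dfn:induced_subtree}, the label of an internal node $v$ of $(\bm b_n^{(p)})_{\bm I}$ is the pattern of the label of $\varphi(v)$ in $\bm b_n^{(p)}$ induced by the children having a marked descendant; since patterns of increasing (resp. decreasing) permutations are increasing (resp. decreasing), a $\oplus$ remains a $\oplus$ and a $\ominus$ remains a $\ominus$. Therefore the labels of the internal nodes of $(\bm b_n^{(p)})_{\bm I}$ form, conditionally on its shape, an i.i.d.\ $\oplus/\ominus$ family with parameter $p$, so that $(\bm b_n^{(p)})_{\bm I} \stackrel d= \bm b_k^{(p)}$.

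The third step transports this equality to permutations via \cref{lem:DiagrammeCommutatif}:
\[
\pat_{\bm I}(\bm\tau_n^{(p)}) = \pat_{\bm I}(\perm(\bm b_n^{(p)})) = \perm\bigl((\bm b_n^{(p)})_{\bm I}\bigr) \stackrel d= \perm(\bm b_k^{(p)}) = \bm\tau_k^{(p)},
\]
which is exactly consistency. The main (mild) obstacle is the first step: a clean invocation of Rémy's algorithm to argue that extracting a uniform subset of leaves of a uniform binary tree produces again a uniform binary tree. The signed version and the passage to permutations are then routine.
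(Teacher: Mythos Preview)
Your proposal is correct and follows essentially the same approach as the paper: the paper simply invokes \cref{Prop:existence_permuton} after observing (in \cref{ssec:BrownianPermuton}) that consistency of the family $(\bm\tau_n^{(p)})_n$ follows from R\'emy's algorithm. Your three-step argument merely spells out in detail what the paper leaves implicit---in particular the preservation of $\oplus/\ominus$ labels under the induced-tree operation and the passage to permutations via \cref{lem:DiagrammeCommutatif}.
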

By definition, $\bm\mu^{(p)}$ is the biased Brownian separable permuton with parameter $p$
(indeed $\bm \tau_k^{(p)} \stackrel d = \Perm(\Mk,\Mu^{(p)})$ implies
\cref{eq:def_PermutonBiaise} of \cref{Def:PermutonBiaise}).
This lemma is not needed to prove the results of this paper,
but was used in the simulations.
The three pictures in \cref{Fig:SimusPermuton}
p.~\pageref{Fig:SimusPermuton} are obtained
by drawing the diagram of a random permutation distributed as $\bm \tau_n^{(p)}$,
for $p=0.2$ and $n=11\, 629$, $p=0.45$ and $n=12\, 666$, $p=0.5$ and $n=17\, 705$.

\subsection{Stable permutons}

Fix $\delta \in (1,2)$. For every $k$ the following probability distribution on unlabeled plane trees
with $k$ leaves was introduced in \cite[Thm 3.3.3]{DuquesneLeGall} 
and is the distribution of the induced subtree with $k$ leaves in the $\delta$-stable tree:
\[\rho_{\delta,k}(\patterntree) = \frac{k!}{(\delta -1) \cdots ((k-1)\delta - 1)}\prod_{v\in \Internal\patterntree} \mathbf{1}_{d_v\geq 2} \frac{(d_v - 1 - \delta) \cdots (2-\delta)(\delta - 1)}{d_v!}.\]
Now if we fix the distribution of a random permuton $\bm \nu$, for every $n\geq 1$, we build a random substitution tree $\bm t^{(\delta,\bm \nu)}_{n}$ as follows:
the tree is chosen according to $\rho_{\delta,n}$, and conditional on that choice, all internal nodes $v$ are independently labeled by a permutation distributed like $\Perm(\vec{\mathbf { m}}_{d_v}, \bm\nu)$  (the notation $\Perm(.,.)$ is defined in \cref{Sec:ExtractedPermutations}).

Now we can define the permutations $\bm \tau^{(\delta,\bm\nu)}_n = \perm(\bm t^{(\delta,\bm \nu)}_{n})$.
This family of permutations is consistent:
we omit the proof of this fact, which follows from the consistency of the family $(\Perm(\Mk,\bm \nu))_k$
(\cref{Prop:existence_permuton}) and from Marchal's algorithm \cite{Marchal} to generate trees of distribution $\nu_{\delta,k}$.
We deduce the following lemma.

\begin{lemma}\label{Lem:ConstructionElementaireStable}
	For every $\delta\in(1,2)$ and random permuton $\bm \nu$, there exists a random permuton $\bm \mu^{(\delta,\bm \nu)}$, 
    whose induced subpermutations are the $\bm \tau_n^{(\delta,\bm\nu)}$ ( \emph{i.e.}
	for all $n$, $\bm \tau_n^{(\delta,\bm\nu)} \stackrel d = \Perm(\Mn,\Mu^{(\delta,\bm \nu)})$)
	and we have the convergence in distribution
	$$
	\mu_{\bm \tau_n^{(\delta,\bm \nu)}} \stackrel{n\to +\infty}{\to} \bm\mu^{(\delta,\bm \nu)}.
	$$
	We call $\mu^{(\delta,\bm \nu)}$ the $\delta$-stable permuton driven by $\bm \nu$.
\end{lemma}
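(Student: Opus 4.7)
The strategy is to verify that the family $(\bm\tau_n^{(\delta,\bm\nu)})_{n\ge 1}$ is consistent in the sense of \cref{Def:consistency}, and then to invoke \cref{Prop:existence_permuton} directly: this proposition delivers both the existence of a random permuton $\bm\mu^{(\delta,\bm\nu)}$ satisfying $\bm\tau_n^{(\delta,\bm\nu)} \stackrel d = \Perm(\Mn,\bm\mu^{(\delta,\bm\nu)})$ and the convergence $\mu_{\bm\tau_n^{(\delta,\bm\nu)}} \to \bm\mu^{(\delta,\bm\nu)}$ in distribution. Only consistency requires work.

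To establish consistency, fix $n\ge k\ge 1$ and let $\bm I_{n,k}$ be a uniform $k$-element subset of $[n]$, independent of $\bm\tau_n^{(\delta,\bm\nu)}$. By \cref{lem:DiagrammeCommutatif}, $\pat_{\bm I_{n,k}}(\bm\tau_n^{(\delta,\bm\nu)}) = \perm\big((\bm t^{(\delta,\bm\nu)}_n)_{\bm I_{n,k}}\big)$, so it is enough to check that the induced substitution tree $(\bm t^{(\delta,\bm\nu)}_n)_{\bm I_{n,k}}$ has the same distribution as $\bm t^{(\delta,\bm\nu)}_k$. This splits into a statement about the underlying tree shape, and one about the labels.

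For the tree shape, I would use Marchal's algorithm \cite{Marchal}, which grows the sequence of $\rho_{\delta,n}$-distributed trees by inserting leaves one at a time, with insertion probabilities chosen precisely so that for every $n \ge k$, the subtree spanned by the first $k$ inserted leaves is distributed as $\rho_{\delta,k}$. A standard exchangeability argument then shows that $k$ uniformly chosen leaves (rather than the first $k$) span a subtree with the same law $\rho_{\delta,k}$. This gives the tree-shape consistency. For the labels, I would condition on the induced shape $t_0$ with $k$ leaves, look at one internal node $v$ of $t_0$ with degree $d'_v$, and consider the corresponding node $\varphi(v)$ in the larger tree (degree $d_{\varphi(v)} \ge d'_v$): by construction $\varphi(v)$ carries a label $\Perm(\vec{\mathbf m}_{d_{\varphi(v)}}, \bm\nu)$, and the induced label at $v$ is the pattern of this permutation on the $d'_v$ children of $\varphi(v)$ that survive in $t_0$. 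Since these surviving children form a uniform $d'_v$-subset (inherited from the uniformity of $\bm I_{n,k}$) chosen independently of $\bm\nu$, \cref{eq:propUniverselle} together with the consistency of $(\Perm(\Mk,\bm\nu))_k$ (first part of \cref{Prop:existence_permuton}) gives that this induced label is distributed as $\Perm(\vec{\mathbf m}_{d'_v}, \bm\nu)$. Independence of labels across distinct internal nodes is preserved because the subsets of $\bm\nu$-samples attached to different nodes are disjoint and sampled independently.

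Combining both statements yields $(\bm t^{(\delta,\bm\nu)}_n)_{\bm I_{n,k}} \stackrel d = \bm t^{(\delta,\bm\nu)}_k$, hence $\pat_{\bm I_{n,k}}(\bm\tau_n^{(\delta,\bm\nu)}) \stackrel d = \bm\tau_k^{(\delta,\bm\nu)}$, which is consistency. Applying \cref{Prop:existence_permuton} then concludes the proof. The main subtlety I anticipate is being careful with the coupling underlying the labels: one must handle the two levels of randomness (tree plus $\bm\nu$-driven labels) simultaneously, which is cleanest by conditioning first on the sequence of tree shapes produced by Marchal's algorithm and then invoking \cref{eq:propUniverselle} node by node; the rest is bookkeeping.
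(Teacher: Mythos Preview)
Your proposal is correct and follows exactly the approach the paper sketches: the paper states (without detailed proof) that consistency of $(\bm\tau_n^{(\delta,\bm\nu)})_n$ follows from Marchal's algorithm for the tree shapes together with the consistency of $(\Perm(\Mk,\bm\nu))_k$ from \cref{Prop:existence_permuton}, and then deduces the lemma directly from \cref{Prop:existence_permuton}. You have simply filled in the details of that sketch, including the use of \cref{lem:DiagrammeCommutatif} to pass from permutations to induced trees.
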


Once again, this lemma is used in our simulations.
The pictures in  \cref{Fig:SimusPermutonStable} are 
the rescaled diagrams of realizations of $\bm \tau_n^{(\delta,\bm \nu)}$ for $n=20\, 000$ and $\delta \in \{1.1,1.5\}$, 
where we have taken $\bm \nu$ to be the (nonrandom) uniform measure on $[0,1]^2$. 

\subsection{Simulations of permutations in classes}
The uniform random permutations in substitution-closed classes
shown on \cref{Fig:PermInClasses} have been obtained through a Boltzmann sampler.
Obtaining such a sampler is routine from the equations on the generating series
given in \cref{Prop:systeme1} \cite{BoltzmannSampler}.
The only input that we need is a Boltzmann sampler for the set $\SSS$ 
of simple permutations in the class.
In the case of a finite set $\SSS$, this is trivial.
For the set of simple permutations in $\Av(321)$, we built a Boltzmann sampler
for the whole class $\Av(321)$, which has an easy recursive structure,
and we run it until the output is simple
(this happens with probability bigger than $1/4$ when the parameter of the Boltzmann
sampler is close to the radius of convergence of $\CCC=\langle\Av(321)\rangle$).
Code is available on request.

\section*{Acknowledgements}
The authors are grateful to the referees for suggestions which improved the presentation of the paper.

MB and VF are partially supported by the Swiss National Science Foundation, under grants number 200021-172536 and 200020-172515.
LG's research is supported by ANR grants GRAAL (ANR-14-CE25-0014) and PPPP (ANR-16-CE40-0016).


\begin{thebibliography}{99}

\bibitem{AA05}
M. H. Albert, M. D. Atkinson.
Simple permutations and pattern restricted permutations.
\emph{Discrete Mathematics}, 300(1):1--15, 2005. 

\bibitem{AlbertAtkinsonBrignall3}
M. H. Albert, M. D. Atkinson, R. Brignall. 
The enumeration of three pattern classes using monotone grid classes. 
\emph{Electronic Journal of Combinatorics}, 19(3) \#P20, 2012. 

\bibitem{AlbertAtkinsonBrignall}
M. H. Albert, M. D. Atkinson, R. Brignall. 
The enumeration of permutations avoiding $2143$ and $4231$. 
\emph{Pure Mathematics and Applications}, 22(2):87--98, 2011.

\bibitem{AlbertAtkinsonKlazar}
M.H.Albert, M.D. Atkinson, M. Klazar. The enumeration of simple permutations. \emph{Journal of Integer Sequences} vol.6 (2003), article 03.4.4.

\bibitem{AlbertAtkinsonVatter3}
M. H. Albert, M. D. Atkinson, V. Vatter. 
Inflations of geometric grid classes: three case studies. 
\emph{Australasian Journal of Combinatorics}, vol. 58 (2014), p. 27-47.

\bibitem{AlbertAtkinsonVatter}
M. H. Albert, M. D. Atkinson, V. Vatter. 
Counting $1324, 4231$-Avoiding Permutations. 
\emph{Electronic Journal of Combinatorics}, 16(1) \#R136, 2009.

\bibitem{AlbertBrignall}
M. H. Albert, R. Brignall. 
Enumerating indices of Schubert varieties defined by inclusions. 
\emph{Journal of Combinatorial Theory, Series A}, 123(1): 154--168, 2014. 

\bibitem{AlbertVatter}
M. H. Albert, V. Vatter. 
Generating and Enumerating $321$-Avoiding and Skew-Merged Simple Permutations.
\emph{Electronic Journal of Combinatorics}, 20(2) \#P44, 2013.


\bibitem{AldousCRT3}
     D. Aldous. The Continuum Random Tree III.
     {\em Annals of Probability}, 21(1): 248---289, 1993.
 
   \bibitem{AtapourMadras}
 M. Atapour, N. Madras. Large deviations and ratio limit theorems for pattern-avoiding
 permutations. {\em Combinatorics, Probability and Computing},
 23(2): 160--200, 2014.
%
\bibitem{Basis_Subst_Closure}
M. Atkinson, N. Ruškuc, R. Smith. Substitution-closed pattern classes. 
{\em J. Combin. Theory Ser. A}, 118: 317–340, 2011.
%
\bibitem{BrownianPermutation}
F. Bassino, M. Bouvel, V. Féray, L. Gerin, A. Pierrot.
The Brownian limit of separable permutations, \emph{Annals of Probability}, 46(4): 2134--2189, 2018.

\bibitem{BBPR}
F. Bassino, M. Bouvel, A. Pierrot, D. Rossin.
An algorithm for deciding the finiteness of the number of simple permutations in permutation classes.
\emph{Advances in Applied Mathematics}, vol. 64 (2015), p. 124-200. 

\bibitem{PinPerm}
F. Bassino, M. Bouvel, D. Rossin. Enumeration of Pin-Permutations. 
\emph{Electronic Journal of Combinatorics}, vol. 18 (2011), \#P57.

   \bibitem{BevanPhD}
     D. Bevan. {\em On the growth of permutation classes}.
     PhD thesis (2015), Open University, \verb|arXiv:1506.06688|.

\bibitem{Billingsley}
P. Billingsley. \emph{Convergence of probability measures} (2d edition). John Wiley \& Sons (1999).
%
\bibitem{Bona0} M. B\'ona. Exact enumeration of 1342-avoiding permutations:
  a close link with labeled trees and planar maps.
  \emph{J. Combin. Th. Series A},
  80: 257--272, 1997.

 \bibitem{Bona1} M. B\'ona. The absence of a pattern and the occurrences of another. \emph{Discrete Mathematics \& Theoretical Computer Science}, 12(2): 89--102, 2010.
%
 \bibitem{Bona2} M. B\'ona. Surprising Symmetries in Objects Counted by Catalan Numbers. {\em The Electronic Journal of Combinatorics}, 19(1) \#62, 2012.

\bibitem{BonaBook} M. B\'ona. \emph{Combinatorics of permutations} (2d edition). Chapman-Hall and CRC Press (2012). 

\bibitem{MathildeMarniCyril}
M. Bouvel, M. Mishna, C. Nicaud. Some families of trees arising in permutation analysis.
Preprint, \texttt{arXiv:1609.09586}, (2016).

\bibitem{BrignallSurvey}
R. Brignall. 
A survey of simple permutations. 
S. Linton, N. Ru\v skuc and V. Vatter, Eds., vol. 376 of London Mathematical Society Lecture Note Series, Cambridge University Press, p. 41-65, 2010.

\bibitem{BRV}
R. Brignall, N. Ru\v skuc, V. Vatter. 
Simple permutations: decidability and unavoidable substructures. 
\emph{Theoretical Computer Science}, 391(1-2): 150-163, 2008. 


 \bibitem{ChenEuFu}
 S.-E. Cheng, S.-P. Eu, T.-S. Fu, Area of Catalan paths
 on a checkerboard. {\em European Journal of Combinatorics}, 28(4): 1331--1344, 2007.
%
 \bibitem{DokosPak}
   T. Dokos, I. Pak.
   The expected shape of random doubly alternating Baxter permutations.
   {\em Online Journal of Analytic Combinatorics}, vol. 9 (2014), \#5.
%
\bibitem{Drmota}
  M. Drmota, 
  \emph{Random Trees, An Interplay between Combinatorics and Probability}, Springer (2009).
 
\bibitem{BoltzmannSampler}
    Ph. Duchon, Ph. Flajolet, G. Louchard, G. Schaeffer.
    Boltzmann Samplers for the Random Generation of Combinatorial Structures.
   \emph{Combinatorics, Probability and Computing},
    13 (4-5): 577--625, 2004.

\bibitem{DuquesneLeGall}
  T. Duquesne, J. F. Le Gall,  Random trees, Lévy processes and spatial branching processes, vol. 281 of Astérisque, Société mathématique de France (2002).

  %
%
%
\bibitem{FMN3}
V. Féray, P.-L. Méliot, A. Nikeghbali.
  Graphons, permutons and the Thoma simplex: three mod-Gaussian moduli spaces.
  Preprint arXiv:1712.06841, 2017.

\bibitem{Violet}
Ph. Flajolet, R. Sedgewick. \emph{Analytic combinatorics}. Cambridge University Press (2009).

\bibitem{FinitelyForcible}
  R. Glebov, A. Grzesik, T. Klimo\v{s}ov\'a, D. Kr\'al.
Finitely forcible graphons and permutons.
{\em J. Combin. Th., Series B}, 110: 112 -- 135, 2015.

\bibitem{ParameterTesting}
R. Glebov, C. Hoppen, T. Klimo\v{s}ov\'a, Y. Kohayakawa, D. Kr\'al, H. Liu.
Densities in large permutations and parameter testing.
{\em Eur. J. Combin.}, 60: 89--99, 2017.

%
%
%
%
%
%
%
 \bibitem{HoffmanBrownian1}
 C. Hoffman, D. Rizzolo, E. Slivken.
 Pattern Avoiding Permutations and Brownian Excursion Part I: Shapes and Fluctuations.
 \newblock {\em Random Structures and Algorithms}, 50(3): 394--419, 2017.

%
 \bibitem{HoffmanBrownian2}
   C. Hoffman, D. Rizzolo, E. Slivken. Pattern Avoiding Permutations and Brownian Excursion Part II: Fixed Points.
   \newblock{\em Probability Theory and Related Fields}, 169: 377--424, 2016.
%
%
%
%
 \bibitem{Homberger}
   C. Homberger, Expected patterns in permutation classes. 
   {\em The Electronic Journal of Combinatorics},  19:3 \#43, 2012.
%
\bibitem{Permutons}
C. Hoppen, Y. Kohayakawa, C. G. Moreira, B. Rath, R. M. Sampaio.
Limits of permutation sequences.
\emph{Journal of Combinatorial Theory, Series B}, 103(1): 93--113, 2013.
%
\bibitem{JansonSurveysGWTrees}
  S. Janson. Simply generated trees, conditioned Galton--Watson trees, random allocations and condensation. {\em Probability Surveys}, vol. 9 (2012), p. 103--252.

 \bibitem{JansonPermutations}
 S. Janson. Patterns in random permutations avoiding the pattern 132. \emph{Combinatorics Probability and Computing}, 26(1): 24--51, 2017.
%
 \bibitem{JansonNakamuraZeilberger}
   S. Janson, B. Nakamura, D. Zeilberger. On the Asymptotic Statistics of the Number of Occurrences of Multiple Permutation Patterns. 
   {\em Journal of Combinatorics}, 6(1-2): 117--143, 2015.
%
%
%
%
%
%
%
%
%
%
%

\bibitem{LargeDeviations}
R. Kenyon, D. Král, C. Radin, and P. Winkler. Permutations with fixed pattern densities.
preprint, arXiv:1506.02340, 2015.


 \bibitem{Igor}
 I. Kortchemski. Invariance principles for Galton-Watson trees conditioned on the number of leaves. \emph{Stochastic Processes and Applications}, 122(9): 3126--3172, 2012. 
%
%
%
%



\bibitem{MickaelConstruction}
  M. Maazoun, On the Brownian separable permuton. 
  Preprint arXiv:1711.08986.
  %

 \bibitem{MadrasLiu}
   N. Madras, H. Liu.
   Random pattern-avoiding permutations.
   {\em Algorithmic, Probability and Combinatorics},
   vol. 520 of Contemp. Math., p. 173--194. Amer. Math. Soc., 2010.
%
 \bibitem{MadrasPehlivan}
   N. Madras, L. Pehlivan.
   \newblock  Structure of Random 312-Avoiding Permutations.
   \newblock {\em Random Structures and Algorithms}, 49(3): 599--631, 2016.
%
\bibitem{Madras2413}
 N. Madras, G. Yıldırım.
 \newblock Longest Monotone Subsequences and Rare Regions of Pattern-Avoiding Permutations
 \newblock {\em Elec. J. Combin.}, 24(4): \#P4.13, 2017.
%
%
%
%

\bibitem{MaTa04}
A. Marcus and G. Tardos.
\newblock Excluded permutation matrices and the {S}tanley-{W}ilf conjecture.
\newblock {\em Journal of Combinatorial Theory, Series A}, 107(1): 153--160, 2004.

\bibitem{Marchal}
P. Marchal. A note on the fragmentation of a stable tree.
In \emph{Fifth Colloquium on Mathematics
and Computer Science},
DMTCS Proceedings (2008), p. 489--500.

\bibitem{MinerPak}
 S. Miner, I. Pak. The shape of random pattern-avoiding permutations. \emph{Advances in Applied Mathematics}, vol. 55 (2014), p. 86--130.
%
\bibitem{ExpTiltedPermuton}
  S. Mukherjee.
  Estimation in exponential families on permutations. 
  {\em Ann. Statist.} 44 (2): 853--875, 2016.


\bibitem{FixedPointsPermuton}
  S. Mukherjee. Fixed points and cycle structure of random permutations. 
  \emph{Elec. J. Probab.}, 21:1--18, 2016.

\bibitem{Pantone}
J. Pantone. The Enumeration of Permutations Avoiding $3124$ and $4312$. 
\emph{Annals of Combinatorics}, 21(2): 293–315, 2017.

%
%
%
%
%
%
%
%
%
\bibitem{PitmanRizzolo}
 J. Pitman, D. Rizzolo. Schr\"oder's problems and scaling limits of random trees. \emph{Transactions of the American Mathematical Society}, 367(10): 6943--6969, 2015.
 
 \bibitem{GeometryPermuton}
   M. Rahman, B. Virag, M. Vizer. 
   Geometry of permutation limits.
  Preprint arXiv:1609.03891, 2016

 \bibitem{Remy}
   J.-L. Rémy. Un procédé itératif de dénombrement d’arbres binaires et son application à leur génération aléatoire. \emph{RAIRO Inform. Théor.},  19(2): 179--195, 1985.

\bibitem{Rudolph}
   K. Rudolph. Pattern popularity in 132-avoiding permutations.
   {\em The Electronic Journal of Combinatorics}, 20:1 \#8, 1985.
%
%
%
%
%
%
%
%
%
%
 \bibitem{Stankova3142}
   Z. Stankova. Forbidden subsequences. 
   {\em Discrete Math.} 132(1-3): 291--316, 1994.

\end{thebibliography}
\end{document}